\documentclass{article}%
\usepackage{amsfonts}
\usepackage{amsmath}
\usepackage{amssymb}
\usepackage{graphicx}%
\setcounter{MaxMatrixCols}{30}
\providecommand{\U}[1]{\protect\rule{.1in}{.1in}}
\newtheorem{theorem}{Theorem} [section]

\newtheorem{conclusion}[theorem]{Conclusion}

\newtheorem{definition}[theorem]{Definition}
\newtheorem{example}[theorem]{Example}

\newtheorem{proposition}[theorem]{Proposition}
\newtheorem{remark}[theorem]{Remark}

\newenvironment{proof}[1][Proof]{\noindent\textbf{#1.} }{\ \rule{0.5em}{0.5em}}
\setlength{\textwidth}{5.3in}
\begin{document}
\author{Robert Shwartz\\Department of Mathematics\\Ariel University, Israel\\robertsh@ariel.ac.il}
\date{}
\title{OGS canonical forms and exchange laws for the $I$ and for the $A$-type Coxeter groups}
\maketitle
\begin{abstract}
We consider a generalization of the fundamental theorem of finitely generated abelian groups for some non-abelian groups, which is called $OGS$. First, we consider the dihedral group $Dih(A)$, which is a non-abelian extension of an abelian group $A$ by an involution. Then, we focus on a special case of $Dih(A)$, where $A$ is cyclic, which is the two-generated Coxeter group $I_{2}(m)$. We mention interesting connections between the reduced Coxeter presentation and a particular $OGS$ canonical presentation, which we call the standard $OGS$ canonical presentation. These connections motivate us to offer a generalization of the standard $OGS$ to the $A$-type Coxeter group, which can be considered as the dual family to the $I$-type Coxeter groups. The $A$-type  Coxeter graph is connected, with edges limited to simply laced, but the number of vertices is not limited, and depends on $n$. The $(n-1)$-generated $A$-type Coxeter groups can be considered as the symmetric group $S_{n}$ for an arbitrary $n$. We mention the standard and the dual-standard $OGS$ of $S_n$, where, The standard $OGS$ canonical form of $S_{n}$  has a special interest in combinatorics, since in 2001, R. M. Adin, and Y. Roichman has proved that sum of the exponents in the canonical form is coincide with the major-index of the permutation, which is equi-disributed with the Coxeter length. In this paper we extend the results of Adin and Roichman very significantly, where we show interesting properties of the exchange laws, we define standard $OGS$ elementary factorization, which connects between the standard $OGS$ and the descent set of a permutation. Then, by using the standard $OGS$ elementary factorization, we find  a new explicit formula for the Coxeter length of an element of $S_n$, and we give a new algorithm for the standard $OGS$ canonical form and the descent set of the inverse element of an arbitrary element of $S_n$.

\textbf{Keywords:}  Basis, Ordered Generating Systems, Exchange laws, Standard OGS elementary factorization, Dihedral groups, Symmetric groups, Permutation combinatorics.

\textbf{MSC 2010 classification:} 05E15, 06F15, 20B05, 20B20, 20B30, 20F60, 20K01.

\end{abstract}

\section{Introduction}

One of the most important theorems of Linear Algebra is  that every vector-space $V$ over a field $\mathbb{F}$ has a basis, i.e. there are elements $v_{1}, v_{2}, \ldots v_{n}$ in $V$, such that every vector $v$ in $V$ has a unique presentation of a form:
$$v=\alpha_{1}\cdot v_{1}+\alpha_{2}\cdot v_{2}+\cdots +\alpha_{n}\cdot v_{n}, ~~~\alpha_{1}, \alpha_{2}, \ldots, \alpha_{n}\in \mathbb{F}$$
Thus, the vector $v$ in $V$ can be expressed by its $n$ coordinates $(\alpha_{1}, \alpha_{2}, \ldots, \alpha_{n})$. There is a generalization of the basis for finitely generated abelian groups.  Let $A$ be a finitely generated abelian group, then by the fundamental theorem of finitely generated abelian groups there exists generators $a_{1}, a_{2}, \ldots a_{n}$, such that every element $a$ in $A$ has a unique presentation of a form:
$$g=a_{1}^{i_{1}}\cdot a_{2}^{i_{2}}\cdots a_{n}^{i_{n}}.$$
where, $i_{1}, i_{2}, \ldots, i_{n}$ are $n$ integers such that for  $1\leq k\leq n$, $0\leq i_{k}<|g_{k}|$, where $a_{k}$ has a finite order of $|a_{k}|$ in $A$, and $i_{k}\in \mathbb{Z}$, where $a_{k}$ has infinite order in $A$. The mentioned  presentation is  the canonical presentation of $a\in A$ according to the basis  $\langle a_{1}, a_{2}, \ldots, a_{n}\rangle$. Since $A$ is an abelian group, the following exchange laws hold: $a_{k}\cdot a_{j}=a_{j}\cdot a_{k}$, for each $1\leq j, k\leq n$. The exchange law allows us to put each product of two elements of the group $A$ onto the mentioned canonical form.  Therefore, we can write every element $g$ of an abelian group $A$ as a $n$-tuple of integers $(i_1, i_2, \ldots i_n)$, where $i_k\in \mathbb{Z}_{|a_{k}|}$ for $1\leq r\leq n$ (we denote by  $\mathbb{Z}_{r}$ the set of integers modulo $r$, and we denote by by $\mathbb{Z}_{\infty}$ the set of the integers).  Thus,
$$(i_{1}, i_{2}, \ldots i_{n})+(j_{1}, j_{2},\ldots j_{n})=(i_{1}+j_{1}, i_{2}+j_{2}, \ldots, i_{n}+j_{n}),$$
where, $i_{k}+j_{k}$ is the group operation in the additive group of $\mathbb{Z}_{|a_{k}|}$.
In this paper we consider a generalization of the canonical form by a given basis as it arises from the fundamental theorem of Abelian groups to the non-abelian case in the following way:
\begin{definition}\label{ogs}
Let $G$ be a non-abelian group. The ordered sequence of $n$ elements $\langle g_{1}, g_{2}, \ldots, g_{n}\rangle$ is called an $Ordered ~~ Generating ~~ System$ of the group $G$ or by shortened notation, $OGS(G)$, if every element $g\in G$ has a unique presentation in the a form
$$g=g_{1}^{i_{1}}\cdot g_{2}^{i_{2}}\cdots g_{n}^{i_{n}},$$
where, $i_{1}, i_{2}, \ldots, i_{n}$ are $n$ integers such that for  $1\leq k\leq n$, $0\leq i_{k}<r_{k}$, where  $r_{k} | |g_{k}|$  in case the order of $g_{k}$ is finite in $G$, or   $i_{k}\in \mathbb{Z}$, in case  $g_{k}$ has infinite order in $G$.
The mentioned canonical form is called $OGS$ canonical form.  For every $q>p$, $1\leq x_{q}<r_{q}$, and $1\leq x_{p}<r_{p}$ the relation
$$g_{q}^{x_{q}}\cdot g_{p}^{x_{p}} = g_{1}^{i_{1}}\cdot g_{2}^{i_{2}}\cdots g_{n}^{i_{n}},$$
is called exchange law.
\end{definition}
In contrast to finitely generated abelian groups, the existence of an $OGS$ is generally not true for every finitely generated non-abelian group. Even in case of two-generated infinite non-abelian groups it is not too hard to find counter examples. For example, the Baumslag-Solitar groups $BS(m,n)$ \cite{BS}, where $m\neq \pm1$ or $n\neq \pm1$, or most of the cases of the one-relator free product of a finite cyclic group generated by $a$, with a finite two-generated group generated by $b, c$ with the relation $a^{2}\cdot b\cdot a\cdot c=1$ \cite{S}, do not have an $OGS$. Even the question of the existence of an $OGS$ for a general finite non-abelian group is still open. Moreover, contrary to the abelian case where the exchange law is just $g_{q}\cdot g_{p}=g_{p}\cdot g_{q}$, in most of the cases of non-abelian groups with the existence of an $OGS$, the exchange laws are very complicated. Although there are some specific non-abelian groups where the exchange laws are very convenient and have very interesting properties (For example, in the case of $PSL_{2}(q)$ there is an $OGS$ which is closely connected to the $BN-pair$ decomposition, where the exchange laws yield  some interesting recursive sequences over finite fields \cite{SY}). In this paper we deal with the two most significant classes of  Coxeter groups (namely the $I$-type and the $A$-type), which  have an $OGS$ canonical presentation strongly connected to the presentation in Coxeter generators, and  with very interesting and surprising exchange laws. The paper is divided as follows: In section \ref{dih}, we describe an $OGS$ canonical form and the related exchange laws for the family of the dihedral groups $Dih(A)$, which are non-abelian extensions of an abelian group $A$ by a cyclic group of order $2$. Then, we focus in the case of $Dih(A)$, where the abelian group $A$ is cyclic, since then the group  is a Coxeter group with two generators. We show a connection between the canonical form according to an $OGS$, which we call standard $OGS$, and the reduced Coxeter presentation of it. In section \ref{s-n}, we show a generalization of the standard $OGS$ canonical form to the $A$-type Coxeter groups, which can be considered as a dual family of the $I$-type Coxeter groups, where instead of limiting the number of vertices to two in the Coxeter graph, we limit the lace of the edges to be simply laced. The $(n-1)$-generated $A$-type Coxeter group is the symmetric group $S_n$, which can be considered as the permutation group on $n$ elements. Therefore, a lot of work has been accomplished concerning the connections between permutation invariants and the Coxeter length of the elements by Brenti, Bjorner \cite{BB}, B\'{o}na \cite{BO}, Foata, Schutzenberger \cite{FS}, Grasia, Gessel \cite{GG}, Reiner \cite{R}, Stanley \cite{Sta}, Steingrimsson \cite{Ste}, Bagno, Garber, Mansour, Shwartz \cite{BGMS}, and many others.
  In the same aspect, Adin and Roichman \cite{AR} introduced a presentation of an $OGS$ canonical form  for the symmetric group $S_n$, for the hyperoctahedral group $B_n$, and for the wreath product $\mathbb{Z}_{m}\wr S_{n}$. In the case of $S_n$, the $OGS$ which they used, coincides with the standard $OGS$ by our definition. Adin and Roichman proved that for every element of $S_n$ presented in the standard $OGS$ canonical form, the sum of the exponents of the $OGS$ equals the major-index of the permutation. Moreover, by using an $OGS$ canonical form, Adin and Roichman generalized the theorem of MacMahon \cite{Mac} to the $B$-type Coxeter group, and to the wreath product $\mathbb{Z}_{m}\wr S_{n}$. A few years later, that $OGS$ canonical form was generalized for complex reflection groups by Shwartz, Adin and Roichman \cite{SAR}. Although an $OGS$ canonical form for the symmetric groups $S_n$ has been already introduced, and a lot of work has been done concerning permutation statistics and Coxeter length of elements in the symmetric groups, nothing has been carried out yet concerning very important and very interesting aspects of the $OGS$ canonical forms, like exchange laws, an algorithm for a canonical form of the inverse element of a given element of the symmetric group, or an explicit formula for the Coxeter length of a given element of $S_n$. In this paper, we find the related exchange laws for the standard and for the dual-standard $OGS$ canonical forms of the symmetric group $S_n$, with very interesting and surprising properties. By using the standard $OGS$, we define standard $OGS$ elementary elements, which coincide with the elements of $S_{n}$ with a single descent. Then, we define standard $OGS$ elementary factorization onto standard $OGS$ elementary factors,  such that the number of the elementary factors of an element $\pi\in S_{n}$ equals to the size of the descent set of $\pi$. We apply the standard $OGS$ elementary factorization of $\pi$ to introduce an algorithm for the standard $OGS$ canonical form and the descent set of the inverse element $\pi^{-1}$.  We also give a new explicit formula for the Coxeter length of a permutation in the symmetric group $S_n$ by using the standard $OGS$ canonical form, and the standard $OGS$ elementary factorization.

\section{$OGS$ Canonical forms and exchange laws for $Dih(A)$}\label{dih}

In this section we show an $OGS$ canonical form, with very simple exchange laws,  for the dihedral groups, a very important family of non-abelian groups. Then, we show connections between  the mentioned $OGS$ canonical form and the presentation in Coxeter generators of the two-generated Coxeter groups, which are dihedral groups.

\begin{definition}\label{dihed}
Let $A$ be an abelian group. The map $\phi: A\rightarrow A$, such that $\phi(a)=a^{-1}$ is an automorphism of $A$.   Then, we define $Dih(A)$ to be the dihedral group of order $2|A|$, as an extension of the group $A$ by an involution $b$, where $b(a)=a^{-1}$, for every $a\in A$.
\end{definition}
Obviously, the relations of $Dih(A)$ are the relations of $A$ and conjugation  of the elements of $A$ to their inverse by an involution $b$, i.e., $b^{-1}\cdot a\cdot b=a^{-1}$ for every $a\in A$, which is equivalent to: $b\cdot a=a^{-1}\cdot b$, for every $a\in A$. Thus,
every element of $Dih(A)$ has a unique presentation in the following canonical form
$$g=a_{1}^{i_{1}}\cdot a_{2}^{i_{2}}\cdots a_{n}^{i_{n}}\cdot b^{j}$$
where, $i_{1}, i_{2}, \ldots, i_{n}$ are $n$ integers such that for  $1\leq k\leq n$, $0\leq i_{k}<|a_{k}|$, where $a_{k}$ has a finite order of $|a_{k}|$ in $A$, and $i_{k}\in \mathbb{Z}$, where $a_{k}$ has infinite order in $A$, and $0\leq j<2$. By Definition \ref{ogs}, the ordered sequence $\langle a_1, a_2, \ldots, a_n, b\rangle$ is an $OGS$ for $G=Dih(A)$.
The relation $b\cdot a=a^{-1}\cdot b$ for every $a\in A$ implies exchange laws of the form $b\cdot a_{k}^{i_{k}}=a_{k}^{-i_{k}}\cdot b$. Therefore,
$$(i_{1}, i_{2}, \ldots i_{n}, j)+(p_{1}, p_{2},\ldots p_{n}, q)=(i_{1}+(-1)^{j}\cdot p_{1}, i_{2}+(-1)^{j}\cdot p_{2}, \ldots, i_{n}+(-1)^{j}\cdot p_{n}, j+q),$$
where, the operation  "$+$" in $i_{k}+(-1)^{j}\cdot p_{k}$ is the group operation in the additive group of $\mathbb{Z}_{r}$, and $j+q$ is the group operation in the additive group of  $\mathbb{Z}_2$.
\begin{proposition}\label{dihedral-order}
 Every ordered sequence of a form  $$\langle a_{\pi(1)}, \ldots, a_{\pi(w)}, b, a_{\pi(w+1)}, \ldots a_{\pi(n)}\rangle,$$
 where, $w$ is an arbitrary integer such that $1\leq w\leq n$, and $\pi$ is a permutation of the elements in the set of the $n$ integers  $\{1, 2, \ldots n\}$, forms an $OGS$ for $G=Dih(A)$, where the exchange laws of the given $OGS$ canonical form is the following:
 \begin{itemize}
 \item $a_{\pi(k)}\cdot a_{\pi(j)}=a_{\pi(j)}\cdot a_{\pi(k)}$, for $1\leq j,k,\leq n$ (i.e., commutative exchange laws);
 \item $b\cdot a_{\pi(k)}^{i_{\pi(k)}}=a_{\pi(k)}^{-i_{\pi(k)}}\cdot b$, for $1\leq \pi(k)\leq w$;
 \item $a_{\pi(k)}^{i_{\pi(k)}}\cdot b = b\cdot a_{\pi(k)}^{-i_{\pi(k)}}$, for $w+1\leq \pi(k)\leq n$.
 \end{itemize}
\end{proposition}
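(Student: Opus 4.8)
The plan is to reduce everything to the $OGS$ $\langle a_{1},a_{2},\ldots,a_{n},b\rangle$ already established after Definition \ref{dihed}, using only two structural facts in hand: the commutativity of $A$, and the defining relation $b\cdot a=a^{-1}\cdot b$ for every $a\in A$. I would first dispose of the exchange laws, which follow immediately. The commutative law $a_{\pi(k)}\cdot a_{\pi(j)}=a_{\pi(j)}\cdot a_{\pi(k)}$ holds because every $a_{i}$ lies in the abelian group $A$. For the laws involving $b$, the relation $b\cdot a=a^{-1}\cdot b$ yields $b\cdot a_{\pi(k)}^{i_{\pi(k)}}=a_{\pi(k)}^{-i_{\pi(k)}}\cdot b$ by induction on the exponent, and rearranging gives the equivalent identity $a_{\pi(k)}^{i_{\pi(k)}}\cdot b=b\cdot a_{\pi(k)}^{-i_{\pi(k)}}$. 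The only content of the two separate bullets is which of these equivalent forms is the canonical reordering: for a generator sitting before $b$ (positions $1,\ldots,w$) the first form moves $b$ rightward into place, while for a generator sitting after $b$ (positions $w+1,\ldots,n$) the second form moves $b$ leftward into place.

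Next I would establish existence of the canonical form. Given $g\in G$, write it in the known $OGS$ as $g=a_{1}^{c_{1}}\cdots a_{n}^{c_{n}}\cdot b^{j}$. Applying the commutative exchange law, I reorder the abelian part according to $\pi$, obtaining $g=a_{\pi(1)}^{c_{\pi(1)}}\cdots a_{\pi(n)}^{c_{\pi(n)}}\cdot b^{j}$. If $j=0$ the trivial factor $b^{0}$ may simply be inserted between positions $w$ and $w+1$. If $j=1$, I transport the single $b$ leftward across the block $a_{\pi(w+1)}^{c_{\pi(w+1)}}\cdots a_{\pi(n)}^{c_{\pi(n)}}$ by repeated use of $a_{\pi(k)}^{c}\cdot b=b\cdot a_{\pi(k)}^{-c}$, which negates each of these exponents and places $b$ in position $w+1$; reducing each exponent modulo the order of its generator then puts $g$ into the asserted form.

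Finally I would prove uniqueness, which is the step that needs genuine care. Suppose two such expressions represent the same element. Projecting onto the quotient $G/A\cong\mathbb{Z}_{2}$, under which every $a_{i}$ maps to the identity, forces the two values of $j$ to coincide. With $j$ fixed, I run the existence procedure in reverse: transporting $b$ back to the right end and re-ordering converts each expression into a standard presentation $a_{1}^{\ast}\cdots a_{n}^{\ast}\cdot b^{j}$, whence uniqueness of the $OGS$ $\langle a_{1},\ldots,a_{n},b\rangle$ forces all exponents to agree, and unwinding the transport shows the original tuples coincide. The one point to watch is that the exponent negations introduced as $b$ crosses the post-$b$ block are bijections of $\mathbb{Z}_{|a_{\pi(k)}|}$ (respectively of $\mathbb{Z}$), so distinct tuples cannot collapse; this is precisely what lets uniqueness survive the reordering. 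In the finite case one may instead note that the number of admissible tuples is $2|A|=|G|$, so existence alone forces uniqueness by counting.
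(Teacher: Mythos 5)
Your proposal is correct and takes what is essentially the paper's (only sketched) approach: the paper's entire proof is the one-line assertion that the claim ``comes directly by the definition of $G=Dih(A)$ as an extension of the abelian group $A$ by an involution $b$.'' Your writeup simply supplies the details that assertion hides --- deriving the exchange laws from $b\cdot a=a^{-1}\cdot b$, obtaining existence by transporting $b$ across the post-$b$ block, and securing uniqueness via the quotient $G/A\cong\mathbb{Z}_{2}$ together with the observation that the exponent negations are bijections of $\mathbb{Z}_{|a_{\pi(k)}|}$ (or of $\mathbb{Z}$) --- and is a sound, complete version of the argument the paper leaves implicit.
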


\begin{proof}
The proof comes directly by the definition of $G=Dih(A)$ as an extension of the abelian group $A$ by an involution $b$, according to the automorphism $b(a)=a^{-1}$, for every $a\in A$.
\end{proof}

Now, we show the $OGS$ canonical form of the inverse element $g^{-1}$ for a given element $g\in Dih(A)$.

\begin{proposition}\label{inverse-dih}
Let $G=Dih(A)$ for an abelian group $A$, where $A$ has a basis \\ $\langle a_1, a_2, \ldots, a_n\rangle$ by the fundamental theorem of abelian groups. Consider the $OGS$ canonical form with $OGS(G)=\langle a_{\pi(1)}, \ldots, a_{\pi(w)}, b, a_{\pi(w+1)}, \ldots a_{\pi(n)}\rangle$, where $w$ is an arbitrary integer such that $1\leq w\leq n$, and $\pi$ is a permutation of the elements in the set of the $n$ integers  $[n]=\{1, 2, \ldots, n\}$.
 \begin{itemize}
 \item If $g\in A$,  i.e., $$g=a_{\pi(1)}^{i_{\pi(1)}}\cdot a_{\pi(2)}^{i_{\pi(2)}}\cdots a_{\pi(w)}^{i_{\pi(w)}}\cdot a_{\pi(w+1)}^{i_{\pi(r+1)}}\cdots a_{\pi(n)}^{i_{\pi(n)}}$$ then
     $$g^{-1}=a_{\pi(1)}^{|a_{\pi(1)}|-i_{\pi(1)}}\cdot a_{\pi(2)}^{|a_{\pi(2)}|-i_{\pi(2)}}\cdots a_{\pi(w)}^{|a_{\pi(w)}|-i_{\pi(w)}}\cdot a_{\pi(w+1)}^{|a_{\pi(w+1)}|-i_{\pi(r+1)}}\cdots a_{\pi(n)}^{|a_{\pi(n)}|-i_{\pi(n)}};$$
 \item If $g\notin A$, i.e., $$g=a_{\pi(1)}^{i_{\pi(1)}}\cdot a_{\pi(2)}^{i_{\pi(2)}}\cdots a_{\pi(w)}^{i_{\pi(w)}}\cdot b\cdot a_{\pi(w+1)}^{i_{\pi(r+1)}}\cdots a_{\pi(n)}^{i_{\pi(n)}},$$  then $g^{-1}=g$ (i.e., $g$ is an involution).
\end{itemize}
\end{proposition}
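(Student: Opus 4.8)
The plan is to split along the two cases of the statement and in each to reduce everything to the two ingredients already packaged in Proposition \ref{dihedral-order}: the commutativity of $A$, and the conjugation relation $b\cdot a = a^{-1}\cdot b$ (equivalently $a\cdot b = b\cdot a^{-1}$) for every $a\in A$, together with $b^{2}=e$. Both cases are then pure rewriting, so the work is to carry the $b$ past the $A$-factors while keeping track of the canonical range of exponents.

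For the first case, where $g=\prod_{k=1}^{n}a_{\pi(k)}^{i_{\pi(k)}}\in A$, I would argue purely inside the abelian group $A$: the inverse is $\prod_{k=1}^{n}a_{\pi(k)}^{-i_{\pi(k)}}$, and since the $a_{\pi(k)}$ commute the factors can be left in the prescribed order. For each finite-order generator one has $a_{\pi(k)}^{-i_{\pi(k)}}=a_{\pi(k)}^{|a_{\pi(k)}|-i_{\pi(k)}}$, so $|a_{\pi(k)}|-i_{\pi(k)}$ (read modulo $|a_{\pi(k)}|$, i.e. as $0$ when $i_{\pi(k)}=0$) is the canonical exponent; for an infinite-order generator the exponent is simply $-i_{\pi(k)}\in\mathbb{Z}$. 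This is exactly the claimed form.

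For the second case, write $g=\alpha\cdot b\cdot\beta$ with $\alpha=a_{\pi(1)}^{i_{\pi(1)}}\cdots a_{\pi(w)}^{i_{\pi(w)}}\in A$ and $\beta=a_{\pi(w+1)}^{i_{\pi(w+1)}}\cdots a_{\pi(n)}^{i_{\pi(n)}}\in A$, and I would show directly that $g$ is an involution by computing
$$g^{-1}=\beta^{-1}\cdot b^{-1}\cdot\alpha^{-1}=\beta^{-1}\cdot b\cdot\alpha^{-1}=\beta^{-1}\cdot\alpha\cdot b=\alpha\cdot\beta^{-1}\cdot b=\alpha\cdot b\cdot\beta=g,$$
where the steps use $b^{-1}=b$, then $b\cdot\alpha^{-1}=\alpha\cdot b$, then commutativity of $\alpha$ and $\beta^{-1}$ in $A$, and finally $\beta^{-1}\cdot b=b\cdot\beta$. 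Since the last expression is already the canonical form of $g$, the exponents of $g^{-1}$ coincide with those of $g$. The computation is entirely mechanical, so no step is a genuine obstacle; the only point needing care is the bookkeeping of the canonical exponent range in the first case (the convention that $|a_{\pi(k)}|-i_{\pi(k)}$ is taken modulo $|a_{\pi(k)}|$, together with the separate treatment of infinite-order generators). An equivalent and perhaps more transparent route for the second case is to verify $g^{2}=e$ outright: using commutativity to write $\beta\alpha=\alpha\beta$ and collapsing the two copies of $b$ via $b(\alpha\beta)b=(\alpha\beta)^{-1}$ reduces $g^{2}$ to $\alpha(\alpha\beta)^{-1}\beta=e$.
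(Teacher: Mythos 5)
Your proposal is correct and matches the paper's argument in substance: the first case is handled by the fundamental theorem of abelian groups exactly as in the paper, and your second case uses the same ingredients ($b^{-1}=b$, the conjugation relation, and commutativity of $A$), differing only in that you compute $g^{-1}$ directly while the paper verifies $\left(a'\cdot b\cdot a''\right)^{2}=1$ — the alternative you yourself note at the end, which is precisely the paper's computation.
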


\begin{proof}
If $g\in A$, then we conclude $g^{-1}$ by the fundamental theorem of abelian groups. Therefore, assume $g\notin A$, which means the $OGS$ canonical form of $g$ involves $b$. Notice, the relation $b\cdot a=a^{-1}\cdot b$, for every $a\in A$, applies:
\begin{align*}
\left(a'\cdot b\cdot a''\right)^{2} &= a'\cdot (b\cdot a''\cdot a')\cdot b\cdot a'' \\ &= a'\cdot \left(a''\right)^{-1}\cdot \left(a'\right)^{-1}\cdot b^{2}\cdot a'' \\ &=a'\cdot \left(a''\right)^{-1}\cdot \left(a'\right)^{-1}\cdot a'' \\ &= a'\cdot \left(a'\right)^{-1}\cdot \left(a''\right)^{-1}\cdot a'' \\ &= 1,
\end{align*}
for every $a', a''\in A$.
Thus, $$\left(a'\cdot b\cdot a''\right)^{-1}=a'\cdot b\cdot a''$$
for every $a', a''\in A$.
\end{proof}

The following example shows how we can multiply two arbitrary elements in $Dih(A)$, which are presented in $OGS$ canonical form.

\begin{example}
Let $A$ be $\mathbb{Z}_{9}\bigoplus\mathbb{Z}_{3}$, where the elements $a_{1}$ and $a_{2}$ generates $A$, such that $|a_{1}|=9$, $|a_{2}|=3$, and every element in $A$ has a unique presentation in the canonical form $a_{1}^{i_{1}}\cdot a_{2}^{i_{2}}$, where $0\leq i_{1}<9$, and $0\leq i_{2}<3$. Now, consider the group $Dih(\mathbb{Z}_{9}\bigoplus\mathbb{Z}_{3})$, which is the extension of $\mathbb{Z}_{9}\bigoplus\mathbb{Z}_{3}$ by an involution $b$ such that $b\cdot a=a^{-1}\cdot b$, for every $a\in A$. Then, every element of $Dih(\mathbb{Z}_{9}\bigoplus\mathbb{Z}_{3})$ ha a unique presentation in a canonical form $a_{1}^{i_{1}}\cdot a_{2}^{i_{2}}\cdot b^{j}$, where $0\leq i_{1}<9$, $0\leq i_{2}<3$, and $0\leq j<2$, with the exchange laws:
\begin{itemize}
\item $a_{2}^{i_{2}}\cdot a_{1}^{i_{1}}=a_{1}^{i_{1}}\cdot a_{2}^{i_{2}}$;
\item $b\cdot a_{1}^{i_{1}}=a_{1}^{9-i_{1}}\cdot b$;
\item $b\cdot a_{2}^{i_{2}}=a_{2}^{3-i_{2}}\cdot b$.
\end{itemize}

Thus, for example let $x$ and $y$ be the following elements: $x=a_{1}^{4}\cdot a_{2}^{2}\cdot b$, and $y=a_{1}^{7}\cdot a_{2}\cdot b$, then by the exchange laws the following holds:
$$x\cdot y=a_{1}^{4}\cdot a_{2}^{2}\cdot b\cdot a_{1}^{7}\cdot a_{2}\cdot b = a_{1}^{4}\cdot a_{2}^{2}\cdot a_{1}^{9-7}\cdot a_{2}^{3-2}\cdot b\cdot b = a_{1}^{6}\cdot a_{2}^{3}\cdot b^{2} = a_{1}^{4}.$$

\end{example}

\subsection{The Coxeter group $I_{2}(m)$}

There is a special interest in the family dihedral groups $Dih(A)$, where $A$ is a cyclic group. Let $A$ be a cyclic group of order $m$ ($m$ might be $\infty$), then $Dih(A)$ is a two-generated Coxeter group $I_{2}(m)$, of order $2m$, where $m$ is finite, or order $\infty$, in case $m=\infty$. We recall the Coxeter presentation of $I_{2}(m)$, and some basic properties of it, as described in \cite{BB}:
\begin{itemize}
\item $I_{2}(m)=\langle s_1, s_2 | s_1^{2}=s_2^{2}=1, (s_1\cdot s_2)^{m}=1\rangle$ in case of finite $m$;
\item $I_{2}(\infty)=\langle s_1, s_2 | s_1^{2}=s_2^{2}=1\rangle$, i.e., $I_{2}(\infty)=\mathbb{Z}_{2}*\mathbb{Z}_{2}$.
\end{itemize}
Now, define $b$ to be $s_1$, and define $a$ to be $s_1\cdot s_2$. Then, $\langle b, a\rangle$ is an $OGS$ for $I_{2}(m)$ with the exchange law
$a\cdot b=b\cdot a^{m-1}$, in case of finite $m$, or $a\cdot b=b\cdot a^{-1}$ in case of $m=\infty$.
Now, Consider the presentation of the elements of $I_{2}(m)$ in Coxeter generators.
\begin{proposition}\label{cox-ogs}
Let $G=I_{2}(m)$, with the Coxeter generators $s_1, s_2$, and let $b=s_1$, $a=s_1\cdot s_2$, then the following holds:
\begin{itemize}
\item $b=s_1$;
\item $b\cdot a=s_2$;
\item $b\cdot a^{i}=s_2\cdot s_1\cdot s_2\cdots s_1\cdot s_2=(s_2\cdot s_1)^{i-1}\cdot s_2$, for every $1<i\leq\frac{m+1}{2}$ in case of finite $m$, and for every $i>1$ in case of infinite $m$;
\item $b\cdot a^{i}=s_1\cdot s_2\cdot s_1\cdots s_2\cdot s_1=(s_1\cdot s_2)^{m-i}\cdot s_1$, for every $\frac{m+1}{2}\leq i<m$ in case of finite $m$, and for every $i<0$ in case of infinite $m$;
\item $a^{i}=s_1\cdot s_2\cdot s_1\cdots s_1\cdot s_2=(s_1\cdot s_2)^{i}$, for every $0<i\leq \frac{m}{2}$ in case of finite $m$, and for every $i>0$ in case of infinite $m$;
\item $a^{i}=s_2\cdot s_1\cdot s_2\cdots s_2\cdot s_1=(s_2\cdot s_1)^{m-i}$, for every $\frac{m}{2}\leq i<m$ in case of finite $m$, and for every $i<0$ in case of infinite $m$.
\end{itemize}
\end{proposition}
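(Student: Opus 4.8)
The plan is to treat the six identities as elementary computations in $I_{2}(m)$, each reducible to the defining relations $s_{1}^{2}=s_{2}^{2}=1$ and $(s_{1}\cdot s_{2})^{m}=1$, and then to explain why the displayed ranges of the exponent $i$ are precisely those for which the right-hand sides are \emph{reduced} Coxeter words.

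First I would dispose of the two purely definitional identities. Since $a=s_{1}\cdot s_{2}$, the equality $a^{i}=(s_{1}\cdot s_{2})^{i}$ is immediate, and $b\cdot a=s_{1}\cdot s_{1}\cdot s_{2}=s_{2}$ follows from $s_{1}^{2}=1$. For the general $b\cdot a^{i}$ I would compute $b\cdot a^{i}=s_{1}\cdot(s_{1}\cdot s_{2})^{i}=s_{2}\cdot(s_{1}\cdot s_{2})^{i-1}$, again using $s_{1}^{2}=1$, and then observe that the alternating word $s_{2}\cdot(s_{1}\cdot s_{2})^{i-1}$ of odd length $2i-1$ is letter-for-letter the same word as $(s_{2}\cdot s_{1})^{i-1}\cdot s_{2}$; this gives the third bullet. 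The two remaining ``dual'' forms, carrying the exponent $m-i$, come from the single extra relation $(s_{1}\cdot s_{2})^{m}=1$, equivalently $a^{m}=1$, which lets me rewrite $a^{i}=a^{-(m-i)}=(s_{2}\cdot s_{1})^{m-i}$ and likewise $b\cdot a^{i}=s_{1}\cdot(s_{2}\cdot s_{1})^{m-i}=(s_{1}\cdot s_{2})^{m-i}\cdot s_{1}$, yielding the fourth and sixth bullets. In the infinite case the relation $a^{m}=1$ is absent, which is exactly why the second family of identities is instead indexed by negative $i$: there $a^{-1}=s_{2}\cdot s_{1}$ holds freely, so $a^{-k}=(s_{2}\cdot s_{1})^{k}$ and $b\cdot a^{-k}=(s_{1}\cdot s_{2})^{k}\cdot s_{1}$ need no finiteness of $m$.

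The real content, and the step I expect to require the most care, is the choice of ranges, since the cut at $i=\frac{m}{2}$ (respectively $i=\frac{m+1}{2}$) is dictated by reducedness rather than by the algebra. Here I would invoke the standard description of Coxeter length in a dihedral group: every nonidentity element of $I_{2}(m)$ has a unique reduced alternating expression in $s_{1},s_{2}$ of length at most $m$, save for the longest element $w_{0}$ of length $m$, which has exactly two such expressions. Consequently $(s_{1}\cdot s_{2})^{i}$ is reduced of length $2i$ precisely while $2i\le m$, and $(s_{2}\cdot s_{1})^{m-i}$ is reduced precisely while $2(m-i)\le m$; the two descriptions of $a^{i}$ therefore meet exactly at $i=\frac{m}{2}$ (possible only for even $m$) and together cover $0<i<m$. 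The same bookkeeping for odd-length words handles $b\cdot a^{i}$: the words $(s_{2}\cdot s_{1})^{i-1}\cdot s_{2}$ and $(s_{1}\cdot s_{2})^{m-i}\cdot s_{1}$ are reduced of lengths $2i-1$ and $2(m-i)+1$, and they meet at $i=\frac{m+1}{2}$ (possible only for odd $m$). In both cases the overlap is the assertion that $w_{0}$ admits two reduced expressions, namely $(s_{1}\cdot s_{2})^{m/2}=(s_{2}\cdot s_{1})^{m/2}$ for even $m$ and $(s_{1}\cdot s_{2})^{(m-1)/2}\cdot s_{1}=(s_{2}\cdot s_{1})^{(m-1)/2}\cdot s_{2}$ for odd $m$, each of which is itself just a rewriting of $(s_{1}\cdot s_{2})^{m}=1$. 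The main obstacle is thus not any isolated computation but the careful verification that these two families of reduced words, indexed by the stated ranges, enumerate each element exactly once and agree at the boundary.
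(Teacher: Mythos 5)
Your proof is correct. The paper in fact states this proposition without supplying any proof (the proof block that follows it in the source belongs to Proposition \ref{length-cox-canon}), treating the identities as direct computations from the defining relations; your argument --- reducing each bullet to $s_1^2=s_2^2=1$ and $(s_1\cdot s_2)^m=1$, interpreting the $i<0$ cases for $m=\infty$ as $a^{-k}=(s_2\cdot s_1)^k$ and $b\cdot a^{-k}=(s_1\cdot s_2)^k\cdot s_1$, and then justifying the exponent ranges via the standard description of reduced alternating words in a dihedral group, with the two expressions agreeing exactly at the longest element when $i=\frac{m}{2}$ or $i=\frac{m+1}{2}$ --- supplies precisely the verification the paper leaves implicit.
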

\begin{proposition}\label{length-cox-canon}
Let $G$ be $I_{2}(m)$ for a finite $m$, then the Coxeter length is equidistributed with the length (sum of the exponents) in the $OGS$ canonical form by \\ $OGS(I_{2}(m))=\langle b, a\rangle $.
\end{proposition}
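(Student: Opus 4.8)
The plan is to compute the two length distributions explicitly and verify that they produce the same histogram over the $2m$ elements of $G=I_{2}(m)$. Since $\langle b, a\rangle$ is an $OGS$ for $I_{2}(m)$, every element has a unique canonical form $b^{j}\cdot a^{i}$ with $0\leq j<2$ and $0\leq i<m$, and its $OGS$ length (sum of exponents) is simply $i+j$. So first I would count, for each integer $k$ with $0\leq k\leq m$, the number of admissible pairs $(i,j)$ with $i+j=k$. For $k=0$ the only solution is $(0,0)$; for $1\leq k\leq m-1$ there are exactly the two solutions $(k,0)$ and $(k-1,1)$, both admissible; and for $k=m$ only $(m-1,1)$ survives, since $(m,0)$ violates $i<m$. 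Hence the $OGS$ length takes the value $0$ once, each value $1,\ldots,m-1$ exactly twice, and the value $m$ once.

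Next I would record the Coxeter length distribution. This can be read directly off Proposition \ref{cox-ogs}, which expresses each canonical element as an explicit alternating word in $s_1,s_2$: the elements $a^{i}$ have even Coxeter length and the elements $b\cdot a^{i}$ have odd Coxeter length, and within each parity class the length first increases and then decreases as $i$ runs from $0$ to $m-1$. Equivalently, it is a standard fact (see \cite{BB}) that $I_{2}(m)$ has a unique element of length $0$, exactly two elements of each length $k$ with $1\leq k\leq m-1$ (the two alternating words of length $k$, one beginning with $s_1$ and one with $s_2$), and a unique longest element of length $m$. Thus the Coxeter length also takes the value $0$ once, each of $1,\ldots,m-1$ exactly twice, and $m$ once.

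Comparing the two counts termwise gives equality of the numbers of elements of each length $0\leq k\leq m$, which is precisely the asserted equidistribution. I would stress that the two statistics are genuinely different as functions on the group, so this is a statement about distributions and not an identity of the two length functions: for example $a^{i}=(s_1\cdot s_2)^{i}$ has Coxeter length $2i$ for small $i$ while its $OGS$ length is only $i$, and the reflection $b\cdot a$ has Coxeter length $1$ but $OGS$ length $2$. The content is that these discrepancies cancel exactly across the group.

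The main obstacle is bookkeeping rather than conceptual. If one derives the Coxeter histogram from Proposition \ref{cox-ogs} element by element, one must handle the boundary indices ($i=0$, $i$ near $m/2$, and the parity of $m$) carefully, and confirm that the "folding" at $i\approx m/2$ yields exactly two elements of each intermediate length while the extreme lengths $0$ and $m$ are attained only once. Counting by length value $k$, as above, rather than element by element, sidesteps almost all of this case analysis, so I would organize the final write-up that way.
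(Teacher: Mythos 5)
Your proposal is correct and follows essentially the same route as the paper: both compute the histogram of the $OGS$ length over the canonical forms $b^{j}\cdot a^{i}$ (one element of length $0$, two of each length $1,\ldots,m-1$, one of length $m$) and match it against the standard Coxeter-length distribution of $I_{2}(m)$ quoted from \cite{BB}. Your added remark that the two statistics differ as functions on the group and only their distributions coincide is accurate and a worthwhile clarification, but it does not change the argument.
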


\begin{proof}
Let $G=I_{2}(m)$, then the following hold:
\begin{itemize}
\item By \cite{BB}, there are exactly two elements with Coxeter length $i$ for every $1\leq i\leq m-1$. Namely $(s_1\cdot s_2)^{\frac{i}{2}}$, and $(s_2\cdot s_1)^{\frac{i}{2}}$ for an even $i$. $(s_1\cdot s_2)^{\frac{i-1}{2}}\cdot s_1$ and  $(s_2\cdot s_1)^{\frac{i-1}{2}}\cdot s_2$  for an odd $i$;
\item By \cite{BB}, there is exactly one element with Coxeter length $0$ and Coxeter length $m$. The element with Coxeter length $m$ is $(s_1\cdot s_2)^{\frac{m}{2}}= (s_2\cdot s_1)^{\frac{m}{2}}$ in case of even $m$, and $(s_1\cdot s_2)^{\frac{m-1}{2}}\cdot s_1= (s_2\cdot s_1)^{\frac{m-1}{2}}\cdot s_2$ in case of odd $m$;
\item The presentation of the elements in $I_{2}(m)$  in the canonical form according to the sequence $\{b, a\}$ is $b^{j}\cdot a^{i}$, such that $0\leq j\leq 1$, $0\leq i\leq m-1$. Therefore, there are two elements of length $i$ for every $1\leq i\leq m-1$, namely $a^{i}$, and $b\cdot a^{i-1}$.  The identity is the only element of length $0$,  and $b\cdot a^{m-1}$ is the only one element of length $m$.
\end{itemize}
\end{proof}

In Propositions \ref{cox-ogs} and \ref{length-cox-canon}, we show interesting connections between two presentations of the two-generated Coxeter group, $I_{2}(m)$:
\begin{itemize}
\item The presentation in Coxeter generators;
\item The  $OGS$ canonical presentation for $OGS(I_{2}(m))=\langle b, a\rangle$, where $b=s_1$ and $a=s_1\cdot s_2$.
\end{itemize}
Therefore, we call $OGS(I_{2}(m))=\langle b, a\rangle$, the standard $OGS$ of $I_{2}(m)$.

\begin{remark}\label{symmetry-dihedral}
The  geometric meaning of the group $I_{2}(m)=Dih(\mathbb{Z}_{m})$   is the symmetry group of a regular $m$-sided polygon, where the $m$ elements of $\mathbb{Z}_{m}$ present the rotations of the polygon, and the $m$ elements of the form $b\cdot a^{i}$ (where, $0\leq i<m$) present the $m$ reflections of the polygon.  The case of $m=3$ is the non-abelian group which has the smallest order, and in this case $I_{2}(3)=Dih(\mathbb{Z}_{3})$ is the symmetric group on $3$ elements, which is denoted by $S_3$.
\end{remark}

\section{$OGS$ canonical forms and exchange laws for the symmetric group $S_n$}\label{s-n}

In the last section we have shown interesting connections between the presentation in the standard $OGS$ canonical form, and the presentation in Coxeter generators for every two-generated Coxeter group $I_{2}(m)$. These connections motivate us to look at a generalization for Coxeter groups with more than two Coxeter generators.  The family $I_{2}(m)$ is the natural generalization of $I_{2}(3)$, changing the lace of the edge connecting the two vertices on the Coxeter graph to arbitrary lace, but fixing the number of vertices to two. The natural analogous generalization of $I_{2}(3)$ is the $A$-type Coxeter group with arbitrary $n$ generators, which we get by changing the number of vertices on the Coxeter graph, but keeping it connected and simply laced by connecting every two adjacent vertices in the Coxeter graph by a simply laced edge. Therefore, in this section we consider the $A$-type Coxeter groups, where the Coxeter graph is a line with arbitrary $n$ vertices, but the lace of the connecting edges is fixed to be simply laced for every two adjacent vertices in the Coxeter graph. Notice, the smallest non-abelian $I$-type Coxeter group $I_{2}(3)$ is the same to the smallest non-abelian $A$-type Coxeter group $A_{2}$, where the Coxeter graph is just two vertices, which are  connected by a simply-laced edge. The $(n-1)$-generated $A$-type Coxeter group can considered as the symmetric group on $n$ elements, $S_n$. Adin and Roichman  \cite{AR} have introduced an $OGS$ canonical form for the family of Coxeter groups $S_n$ and $B_n$ s.t.; the length according to that form is equi-distributed with the Coxeter length (as has been shown for $I_{2}(m)$). They also give a combinatorial interpretation for the length of an element in $S_n$ or in $B_n$ according to that $OGS$ canonical form, which is called flag-major-index and motivated by MacMahon's theorem for the symmetric groups \cite{Mac}. In this section we find the exchange laws for that $OGS$ canonical form  of $S_n$, and for the dual $OGS$ canonical form. We also show interesting properties of the exchange laws, which allows for an efficient multiplication of elements in $S_n$. We give a new explicit formula for the Coxeter length of an element in $S_n$ using the $OGS$ canonical form. Finally, we find a formula for the $OGS$ canonical form of the inverse element of an arbitrary element of $S_n$.

We start with some basic definitions concerning the symmetric group $S_n$.

\begin{definition}\label{sn}
Let $S_n$ be the symmetric group on $n$ elements, then :
\begin{itemize}
\item The symmetric group $S_n$ is an $(n-1)$-generated simply-laced Coxeter group which has the presentation of: $$\langle s_1, s_2, \ldots, s_{n-1} | s_i^{2}=1, ~~ (s_i\cdot s_{i+1})^{3}=1, ~~(s_i\cdot s_j)^2=1 ~~for ~~|i-j|\geq 2\rangle;$$
\item The group $S_n$ can be considered as the permutation group on $n$ elements. A permutation $\pi\in S_n$ is denoted by $[\pi(1);\pi(2);\ldots;\pi(n)]$ (i.e., $\pi=[2;4;1;3]$ is a permutation in $S_{4}$ which satisfies $\pi(1)=2$, $\pi(2)=4$, $\pi(3)=1$, and $\pi(4)=3$);
\item Every permutation $\pi\in S_n$ can be presented in a cyclic notation, as a product of disjoint cycles of the form $(i_1, ~i_2, ~\ldots, ~i_m)$, which means $\pi(i_{k})=i_{k+1}$, for $1\leq k\leq m-1$, and $\pi(i_{m})=i_{1}$
    (i.e., The cyclic notation of $\pi=[3;4;1;5;2]$ in $S_5$, is $(1, ~3)(2, ~4, ~5)$);
\item The Coxeter generator $s_i$ can be considered the permutation which exchanges the element $i$ with the element $i+1$, i.e., the transposition $(i, i+1)$;
\item We consider multiplication of permutations in left to right order; i.e., for every $\pi_1, \pi_2\in S_n$, $\pi_1\cdot \pi_2 (i)=\pi_2(j)$, where $\pi_1(i)=j$ (in contrary to the notation in \cite{BB}, \cite{AR} where, Brenti, Bjorner, Adin, Roichman and other people have considered right to left multiplication of permutations);
\item For every permutation $\pi\in S_n$, the Coxeter length $\ell(\pi)$ is the number of inversions in $\pi$, i.e., the number of different pairs $i, j$, s. t. $i<j$ and $\pi(i)>\pi(j)$;
\item For every permutation $\pi\in S_n$, the set of the locations of the descents is defined to be $$des\left(\pi\right)=\{1\leq i\leq n-1 | \pi(i)>\pi(i+1)\},$$ and $$i\in des\left(\pi\right) ~~if ~and ~only ~if ~~\ell(s_i\cdot \pi)<\ell(\pi)$$ (i.e., $i$ is a descent of $\pi$ if and only if multiplying $\pi$ by $s_i$ in the left side shortens the Coxeter length of the element.);
\item For every permutation $\pi\in S_n$, the major-index is defined to be  $$maj\left(\pi\right)=\sum_{\pi(i)>\pi(i+1)}i$$ (i.e., major-index is the sum of the locations of the descents of $\pi$.).
\end{itemize}
\end{definition}

\subsection{The standard and the dual-standard $OGS$ canonical forms and the exchange laws}

\begin{theorem}\label{canonical-sn}
Let $S_n$ be the symmetric group on $n$ elements. For every $2\leq m\leq n$, define $t_{m}$ to be the product $\prod_{j=1}^{m-1}s_{j}$. The element $t_{m}$ is the permutation $[m;1;\ldots;m-1]$, which is the $m$-cycle $(m, ~m-1, ~\ldots, ~1)$ in the cyclic notation of the permutation. Then, the elements $t_{n}, t_{n-1}, \ldots, t_{2}$ generates $S_n$, and every element of $S_n$ has a unique presentation in each one of the following $OGS$ canonical forms:
\begin{enumerate}
\item $t_{2}^{i_{2}}\cdot t_{3}^{i_{3}}\cdots t_{n}^{i_{n}}$, where $0\leq i_{k}<k$ for $2\leq k\leq n$;
\item  $t_{n}^{i_{n}}\cdot t_{n-1}^{i_{n-1}}\cdots t_{2}^{i_{2}}$, where $0\leq i_{k}<k$ for $2\leq k\leq n$.
\end{enumerate}
\end{theorem}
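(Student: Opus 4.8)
The plan is to argue by induction on $n$, exploiting the tower of stabilizer subgroups $S_{2}\subset S_{3}\subset\cdots\subset S_{n}$, where we regard $S_{m}$ as the subgroup of $S_{n}$ permuting $\{1,\ldots,m\}$ and fixing $m+1,\ldots,n$. The base case $n=2$ is immediate, since $S_{2}=\{1,t_{2}\}$ with $t_{2}=s_{1}$ of order $2$, and the two admissible exponents $i_{2}\in\{0,1\}$ produce the two elements.

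First I would record the two facts about $t_{m}$ that drive everything. Since $t_{m}$ is the $m$-cycle $(m,m-1,\ldots,1)$ (as stated), it has order exactly $m$, so the powers $t_{m}^{0},t_{m}^{1},\ldots,t_{m}^{m-1}$ are pairwise distinct; and for $m<n$ the permutation $t_{m}$ fixes $n$, hence $t_{2},\ldots,t_{n-1}$ all lie in $S_{n-1}$ and there coincide with the generators defined intrinsically inside $S_{n-1}$. By the induction hypothesis these intrinsic generators already satisfy both canonical forms for $S_{n-1}$; in particular $\langle t_{2},\ldots,t_{n-1}\rangle=S_{n-1}$, and every $h\in S_{n-1}$ is uniquely $t_{2}^{i_{2}}\cdots t_{n-1}^{i_{n-1}}$ and uniquely $t_{n-1}^{i_{n-1}}\cdots t_{2}^{i_{2}}$.

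The heart of the argument is the following coset lemma: the powers $t_{n}^{0},t_{n}^{1},\ldots,t_{n}^{n-1}$ form simultaneously a complete set of right coset representatives and a complete set of left coset representatives of $S_{n-1}$ in $S_{n}$. To see this, note that $S_{n-1}$ is exactly the stabilizer of the point $n$, and since $t_{n}$ is an $n$-cycle sending $n\mapsto n-1\mapsto\cdots\mapsto 1\mapsto n$, we have $t_{n}^{j}(n)=n$ if and only if $n\mid j$; equivalently $t_{n}^{j}\in S_{n-1}$ precisely when $n\mid j$. Consequently, for $0\leq i,i'<n$ the equality $S_{n-1}t_{n}^{i}=S_{n-1}t_{n}^{i'}$ (resp. $t_{n}^{i}S_{n-1}=t_{n}^{i'}S_{n-1}$) forces $t_{n}^{i-i'}\in S_{n-1}$ and hence $i=i'$; so the $n$ powers occupy $n$ distinct cosets, which exhausts them because $[S_{n}:S_{n-1}]=n$.

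The two canonical forms then drop out. For form (1), every $g\in S_{n}$ lies in a unique right coset $S_{n-1}t_{n}^{i_{n}}$ with $0\leq i_{n}<n$, so $g=h\cdot t_{n}^{i_{n}}$ for a unique $h\in S_{n-1}$; writing $h=t_{2}^{i_{2}}\cdots t_{n-1}^{i_{n-1}}$ uniquely via the induction hypothesis yields existence and uniqueness of $g=t_{2}^{i_{2}}\cdots t_{n}^{i_{n}}$. Form (2) is identical using left cosets: $g$ lies in a unique $t_{n}^{i_{n}}S_{n-1}$, so $g=t_{n}^{i_{n}}\cdot h'$ with $h'=t_{n-1}^{i_{n-1}}\cdots t_{2}^{i_{2}}$ unique. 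A counting check confirms consistency, since the number of admissible tuples is $\prod_{k=2}^{n}k=n!=|S_{n}|$, and the generation statement $\langle t_{2},\ldots,t_{n}\rangle=S_{n}$ follows because every element has been exhibited as such a product. The one place demanding care, and the main obstacle, is the coset lemma together with the bookkeeping that the length $n-1$ prefix (resp. suffix) is genuinely an arbitrary element of $S_{n-1}$ and that the intrinsic and ambient meanings of $t_{2},\ldots,t_{n-1}$ agree; once this identification is secured, the induction is routine.
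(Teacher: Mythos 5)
Your proposal is correct and follows essentially the same route as the paper: induction on $n$, identifying $S_{n-1}$ with the stabilizer of the point $n$ inside $S_n$, and showing that the powers $t_{n}^{0},\ldots,t_{n}^{n-1}$ form simultaneously complete sets of left and right coset representatives. In fact you supply an explicit justification of the coset lemma (via $t_{n}^{j}\in S_{n-1}$ iff $n\mid j$) that the paper only asserts as "easy to show."
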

\begin{proof}
The theorem has been stated, and has been used in \cite{AR}. We give a detailed proof of it.
The proof is by induction on $n$. For $n=2$, we have $S_{2}$ a cyclic group of order $2$, which is generated by $t_2$. Therefore, the theorem holds trivially. Now, assume by induction that the theorem holds for every $m$ such that $2\leq m\leq n-1$. Thus, for each $1\leq m\leq n-1$, every element in $S_{m}$ has a unique presentation in a form  $t_{m}^{i_{m}}\cdot t_{m-1}^{i_{m-1}}\cdots t_{2}^{i_{2}}$, where $0\leq i_{k}<k$ for $2\leq k\leq m$, and every element in $S_{m}$ has a unique presentation also in a form $t_{2}^{i_{2}}\cdot t_{3}^{i_{3}}\cdots t_{m}^{i_{m}}$, where $0\leq i_{k}<k$ for $2\leq k\leq m$. Now, let us consider the group $S_{n}$. There is a subgroup $\dot{S}_{n-1}$ of $S_{n}$ which contains all the elements of $S_{n}$ such that $n$ is a fixed point. It is easy to show that $\dot{S}_{n-1}$ is isomorphic to $S_{n-1}$. Then, by the induction hypothesis, every element in $\dot{S}_{n-1}$ has a unique presentation in a form $t_{n-1}^{i_{n-1}}\cdot t_{n-2}^{i_{n-2}}\cdots t_{2}^{i_{2}}$, where $0\leq i_{k}<k$ for $2\leq k\leq n-1$, and also in a form  $t_{2}^{i_{2}}\cdot t_{3}^{i_{3}}\cdots t_{n-1}^{i_{n-1}}$, where $0\leq i_{k}<k$ for $2\leq k\leq n-1$. The index $[S_{n}:\dot{S}_{n-1}]=n$, and it is easy to show that the $n$ elements $t_{n}^{i_{n}}$, where $0\leq i_{n}<n$ are both left and right cosset representatives of $\dot{S}_{n-1}$ in $S_{n}$. Thus, the theorem holds for $S_{n}$.
\end{proof}

\begin{remark}\label{order-important}

Contrary to the abelian groups and to the dihedral groups, in the case of $S_{n}$ for $n\geq 4$, the order in which the $n-1$  generators $t_{2}, t_{3}, \ldots, t_{n}$ can appear in the $OGS$  canonical form is important, and there is no every ordered  sequence $\langle t_{\pi(2)}, t_{\pi(3)}, \ldots t_{\pi(n)}\rangle$ forms an $OGS$ for $S_n$, where $\pi$ is an arbitrary  permutation of the elements in the set of the $n-1$ integers $\{2, 3, \ldots n\}$. For example, consider the goup $S_{4}$, which is generated by the elements $t_{2}, t_{3}, t_{4}$. Then, it is satisfied that  $t_{4}^{2}\cdot t_{2}=t_{3}\cdot t_{4}$. Thus, there is no unique presentations of the elements of $S_4$ in the form  $t_{3}^{i_{3}}\cdot t_{4}^{i_{4}}\cdot t_{2}^{i_{2}}$, where $0\leq i_{k}<k$. Therefore, the ordered sequence $\langle t_{3}, t_{4}, t_{2}\rangle$ does not form an $OGS$ for $S_n$..
\end{remark}

 As a conclusion  we consider the following two $OGS$ for $S_n$.
  \begin{itemize}
  \item The standard $OGS$ for $S_n$: $OGS(S_n)=\langle t_{2}, t_{3}, \ldots t_{n}\rangle$;
  \item The dual-standard $OGS$ for $S_n$: $OGS(S_n)=\langle t_{n}, t_{n-1}, \ldots t_{2}\rangle$.
  \end{itemize}

   The standard $OGS$ for $S_n$ has been considered in \cite{AR} for combinatorial interest too. By \cite{AR}, the sum of the exponents of the generators according to the sequence, $\sum_{j=2}^{n}i_{j}$,  (i.e., the length of the element $\pi$ in the canonical form according to the standard $OGS$) is the major-index of the permutation $\pi$.

Both the standard and the dual-standard $OGS$ give exchange laws with very interesting properties which we describe now.

\begin{proposition}\label{exchange}
The following holds:
\begin{enumerate}
\item For transforming  the element $t_{q}^{i_{q}}\cdot t_{p}^{i_{p}}$  ($p<q$) onto the $OGS$ canonical form\\ $t_{2}^{i_{2}}\cdot t_{3}^{i_{3}}\cdots t_{n}^{i_{n}}$, i.e., according to the  standard $OGS$, one needs to use the following exchange laws:

 \[ t_{q}^{i_{q}}\cdot t_{p}^{i_{p}}=\begin{cases}
t_{i_{q}+i_{p}}^{i_q}\cdot t_{p+i_{q}}^{i_{p}}\cdot t_{q}^{i_{q}}  & q-i_{q}\geq p \\
\\
t_{i_{q}}^{p+i_{q}-q}\cdot t_{i_{q}+i_{p}}^{q-p}\cdot t_{q}^{i_{q}+i_{p}} & i_{p}\leq q-i_{q}\leq p \\
\\
t_{p+i_{q}-q}^{i_{q}+i_{p}-q}\cdot t_{i_{q}}^{p-i_{p}}\cdot t_{q}^{i_{q}+i_{p}-p}  & q-i_{q}\leq i_{p}.
\end{cases}
\]
\item Similarly, for transforming  the element $t_{p}^{{i_p}'}\cdot t_{q}^{{i_q}'}$  ($p<q$) onto the $OGS$ canonical form $t_{n}^{i_{n}}\cdot t_{n-1}^{i_{n-1}}\cdots t_{2}^{i_{2}}$, i.e., according to dual-standard, one needs to use the following exchange laws:

 \[ t_{p}^{{i_p}'}\cdot t_{q}^{{i_q}'}=\begin{cases}
 t_{q}^{{i_q}'}\cdot t_{p+q-{i_q}'}^{q-{i_q}'+{i_p}'}\cdot t_{q-{i_q}'+p-{i_p}'}^{p-{i_p}'}  & {i_q}'\geq p \\
\\
t_{q}^{{i_q}'+{i_p}'-p}\cdot t_{q-{i_q}'+p-{i_p}'}^{2p-{i_p}'-{i_q}'}\cdot t_{q-{i_q}'}^{q-p}  & p-{i_p}'\leq {i_q}'\leq p \\
\\
t_{q}^{{i_q}'+{i_p}'}\cdot t_{q-{i_q}'}^{q-{i_q}'-{i_p}'}\cdot t_{p-{i_q}'}^{{i_p}'} & {i_q}'\leq p-{i_p}'.
\end{cases}
\]
\end{enumerate}
\end{proposition}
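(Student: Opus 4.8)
The plan is to prove part (1) directly at the level of permutations and then obtain part (2) for free by inverting. The starting point is the functional description of the generators supplied by Theorem~\ref{canonical-sn}: since $t_m = (m,~m-1,~\ldots,~1)$, the power $t_m^{i}$ acts on $\{1,\ldots,m\}$ as the cyclic shift $k\mapsto k-i \pmod m$ (with residues taken in $\{1,\ldots,m\}$) and fixes every $k>m$. I would record this once and use it throughout. In particular, because both $t_q^{i_q}$ and $t_p^{i_p}$ fix every index larger than $q$, the composite $\sigma := t_q^{i_q}\cdot t_p^{i_p}$ fixes $\{q+1,\ldots,n\}$, so its canonical form can only involve $t_2,\ldots,t_q$; this already matches the subscripts appearing on the right-hand sides.

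First I would compute the one-line form of $\sigma$. Since multiplication is left to right, $\sigma(k)=t_p^{i_p}\bigl(t_q^{i_q}(k)\bigr)$: one first shifts $\{1,\ldots,q\}$ down by $i_q$, then shifts $\{1,\ldots,p\}$ down by $i_p$. Applying $t_q^{i_q}$ splits $\{1,\ldots,q\}$ into the wrapped block $\{1,\ldots,i_q\}$, whose images are $\{q-i_q+1,\ldots,q\}$, and the block $\{i_q+1,\ldots,q\}$, whose images are $\{1,\ldots,q-i_q\}$. The second shift $t_p^{i_p}$ acts only on those images that are $\le p$, and the entire case split is governed by the position of $q-i_q$ relative to $p$ and to $i_p$: the three regimes $q-i_q\ge p$, $\ i_p\le q-i_q\le p$, and $\ q-i_q\le i_p$ are exactly the three cases in the statement. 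In each regime I would write $\sigma$ as a concatenation of monotone runs of consecutive integers.

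The core step is then to verify, in each of the three cases, that the stated product of three generators acts on every $k$ exactly as $\sigma$ does. I would do this by evaluating the claimed right-hand side pointwise, again using the cyclic-shift description and composing the three shifts on their nested intervals, and checking agreement on each sub-interval of $\{1,\ldots,q\}$ produced above together with the fixed range $k>q$. Simultaneously I would check that every exponent lies in the admissible range $0\le i_k<k$ and every subscript lies in $\{2,\ldots,q\}$, so that the right-hand side is a genuine standard $OGS$ canonical form; here the case hypotheses do the work, e.g.\ in the first case the condition $q-i_q\ge p$ is precisely what forces the middle subscript $p+i_q$ to be at most $q$. I expect the main obstacle to be exactly this bookkeeping: tracking which sub-interval each factor moves and matching the resulting block decomposition against the precise subscripts and exponents, while keeping the three cases and their boundaries ($q-i_q=p$ and $q-i_q=i_p$) consistent and correctly reading off degenerate situations where an exponent reduces to $0$ modulo its subscript. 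The conceptual content is small, being entirely the arithmetic of composing cyclic shifts on nested intervals, but the case-by-case verification is where errors are easiest to make.

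Finally I would deduce part (2) from part (1) by the anti-automorphism $g\mapsto g^{-1}$. Since $(gh)^{-1}=h^{-1}g^{-1}$ and $t_m^{-i}=t_m^{\,m-i}$, inverting a standard canonical form $t_2^{i_2}\cdots t_n^{i_n}$ reverses the order of the factors and negates the exponents, turning it into a dual-standard form. Concretely, $t_p^{{i_p}'}\cdot t_q^{{i_q}'}=\bigl(t_q^{\,q-{i_q}'}\cdot t_p^{\,p-{i_p}'}\bigr)^{-1}$, so I would apply case (1) with $i_q=q-{i_q}'$ and $i_p=p-{i_p}'$, invert the resulting three-factor expression, and reduce exponents modulo their subscripts. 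A short check shows the three hypotheses transform into ${i_q}'\ge p$, $\ p-{i_p}'\le {i_q}'\le p$, and $\ {i_q}'\le p-{i_p}'$ respectively, and that the inverted factors are exactly those displayed in part (2).
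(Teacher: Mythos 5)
Your proposal is correct and follows essentially the same route as the paper: both arguments work at the level of permutations viewed as cyclic shifts on nested intervals, use the same three-way case split governed by the position of $q-i_{q}$ relative to $p$ and $i_{p}$, and obtain part (2) by inverting part (1) with the substitution $i_{p}'=p-i_{p}$, $i_{q}'=q-i_{q}$. The only difference is organizational: the paper first computes the conjugate $x=t_{q}^{i_{q}}\cdot t_{p}^{i_{p}}\cdot t_{q}^{-i_{q}}$ (a displaced copy of $t_{p}^{i_{p}}$ whose canonical form is easy to read off) and then appends $t_{q}^{i_{q}}$, whereas you verify the full three-factor product pointwise against the one-line form of $t_{q}^{i_{q}}\cdot t_{p}^{i_{p}}$ directly.
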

\begin{proof}
 First, consider the standard $OGS$ of $S_n$. Then, we look at the  exchange laws for $t_{q}^{i_{q}}\cdot t_{p}^{i_{p}}$, where $q>p$. Since all \\ $j\in [n]:=\{1, 2, \ldots, n\}$ such that $j>q$ are fixed by $t_{q}^{i_{q}}\cdot t_{p}^{i_{p}}$, we may consider the element $t_{q}^{i_{q}}\cdot t_{p}^{i_{p}}$ in $S_{q}$ instead of considering them in $S_{n}$. Therefore, the operation $"+"$ is  considered addition modulo $q$.
 Now, look at the elements $t_{q}^{i_{q}}$ and $t_{p}^{i_{p}}$ as permutations in $S_{q}$, then the following is satisfied:
 \begin{itemize}
 \item $t_{q}^{i_{q}}(j)=j-i_{q}$ ~~for every $1\leq j\leq q$;
 \item $t_{p}^{i_{p}}(j)=j-i_{p}+p=j-(i_{p}-p+q)$ ~~for every $1\leq j\leq i_{p}$;
 \item $t_{p}^{i_{p}}(j)=j-i_{p}$ ~~for every $i_{p}+1\leq j\leq p$;
 \item $t_{p}^{i_{p}}(j)=j$ ~~for every $p+1\leq j\leq q$.
 \end{itemize}
 Let $x$ be the conjugate of $t_{p}^{i_{p}}$ by the element $t_{q}^{-i_{q}}$, i.e, $x:=t_{q}^{i_{q}}\cdot t_{p}^{i_{p}}\cdot t_{q}^{-i_{q}}$. Then, by conjugating laws of permutations, the  permutation $x$ satisfies the following by considering  the inequalities modulo $q$ (i.e., if $q=10$, then $8\leq j\leq 2$ means the set of integers $\{8, 9, 10, 1, 2\})$:
\begin{itemize}
\item $x(j)=j-(i_{p}-p+q)$ ~for ~$i_{q}+1\leq j\leq i_{q}+i_{p}$;
\item $x(j)=j-i_{p}$ ~for ~$i_{q}+i_{p}+1\leq j\leq i_{q}+p$;
\item $x(j)=j$ ~for ~$i_{q}+p+1\leq j\leq i_{q}$.
\end{itemize}

Thus, by using $x(j)$ we get the following canonical form for $x$:
\begin{itemize}
\item $x=t_{i_{q}+i_{p}}^{i_q}\cdot t_{p+i_{q}}^{i_{p}}$ in case ~$q\geq i_{q}+p$  ~(i.e., $q-i_{q}\geq p$);
\item $x=t_{i_{q}}^{i_{q}-(q-p)}\cdot t_{i_{q}+i_{p}}^{q-p}\cdot t_{q}^{i_{p}}$ in case ~$i_{q}+i_{p}\leq q\leq i_{q}+p$ ~(i.e., $i_{p}\leq q-i_{q}\leq p$);
\item $x=t_{i_{q}-(q-p)}^{i_{q}+i_{p}-q}\cdot t_{i_{q}}^{p-i_{p}}\cdot t_{q}^{i_{p}-p}$ in case ~$q\leq i_{q}+i_{p}$ ~(i.e., $q-i_{q}\leq i_{p}$).
\end{itemize}
Now, right multiplications of each one of the three equalities by $t_{q}^{i_{q}}$ gives the desired exchange laws for $t_{q}^{i_{q}}\cdot t_{p}^{i_{p}}$.

Now, consider the dual-standard $OGS$ of $S_n$.
Since $(t_{q}^{i_{q}}\cdot t_{p}^{i_{p}})^{-1}=t_{p}^{p-i_{p}}\cdot t_{q}^{q-i_{q}}$, we get the exchange laws for $t_{p}^{{i_p}'}\cdot t_{q}^{{i_q}'}$ easily by taking the inverse of the the exchange laws for $t_{q}^{i_{q}}\cdot t_{p}^{i_{p}}$, and substituting ${i_p}'=p-i_{p}$, ${i_q}'=q-i_{q}$.
\end{proof}

From the described exchange laws for $S_n$ we conclude the following properties:

\begin{proposition}\label{exchange-2}

 The standard $OGS$ canonical form of $t_{q}^{i_{q}}\cdot t_{p}^{i_{p}}$, and the dual-standard canonical form of $t_{p}^{{i_p}'}\cdot t_{q}^{{i_q}'}$ (where, $p<q$) are products of non-zero powers of at most three different canonical generators, where the following holds:
   \begin{enumerate}
    \item  The standard $OGS$ canonical form of $t_{q}^{i_{q}}\cdot t_{p}^{i_{p}}$ ($p<q$, $i_{p}>0, i_{q}>0$) is a product of non-zero powers of two different canonical generators if and only if $q-i_{q}=p$ or $q-i_{q}=i_{p}$, and then the following hold:
 \[ t_{q}^{i_{q}}\cdot t_{p}^{i_{p}}=\begin{cases}
  t_{i_{q}+i_{p}}^{i_q}\cdot t_{q}^{i_{q}+i_{p}} & q-i_{q} = p \\
\\
  t_{i_{q}}^{p-i_{p}}\cdot t_{q}^{q-p} & q-i_{q} = i_{p}.
\end{cases}
\]
     \item The dual-standard $OGS$ canonical form of $t_{p}^{{i_p}'}\cdot t_{q}^{{i_q}'}$ ($p<q$, ${i_p}'>0, {i_q}'>0$) is a product of non-zero powers of two different canonical generators if and only if ${i_q}'=p$, or ${i_q}'=p-{i_p}'$, and then the following holds:
 \[ t_{p}^{{i_p}'}\cdot t_{q}^{{i_q}'}=\begin{cases}
t_{q}^{{i_p}'}\cdot t_{q-{i_p}'}^{p-{i_p}'}   & {i_q}' = p \\
\\
t_{q}^{p}\cdot t_{q-{i_q}'}^{q-p}  & {i_q}' = p-{i_p}'.
\end{cases}
\]
\end{enumerate}
\end{proposition}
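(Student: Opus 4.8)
The plan is to read everything off the three-branch exchange laws already established in Proposition \ref{exchange} and to locate precisely when one of the three right-hand factors degenerates. The first assertion—that at most three canonical generators occur—is immediate, since each branch of Proposition \ref{exchange} is literally a product of three factors $t_{a}^{x}\cdot t_{b}^{y}\cdot t_{c}^{z}$. First I would confirm that these really are canonical forms by checking that the subscripts increase in each branch: for instance in the first branch $i_{q}+i_{p}\le p+i_{q}\le q$ follows from $i_{p}<p$ together with the branch hypothesis $q-i_{q}\ge p$, and the analogous orderings hold in the second and third branches from their hypotheses and from $0<i_{p}<p$, $0<i_{q}<q$.

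For the substantive ``if and only if'' I would analyze, branch by branch, the two ways a three-term product can collapse to a two-term one: either two of the three subscripts coincide and the corresponding powers merge, or one exponent vanishes modulo the order of its generator. Working in the strict interior of each branch I expect to show that all three subscripts are pairwise distinct and at least $2$, and that all three exponents lie strictly between $0$ and the order of the corresponding generator, so that exactly three generators survive. This reduces to a short list of inequalities following from the branch hypotheses and from $0<i_{p}<p$, $0<i_{q}<q$; the one delicate bound is the lower estimate $p+i_{q}-q\ge 2$ in the third branch, which I would deduce from $q-i_{q}<i_{p}\le p-1$ and which is exactly what prevents a spurious $t_{1}=e$ from masquerading as a genuine collapse. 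The crucial observations are that $p+i_{q}=q$ holds exactly on the interface $q-i_{q}=p$ and that $i_{q}+i_{p}=q$ holds exactly on the interface $q-i_{q}=i_{p}$, so the two boundary conditions in the statement are precisely the two interfaces between consecutive branches.

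At the interface $q-i_{q}=p$ I would substitute into the first branch: since $p+i_{q}=q$, the last two factors merge as $t_{p+i_{q}}^{i_{p}}\cdot t_{q}^{i_{q}}=t_{q}^{i_{p}+i_{q}}$, giving $t_{i_{q}+i_{p}}^{i_{q}}\cdot t_{q}^{i_{q}+i_{p}}$, and I would verify $0<i_{q}+i_{p}<q$ and $2\le i_{q}+i_{p}<q$ so that no further collapse occurs. At the interface $q-i_{q}=i_{p}$ I would substitute into the second branch: now $i_{q}+i_{p}=q$, so $t_{i_{q}+i_{p}}^{q-p}\cdot t_{q}^{i_{q}+i_{p}}=t_{q}^{q-p}\cdot t_{q}^{q}=t_{q}^{q-p}$, and rewriting $p+i_{q}-q=p-i_{p}$ yields $t_{i_{q}}^{p-i_{p}}\cdot t_{q}^{q-p}$; substituting into the third branch produces the same answer (its first exponent $i_{q}+i_{p}-q$ vanishes), which confirms continuity across the interface. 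Since $i_{p}<p$ forces the two interface values $q-i_{q}=p$ and $q-i_{q}=i_{p}$ to differ, an element satisfies at most one condition, so the two displayed sub-cases are genuinely disjoint.

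Finally, the dual-standard statement I would not reprove but deduce from the standard one exactly as in Proposition \ref{exchange}: applying the anti-automorphism $g\mapsto g^{-1}$ to each standard identity and substituting ${i_{p}}'=p-i_{p}$, ${i_{q}}'=q-i_{q}$. Under this substitution the interface $q-i_{q}=p$ becomes ${i_{q}}'=p$ and $q-i_{q}=i_{p}$ becomes ${i_{q}}'=p-{i_{p}}'$, while inverting the two standard two-generator forms produces precisely the two dual forms displayed. The main obstacle throughout is not conceptual but the inequality bookkeeping needed to certify that, away from the two interfaces, none of the three factors silently disappears; the single genuinely nontrivial inequality is the bound $p+i_{q}-q\ge 2$ in the third branch.
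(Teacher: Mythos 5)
Your proposal is correct and follows essentially the same route as the paper: both read the result directly off the three-branch exchange laws of Proposition \ref{exchange} and observe that the collapse to two generators occurs exactly at the interfaces $q-i_{q}=p$ and $q-i_{q}=i_{p}$ (respectively ${i_q}'=p$ and ${i_q}'=p-{i_p}'$ via inversion). Your explicit verification of the ``only if'' direction --- checking that in the strict interior of each branch all three subscripts are distinct, at least $2$, and carry exponents strictly between $0$ and the order of the corresponding generator, with the key bound $p+i_{q}-q\geq 2$ in the third branch --- is a detail the paper asserts without proof, and your inequalities check out.
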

\begin{proof}
The results of  Proposition \ref{exchange} imply that the standard $OGS$ canonical form of   $t_{q}^{i_{q}}\cdot t_{p}^{i_{p}}$, and the dual-standard $OGS$ canonical form of  $t_{p}^{{i_p}'}\cdot t_{q}^{{i_q}'}$ (where, $p<q$) are product of non-zero powers of at most three different canonical generators. In the cases $q-i_{q}=p$ and in case $q-i_{q}=i_{p}$ the element $t_{q}^{i_{q}}\cdot t_{p}^{i_{p}}$ (where, $p<q$) is a product of a non-zero powers of two different canonical generators, using the exchange laws in case 1. of Proposition \ref{exchange}. Similarly, in cases ${i_q}'=p$ or ${i_q}'=p-{i_p}'$, the element $t_{p}^{{i_p}'}\cdot t_{q}^{{i_q}'}$ (where, $p<q$) is a product of a non-zero powers of two different canonical generators, using the exchange laws in case 2. of  Proposition \ref{exchange}.
\end{proof}

\begin{proposition}
The exchange laws for $S_{n}$ which are described in Propositions \ref{exchange}, \ref{exchange-2} satisfy the following properties:
\begin{enumerate}
\item The standard $OGS$ canonical form of the element $t_{q}^{i_{q}}\cdot t_{p}^{i_{p}}$ for $q>p$ is either $t_{a}^{i_{a}}\cdot t_{b}^{i_{b}}\cdot t_{c}^{i_{c}}$, or $t_{a}^{i_{a}}\cdot t_{b}^{i_{b}}$ for $a<b<c$, where $c=q$ and  all of the numbers: $a$, $b$, $i_{a}$, $i_{b}$, and $i_{c}$, are linear combinations of at most three different elements from $\{p, q, i_{p}, i_{q}\}$ with co-efficients $1$ or $-1$;
\item The dual-standard canonical form of the element $t_{p}^{{i_p}'}\cdot t_{q}^{{i_q}'}$ for $q>p$ is either $t_{a}^{{i_{a}}'}\cdot t_{b}^{{i_{b}}'}\cdot t_{c}^{{i_{c}}'}$, or $t_{a}^{{i_{a}}'}\cdot t_{b}^{{i_{b}}'}$ for $a>b>c$, where $a=q$ and  all of the numbers: $b$, $c$, ${i_{a}}'$, ${i_{b}}'$, and ${i_{c}}'$, are linear combination of at most four elements from $\{p, q, {i_p}', {i_q}'\}$ with co-efficients $1$ or $-1$ or $2$ (The co-efficient $2$ appears just in one of the three cases for ${i_{b}}'$, as a co-efficient of $p$).
\end{enumerate}
\end{proposition}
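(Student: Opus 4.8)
The plan is to prove this purely by inspection of the explicit exchange laws already recorded in Proposition \ref{exchange}, since the statement asserts only structural features of the three-generator expressions found there: the value of the top subscript, the ordering of the subscripts, and the fact that each subscript and each exponent is a short signed combination of $p, q, i_p, i_q$. For part 1 I would treat the three cases of Proposition \ref{exchange} separately. In each case I read off the triple $(a,b,c)$ of subscripts and the triple $(i_a,i_b,i_c)$ of exponents, and check term by term that each of $a, b, i_a, i_b, i_c$ is a $\mathbb{Z}$-linear combination of at most three of the four quantities $p, q, i_p, i_q$ with all coefficients equal to $\pm 1$, and that the top subscript is $c=q$. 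For instance, in the case $q-i_q\geq p$ one has $(a,b,c)=(i_q+i_p,\, p+i_q,\, q)$ and $(i_a,i_b,i_c)=(i_q,\, i_p,\, i_q)$, all of the required form; the remaining two cases are handled identically.

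The only point requiring more than bookkeeping is the claimed ordering $a<b<c=q$. This is not an extra hypothesis but a consequence of the case-defining inequalities together with the constraints $0\leq i_p<p$ and $0\leq i_q<q$. I would verify, again case by case, that the strict inequalities $a<b<c$ hold on the interior of each range, and that equality at a boundary of a range is precisely the situation in which one of the three generators acquires a zero exponent and the expression collapses to the two-generator form of Proposition \ref{exchange-2}. This shows that the two-generator and three-generator descriptions are mutually consistent and jointly exhaustive, and that no case is double-counted or omitted.

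Part 2 follows the same scheme applied to the dual-standard exchange laws of Proposition \ref{exchange}, now with top subscript $a=q$ and the ordering reversed to $a>b>c$. Reading off the three dual cases, every subscript and exponent is again a $\pm 1$ combination of $p, q, {i_p}', {i_q}'$ (with up to four of these quantities occurring, as in $c=q-{i_q}'+p-{i_p}'$ in the first case), with the single exception of the exponent ${i_b}'=2p-{i_p}'-{i_q}'$ in the middle case $p-{i_p}'\leq {i_q}'\leq p$; here $p$ enters with coefficient $2$, which is the one place where a coefficient other than $\pm 1$ is needed, and it is a coefficient of $p$, exactly as asserted. I would remark that the appearance of the coefficient $2$ (and the need to allow up to four rather than three of the quantities) reflects the asymmetry introduced by passing from $t_q^{i_q}\cdot t_p^{i_p}$ to its inverse through the substitution ${i_p}'=p-i_p$, ${i_q}'=q-i_q$ used in the proof of Proposition \ref{exchange}.

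I expect the main obstacle to be not any single computation but the organized verification of the ordering inequalities across all six cases and their boundaries, making sure the degeneration to two generators matches Proposition \ref{exchange-2} exactly; the coefficient and cardinality claims themselves are then immediate from the formulas.
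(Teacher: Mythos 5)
Your proposal is correct and matches the paper's approach: the paper's entire proof is the one-line observation that the results come directly from Propositions \ref{exchange} and \ref{exchange-2}, i.e.\ by reading off the subscripts and exponents from the explicit exchange laws exactly as you describe. Your additional care with the ordering inequalities $a<b<c$ and with the boundary degenerations to the two-generator form of Proposition \ref{exchange-2} is a welcome elaboration of what the paper leaves implicit, but it is the same argument.
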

\begin{proof}
The results come directly from Propositions \ref{exchange} and \ref{exchange-2}.
\end{proof}

\begin{example}
Consider the following elements $x, y\in S_{5}$. Let $x=t_{2}\cdot t_{3}^{2}\cdot t_{4}\cdot t_{5}^{4}$, which is the permutation $[3;2;5;4;1]$ which is the $3$-cycle $(1,3,5)$. Let $y=t_{2}\cdot t_{3}\cdot t_{4}^{2}\cdot t_{5}^{2}$, which is the permutation $[1;4;2;5;3]$, which is the $4$-cycle $(2, 4, 5, 3)$. Now, we find the standard $OGS$ canonical form of the product $\pi=x\cdot y$ by using the exchange laws described in Propositions \ref{exchange}, and \ref{exchange-2}:
\begin{align*}
x\cdot y&=t_{2}\cdot t_{3}^{2}\cdot t_{4}\cdot (t_{5}^{4}\cdot t_{2})\cdot t_{3}\cdot t_{4}^{2}\cdot t_{5}^{2} \\
&=t_{2}\cdot t_{3}^{2}\cdot t_{4}^{2}\cdot (t_{5}^{3}\cdot t_{3})\cdot t_{4}^{2}\cdot t_{5}^{2}\\
&=t_{2}\cdot t_{3}^{2}\cdot t_{4}^{2}\cdot t_{3}\cdot t_{4}^{2}\cdot (t_{5}^{4}\cdot t_{4}^{2})\cdot t_{5}^{2} \\
&=t_{2}\cdot t_{3}^{2}\cdot t_{4}^{2}\cdot t_{3}\cdot (t_{4}^{2}\cdot t_{3})\cdot t_{4}^{2}\cdot t_{5}^{4} \\
&=t_{2}\cdot t_{3}^{2}\cdot t_{4}^{2}\cdot (t_{3}\cdot t_{2})\cdot t_{3}\cdot t_{4}\cdot t_{5}^{4} \\
&=t_{2}\cdot t_{3}^{2}\cdot t_{4}^{2}\cdot t_{2}\cdot (t_{3}^{2}\cdot t_{3})\cdot t_{4}\cdot t_{5}^{4} \\
&=t_{2}\cdot t_{3}^{2}\cdot (t_{4}^{2}\cdot t_{2})\cdot t_{4}\cdot t_{5}^{4} \\
&=t_{2}\cdot (t_{3}^{2}\cdot t_{3}^{2})\cdot (t_{4}^{3}\cdot t_{4})\cdot t_{5} \\
&=t_{2}\cdot t_{3}\cdot t_{5}^{4}.
\end{align*}
$\pi=x\cdot y$ is the permutation $[2;4;3;5;1]$, which is the $4$-cycle $(1, 2, 4, 5)$.
\end{example}

The next proposition shows the standard $OGS$ canonical form of some conjugates of $t_{k}$ and $t_{k}^{-1}$.

\begin{proposition}\label{exchange-conj}
Let $G=S_{n}$ and consider the standard $OGS$ canonical form.
Then, for every $2\leq k\leq n-1$ the following holds:

$$\left(\prod_{j=0}^{n-k-1}t_{n-j}\right)\cdot t_{k}\cdot \left(\prod_{j=0}^{n-k-1}t_{n-j}\right)^{-1}=t_{n-k+1}^{-1}\cdot t_{n}.$$

$$\left(\prod_{j=0}^{n-k-1}t_{n-j}\right)\cdot t_{k}^{-1}\cdot \left(\prod_{j=0}^{n-k-1}t_{n-j}\right)^{-1}=t_{n-1}^{n-k}\cdot t_{n}^{k-1}.$$
\end{proposition}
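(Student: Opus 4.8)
The plan is to compute the conjugating element $P := \prod_{j=0}^{n-k-1} t_{n-j} = t_n\cdot t_{n-1}\cdots t_{k+1}$ explicitly as a permutation, then conjugate the cycles $t_k$ and $t_k^{-1}$ by it, and finally recognize the resulting cycles as the claimed right-hand sides by evaluating those products directly. Throughout I keep the left-to-right multiplication convention of Definition \ref{sn}, under which $(\alpha\cdot\beta)(i)=\beta(\alpha(i))$, so that conjugation obeys $(P\cdot\sigma\cdot P^{-1})(i)=P^{-1}(\sigma(P(i)))$; equivalently, if $\sigma$ sends $a\mapsto a'$, then $P\cdot\sigma\cdot P^{-1}$ sends $P^{-1}(a)\mapsto P^{-1}(a')$, i.e.\ conjugation by $P$ relabels every entry of each cycle of $\sigma$ through $P^{-1}$.

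First I would establish, by induction on the number of factors, the closed form
$$ P(i)=\begin{cases} n+1-i & 1\le i\le n-k,\\ i-(n-k) & n-k+1\le i\le n. \end{cases} $$
The base case is $P=t_n$ (by Theorem \ref{canonical-sn}, $t_n(j)=j-1$ on $\{2,\dots,n\}$ and $1\mapsto n$), and the inductive step multiplies the partial product $t_n\cdots t_{n-r+1}$ on the right by $t_{n-r}$, using that $t_{n-r}$ subtracts $1$ on $\{2,\dots,n-r\}$, sends $1\mapsto n-r$, and fixes everything above $n-r$; the three ranges then propagate cleanly. From this form one reads off $P^{-1}(j)=j+(n-k)$ for $1\le j\le k$, since $P$ maps $\{n-k+1,\dots,n\}$ bijectively onto $\{1,\dots,k\}$ via $i\mapsto i-(n-k)$.

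Next I would apply the relabeling principle. Since $t_k$ is the cycle $(k,\,k-1,\,\dots,\,1)$, conjugation shows that $P\cdot t_k\cdot P^{-1}$ is the cycle $(P^{-1}(k),\dots,P^{-1}(1))=(n,\,n-1,\,\dots,\,n-k+1)$, a $k$-cycle supported on $\{n-k+1,\dots,n\}$ and fixing $\{1,\dots,n-k\}$; likewise $t_k^{-1}=(1,\,2,\,\dots,\,k)$ gives that $P\cdot t_k^{-1}\cdot P^{-1}$ is the cycle $(P^{-1}(1),\dots,P^{-1}(k))=(n-k+1,\,n-k+2,\,\dots,\,n)$. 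It remains to match these against the two right-hand sides. For the first, a direct evaluation using $t_{n-k+1}^{-1}(j)=j+1$ on $\{1,\dots,n-k\}$, $t_{n-k+1}^{-1}(n-k+1)=1$, and $t_n(j)=j-1$ (with $1\mapsto n$) shows that $t_{n-k+1}^{-1}\cdot t_n$ fixes $\{1,\dots,n-k\}$, sends $n-k+1\mapsto n$, and sends $i\mapsto i-1$ for $n-k+2\le i\le n$, i.e.\ it is exactly the cycle $(n,\,n-1,\,\dots,\,n-k+1)$.

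The one genuinely fiddly step, and where I expect the main obstacle, is matching the second right-hand side, since $t_{n-1}^{n-k}\cdot t_n^{k-1}$ is a product of two cyclic shifts of \emph{different} lengths ($n-1$ and $n$) and needs careful modular bookkeeping. Writing $t_{n-1}^{n-k}$ as cyclic subtraction of $n-k$ on $\{1,\dots,n-1\}$ (equivalently $i\mapsto i+k-1\ (\mathrm{mod}\ n-1)$, fixing $n$) and $t_n^{k-1}$ as cyclic subtraction of $k-1$ on $\{1,\dots,n\}$, I would split into the ranges $1\le i\le n-k$, $n-k+1\le i\le n-1$, and $i=n$, track in which range each intermediate value lands, and verify that the composite sends $i\mapsto i$ on the first range, $i\mapsto i+1$ on the second, and $n\mapsto n-k+1$. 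This is precisely the cycle $(n-k+1,\,\dots,\,n)$ produced by the conjugation, which finishes the proof. The only real care needed is consistency of the representative convention (entries taken in $\{1,\dots,m\}$ rather than $\{0,\dots,m-1\}$) and of the left-to-right composition order.
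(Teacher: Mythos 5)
Your proof is correct, and all the key computations check out: the closed form $P(i)=n+1-i$ on $\{1,\dots,n-k\}$ and $P(i)=i-(n-k)$ on $\{n-k+1,\dots,n\}$ for $P=t_n\cdots t_{k+1}$ is right (and the induction propagating the three ranges works), the relabeling principle for conjugation under left-to-right composition is stated with the correct orientation, and both right-hand sides do evaluate to the cycles $(n,n-1,\dots,n-k+1)$ and $(n-k+1,\dots,n)$ respectively. However, your route is genuinely different from the paper's. The paper proves the first identity by induction on $n-k$ entirely inside the algebra of the generators $t_j$, repeatedly invoking the exchange laws of Propositions \ref{exchange} and \ref{exchange-2} (base case $t_n\cdot t_{n-1}\cdot t_n^{-1}=t_2\cdot t_n$), and then obtains the second identity by inverting the first and applying one more exchange law to $t_n^{-1}\cdot t_{n-k+1}$; it never leaves the $OGS$ formalism. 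You instead compute the conjugator as an explicit permutation, conjugate the cycles by relabeling, and verify the right-hand sides pointwise. Your argument is more elementary and self-contained (it needs only the permutation description of $t_m$ from Theorem \ref{canonical-sn}, not the exchange laws), and it is arguably easier to check since the paper's inductive step contains some garbled intermediate expressions; what it gives up is that it does not illustrate or exercise the exchange-law machinery that this proposition is meant to showcase and that the subsequent results (e.g.\ Proposition \ref{t-power}) build on. One small economy you could add: since $\left(t_{n-k+1}^{-1}\cdot t_n\right)^{-1}=t_n^{-1}\cdot t_{n-k+1}$, the second identity follows from the first plus a single two-generator computation, sparing you the "fiddly" modular bookkeeping for $t_{n-1}^{n-k}\cdot t_n^{k-1}$ — though your direct verification of that product is also correct as written.
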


\begin{proof}
The proof is in induction on $n-k$. If $n-k=1$ (i.e., $k=n-1$, then by Proposition \ref{exchange-2},
$$t_{n}\cdot t_{n-1}\cdot t_{n}^{-1}=t_{2}\cdot t_{n}^{2}\cdot t_{n}^{-1}=t_{2}\cdot t_{n}.$$
Assume in induction that the proposition holds for every $k$ such that $n-k<k'$ for some $k'\geq 1$ and we prove it for $n-k=k'$. Then, by the induction hypothesis, the following is satisfied:
$$\left(\prod_{j=0}^{n-k'-1}t_{n-j}\right)\cdot t_{k'}\cdot \left(\prod_{j=0}^{n-k-1}t_{n-k}\right)^{-1}=t_{n}\cdot t_{n-k'}^{-1}\cdot t_{n-1}\cdot t_{n}^{-1}.$$
Then, by Propositions \ref{exchange} and \ref{exchange-2} the following holds:
\begin{align*}
t_{n}\cdot t_{n-k'}^{n-k'-1}\cdot t_{n-1}\cdot t_{n}^{-1}&=t_{n-k'}\cdot t_{n-k'+1}^{n-k'-1}\cdot t_{n}\cdot t_{n-1}\cdot t_{n}^{-1} \\ \\&=
t_{n-k'}\cdot t_{n-k'+1}^{n-k'-1}\cdot t_{2}\cdot t_{n}^{2}\cdot t_{n}^{-1} \\ \\&= t_{n-k'}\cdot t_{n-k'}^{n-k'-1}\cdot t_{n-k'+1}^{n-k'}\cdot t_{n} \\ \\&=
t_{n-k'+1}^{-1}\cdot t_{n}.
\end{align*}
Thus, the first part of the proposition holds for every $k$.
Now,
\begin{align*}
\left(\prod_{j=0}^{n-k-1}t_{n-j}\right)\cdot t_{k}^{-1}\cdot \left(\prod_{j=0}^{n-k-1}t_{n-j}\right)^{-1}&=\left(\left(\prod_{j=0}^{n-k-1}t_{n-j}\right)\cdot t_{k}\cdot \left(\prod_{j=0}^{n-k-1}t_{n-j}\right)^{-1}\right)^{-1} \\ &=\left(t_{n-k+1}^{-1}\cdot t_{n}\right)^{-1}=t_{n}^{-1}\cdot t_{n-k+1}.
\end{align*}
By Proposition \ref{exchange-2},
$$t_{n}^{-1}\cdot t_{n-k+1}=t_{n}^{n-1}\cdot t_{n-k+1}=t_{n-1}^{n-k}\cdot t_{n}^{k-1}.$$
\end{proof}

\subsection{Normal form, and its connection to the standard $OGS$ canonical form}

From now on, we consider just the standard $OGS$ canonical form.
\vskip 0.5cm
In this subsection, we recall the definition of  a  normal form of elements of $S_n$ in Coxeter generators, which is described in detail in  Brenti and Bjorner's book,  "Combinatorics of Coxeter groups" \cite{BB}. Then, we find a connection of the normal form to the standard $OGS$ canonical form.

\begin{definition}\label{brenti-normal}
By \cite{BB}, every element $\pi$ of $S_n$ can be presented uniquely in the following normal reduced form, which we denote by $norm(\pi)$:
$$norm(\pi)=\prod_{u=1}^{n-1}\prod_{r=0}^{y_{u}-1}s_{u-r}.$$
such that $y_u$ is a non-negative integer where, $0\leq y_u\leq u$ for every $1\leq u\leq n-1$.

We denote by $\ell(\pi)$, the Coxeter length of an element $\pi\in S_n$, which is the number of Coxeter generators $s_{j}$ which are used in the reduced presentation of $\pi$. By our notation of $norm(\pi)$, $$\ell(\pi)=\sum_{u=1}^{n-1}y_{u}.$$
\end{definition}

\begin{example}
Let  $m=8$, $y_2=2$, $y_4=3$, $y_5=1$, $y_8=4$, and $y_1=y_3=y_6=y_7=0$, then
$$norm(\pi)=(s_2\cdot s_1)\cdot (s_4\cdot s_3\cdot s_2)\cdot s_5\cdot (s_8\cdot s_7\cdot s_6\cdot s_5).$$
$$\ell(\pi)=2+3+1+4=10.$$
\end{example}
Notice, $t_{k}=\prod_{u=1}^{k-1}s_{u}$ is already presented in the mentioned normal form according to \cite{BB}, with $y_{u}=1$ for every $1\leq u\leq k-1$.

\begin{proposition}\label{t-power}
Consider the symmetric group $S_n$. then for every $2\leq k\leq n$, and \\ $1\leq i_{k}\leq k-1$ the following holds:

$$norm(t_{k}^{i_k})=\prod_{u=i_{k}}^{k-1}\prod_{r=0}^{i_{k}-1}s_{u-r}.$$

$$\ell(t_{k}^{i_{k}})=k\cdot i_{k}-i_{k}^{2}.$$

\end{proposition}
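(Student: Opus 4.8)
The plan is to compute the permutation $t_k^{i_k}$ explicitly, read off its Coxeter length from an inversion count, and then identify the Bj\"orner--Brenti normal form factor by factor, invoking the uniqueness of that normal form (Definition \ref{brenti-normal}) to conclude.

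First I would record the one-line notation of $t_k^{i_k}$. As in the proof of Proposition \ref{exchange}, $t_k^{i_k}(j)=j-i_k$ reduced modulo $k$ on $\{1,\dots,k\}$, while every $j>k$ is fixed; concretely the first $k$ positions read $[\,k-i_k+1,\,k-i_k+2,\,\dots,\,k,\,1,\,2,\,\dots,\,k-i_k\,]$. This permutation is the concatenation of two increasing blocks: positions $1,\dots,i_k$ carry the $i_k$ largest values and positions $i_k+1,\dots,k$ carry the $k-i_k$ smallest values. Hence there are no inversions inside either block, while every pair consisting of a first-block position and a second-block position is an inversion. By the definition of Coxeter length as the number of inversions (Definition \ref{sn}), $\ell(t_k^{i_k})=i_k\cdot(k-i_k)=k\,i_k-i_k^{2}$, which is the asserted length.

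For the normal form, set $y_u=i_k$ for $i_k\le u\le k-1$ and $y_u=0$ otherwise; since $u\ge i_k$ on this range the constraint $0\le y_u\le u$ holds, so $\prod_{u=i_k}^{k-1}\prod_{r=0}^{i_k-1}s_{u-r}$ is a legitimate normal form in the sense of Definition \ref{brenti-normal}. Because that normal form is unique, it suffices to check that this word represents $t_k^{i_k}$. I would first identify the individual factor $c_u:=\prod_{r=0}^{i_k-1}s_{u-r}=s_u s_{u-1}\cdots s_{u-i_k+1}$: tracing it under the convention $\pi_1\cdot\pi_2(i)=\pi_2(\pi_1(i))$ shows that $c_u$ sends $v\mapsto v+1$ for $u-i_k+1\le v\le u$ and $u+1\mapsto u-i_k+1$, i.e. it is the forward rotation of the interval $W_u:=[\,u-i_k+1,\,u+1\,]$. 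Then I would evaluate the product $c_{i_k}c_{i_k+1}\cdots c_{k-1}$ by tracking an arbitrary value $j$ through these sliding windows: a value $j\le i_k$ first enters $W_{i_k}=[1,i_k+1]$ strictly below its top and is incremented by each of the $k-i_k$ successive windows, ending at $j+(k-i_k)$; a value $j\ge i_k+1$ is untouched until it sits at the top of $W_{j-1}=[\,j-i_k,\,j\,]$, drops to $j-i_k$, and is then below every later window, ending at $j-i_k$. This coincides exactly with the one-line notation of $t_k^{i_k}$ computed above, so the word represents $t_k^{i_k}$ and is its normal form, and $\sum_u y_u=i_k(k-i_k)$ agrees with the inversion count.

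The main obstacle is the bookkeeping in the sliding-window computation: one must verify that for small values the running value never reaches the top of any window (so it is incremented at every step), while for large values the drop occurs exactly once and no subsequent window touches the result. Keeping the window indices consistent with the left-to-right multiplication convention is the delicate point. If one prefers to avoid the window analysis, the identity $c_{i_k}\cdots c_{k-1}=t_k^{i_k}$ can instead be established by induction on $i_k$, with base case $i_k=1$ giving $s_1 s_2\cdots s_{k-1}=t_k$, although the inductive rearrangement of the two nested products is itself somewhat technical.
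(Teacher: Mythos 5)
Your proposal is correct, and it takes a genuinely different route from the paper. The paper proves the identity $norm(t_k^{i_k})=\prod_{u=i_k}^{k-1}\prod_{r=0}^{i_k-1}s_{u-r}$ algebraically: it observes that the first block $\prod_{r=0}^{i_k-1}s_{i_k-r}$ equals $t_{i_k+1}^{-1}$, recognizes each subsequent block $\prod_{r=0}^{i_k-1}s_{u-r}$ as a conjugate of that first block by a product of generators $t_{u'}^{-1}$, and then telescopes the whole product by induction using the conjugation formula of Proposition \ref{exchange-conj}; the length is then read off as the letter count of the normal form. You instead compute the one-line notation of $t_k^{i_k}$ directly (two increasing blocks, giving $\ell=i_k(k-i_k)$ by the inversion count), identify each factor $c_u=s_us_{u-1}\cdots s_{u-i_k+1}$ as a forward rotation of the window $[u-i_k+1,u+1]$, track values through the sliding windows to show the word represents $t_k^{i_k}$, and close by uniqueness of the Bj\"orner--Brenti normal form. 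Your window bookkeeping checks out under the paper's left-to-right convention (a value $j\le i_k$ sits strictly inside every window and is incremented $k-i_k$ times; a value $j\ge i_k+1$ is hit exactly once, at the top of $W_{j-1}$, and lands below all later windows), and the two clauses match $t_k^{i_k}(j)=j-i_k \bmod k$. What your approach buys is self-containedness: it avoids Proposition \ref{exchange-conj} and the exchange-law machinery entirely, and it derives the length formula independently of the normal form rather than as a corollary of it. What the paper's approach buys is that it stays inside the algebra of the generators $t_k$ and exercises the conjugation identities that are reused later in the paper.
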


\begin{proof}
 First, notice, by the definition of $t_{i_{k+1}}$, we conclude:  $\prod_{r=0}^{i_{k}-1}s_{i_{k}-r}=t_{i_{k}+1}^{-1}=t_{i_{k}+1}^{i_{k}}$.
 Notice also, $\prod_{r=0}^{i_{k}-1}s_{u-r}$ for $i_{k}<u\leq k-1$, is a conjugate of $\prod_{r=0}^{i_{k}-1}s_{i_{k}-r}$ by $\prod_{u'=i_{k}+2}^{u+1}t_{u'}^{-1}$. Hence, $$\prod_{u=i_{k}}^{k-1}\prod_{r=0}^{i_{k}-1}s_{u-r}=t_{i_{k}+1}^{-1}\cdot \prod_{u=i_{k}+2}^{k}\left(\prod_{r=0}^{u-i_{k}-2}t_{u-r}\cdot t_{i_{k}+1}^{-1}\cdot \left(\prod_{r=0}^{u-i_{k}-2}t_{u-r}\right)^{-1}\right).$$
 Now, assume in induction that $$t_{i_{k}+1}^{-1}\cdot \prod_{u=i_{k}+2}^{k-1}\left(\prod_{r=0}^{u-i_{k}-2}t_{u-r}\cdot t_{i_{k}+1}^{-1}\cdot \left(\prod_{r=0}^{u-i_{k}-2}t_{u-r}\right)^{-1}\right)=t_{k-1}^{i_{k}}.$$
 Then,
 \begin{align*}
 &t_{i_{k}+1}^{-1}\cdot \prod_{u=i_{k}+2}^{k}\left(\prod_{r=0}^{u-i_{k}-2}t_{u-r}\cdot t_{i_{k}+1}^{-1}\cdot \left(\prod_{r=0}^{u-i_{k}-2}t_{u-r}\right)^{-1}\right) \\&=t_{k-1}^{i_{k}}\cdot\left(\prod_{r=0}^{k-i_{k}-2}t_{k-r}\cdot t_{i_{k}+1}^{-1}\cdot \left(\prod_{r=0}^{k-i_{k}-2}t_{k-r}\right)^{-1}\right).
 \end{align*}
 Then, by Proposition \ref{exchange-conj},
 $$\left(\prod_{r=0}^{k-i_{k}-2}t_{k-r}\cdot t_{i_{k}+1}^{-1}\cdot \left(\prod_{r=0}^{k-i_{k}-2}t_{k-r}\right)^{-1}\right)=t_{k-1}^{k-(i_{k}+1)}\cdot t_{k}^{i_{k}}.$$
Hence,
$$\prod_{u=i_{k}}^{k-1}\prod_{r=0}^{i_{k}-1}s_{u-r}=t_{k-1}^{i_{k}}\cdot t_{k-1}^{k-(i_{k}+1)}\cdot t_{k}^{i_{k}}=t_{k}^{i_{k}}.$$
 Then, by using $norm(t_{k}^{i_k})$, we get $\ell(t_{k}^{i_k})=(k-i_{k})\cdot i_{k}=k\cdot i_{k}-i_{k}^{2}$.
\end{proof}

\begin{example}
$$t_7^{3}=(s_3\cdot s_2\cdot s_1)\cdot (s_4\cdot s_3\cdot s_2)\cdot (s_5\cdot s_4\cdot s_3)\cdot (s_6\cdot s_5\cdot s_4).$$

$$\ell(t_{7}^{3})=7\cdot 3-3^2=12.$$

\end{example}

\begin{proposition}\label{s-r-k-v}
Assume $norm(\pi)=\prod_{r=0}^{v-1}s_{k-r}$, where $k, v$ are positive integers, such that $v\leq k$,  then the standard $OGS$ canonical form of $\pi$ is the following:
\begin{itemize}
\item  $\pi=t_{k}^{k-v}\cdot t_{k+1}^{v}$ in case $v<k$ \\ (i.e., ~$norm(\pi)=s_k\cdot s_{k-1}\cdots s_{k-v+1}$, where $k-v+1\geq 2$).
\item $\pi=t_{k+1}^{k}$ in case $v=k$ \\ (i.e., ~$norm(\pi)=s_k\cdot s_{k-1}\cdots s_1$).
\end{itemize}
\end{proposition}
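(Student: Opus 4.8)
The plan is to bypass any permutation computation and instead rewrite $norm(\pi)$ as a product of two canonical generators in reverse order, then collapse it to standard $OGS$ form with a single application of the exchange laws already established.

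The first step is a purely algebraic identity for the descending word. Using $t_{k+1}=\prod_{j=1}^{k}s_{j}=s_1 s_2\cdots s_k$ together with the fact that each $s_j$ is an involution, I get $t_{k+1}^{-1}=s_k s_{k-1}\cdots s_1$; likewise $t_{k-v+1}=s_1 s_2\cdots s_{k-v}$. Multiplying these two, the prefix $s_1 s_2\cdots s_{k-v}$ of $t_{k-v+1}$ cancels telescopically against the suffix $s_{k-v}\cdots s_2 s_1$ of $t_{k+1}^{-1}$ — the innermost pair $s_1 s_1$ first, then $s_2 s_2$, up to $s_{k-v}s_{k-v}$ — leaving
$$t_{k+1}^{-1}\cdot t_{k-v+1}=s_k s_{k-1}\cdots s_{k-v+1}=norm(\pi).$$
Hence $\pi=t_{k+1}^{-1}\cdot t_{k-v+1}=t_{k+1}^{k}\cdot t_{k-v+1}^{1}$, a product of two generators written in descending (dual) order.

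The second step applies Proposition \ref{exchange-2} to $t_q^{i_q}\cdot t_p^{i_p}$ with $q=k+1$, $p=k-v+1$, $i_q=k$, $i_p=1$. Here $q-i_q=1=i_p$, which is exactly the degenerate condition $q-i_q=i_p$ under which the standard $OGS$ canonical form shrinks to two generators $t_{i_q}^{p-i_p}\cdot t_q^{q-p}$; substituting the values gives $t_k^{(k-v+1)-1}\cdot t_{k+1}^{(k+1)-(k-v+1)}=t_k^{k-v}\cdot t_{k+1}^{v}$. Checking that $1\leq k-v\leq k-1$ and $1\leq v\leq k-1$ (which hold precisely when $v<k$) confirms this is a legitimate standard $OGS$ canonical form, so by the uniqueness in Theorem \ref{canonical-sn} it is the canonical form of $\pi$, establishing the first bullet.

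The boundary $v=k$ is then treated separately and is immediate: there $k-v+1=1$, so $t_{k-v+1}=t_1$ is the empty product, and the identity of the first step reads $norm(\pi)=s_k s_{k-1}\cdots s_1=t_{k+1}^{-1}=t_{k+1}^{k}$ with no exchange law required, giving the second bullet. The only delicate point — and the main obstacle — is the case bookkeeping in the exchange law: one must verify that the parameters satisfy $q-i_q=i_p$ rather than $q-i_q=p$, so that the correct two-generator branch of Proposition \ref{exchange-2} is selected, and observe that the vanishing of $t_{k-v+1}=t_1$ is exactly what separates the two cases $v<k$ and $v=k$ of the statement.
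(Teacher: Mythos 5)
Your proof is correct, but it takes a genuinely different route from the paper's. The paper argues in the reverse direction: it starts from the candidate canonical form $t_{k}^{k-v}\cdot t_{k+1}^{v}=(t_{k}^{v})^{-1}\cdot t_{k+1}^{v}$, computes $norm(t_{k}^{v})$ and $norm(t_{k+1}^{v})$ via Proposition \ref{t-power}, observes that the former is a prefix of the latter so that the quotient is exactly $s_k\cdot s_{k-1}\cdots s_{k-v+1}$, and concludes by uniqueness of the normal form. You instead work forward from the reduced word: the telescoping identity $t_{k+1}^{-1}\cdot t_{k-v+1}=s_k\cdots s_{k-v+1}$ expresses $\pi$ as a two-generator product in descending order, and a single application of the branch $q-i_{q}=i_{p}$ of Proposition \ref{exchange-2} (with $q=k+1$, $p=k-v+1$, $i_q=k$, $i_p=1$) lands directly on $t_{k}^{k-v}\cdot t_{k+1}^{v}$; uniqueness from Theorem \ref{canonical-sn} finishes it. Your route avoids Proposition \ref{t-power} entirely (and the induction behind it) at the cost of invoking the exchange-law machinery; the paper's route does the opposite. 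Both are legitimate, since the propositions each relies on precede this one in the paper. Your bookkeeping is also right: $q-i_q=1=i_p$ selects the correct degenerate branch, the constraint $p\geq 2$ holds exactly when $v<k$, and the collapse of $t_{k-v+1}$ to $t_1=1$ is precisely what isolates the boundary case $v=k$, where $\pi=t_{k+1}^{-1}=t_{k+1}^{k}$ needs no exchange at all.
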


\begin{proof}
Assume $\pi=t_{k+1}^{k}$. By Proposition \ref{t-power}, $t_{k+1}^{k}=\prod_{r=0}^{k-1}s_{k-r}$. \\ Now, assume $\pi=t_{k}^{k-v}\cdot t_{k+1}^{v}$. Then, $\pi=(t_{k}^{v})^{-1}\cdot t_{k+1}^{v}$. By using Proposition \ref{t-power},
$$norm(t_{k}^{v})=(s_{v}\cdot s_v\cdots s_1)\cdot (s_{v+1}\cdot s_{v+1}\cdots s_2)\cdots (s_{k-1}\cdot s_{k-2}\cdots s_{k-v})$$
and $$norm(t_{k+1}^{v})=(s_{v}\cdot s_v\cdots s_1)\cdot (s_{v+1}\cdot s_{v+1}\cdots s_2)\cdots (s_{k-1}\cdot s_{k-2}\cdots s_{k-v})\cdot (s_k\cdot s_{k-1}\cdots s_{k-v+1}).$$
Thus, $$norm(t_{k+1}^{v})=norm(t_{k}^{v})\cdot (s_k\cdot s_{k-1}\cdots s_{k-v+1}).$$
Hence, $$norm(\pi)=norm(t_{k}^{-(v)}\cdot t_{k+1}^{v})=\prod_{r=0}^{v-1}s_{k-r}.$$
\end{proof}

\begin{example}
Assume, $norm(\pi)=s_5\cdot s_4\cdot s_3$, then $\pi=t_{5}^{2}\cdot t_{6}^{3}$ in the standard $OGS$ canonical form.
\end{example}

\begin{proposition}\label{s-r-k-v-2}
Let $\pi=t_{k_1}^{k_1-v}\cdot t_{k_2}^{v}$ be an element of $S_n$, which is presented in the standard $OGS$ canonical form, where $v$ is a positive integer such that $1\leq v\leq k_1-1$. Then,
 $$norm(\pi)=norm(t_{k_1}^{k_1-v}\cdot t_{k_2}^{v})=\prod_{u=k_1}^{k_2-1}\prod_{r=0}^{v-1}s_{u-r}.$$

$$\ell(\pi)=(k_{2}-k_{1})\cdot v.$$
\end{proposition}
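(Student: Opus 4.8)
The plan is to prove the normal-form identity by induction on $k_2-k_1$ and then read off the length formula for free from the shape of the resulting word. First I would observe that the right-hand side $\prod_{u=k_1}^{k_2-1}\prod_{r=0}^{v-1}s_{u-r}$ already has exactly the syntactic shape of the normal form of Definition \ref{brenti-normal}: it is a product over $u$ of descending runs $s_u s_{u-1}\cdots s_{u-v+1}$, so it corresponds to the choice $y_u=v$ for $k_1\le u\le k_2-1$ and $y_u=0$ otherwise. Since $v\le k_1-1<k_1\le u$ for every index $u$ occurring, the constraint $0\le y_u\le u$ is satisfied, so this word is a legitimate normal form. Consequently, once I show that $\pi$ equals this word in $S_n$, the uniqueness in Definition \ref{brenti-normal} forces it to be $norm(\pi)$, and the length formula is then immediate: $\ell(\pi)=\sum_u y_u=(k_2-k_1)v$. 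The point I would stress is that I never need a separate reducedness argument, because being of canonical shape and equal to $\pi$ already pins the word down as the unique normal form.

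For the base case $k_2=k_1+1$ the claim is exactly Proposition \ref{s-r-k-v}, which (since $v\le k_1-1<k_1$) gives $norm(t_{k_1}^{k_1-v}\cdot t_{k_1+1}^{v})=\prod_{r=0}^{v-1}s_{k_1-r}$, matching the single-block ($u=k_1$) right-hand side. For the inductive step, assuming $k_2>k_1+1$ together with the statement for $k_2-1$, I would insert $t_{k_2-1}^{v}\cdot t_{k_2-1}^{-v}=1$ to factor
\[
t_{k_1}^{k_1-v}\cdot t_{k_2}^{v}=\left(t_{k_1}^{k_1-v}\cdot t_{k_2-1}^{v}\right)\cdot\left(t_{k_2-1}^{-v}\cdot t_{k_2}^{v}\right).
\]
Because $t_{k_2-1}$ has order $k_2-1$, the second factor equals $t_{k_2-1}^{(k_2-1)-v}\cdot t_{k_2}^{v}$, which is of the form $t_{k}^{k-v}\cdot t_{k+1}^{v}$ with $k=k_2-1>v$; Proposition \ref{s-r-k-v} then yields its normal form $\prod_{r=0}^{v-1}s_{(k_2-1)-r}$. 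The induction hypothesis supplies the normal form $\prod_{u=k_1}^{k_2-2}\prod_{r=0}^{v-1}s_{u-r}$ of the first factor.

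I would conclude by concatenating the two reduced words:
\[
\left(\prod_{u=k_1}^{k_2-2}\prod_{r=0}^{v-1}s_{u-r}\right)\cdot\left(\prod_{r=0}^{v-1}s_{(k_2-1)-r}\right)=\prod_{u=k_1}^{k_2-1}\prod_{r=0}^{v-1}s_{u-r},
\]
which equals $\pi$ in $S_n$ and has precisely the canonical shape of Definition \ref{brenti-normal}, obtained by appending the run indexed by $u=k_2-1$ to the runs indexed by $u=k_1,\dots,k_2-2$. By uniqueness of the normal form this word is $norm(\pi)$, completing the induction, and summing the $y_u$ gives $\ell(\pi)=(k_2-k_1)v$. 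The one delicate point I would articulate carefully is exactly this gluing: the two reduced words from the factorization combine into a word that is again in the prescribed normal-form shape, with strictly increasing block-indices $u$ and each block-length bounded by its index, which is what licenses the appeal to uniqueness rather than a hand computation of reducedness.
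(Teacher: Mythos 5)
Your proof is correct and follows essentially the same route as the paper: both telescope $\pi$ into the product $\prod_{u=k_1}^{k_2-1} t_u^{-v}\cdot t_{u+1}^{v}$, apply Proposition \ref{s-r-k-v} to each factor, and concatenate the resulting descending runs into the claimed word. Your inductive packaging, and the explicit appeal to the uniqueness of the normal form to justify that the concatenation is $norm(\pi)$, merely make precise a step the paper passes over with ``therefore''; the underlying argument is identical.
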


\begin{proof}
Let $\pi=t_{k_1}^{k_1-v}\cdot t_{k_2}^{v}$ be an element of $S_n$, which is presented in the standard $OGS$ canonical form. Then, $\pi=\prod_{u=k_1}^{k_2-1}t_{u}^{-v}\cdot t_{u+1}^{v}$. By proposition \ref{s-r-k-v}, \\ $norm(t_{u}^{-v}\cdot t_{u+1}^{v})=\prod_{r=0}^{v-1}s_{u-r}$. Therefore, $norm(\prod_{u=k_1}^{k_2-1}t_{u}^{-v}\cdot t_{u+1}^{v})=\prod_{u=k_1}^{k_2-1}\prod_{r=0}^{v-1}s_{u-r}$.
Therefore, obviously, $\ell(\pi)=(k_{2}-k_{1})\cdot v.$
\end{proof}
\\

The next Proposition describes the connection between the normal form of $S_n$ in Coxeter generators, and the standard $OGS$ canonical form of $S_n$.

\begin{proposition}\label{normal-canonical}
Let $\pi\in S_n$, such that $norm(\pi)=\prod_{u=1}^{n-1}\prod_{r=0}^{y_{u}-1}s_{u-r}$, where \\ $0\leq y_{u}\leq u$ is a non negative integer ($y_{u}=0$ means $norm(\pi)$ does not contain any segment of decreasing indices starting with $s_{u}$). Then, the standard $OGS$ canonical form of $\pi$ is:
$$\prod_{j=2}^{n}t_{j}^{i_{j}},$$
such that:
\begin{itemize}
\item If $y_{j}\leq y_{j-1}$, then $i_{j}=y_{j-1}-y_{j}$;
\item If $y_{j}>y_{j-1}$, then $i_{j}=j+y_{j-1}-y_{j}$;
\item $i_{n}=y_{n-1}$ (We may assume $y_{n}=0$, and using $i_{n}=y_{n-1}-y_{n}$).
\end{itemize}
\end{proposition}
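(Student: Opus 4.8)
The plan is to convert the normal form $norm(\pi)=\prod_{u=1}^{n-1}\prod_{r=0}^{y_u-1}s_{u-r}$ into the standard $OGS$ canonical form by processing the blocks from left to right, maintaining at each stage a partial product already expressed in the $t$-generators. The key observation is that each factor $\prod_{r=0}^{y_u-1}s_{u-r}=s_u\cdot s_{u-1}\cdots s_{u-y_u+1}$ is precisely the kind of descending segment handled by Proposition \ref{s-r-k-v}: with $k=u$ and $v=y_u$, it equals $t_u^{\,u-y_u}\cdot t_{u+1}^{\,y_u}$ when $y_u<u$, and $t_{u+1}^{\,u}$ when $y_u=u$. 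So $norm(\pi)$ is a product of $n-1$ such two-generator pieces, and the whole task reduces to collecting powers of each $t_j$ after all the telescoping cancellations.

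First I would write $\pi=\prod_{u=1}^{n-1}\left(t_u^{\,u-y_u}\cdot t_{u+1}^{\,y_u}\right)$ using Proposition \ref{s-r-k-v} blockwise (absorbing the $v=u$ case, where the $t_u$-factor disappears, into the same formula since $t_u^{\,u-y_u}=t_u^0$ there). The crucial structural fact, which I would prove by induction on $u$, is that this product is already in canonical order: after multiplying the first $u$ blocks the result lies in $S_{u+1}$ and has canonical form $\left(\prod_{j=2}^{u}t_j^{\,i_j}\right)\cdot t_{u+1}^{\,y_u}$, with the $i_j$ given by the stated rules for $j\le u$. The inductive step multiplies the current product, whose rightmost generator is $t_{u}^{\,y_{u-1}}$, by the next block $t_u^{\,u-y_u}\cdot t_{u+1}^{\,y_u}$; the two $t_u$-powers combine to $t_u^{\,y_{u-1}+u-y_u}$, and I reduce the exponent modulo $u$ (recall $0\le i_u<u$). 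This modular reduction is exactly where the case split arises: if $y_u\le y_{u-1}$ then $y_{u-1}+u-y_u\ge u$, so the exponent reduces to $y_{u-1}-y_u$, whereas if $y_u>y_{u-1}$ then $0\le y_{u-1}+u-y_u<u$ already, giving $i_u=u+y_{u-1}-y_u$. This reproduces the two bullet formulas. The boundary case $i_n=y_{n-1}$ follows by setting $y_n=0$ in the first rule (since $y_n=0\le y_{n-1}$ forces $i_n=y_{n-1}-0$).

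The one point requiring care is the claim that multiplying the accumulated canonical word by the new block does not disturb the already-fixed exponents $i_2,\dots,i_{u-1}$, i.e., that the only interaction is between the two adjacent $t_u$-powers. This holds because the accumulated product lies in $\dot{S}_{u}$ (fixing all points $\ge u+1$) and its canonical form ends in $t_u^{\,y_{u-1}}$, so right-multiplying by $t_u^{\,u-y_u}$ touches only the $t_u$-slot, and the subsequent $t_{u+1}^{\,y_u}$ is the new top generator that commutes past nothing already placed — it simply becomes the rightmost factor, exactly as in the two-block computation of Proposition \ref{s-r-k-v-2}. Thus no exchange law involving three distinct generators is ever invoked, and the induction closes cleanly.

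I expect the main obstacle to be bookkeeping rather than conceptual: one must verify that the exponent $y_{u-1}+u-y_u$ always lands in the correct range before reduction (it lies in $(0,2u)$ since $0\le y_{u-1},y_u\le u$, with the two halves of the interval corresponding precisely to the two cases), and that the degenerate blocks with $y_u=0$ contribute $i_u=0$ consistently with both branches. Once the invariant \[\left(\prod_{j=2}^{u}t_j^{\,i_j}\right)\cdot t_{u+1}^{\,y_u}\in\dot S_{u+1}\] is stated and fed through the induction using Proposition \ref{s-r-k-v}, the result follows.
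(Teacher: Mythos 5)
Your proposal is correct and follows essentially the same route as the paper's own proof: both rewrite each descending segment $\prod_{r=0}^{y_u-1}s_{u-r}$ via Proposition \ref{s-r-k-v} as $t_u^{u-y_u}\cdot t_{u+1}^{y_u}$ (with the $y_u=u$ case absorbed as $t_u^0$), then merge the two adjacent powers of each $t_u$ and reduce the exponent using $t_u^u=1$, which is exactly where the two-case formula for $i_j$ comes from. Your explicit induction and the remark that only adjacent $t_u$-slots interact are just a more careful write-up of the paper's one-line "substitute and use $t_u^u=1$" argument.
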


\begin{proof}
Let $\pi\in S_n$, such that $norm(\pi)=\prod_{u=1}^{n-1}\prod_{r=0}^{y_{u}-1}s_{u-r}$, where $0\leq y_{u}\leq u$ is a non negative integer. Then, by Proposition \ref{s-r-k-v}, the following holds:
\begin{itemize}
\item If $0<y_{u}<u$, then $\prod_{r=0}^{y_{u}-1}s_{u-r}=t_{u}^{u-y_{u}}\cdot t_{u+1}^{y_{u}}$;
\item If $y_{u}=u$, then $\prod_{r=0}^{y_{u}-1}s_{u-r}=\prod_{r=0}^{u-1}s_{u-r}=t_{u+1}^{u}$;
\end{itemize}
Thus, by substituting instead of $\prod_{r=0}^{y_{u}-1}s_{u-r}$, the suitable presentation in canonical form, and by using $t_{u}^{u}=1$ for $2\leq u\leq n$, we get the desired result.
\end{proof}

\begin{example}
Let $\pi\in S_9$ with the following normal form in Coxeter generators:
$$norm(\pi)=s_1\cdot (s_3\cdot s_2)\cdot (s_4\cdot s_3\cdot s_2)\cdot (s_6\cdot s_5\cdot s_4\cdot s_3)\cdot (s_7\cdot s_6)\cdot (s_8\cdot s_7).$$
Then, $y_{1}=1$, $y_2=0$, $y_3=2$, $y_4=3$, $y_5=0$, $y_6=4$, $y_7=2$, $y_8=2$.
Thus, the standard $OGS$ canonical form of $\pi$ is the following:
\begin{align*}
\pi&=t_{2}^{1}\cdot t_{3}^{3-2}\cdot t_{4}^{4-1}\cdot t_{5}^{3}\cdot t_{6}^{6-4}\cdot t_{7}^{2}\cdot t_{8}^{0}\cdot t_{9}^{2} \\ &=t_{2}\cdot t_{3}\cdot t_{4}^{3}\cdot t_{5}^{3}\cdot t_{6}^{2}\cdot t_{7}^{2}\cdot t_{9}^{2}.
\end{align*}
\end{example}

\begin{conclusion}
Let $\omega_{n}$ be the longest element of $S_{n}$ according to the Coxeter presentation (i.e. $norm(\omega_{n})=\prod_{u=1}^{n-1}\prod_{y=0}^{u-1}s_{u-r}$). Then, the following hold:
\begin{itemize}
\item $\omega_{n}=\prod_{j=2}^{n}t_{j}^{j-1}$ in the presentation by the standard $OGS$ canonical form;
\item The subgroup of $S_{n}$ which is generated by $\omega_{n}$ and $\omega_{n-1}$ is the Coxeter group $I_{2}(n)$, where the following holds:
\begin{itemize}
\item $$b=\prod_{j=2}^{n-1}t_{j}^{j-1}, ~~ a=t_{n},$$
form a standard $OGS$ for $I_{2}(n)$, such that the presentation of $a^{i}$ and of $b\cdot a^{i}$ for $0\leq i<n$, in the standard $OGS$ canonical form of $S_{n}$ is as follows:
$$a^{i}=t_{n}^{i}, ~~ b\cdot a^{i}=\omega_{n-1}=\prod_{j=2}^{n-1}t_{j}^{j-1}t_{n}^{i};$$
\item $$b'=\prod_{j=2}^{n}t_{j}^{j-1}, ~~ a'=t_{n}^{-1},$$
form a standard $OGS$ for $I_{2}(n)$, such that the presentation of ${a'}^{i}$ and of $b'\cdot {a'}^{i}$ for $0\leq i<n$, in the standard $OGS$ canonical form of $S_{n}$ is as follows:
$${a'}^{i}=t_{n}^{n-i}, ~~ b'\cdot {a'}^{i}=\omega_{n}=\prod_{j=2}^{n-1}t_{j}^{j-1}t_{n}^{n-1-i}.$$
\end{itemize}
\end{itemize}
\end{conclusion}

\begin{proof}
Since $norm(\omega_{n})=\prod_{u=1}^{n-1}\prod_{y=0}^{u-1}s_{u-r}$, the standard $OGS$ canonical form of $\omega_{n}$ comes directly from Proposition \ref{normal-canonical}. By considering the permutation presentation of $\omega_{n}$ and $\omega_{n-1}$, we get by Remark \ref{symmetry-dihedral}, $\omega_{n-1}$ and $\omega_{n}$ generates the Coxeter group $I_{2}(n)$. Then, by using Propositions \ref{cox-ogs}, \ref{dihedral-order} we get the desired standard $OGS$ canonical forms for $I_{2}(n)$.
\end{proof}

\subsection{Standard $OGS$ elementary factorization, and the Coxeter length}

In this subsection, we define standard $OGS$ elementary elements, and the standard $OGS$ elementary factorization of elements of $S_n$ onto a product of standard $OGS$ elementary factors, which we need to describe the Coxeter length of an arbitrary $\pi\in S_n$, which is presented in the standard $OGS$ canonical form.
The standard $OGS$ elementary factorization is used for the algorithm of the standard $OGS$ canonical form of the inverse element $\pi^{-1}$ for an arbitrary $\pi\in S_n$ as well, which we will show in the next subsection.

\begin{definition}\label{elementary}
Let $\pi\in S_n$, where $\pi=\prod_{j=1}^{m}t_{k_{j}}^{i_{k_{j}}}$ is presented in the standard $OGS$ canonical form, with $i_{k_{j}}>0$ for every $1\leq j\leq m$. Then, $\pi$ is called standard $OGS$ elementary element of $S_n$, if
$$\sum_{j=1}^{m}i_{k_{j}}\leq k_{1}.$$
\end{definition}

\begin{definition}\label{rho}
Let $\pi=\prod_{j=1}^{m}t_{k_{j}}^{i_{k_{j}}}$  a standard  $OGS$ elementary  element of $S_n$ which is presented in the standard $OGS$ canonical form, with $i_{k_{j}}>0$ for every $1\leq k\leq m$. Then, for every $1\leq j\leq m$, $\rho_{j}$, $\varrho_{j}$, and $\vartheta_{j}$ are defined to be as follows:
\begin{itemize}
\item $\rho_{j}=\sum_{x=j}^{m}i_{k_{x}}$;
\item $\varrho_{j}=\sum_{x=1}^{j}i_{k_{x}}$;
\item $\vartheta_{j}=k_{j}-\rho_{1}=k_{j}-\varrho_{m}$.
\end{itemize}
\end{definition}

\begin{remark}\label{maj}
Let $\pi=\prod_{j=1}^{m}t_{k_{j}}^{i_{k_{j}}}$  a standard  $OGS$ elementary  element of $S_n$ which is presented in the standard $OGS$ canonical form, with $i_{k_{j}}>0$ for every $1\leq j\leq m$, let $\rho_{j}$ and $\varrho_{j}$ be integers as defined in Definition \ref{rho}. Then, by \cite{AR}, the parameters $\rho$  and $\varrho$ are connected to the major-index of  $\pi$, such that:
$$\rho_{1}=\varrho_{m}=maj\left(\pi\right).$$
Thus, we often write $maj\left(\pi\right)$ instead of $\rho_{1}$ or instead of $\varrho_{m}$, which appears several times in the paper, especially in the algorithm of the presentation of the inverse element in the standard $OGS$ canonical form.
In particular, an element $\pi=\prod_{j=1}^{m}t_{k_{j}}^{i_{k_{j}}}$, which is presented in the standard $OGS$ canonical form, with $i_{k_{j}}>0$ for $1\leq j\leq m$, is a standard $OGS$ elementary element if and only if
$$maj\left(\pi\right)\leq k_{1}.$$

\end{remark}

\begin{proposition}\label{order-rho}
Let $\pi=\prod_{j=1}^{m}t_{k_{j}}^{i_{k_{j}}}$  a standard  $OGS$ elementary  element of $S_n$ which is presented in the standard $OGS$ canonical form, with $i_{k_{j}}>0$ for every $1\leq j\leq m$, let $\rho_{j}$, $\varrho_{j}$ and $\vartheta_{j}$ be integers as defined in Definition \ref{rho}. Then, $1\leq j_{1}<j_{2}\leq m$ if and only if the following holds:
\begin{itemize}
\item $\rho_{j_{1}}>\rho_{j_{2}}$;
\item $\varrho_{j_{1}}<\varrho_{j_{2}}$;
\item $\vartheta_{j_{1}}<\vartheta_{j_{2}}$.
\end{itemize}
Moreover, for every $1\leq j\leq m$, we have:
\begin{itemize}
\item $\rho_{j}>0$;
\item $\varrho_{j}>0$;
\item $\vartheta\geq 0$;
\item $\vartheta_{j}=0$ ~if and only if ~$j=1$ ~and ~$k_{1}=maj\left(\pi\right)$;
\item If $\vartheta_{1}=0$, then $m>1$.
\end{itemize}
\end{proposition}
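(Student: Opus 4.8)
The plan is to reduce everything to two structural facts about the presentation $\pi=\prod_{j=1}^{m}t_{k_{j}}^{i_{k_{j}}}$. First, the listed indices satisfy $2\leq k_{1}<k_{2}<\cdots<k_{m}\leq n$: they inherit the strictly increasing order of the standard $OGS$ canonical form $t_{2}^{i_{2}}\cdots t_{n}^{i_{n}}$ once the zero-exponent factors are discarded. Second, every exponent is strictly positive, $i_{k_{j}}>0$, by hypothesis. I would state both of these explicitly at the outset, since the entire argument rests on them. Once they are in place, each of $\rho$, $\varrho$, $\vartheta$ is a strictly monotone function of the position index $j$, and all the claimed equivalences follow.

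For the three ``if and only if'' statements I would argue purely by strict monotonicity. Fixing $1\leq j_{1}<j_{2}\leq m$, the telescoping differences
\[
\rho_{j_{1}}-\rho_{j_{2}}=\sum_{x=j_{1}}^{j_{2}-1}i_{k_{x}},\qquad
\varrho_{j_{2}}-\varrho_{j_{1}}=\sum_{x=j_{1}+1}^{j_{2}}i_{k_{x}},\qquad
\vartheta_{j_{2}}-\vartheta_{j_{1}}=k_{j_{2}}-k_{j_{1}}
\]
are each strictly positive: the first two because every exponent is positive, the third because the indices strictly increase. Hence $\rho$ is strictly decreasing in $j$, while $\varrho$ and $\vartheta$ are strictly increasing. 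Since a strictly monotone sequence is order-determined, each of the three inequalities holds if and only if $j_{1}<j_{2}$, giving both directions of the equivalence simultaneously.

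For the ``moreover'' list I would proceed bullet by bullet, invoking the remaining hypotheses one at a time. The bounds $\rho_{j}\geq i_{k_{j}}>0$ and $\varrho_{j}\geq i_{k_{1}}>0$ are immediate, each being a sum of positive terms containing at least one summand. For $\vartheta_{j}\geq 0$ I would use the elementary hypothesis, which by Remark \ref{maj} reads $\rho_{1}=maj\left(\pi\right)\leq k_{1}$; combined with $k_{j}\geq k_{1}$ this yields $\vartheta_{j}=k_{j}-\rho_{1}\geq k_{1}-\rho_{1}\geq 0$. For the characterization of $\vartheta_{j}=0$, note that $\vartheta$ is strictly increasing with $\vartheta_{1}\geq 0$, so any $j>1$ gives $\vartheta_{j}>\vartheta_{1}\geq 0$; thus a vanishing value forces $j=1$, and $\vartheta_{1}=k_{1}-\rho_{1}=0$ is precisely $k_{1}=maj\left(\pi\right)$.

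The final bullet is the only place where the canonical-form bound (rather than mere positivity) is needed: if $\vartheta_{1}=0$ then $k_{1}=maj\left(\pi\right)=\sum_{x=1}^{m}i_{k_{x}}$, and were $m=1$ this would read $k_{1}=i_{k_{1}}$, contradicting the defining constraint $i_{k_{1}}<k_{1}$ of the standard $OGS$ canonical form; hence $m\geq 2$. I do not anticipate a genuine obstacle here — the proof is a bookkeeping exercise — so the main care is simply to match each claim to the correct hypothesis: positivity of exponents drives the $\rho,\varrho$ monotonicity and positivity, strict increase of indices drives the $\vartheta$ monotonicity, the elementary condition $maj\left(\pi\right)\leq k_{1}$ gives $\vartheta_{j}\geq 0$, and the bound $i_{k_{1}}<k_{1}$ rules out the degenerate case $m=1$.
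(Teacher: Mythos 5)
Your proof is correct and is essentially the argument the paper intends: the paper simply states that the proposition "comes directly from the definition of $\rho_{j}$, $\varrho_{j}$, and $\vartheta_{j}$," and your write-up supplies exactly the telescoping/monotonicity details (positivity of the $i_{k_{j}}$, strict increase of the $k_{j}$, the elementary bound $maj\left(\pi\right)\leq k_{1}$, and the constraint $i_{k_{1}}<k_{1}$ for the last bullet) that make that assertion rigorous. No gaps.
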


\begin{proof}
The proof comes directly from the definition of $\rho_{j}$, $\varrho_{j}$, and $\vartheta_{j}$.
\end{proof}

\begin{theorem}\label{basic-canonical}
Let $\pi=\prod_{j=1}^{m}t_{k_{j}}^{i_{k_{j}}}$ be a standard $OGS$ elementary element of $S_n$, presented in the standard $OGS$ canonical form, with $i_{k_{j}}>0$ for every $1\leq j\leq m$. Then, the following are satisfied:
 \begin{itemize}
\item  $$\pi=\begin{cases} t_{k_{1}}^{\rho_{1}}\cdot (t_{k_{1}}^{k_{1}-\rho_{2}}\cdot t_{k_{2}}^{\rho_{2}})\cdot (t_{k_{2}}^{k_{2}-\rho_{3}}\cdot t_{k_{3}}^{\rho_{3}})\cdots (t_{k_{m-1}}^{k_{m-1}-\rho_{m}}\cdot t_{k_{m}}^{\rho_{m}}) &  ~~ k_{1}>\rho_{1} \\  \\ (t_{k_{1}}^{k_{1}-\rho_{2}}\cdot t_{k_{2}}^{\rho_{2}})\cdot (t_{k_{2}}^{k_{2}-\rho_{3}}\cdot t_{k_{3}}^{\rho_{3}})\cdots (t_{k_{m-1}}^{k_{m-1}-\rho_{m}}\cdot t_{k_{m}}^{\rho_{m}}) &  ~~ k_{1}=\rho_{1}\end{cases}.$$
\item  $$norm(\pi)=\prod_{u=\rho_1}^{k_1-1}\prod_{r=0}^{\rho_1-1}s_{u-r}\cdot \prod_{u=k_1}^{k_2-1}\prod_{r=0}^{\rho_2-1}s_{u-r}\cdot \prod_{u=k_2}^{k_3-1}\prod_{r=0}^{\rho_3-1}s_{u-r}\cdots \prod_{u=k_{m-1}}^{k_m-1}\prod_{r=0}^{\rho_m-1}s_{u-r},$$
for $1\leq x\leq m$;
\item $$\ell(\pi)=\sum_{j=1}^{m}k_{j}\cdot i_{k_{j}}-(i_{k_{1}}+i_{k_{2}}+\cdots +i_{k_{m}})^{2}=\sum_{j=1}^{m}k_{j}\cdot i_{k_{j}}-\left(maj\left(\pi\right)\right)^{2};$$
\item Every sub-word of $\pi$ is a standard $OGS$ elementary element too. In particular, for every two sub-words $\pi_{1}$ and $\pi_{2}$ of $\pi$, such that $\pi=\pi_{1}\cdot \pi_{2}$, it is satisfied: $$\ell(\pi)=\ell(\pi_{1}\cdot \pi_{2})<\ell(\pi_{1})+\ell(\pi_{2});$$
\item $$\ell(s_r\cdot \pi)=\begin{cases} \ell(\pi)-1 & r=\sum_{j=1}^{m}i_{k_{j}} \\
\ell(\pi)+1 & r\neq \sum_{j=1}^{m}i_{k_{j}} \end{cases}.$$
i.e., $des\left(\pi\right)$ contains just one element, which means $des\left(\pi\right)=\{maj\left(\pi\right)\}$.
\end{itemize}
\end{theorem}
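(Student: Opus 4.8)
The plan is to treat the five assertions as a descending chain of consequences: the factorization (first bullet) is a purely algebraic telescoping identity, the normal form and length (second and third bullets) follow by feeding the factors into Propositions \ref{t-power} and \ref{s-r-k-v-2}, the sub-word statement is immediate from the length formula, and the descent computation (last bullet) is the genuinely combinatorial part. For the first bullet I would write the right-hand side as one long product and collapse each maximal run of powers of a single $t_{k_j}$. Since consecutive brackets share a generator, the power of $t_{k_j}$ that accumulates is $\rho_j+(k_j-\rho_{j+1})=k_j+i_{k_j}$ for $2\le j\le m-1$ (and $\rho_1+(k_1-\rho_2)=k_1+i_{k_1}$ for $j=1$ in the case $k_1>\rho_1$, while $t_{k_m}$ only receives $\rho_m=i_{k_m}$); using $t_{k_j}^{k_j}=1$ each run reduces to $t_{k_j}^{i_{k_j}}$, so the product telescopes to $\prod_{j=1}^m t_{k_j}^{i_{k_j}}=\pi$. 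The elementary hypothesis enters only to make each bracket $t_{k_j}^{k_j-\rho_{j+1}}\cdot t_{k_{j+1}}^{\rho_{j+1}}$ an admissible input to Proposition \ref{s-r-k-v-2}: by Proposition \ref{order-rho} one has $\rho_{j+1}\ge 1$ and $\rho_{j+1}<\rho_1\le k_1\le k_j$, hence $1\le\rho_{j+1}\le k_j-1$. The $k_1=\rho_1$ case is the same computation with the leading single factor $t_{k_1}^{\rho_1}$ absent.

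For the second bullet I would replace $t_{k_1}^{\rho_1}$ by its normal form from Proposition \ref{t-power} and each bracket by its normal form from Proposition \ref{s-r-k-v-2}. These occupy the consecutive, pairwise disjoint index ranges $[\rho_1,k_1-1]$ and $[k_j,k_{j+1}-1]$, so their concatenation is literally of the shape $\prod_{u=1}^{n-1}\prod_{r=0}^{y_u-1}s_{u-r}$ with $y_u$ equal to $\rho_1$ on the first range and $\rho_{j+1}$ on the $j$-th range; since $y_u\le\rho_1\le u$ throughout, these exponents are admissible, so by uniqueness of the normal form (Definition \ref{brenti-normal}) the concatenation is $norm(\pi)$. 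The third bullet then follows by summing the block lengths $(k_1-\rho_1)\rho_1+\sum_{j=1}^{m-1}(k_{j+1}-k_j)\rho_{j+1}$ supplied by Propositions \ref{t-power} and \ref{s-r-k-v-2} and reorganizing through $i_{k_j}=\rho_j-\rho_{j+1}$ (an Abel summation) into $\sum_{j=1}^m k_j i_{k_j}-\rho_1^2=\sum_{j=1}^m k_j i_{k_j}-(maj(\pi))^2$.

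The fourth bullet is then immediate: any sub-word $\prod_{j\in J}t_{k_j}^{i_{k_j}}$ is in standard canonical form with least index $k_{\min J}\ge k_1$ and exponent sum at most $\rho_1\le k_1\le k_{\min J}$, hence elementary by Definition \ref{elementary}; and for a prefix/suffix split $\pi=\pi_1\cdot\pi_2$ with $maj(\pi_1)=\varrho_a$ and $maj(\pi_2)=\rho_{a+1}$, the length formula applied to $\pi,\pi_1,\pi_2$ yields $\ell(\pi_1)+\ell(\pi_2)-\ell(\pi)=(\varrho_a+\rho_{a+1})^2-\varrho_a^2-\rho_{a+1}^2=2\varrho_a\rho_{a+1}>0$, which is the claimed strict inequality.

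The last bullet is where the real work lies. Since multiplication by a simple reflection changes the Coxeter length by exactly $\pm1$, and by Definition \ref{sn} the sign is $-1$ precisely on $des(\pi)$, the stated formula for $\ell(s_r\cdot\pi)$ is equivalent to the single claim $des(\pi)=\{maj(\pi)\}=\{\rho_1\}$. I would prove this by exhibiting the one-line notation of $\pi$ and showing it is Grassmannian with its unique descent at position $\rho_1$. In the left-to-right convention $\pi(j)=t_{k_m}^{i_{k_m}}(\cdots(t_{k_1}^{i_{k_1}}(j)))$, the element $\pi$ reads as a composite of cyclic down-shifts by $i_{k_j}$ on the nested intervals $[1,k_1]\subseteq\cdots\subseteq[1,k_m]$; I would compute it by induction on $m$, tracking the effect of the final shift $t_{k_m}^{i_{k_m}}$ on the one-line notation of $\pi'=\prod_{j<m}t_{k_j}^{i_{k_j}}$, which is elementary by the fourth bullet and so, by induction, Grassmannian with descent at $\rho_1-i_{k_m}$. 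The aim is to show $\pi$ is increasing on $\{1,\ldots,\rho_1\}$ and on $\{\rho_1+1,\ldots,n\}$ with $\pi(\rho_1)>\pi(\rho_1+1)$, the inequalities $\rho_1\le k_1$ and $k_1<\cdots<k_m$ being exactly what forces the two runs to stay monotone and to meet in a single descent; the explicit $m=2$ computation already displays this pattern and serves as the template. The main obstacle is the bookkeeping in this one-line computation: the final cyclic shift is order-preserving except for one wrap-around, so one must verify carefully that it relocates the unique descent from $\rho_1-i_{k_m}$ to $\rho_1$ without introducing a second one. Once $des(\pi)=\{\rho_1\}$ is established, consistency with $maj(\pi)=\rho_1$ (Remark \ref{maj}) is automatic.
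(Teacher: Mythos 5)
Your proposal is correct and follows essentially the same route as the paper: the telescoping identity for the factorization, Propositions \ref{t-power} and \ref{s-r-k-v-2} for the normal form, Abel summation for the length, the length formula for the sub-word inequality, and the one-line notation showing $\pi$ is increasing on $[1,\rho_1]$ and on $[\rho_1+1,n]$ for the descent claim. The only divergence is cosmetic: for the last bullet the paper writes down the explicit one-line formulas for $\pi(p)$ directly rather than running your induction on $m$, which is precisely the closed form your bookkeeping would produce.
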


\begin{proof}
Let $\pi=\prod_{j=1}^{m}t_{k_{j}}^{i_{k_{j}}}\in S_n$, s.t. $k_{1}<k_{2}<\ldots, <k_{m}$, and $i_{k_{j}}>0$ for $1\leq j\leq m$, be a standard $OGS$ elementary element. For $1\leq x\leq m$, let  $\rho_x=\sum_{j=x}^{m}i_{k_{j}}$. Then, we get $i_{k_{x}}=\rho_{x}-\rho_{x+1}$ for every $1\leq x\leq m-1$ and $\rho_{m}=i_{k_{m}}$. Thus, $$\pi=\begin{cases} t_{k_{1}}^{\rho_{1}}\cdot (t_{k_{1}}^{k_{1}-\rho_{2}}\cdot t_{k_{2}}^{\rho_{2}})\cdot (t_{k_{2}}^{k_{2}-\rho_{3}}\cdot t_{k_{3}}^{\rho_{3}})\cdots (t_{k_{m-1}}^{k_{m-1}-\rho_{m}}\cdot t_{k_{m}}^{\rho_{m}}) &  ~~ k_{1}>\rho_{1} \\  \\ (t_{k_{1}}^{k_{1}-\rho_{2}}\cdot t_{k_{2}}^{\rho_{2}})\cdot (t_{k_{2}}^{k_{2}-\rho_{3}}\cdot t_{k_{3}}^{\rho_{3}})\cdots (t_{k_{m-1}}^{k_{m-1}-\rho_{m}}\cdot t_{k_{m}}^{\rho_{m}}) &  ~~ k_{1}=\rho_{1}\end{cases}.$$
 Now we turn to the proof of the second part of the proposition. By applying Proposition \ref{t-power} for the normal form of the sub-word $t_{k_{1}}^{\rho_{1}}$ and applying Proposition \ref{s-r-k-v-2}, for the normal form of every sub-word $t_{k_{x}}^{-\rho_{x+1}}\cdot t_{k_{x+1}}^{\rho_{x+1}}$, we get the desired normal form for $\pi$ according to \cite{BB}.
Now, we turn to the proof of the third part of the proposition.
By the formula of $norm(\pi)$:
\begin{align*}
\ell(\pi)&=\left(k_{1}-\rho_{1}\right)\cdot \rho_{1}+\sum_{j=2}^{m}\left(k_{j}-k_{j-1}\right)\cdot \rho_{j} \\ &= \sum_{j=1}^{m-1}k_{j}\cdot\left(\rho_{j}-\rho_{j+1}\right)+k_{m}\cdot \rho_{m}-\rho_{1}^{2} \\ &= \sum_{j=1}^{m}k_{j}\cdot i_{k_{j}}-\left(maj\left(\pi\right)\right)^{2}.
\end{align*}
Now, we turn to the proof of the forth part of the proposition. Assume $\pi_{1}$ and $\pi_{2}$ are two sub-words of $\pi$, such that $\pi=\pi_{1}\cdot \pi_{2}$. Then, the standard $OGS$ presentation of $\pi_{1}$ and $\pi_{2}$ as follows:
$$\pi_{1}=\prod_{j=1}^{w-1}t_{k_{j}}^{i_{k_{j}}}\cdot t_{k_{w}}^{i'_{k_{w}}} ~~~~~~ \pi_{2}=t_{k_{w}}^{i''_{k_{w}}}\cdot\prod_{j=w+1}^{m}t_{k_{j}}^{i_{k_{j}}},$$
where, $1\leq w\leq m$, and $i'_{k_{w}}+i''_{k_{w}}=i_{k_{w}}$.
Obviously, $$maj\left(\pi_{1}\right)=\sum_{j=1}^{w-1}i_{k_{j}}+i'_{k_{w}}\leq maj\left(\pi\right)\leq k_{1},$$ $$maj\left(\pi_{2}\right)=\sum_{j=w+1}^{m}i_{k_{j}}+i''_{k_{w}}\leq maj\left(\pi\right)\leq k_{1}\leq k_{w}.$$ Thus, $\pi_{1}$ and $\pi_{2}$ are standard $OGS$ elementary elements too. Since $\pi=\pi_{1}\cdot \pi_{2}$, obviously, $maj\left(\pi\right)=maj\left(\pi_{1}\right)+maj\left(\pi_{2}\right)$. Thus,
$$\ell(\pi_{1})+\ell(\pi_{2})=\sum_{j=1}^{m}k_{j}\cdot i_{k_{j}}-\left(maj\left(\pi_{1}\right)\right)^{2}-\left(maj\left(\pi_{2}\right)\right)^{2}>\sum_{j=1}^{m}k_{j}\cdot i_{k_{j}}-\left(maj\left(\pi\right)\right)^{2}=\ell(\pi).$$
Now, we turn to the proof of the last part of the proposition.
Recall, $t_{j}=s_1\cdot s_2\cdots s_{j-1}$, therefore $$t_{j}(p)=\begin{cases} j & p=1 \\ p-1 & 2\leq p\leq j \\ p  & p\geq j+1\end{cases}.$$
Hence, by using $\rho_{1}\leq k_{1}$, the following holds:
\begin{itemize}
\item $\pi(p)=k_{1}-\rho_{1}+p$, for $1\leq p\leq i_{k_{1}}$;
\item $\pi(p)=k_{q}-\rho_{1}+p$, for $2\leq q\leq m$ and $\varrho_{q-1}+1\leq p\leq \varrho_{q}$;
\item In particular, $\pi(\rho_{1})=\pi(\varrho_{m})=k_{m}$;
\item $\pi(p)=p-\rho_{1}$, for $\rho_{1}+1\leq p\leq k_{1}$;
\item $\pi(p)=p-\rho_{q}$, for $2\leq q\leq m$ and $k_{q-1}\leq p\leq k_{q}$;
\item $\pi(p)=p$, for $k_{m}+1\leq p\leq n$.
\end{itemize}
 We have  $s_j$ shorten the length of $\pi$ if and only if $j\in des\left(\pi\right)$. By the observation of $\pi(p)$ for $r+1\leq p\leq n$ and by Proposition \ref{order-rho}, ~$\pi(\rho_{1}+1)<\pi(\rho_{1}+2)<\ldots <\pi(n)$. Since $k_{1}<k_{2}<\ldots, <k_{m}$, it follows $\pi(1)<\pi(2)<\ldots <\pi(\rho_1)$. Therefore, $des\left(\pi\right)$ contains $\rho_{1}$ only, and thus, $\ell(s_{\rho_{1}}\cdot \pi)=\ell(\pi)-1$, and $\ell(s_{j}\cdot \pi)=\ell(\pi)+1$, for $j\neq \rho_{1}$.

\end{proof}

\begin{proposition}\label{exchange-elementary1}
Let $\pi=\prod_{j=1}^{m}t_{k_{j}}^{i_{k_{j}}}$ be a standard $OGS$ elementary element of $S_n$, presented in the standard $OGS$ canonical form, with $i_{k_{j}}>0$ for every $1\leq j\leq m$. Let $t_{x}^{i_{x}}\in S_{n}$ such that $x\leq k_{1}-maj\left(\pi\right)$. Then, the presentation of $\pi\cdot t_{x}^{i_{x}}$ in the standard $OGS$ canonical form is as follows:
$$\pi\cdot t_{x}^{i_{x}}=t_{\rho_{1}+i_{x}}^{\rho_{1}}\cdot t_{\rho_{1}+x}^{i_{x}}\cdot \pi.$$
\end{proposition}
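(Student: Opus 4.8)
The plan is to replace the stated identity by the equivalent conjugation identity
$$\pi\cdot t_{x}^{i_{x}}\cdot \pi^{-1}=t_{\rho_{1}+i_{x}}^{\rho_{1}}\cdot t_{\rho_{1}+x}^{i_{x}},$$
and to prove it by evaluating both sides as permutations, using the left-to-right convention $(\alpha\cdot\beta)(p)=\beta(\alpha(p))$ together with the explicit permutation description of $\pi$ established in the proof of Theorem~\ref{basic-canonical}. Recall from Remark~\ref{maj} that $\rho_{1}=maj\left(\pi\right)$, so the hypothesis reads $x\le k_{1}-\rho_{1}=\vartheta_{1}$; in particular $\rho_{1}+x\le k_{1}\le n$, so both generators on the right are well defined, and since $i_{x}=0$ makes the claim trivial we may assume $i_{x}>0$.

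The decisive step is to localize where $t_{x}^{i_{x}}$ can act nontrivially. As $t_{x}^{i_{x}}$ moves only the values $\{1,\dots,x\}$, I would first verify, from the six-case formula for $\pi(p)$ and the strict chain $\rho_{1}>\rho_{2}>\cdots>\rho_{m}$ of Proposition~\ref{order-rho}, that $\pi(p)\le x$ holds \emph{precisely} for $p$ in the block $B=\{\rho_{1}+1,\dots,\rho_{1}+x\}$. This is exactly where the hypothesis $x\le k_{1}-\rho_{1}$ enters: $B$ lies inside the linear region $\{\rho_{1}+1,\dots,k_{1}\}$ on which $\pi(p)=p-\rho_{1}$, so $\pi$ maps $B$ bijectively onto $\{1,\dots,x\}$ with $\pi(\rho_{1}+j)=j$, whereas on every other range the formulas force $\pi(p)\ge k_{1}-\rho_{1}+1>x$ (using $\rho_{q}<\rho_{1}$ for $q\ge2$). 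Hence $(\pi\cdot t_{x}^{i_{x}})(p)=t_{x}^{i_{x}}(\pi(p))$ equals $\pi(p)$ for $p\notin B$ and equals $t_{x}^{i_{x}}(p-\rho_{1})$ for $p\in B$.

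Next I would identify $\tau:=t_{\rho_{1}+i_{x}}^{\rho_{1}}\cdot t_{\rho_{1}+x}^{i_{x}}$ as a permutation. Writing $a=\rho_{1}+i_{x}$ and $b=\rho_{1}+x$, a short reduction using $t_{a}^{\rho_{1}}(p)\equiv p-\rho_{1}\pmod{a}$ and $t_{b}^{i_{x}}(p)\equiv p-i_{x}\pmod{b}$ shows that $\tau$ fixes $\{1,\dots,\rho_{1}\}$ and $\{\rho_{1}+x+1,\dots,n\}$, while on $B$ it is the $\rho_{1}$-translate of $t_{x}^{i_{x}}$, namely $\tau(\rho_{1}+j)=\rho_{1}+t_{x}^{i_{x}}(j)$ for $1\le j\le x$. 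Then $(\tau\cdot\pi)(p)=\pi(\tau(p))$ agrees with $\pi(p)$ for $p\notin B$, and for $p\in B$ the image $\tau(p)$ again lies in $B\subseteq\{\rho_{1}+1,\dots,k_{1}\}$, so $\pi(\tau(p))=\tau(p)-\rho_{1}=t_{x}^{i_{x}}(p-\rho_{1})$, matching the value of $(\pi\cdot t_{x}^{i_{x}})(p)$ found above. Comparing the two sides pointwise gives $\pi\cdot t_{x}^{i_{x}}=\tau\cdot\pi$, which is the assertion.

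I expect the only real obstacle to be bookkeeping: on $B$ one must split into $\{p:p-\rho_{1}\le i_{x}\}$ and $\{p:p-\rho_{1}>i_{x}\}$ and carry out the modular reduction in $t_{x}^{i_{x}}(p-\rho_{1})$ so that the two pieces of $\tau$ (the shift by $+(x-i_{x})$ and the shift by $-i_{x}$) line up with the corresponding pieces of $\pi\cdot t_{x}^{i_{x}}$. The genuine content, as opposed to routine computation, sits entirely in the localization step of the second paragraph, where the assumption $x\le k_{1}-maj\left(\pi\right)$ guarantees that $t_{x}^{i_{x}}$ interacts with $\pi$ only through the single shift-by-$\rho_{1}$ block $B$ and nowhere else.
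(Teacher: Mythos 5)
Your argument is correct, but it proves the identity by a genuinely different route from the paper. The paper proceeds by induction on $m$ entirely inside the exchange-law calculus: the base case is the first case ($q-i_{q}\geq p$) of Proposition \ref{exchange}, and the inductive step pushes $t_{x}^{i_{x}}$ leftward through $\prod_{j=2}^{m}t_{k_{j}}^{i_{k_{j}}}$ via the induction hypothesis and then through $t_{k_{1}}^{i_{k_{1}}}$ by two further applications of Proposition \ref{exchange} and one of Proposition \ref{exchange-2}, recombining the resulting factors at the end. You instead verify the equivalent conjugation identity pointwise as permutations, using the explicit value table for $\pi$ established in the proof of Theorem \ref{basic-canonical}. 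Your localization step is sound: since $x\leq k_{1}-\rho_{1}$, the block $B=\{\rho_{1}+1,\dots,\rho_{1}+x\}$ sits inside the region where $\pi(p)=p-\rho_{1}$, and on all other ranges one checks $\pi(p)>x$ (using $k_{q-1}-\rho_{q}>k_{1}-\rho_{1}-1$ for $q\geq 2$ and $\pi(p)\geq k_{1}-\rho_{1}+1$ for $p\leq\rho_{1}$), so $t_{x}^{i_{x}}$ interacts with $\pi$ only through $B$; the identification of $t_{\rho_{1}+i_{x}}^{\rho_{1}}\cdot t_{\rho_{1}+x}^{i_{x}}$ as the $\rho_{1}$-translate of $t_{x}^{i_{x}}$ supported on $B$ is likewise a routine check. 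What your approach buys is transparency: it isolates exactly where the hypothesis $x\leq k_{1}-maj\left(\pi\right)$ is used and avoids the inductive bookkeeping and the chain of intermediate non-canonical words in the paper's proof. What it gives up is that the paper's version demonstrates the identity is derivable purely from the exchange laws, which is in the spirit of the rest of the paper. One small addition you should make: after establishing equality as permutations, note that the right-hand side is indeed in canonical order, i.e. $\rho_{1}+i_{x}<\rho_{1}+x\leq k_{1}<\cdots<k_{m}$ with exponents in the admissible ranges (immediate from $0<i_{x}<x$ and the hypothesis), so that it really is the standard $OGS$ canonical form claimed in the statement.
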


\begin{proof}
The proof is in induction on $m$. For the case $m=1$, the proposition holds by using Proposition \ref{exchange}, where considering $t_{q}^{i_{q}}\cdot t_{p}^{i_{p}}$ in case $q-i_{q}\geq p$. Assume in induction that the proposition holds for $m'<m$, and we prove it for $m'=m$.
Thus, by the induction hypothesis:
$$t_{k_{1}}^{i_{k_{1}}}\cdot \prod_{j=2}^{m}t_{k_{j}}^{i_{k_{j}}}\cdot t_{x}^{i_{x}}=t_{k_{1}}^{i_{k_{1}}}\cdot t_{\rho_{2}+i_{x}}^{\rho_{2}}\cdot t_{\rho_{2}+x}^{i_{x}}\cdot \prod_{j=2}^{m}t_{k_{j}}^{i_{k_{j}}}.$$
By our assumption, $k_{1}-\rho_{1}\geq x$ and $x>i_{x}$. Therefore, $k_{1}-i_{k_{1}}>\rho_{2}+i_{x}$. Hence, by using Proposition \ref{exchange}, for the case $q-i_{q}\geq p$, where $q=k_{1}$, ~$i_{q}=i_{k_{1}}$, ~$p=\rho_{2}+i_{x}$, ~and ~$i_{p}=\rho_{2}$, we have:
$$t_{k_{1}}^{i_{k_{1}}}\cdot t_{\rho_{2}+i_{x}}^{\rho_{2}}=t_{\rho_{1}}^{i_{k_{1}}}\cdot t_{\rho_{1}+i_{x}}^{\rho_{2}}\cdot t_{k_{1}}^{i_{k_{1}}}.$$
The assumption $k_{1}-\rho_{1}\geq x$ implies also $k_{1}-i_{k_{1}}\geq\rho_{2}+x$. Hence, by using again Proposition \ref{exchange}, for the case $q-i_{q}\geq p$, where $q=k_{1}$, ~$i_{q}=i_{k_{1}}$, ~$p=\rho_{2}+x$, ~and ~$i_{p}=\rho_{2}$, we have:
$$t_{k_{1}}^{i_{k_{1}}}\cdot t_{\rho_{2}+x}^{i_{x}}=t_{i_{k_{1}}+i_{x}}^{i_{k_{1}}}\cdot t_{\rho_{1}+x}^{i_{x}}\cdot t_{k_{1}}^{i_{k_{1}}}.$$
Thus,
$$\prod_{j=1}^{m}t_{k_{j}}^{i_{k_{j}}}\cdot t_{x}^{i_{x}}=t_{\rho_{1}}^{i_{k_{1}}}\cdot t_{\rho_{1}+i_{x}}^{\rho_{2}}\cdot t_{i_{k_{1}}+i_{x}}^{i_{k_{1}}}\cdot t_{\rho_{1}+x}^{i_{x}}\cdot \prod_{j=1}^{m}t_{k_{j}}^{i_{k_{j}}}.$$
Now, look at the sub-word $$t_{\rho_{1}+i_{x}}^{\rho_{2}}\cdot t_{i_{k_{1}}+i_{x}}^{i_{k_{1}}}.$$
By using Proposition \ref{exchange-2}, for the case $q-i_{q}=p$, where $q=\rho_{1}+i_{x}$, ~$i_{q}=\rho_{2}$, ~$p=i_{k_{1}}+i_{x}$, ~and ~$i_{p}=i_{k_{1}}$, we have:
$$t_{\rho_{1}+i_{x}}^{\rho_{2}}\cdot t_{i_{k_{1}}+i_{x}}^{i_{k_{1}}}=t_{\rho_{1}}^{\rho_{2}}\cdot t_{\rho_{1}+i_{x}}^{\rho_{1}}.$$
Thus,
$$\prod_{j=1}^{m}t_{k_{j}}^{i_{k_{j}}}\cdot t_{x}^{i_{x}}=t_{\rho_{1}+i_{x}}^{\rho_{1}}\cdot t_{\rho_{1}+x}^{i_{x}}\cdot \prod_{j=1}^{m}t_{k_{j}}^{i_{k_{j}}}.$$
Therefore, the proposition holds for every $m'$.
\end{proof}

\begin{proposition}\label{basic-canonical-2}
Let $\pi\in S_n$, where $$norm(\pi)=\prod_{u=p}^{q}\prod_{r=0}^{y_{u}-1}s_{u-r},$$
such that $y_{u+1}\leq y_{u}$, for $p\leq u\leq q-1$ (For a convenience, we omit from the formula in Definition \ref{brenti-normal} $u<p$, and $u>q$, where $y_u=0$), then $\pi$ is a standard $OGS$ elementary element, with the following standard $OGS$ canonical form: $$\prod_{j=1}^{m}t_{k_{j}}^{i_{k_{j}}}$$
with $i_{k_{j}}>0$ for every $1\leq j\leq m$, satisfies the following conditions:
\begin{itemize}
\item If $y_{p}=p$, then:
\begin{itemize}
\item $p=maj\left(\pi\right)<k_{1}$;
\item $k_{j}=u$, for $1\leq j\leq m-1$, if and only if $y_{u}<y_{u-1}$, and  $i_{k_{j}}=y_{u-1}-y_{u}$;
\item $k_{m}=q+1$, and $i_{k_{m}}=y_{q}$.
\end{itemize}
\item If $y_{p}<p$, then:
\begin{itemize}
\item $k_{1}=maj\left(\pi\right)=p$, and $i_{k_{1}}=p-y_{p}$;
\item $k_{j}=u$, for $2\leq j\leq m-1$, if and only if $y_{u}<y_{u-1}$, and  $i_{k_{j}}=y_{u-1}-y_{u}$;
\item $k_{m}=q+1$, and $i_{k_{m}}=y_{q}$.
\end{itemize}
\end{itemize}
\end{proposition}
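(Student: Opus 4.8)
The plan is to obtain the canonical form directly from the general conversion already established in Proposition \ref{normal-canonical}, and then to read off $maj(\pi)$ by a telescoping sum in order to verify the elementary condition via Remark \ref{maj}. The only feature of the present hypothesis that I would exploit is that $y_u$ is non-increasing on $[p,q]$ and vanishes off $[p,q]$; this collapses the two-case rule of Proposition \ref{normal-canonical} into a single rule on the interior and isolates the exceptional behaviour to the two boundary indices $u=p$ and $u=q+1$.

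First I would trace the conversion index by index. For $u<p$ and $u>q+1$ we have $y_{u-1}=y_u=0$, so $i_u=0$ and no generator $t_u$ occurs. At the left endpoint $u=p$ we have $y_{p-1}=0<y_p$, so the rule $i_p=p+y_{p-1}-y_p=p-y_p$ applies; this exponent vanishes exactly when $y_p=p$ and is positive exactly when $y_p<p$, which is precisely the dichotomy of the two cases. On the interior $p<u\le q$ the hypothesis $y_u\le y_{u-1}$ forces the first rule $i_u=y_{u-1}-y_u\ge 0$, which is strictly positive if and only if $y_u<y_{u-1}$; these are exactly the interior generators $k_j=u$ with $i_{k_j}=y_{u-1}-y_u$. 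Finally at $u=q+1$ we have $y_{q+1}=0\le y_q$, hence $i_{q+1}=y_q$, giving the last generator $k_m=q+1$ with $i_{k_m}=y_q$. One checks along the way that every exponent produced lies in the admissible range $0\le i_u<u$, so that this is genuinely the standard $OGS$ canonical form. Listing the nonzero exponents in increasing order of index then yields exactly the two cases in the statement.

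It remains to identify $maj(\pi)$ and to verify that $\pi$ is elementary. Summing the interior and right-boundary exponents telescopes: $\sum_{u=p+1}^{q+1}(y_{u-1}-y_u)=y_p-y_{q+1}=y_p$. In the case $y_p=p$ the left-boundary exponent $i_p$ is zero, so $maj(\pi)=\sum_{j}i_{k_j}=y_p=p$, while the smallest occurring index $k_1$ exceeds $p$; hence $maj(\pi)=p<k_1$. In the case $y_p<p$ we add $i_p=p-y_p$, so $maj(\pi)=(p-y_p)+y_p=p=k_1$. In both cases $maj(\pi)\le k_1$, so by Remark \ref{maj} the element $\pi$ is a standard $OGS$ elementary element, which is the remaining assertion.

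The routine but delicate part will be the bookkeeping at the boundaries and in the degenerate configurations: when the staircase is flat (no strict interior decrease) so that $m=1$ or $m=2$, and when $p=q$ is a single segment, where one should instead invoke Proposition \ref{s-r-k-v} directly and confirm agreement with the telescoping count. Handling $p=1$ also requires a word, since then $i_1$ is not part of the canonical form; but there $y_1=1=p$ falls under the first case with $maj(\pi)=1$, so the formulas remain consistent. I do not expect a genuine obstacle beyond this careful indexing.
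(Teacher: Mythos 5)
Your proposal is correct and follows essentially the same route as the paper: both apply Proposition \ref{normal-canonical} term by term (with the non-increasing hypothesis collapsing the interior to the rule $i_u=y_{u-1}-y_u$ and isolating the exceptional behaviour at $u=p$), and both obtain $maj(\pi)=p$ by the same telescoping sum, whence the elementary property. Your index-by-index bookkeeping is in fact slightly more careful than the paper's (which contains a typo giving $i_{k_1}=y_p$ instead of $p-y_p$ in the case $y_p<p$), but there is no substantive difference in method.
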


\begin{proof}
Consider $norm(\pi)=\prod_{u=p}^{q}\prod_{r=0}^{y_{u}-1}s_{u-r}$. By our assumption, $y_{u}\leq y_{u-1}$ for every $u$ apart from $u=p$. Thus, by Proposition \ref{normal-canonical}, the standard $OGS$ canonical form of $\pi$ is  $\pi=\prod_{j=1}^{m}t_{k_{j}}^{i_{k_{j}}}$, where $i_{k_{j}}>0$ for every $1\leq j\leq m$, with the following conditions:
\begin{itemize}
\item Since $y_{u}=0$ for $u<p$, ~we have ~$k_{1}\geq p$. Therefore, in case $y_{p}<p$, ~$k_{1}=p$ and $i_{k_{1}}=y_{p}$. In case $y_{p}=p$, ~$k_{1}$ is the smallest $u>p$, such that $y_{u}<y_{u-1}=p$, and $i_{k_{1}}=p-y_{u}$;
\item For ~$2\leq j<m$, ~$k_{j}=u$, such that $y_{u}<y_{u-1}$ and $i_{k_{j}}=y_{u-1}-y_{u}$;
\item Since $y_{u}=0$ for $u>q$, ~we have ~$k_{m}=y_{q+1}$ and $i_{k_{m}}=y_{q}$;
\item $maj\left(\pi\right)=\sum_{j=1}^{m}i_{k_{j}}=p-y_{p}+\sum_{u=p}^{q-1}(y_{u}-y_{u+1})+y_{q}=p$.
\end{itemize}
\end{proof}

Now, we define the main definition of the paper, \textbf{Standard $OGS$ elementary factorization}, which allows us to present every $\pi\in S_{n}$ as a product
of standard $OGS$ elementary elements, by using the standard $OGS$ of $\pi$.
\\

\begin{definition}\label{canonical-factorization-def}
Let $\pi\in S_n$. Let $z(\pi)$ be the minimal number, such that $\pi$ can be presented as a product of standard $OGS$ elementary elements, with the following conditions:
\begin{itemize}
\item $$\pi=\prod_{v=1}^{z(\pi)}\pi^{(v)}, ~~~~ where ~~~~\pi^{(v)}=\prod_{j=1}^{m^{(v)}}t_{h^{(v)}_{j}}^{\imath_{j}^{(v)}},$$
 by the presentation in the standard $OGS$ canonical form  for every $1\leq v\leq z(\pi)$ and  $1\leq j\leq m^{(v)}$ such that:
 \begin{itemize}
\item $\imath_{j}^{(v)}>0;$ \\
\item $\sum_{j=1}^{m^{(1)}}\imath_{j}^{(1)}\leq h^{(1)}_{1}$ i.e., $maj\left(\pi^{(1)}\right)\leq h^{(1)}_{1}$; \\
\item $h^{(v-1)}_{m^{(v-1)}}\leq\sum_{j=1}^{m^{(v)}}\imath_{j}^{(v)}\leq h^{(v)}_{1}$ for $2\leq v\leq z$ \\ \\
i.e., $h^{(v-1)}_{m^{(v-1)}}\leq maj\left(\pi^{(v)}\right)\leq h^{(v)}_{1} ~~ for ~~ 2\leq v\leq z$.
\end{itemize}
\end{itemize}
Then, the mentioned presentation is called \textbf{Standard $OGS$ elementary factorization} of $\pi$. Since
the factors $\pi^{(v)}$ are standard $OGS$ elementary elements, they are called standard $OGS$ elementary factors of $\pi$.
\end{definition}

The next theorem shows some very important properties of the standard $OGS$ elementary factorization, which is connected to the descents of $\pi$, and we give an explicit formula for the Coxeter length of an arbitrary $\pi\in S_{n}$ by using the standard $OGS$.
\\

\begin{theorem}\label{canonical-factorization}
Let $\pi=\prod_{j=1}^{m}t_{k_{j}}^{i_{k_{j}}}$ be an element of $S_n$ presented in the standard $OGS$ canonical form, with $i_{k_{j}}>0$ for every $1\leq j\leq m$. Consider the standard $OGS$ elementary factorization of $\pi$ with all the notations used in Definition \ref{canonical-factorization-def}.
Then, the following properties hold:
\begin{itemize}
\item The standard $OGS$ elementary factorization of $\pi$ is unique, i.e., the parameters  $z(\pi)$, $m^{(v)}$ for $1\leq v\leq z(\pi)$, $h^{(v)}_{j}$, and $\imath_{j}^{(v)}$ for $1\leq j\leq m^{(v)}$, are uniquely determined by the standard $OGS$ canonical form of $\pi$, such that:
    \begin{itemize}
    \item For every $h^{(v)}_{j}$ there exists exactly one $k_{j'}$ (where, $1\leq j'\leq m$), such that $h^{(v)}_{j}=k_{j'}$;
    \item If $h^{(v)}_{j}=k_{j'}$, for some $1\leq v\leq z(\pi)$, ~$1<j<m^{(v)}$, and $1\leq j'\leq m$, then $\imath_{j}^{(v)}=i_{k_{j'}}$;
    \item If $h^{(v_{1})}_{j_{1}}=h^{(v_{2})}_{j_{2}}$, where $1\leq v_{1}<v_{2}\leq z(\pi)$, ~$1\leq j_{1}\leq m^{(v_{1})}$, and  \\ $1\leq j_{2}\leq m^{(v_{2})}$, then necessarily $v_{1}=v_{2}-1$, ~$j_{1}=m^{(v_{1})}$, ~$j_{2}=1$, and $$h^{(v_{2}-1)}_{m^{(v_{2}-1)}}=h^{(v_{2})}_{1}=maj\left(\pi_{(v_{2})}\right)=k_{j'},$$
        for some $j'$, such that $\imath_{m^{(v_{2}-1)}}^{(v_{2}-1)}+\imath_{1}^{(v_{2})}=i_{k_{j'}}$;
    \end{itemize}

\item $$norm(\pi)=\prod_{v=1}^{z(\pi)}norm(\pi^{(v)});$$
\item $$\ell(s_r\cdot \pi) = \begin{cases} \ell(\pi)-1 & r=\sum_{j=1}^{m^{(v)}}\imath_{j}^{(v)} ~~for ~~ 1\leq v\leq z(\pi) \\
\ell(\pi)+1 & otherwise \end{cases}.$$
i.e., $$des\left(\pi\right)=\bigcup_{v=1}^{z(\pi)}des\left(\pi^{(v)}\right)=\{maj\left(\pi^{(v)}\right)~|~1\leq v\leq z(\pi)\};$$
\item
\begin{align*}
\ell(\pi) &= \sum_{v=1}^{z(\pi)}\ell(\pi^{(v)}) \\ &= \sum_{v=1}^{z(\pi)}\sum_{j=1}^{m^{(v)}}h^{(v)}_{j}\cdot \imath_{j}^{(v)}-\sum_{v=1}^{z(\pi)}\left(maj\left(\pi^{(v)}\right)\right)^{2} \\ &= \sum_{x=1}^{m}k_{x}\cdot i_{k_{x}}-\sum_{v=1}^{z(\pi)}\left(maj\left(\pi^{(v)}\right)\right)^{2} \\ &= \sum_{x=1}^{m}k_{x}\cdot i_{k_{x}}-\sum_{v=1}^{z(\pi)}{{\left(c^{(v)}\right)}}^{2}, ~~ where ~~ c^{(v)}\in des\left(\pi\right).
\end{align*}
\end{itemize}
\end{theorem}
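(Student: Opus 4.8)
The plan is to prove the four assertions in sequence, deriving the last three from a single structural fact: the reduced word $norm(\pi)$ splits, with no cancellation, into the reduced words of the elementary factors. I would first establish existence and uniqueness of the factorization (the first bullet) by a greedy construction directly on the standard $OGS$ canonical form $\prod_{j=1}^{m}t_{k_{j}}^{i_{k_{j}}}$. Reading the partial sums $\varrho_{j}=\sum_{x=1}^{j}i_{k_{x}}$, which by Remark \ref{maj} are the running major indices, the first factor $\pi^{(1)}$ must absorb the generators $t_{k_{1}},t_{k_{2}},\dots$ as long as the accumulated exponent sum does not exceed $k_{1}=h^{(1)}_{1}$; the condition $maj(\pi^{(1)})\le h^{(1)}_{1}$ together with $maj(\pi^{(2)})\ge h^{(1)}_{m^{(1)}}$ then pins down the break-point uniquely, including the delicate case in which a single generator $t_{k_{j'}}$ is split between two consecutive factors with exponents summing to $i_{k_{j'}}$. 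I would show that the two inequalities of Definition \ref{canonical-factorization-def} leave no freedom in where each factor ends, so their factorization is unique and $z(\pi)$ is forced to be minimal; the three sub-items of the first bullet then read off directly from the construction.

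Next comes the crux, $norm(\pi)=\prod_{v=1}^{z(\pi)}norm(\pi^{(v)})$. I would argue that each elementary factor, by Proposition \ref{basic-canonical-2}, has a normal form whose staircase parameters $y_{u}$ are weakly decreasing on its support, and that the break-points produced in the first step are exactly the strict ascents of the sequence $(y_{u})$ attached to $\pi$ through Proposition \ref{normal-canonical}. Consequently the words $norm(\pi^{(v)})$ are precisely the maximal weakly-decreasing staircase segments of $norm(\pi)$, so concatenating them reproduces $norm(\pi)$ without cancellation, equivalently $\ell(\pi)=\sum_{v}\ell(\pi^{(v)})$. The main obstacle lives here: I must check that no letters are lost at the junctions between consecutive factors, especially across a split generator, which I would verify using the exchange laws of Propositions \ref{exchange} and \ref{exchange-2} together with the boundary inequality $h^{(v-1)}_{m^{(v-1)}}\le maj(\pi^{(v)})$ to rule out any reduction.

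Finally I would deduce the descent and length statements. For the descent set, Theorem \ref{basic-canonical} gives that each factor $\pi^{(v)}$ has the single descent $maj(\pi^{(v)})$; the boundary inequalities force $maj(\pi^{(1)})<maj(\pi^{(2)})<\dots<maj(\pi^{(z(\pi))})$, since $maj(\pi^{(v)})\le h^{(v)}_{1}\le h^{(v)}_{m^{(v)}}\le maj(\pi^{(v+1)})$ with strict inequality somewhere in the chain, using $i_{k}<k$ when a factor is a single generator. Using the reduced concatenation already established, I would show each $maj(\pi^{(v)})$ survives as a genuine descent of $\pi$ and that no other position is a descent, so $des(\pi)=\{maj(\pi^{(v)})\}$; I expect the persistence of the descents of the interior factors (those with $v>1$) to be the subtler point, handled by peeling factors off either end and invoking length additivity. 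The length formula is then immediate: summing $\ell(\pi^{(v)})=\sum_{j}h^{(v)}_{j}\imath^{(v)}_{j}-(maj(\pi^{(v)}))^{2}$ from Theorem \ref{basic-canonical} over $v$, the terms $\sum_{v,j}h^{(v)}_{j}\imath^{(v)}_{j}$ collapse to $\sum_{x}k_{x}i_{k_{x}}$, a split index $k_{j'}$ contributing $k_{j'}(\imath^{(v-1)}_{m^{(v-1)}}+\imath^{(v)}_{1})=k_{j'}i_{k_{j'}}$, which yields $\ell(\pi)=\sum_{x}k_{x}i_{k_{x}}-\sum_{v}(maj(\pi^{(v)}))^{2}$, and since the $maj(\pi^{(v)})$ are exactly the elements $c^{(v)}$ of $des(\pi)$, the last expression follows.
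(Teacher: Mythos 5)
Your overall architecture --- construct the factorization, show that the normal forms of the factors concatenate without cancellation, then read off the descent set and the length formula --- is the same as the paper's, and your second, third and fourth steps are essentially the paper's own route: it too derives $norm(\pi)=\prod_{v}norm(\pi^{(v)})$ from Theorem \ref{basic-canonical}, proves the descent statement by induction on $z(\pi)$ using length subadditivity plus the fact that the descents of $\pi$ sum to $maj(\pi)$, and then sums the factor lengths. Your observation that the chain $maj(\pi^{(1)})<\cdots<maj(\pi^{(z(\pi))})$ is strict is also correct, since equality throughout would force a one-generator factor $t_{k}^{k}$, which is excluded.

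The gap is in your first step: a left-to-right greedy pass that lets $\pi^{(v)}$ absorb generators ``as long as the accumulated exponent sum does not exceed $h^{(v)}_{1}$'' does not produce a valid factorization. Take $\pi=t_{5}^{2}\cdot t_{10}^{2}\cdot t_{20}^{15}$. Your rule gives $\pi^{(1)}=t_{5}^{2}\cdot t_{10}^{2}\cdot t_{20}^{1}$ (exponent sum $5=k_{1}$), leaving $\pi^{(2)}=t_{20}^{14}$ with $maj\left(\pi^{(2)}\right)=14<20=h^{(1)}_{m^{(1)}}$, which violates Definition \ref{canonical-factorization-def}; the correct factorization is $\pi^{(1)}=t_{5}^{2}\cdot t_{10}^{2}$, $\pi^{(2)}=t_{20}^{15}$, where the first factor stops strictly below the bound $k_{1}$. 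The same failure occurs at the second factor of the paper's own example $t_{3}\cdot t_{4}^{2}\cdot t_{6}^{4}\cdot t_{7}^{3}\cdot t_{9}^{3}\cdot t_{10}^{2}$, whose correct second factor is $t_{6}^{4}\cdot t_{7}$, not the greedy $t_{6}^{4}\cdot t_{7}^{2}$. The obstruction is structural: the binding constraint $h^{(v)}_{m^{(v)}}\leq maj\left(\pi^{(v+1)}\right)$ bounds the exponent sum of the \emph{next} factor from below by the last index of the current one, so the endpoint of factor $v$ cannot be decided before factor $v+1$ is known, and a single left-to-right pass cannot resolve this circularity. The paper works from the right instead: $\pi^{(z(\pi))}$ is the minimal terminal segment whose exponent sum reaches the index $k_{m-r}$ of the generator immediately preceding it, and factors are peeled off rightmost-first. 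Alternatively, you could promote your own second paragraph to the definition: by Propositions \ref{normal-canonical} and \ref{canonical-factorization-2} the break points are exactly the strict ascents of the staircase sequence $(y_{u})$ of $norm(\pi)$, which yields existence, uniqueness, and the concatenation $norm(\pi)=\prod_{v}norm(\pi^{(v)})$ in one stroke. Either repair salvages the remainder of your argument.
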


\begin{proof}
Let $\pi=\prod_{j=1}^{m}t_{k_{j}}^{i_{k_{j}}}$,
such that, $2\leq k_1<k_2<\ldots <k_m\leq n$, and $i_{k_{j}}>0$ for every $1\leq j\leq m$. We build the standard $OGS$ elementary factorization of $\pi$ in the following way. Let us start with the structure of $\pi^{(z(\pi))}$. Consider the smallest integer $r$, such that $\sum_{x=m-r+1}^{m}i_{k_{x}}\geq k_{m-r}$, and fit $m^{(z(\pi))}$ to be $r$,  and $h^{(z(\pi))}_{y}$ to be $k_{m-r+y}$ for every integer $1\leq y\leq r$. We set $\imath_{y}^{(z(\pi))}$ to be $i_{k_{m-r+y}}$ for $2\leq y\leq r$, and $\imath_{1}^{(z(\pi))}$ as follows:
Let $\imath_{1}^{(z(\pi))}$ be $i_{k_{m-r+1}}$ in case $\sum_{x=m-r+1}^{m}i_{k_{x}}\leq k_{m-r+1}$, and let $\imath_{1}^{(z(\pi))}$ be $k_{m-r+1}-\sum_{x=m-r+2}^{m}i_{k_{x}}$ in case $\sum_{x=m-r+1}^{m}i_{k_{x}}\geq k_{m-r+1}$. Now, we have $\pi=\pi'\cdot \pi^{(z(\pi))}$, where
\begin{itemize}
\item in case $\sum_{x=m-r+1}^{m}i_{k_{x}}\leq k_{m-r+1}$:
 \begingroup
 \large
 $$\pi'=t_{k_1}^{i_{k_1}}\cdot t_{k_2}^{i_{k_2}}\cdots t_{k_{m-r}}^{i_{k_{m-r}}}$$
 $$\pi^{(z(\pi))}=t_{k_{m-r+1}}^{i_{k_{m-r+1}}}\cdots t_{k_{m-1}}^{i_{k_{m-1}}}\cdot t_{k_m}^{i_{k_m}}.$$
\endgroup
Thus,
\begingroup
\large
$$k_{m-r}< ~maj\left(\pi^{(z(\pi))}\right)=\sum_{x=m-r+1}^{m}i_{k_{x}}~\leq k_{m-r+1};$$
\endgroup
\item in case $\sum_{x=m-r+1}^{m}i_{k_{x}}\geq k_{m-r+1}$:
\begingroup
\large
$$\pi'=t_{k_1}^{i_{k_1}}\cdot t_{k_2}^{i_{k_2}}\cdots t_{k_{m-r}}^{i_{k_{m-r}}}\cdot t_{k_{m-r+1}}^{i_{k_{m-r+1}}-k_{m-r+1}+\sum_{x=m-r+2}^{m}i_{k_{x}}}$$ $$\pi^{(z(\pi))}=t_{k_{m-r+1}}^{k_{m-r+1}-\sum_{x=m-r+2}^{m}i_{k_{x}}}\cdot t_{k_{m-r+2}}^{i_{k_{m-r+2}}}\cdots t_{k_{m-1}}^{i_{k_{m-1}}}\cdot t_{k_m}^{i_{k_m}}.$$
\endgroup
Thus,
\begingroup
\large
$$maj\left(\pi^{(z(\pi))}\right)=k_{m-r+1}.$$
\endgroup
     Now, we look at $\pi'$ and we construct $\pi^{(z(\pi)-1)}$ from the terminal segment of $\pi'$ in the same way as we constructed $\pi^{(z(\pi))}$ from the terminal segment of $\pi$, and we get $\pi'=\pi''\cdot \pi^{(z(\pi)-1)}$. Then,  $\pi=\pi''\cdot \pi^{(z(\pi)-1)}\cdot \pi^{(z(\pi))}$. We continue in the same way, by defining $\pi^{(x)}$ for every $1\leq x$. Finally, we get $\pi=\prod_{v=1}^{z(\pi)}\pi^{(v)}$.
\end{itemize}
Since $\pi^{(v)}$ is a standard $OGS$ elementary element for all $1\leq v\leq z(\pi)$, such that \\ $h^{(v)}_{1}\geq h^{(v-1)}_{ m^{(v-1)}}$ for every $2\leq v\leq z(\pi)$,  by using Theorem \ref{basic-canonical}, we have $$norm(\pi)=\prod_{v=1}^{z(\pi)}norm(\pi^{(v)}).$$
Now, we prove the next part of the theorem. The proof is in induction on $z(\pi)$. By the last part of Theorem \ref{basic-canonical}, $\ell(s_r\cdot \pi^{(1)})=\ell(\pi^{(1)})-1$ if and only if $r=\sum_{j=1}^{m^{(1)}}\imath_{j}^{(1)}$. Therefore, this part of the theorem holds for $v=1$. Now, assume in induction, for every $v\leq z(\pi)-1$:  $$\ell(s_r\cdot \pi^{(1)}\cdot \pi^{(2)}\cdots \pi^{(v)})=\begin{cases}\ell(\pi^{(1)}\cdot \pi^{(2)}\cdots \pi^{(v)})-1 & r=\sum_{j=1}^{m^{(w)}}\imath_{j}^{(w)} \\ \ell(\pi^{(1)}\cdot \pi^{(2)}\cdots \pi^{(v)})+1 & r\neq\sum_{j=1}^{m^{(w)}}\imath_{j}^{(w)}\end{cases},$$ for some $w\leq v$. Now, consider $v=z(\pi)$. Let $r\in maj\left(\pi^{(v')}\right)$ for some $1\leq v'\leq z(\pi)-1$. Then, $$\ell(s_r\cdot \pi)=\ell(s_{r}\cdot \prod_{v=1}^{z(\pi)-1}\pi^{(v)}\cdot \pi^{(z(\pi))})\leq \ell(s_r\cdot \prod_{v=1}^{z(\pi)-1}\pi^{(v)})+\ell(\pi^{(z(\pi))}).$$
Since $r\in maj\left(\pi^{(v')}\right)$ for some $1\leq v'\leq z(\pi)-1$, by our induction hypothesis, $$\ell(s_{r}\cdot  \prod_{v=1}^{z(\pi)-1}\pi^{(v)})=\ell(\prod_{v=1}^{z(\pi)-1}\pi^{(v)})-1.$$
Thus, $$\ell(s_{r}\cdot \pi)\leq \ell(\prod_{v=1}^{z(\pi)-1}\pi^{(v)})-1+\ell(\pi^{(z(\pi))})=\ell(\pi)-1.$$
By using $norm(\pi)=\sum_{v=1}^{z(\pi)}norm(\pi^{(v)})$, and by the property that a product of an element by a Coxeter generator $s_{r}$ either shortens or lengthens the length of the element by $1$, we conclude  $$\ell(s_{r}\cdot \pi)=\ell(\pi)-1,$$
for every $r\in maj\left(\pi^{(v')}\right)$ for some $1\leq v'\leq z(\pi)-1$.
 Notice also, that the sum of all $r$ such that $\ell(s_r\cdot \pi)=\ell(\pi)-1$ is the sum of the locations of all the descents of $\pi$, which is $maj\left(\pi\right)$. By \cite{AR}, $$maj\left(\pi\right)=\sum_{v=1}^{z(\pi)}\sum_{j=1}^{m^{(v)}}\imath_{j}^{(v)}$$
 and
 $$maj\left(\prod_{v=1}^{z(\pi)-1}\pi^{(v)}\right)=\sum_{v=1}^{z(\pi)-1}\sum_{j=1}^{m^{(v)}}\imath_{j}^{(v)}.$$
 Let $q$ be the number of descents of $\pi$, which are not descents of $\pi^{(v)}$ for any $v<z(\pi)$, and denote by $r_{x}$  (where, $1\leq x\leq q$) the descents such that $r_{x}\in des\left(\pi\right)$ and $r_{x}\notin des\left(\pi^{(v)}\right)$ for $v<z(\pi)$. Then, the following holds:
 \begin{align*}
 \sum_{x=1}^{q}r_{x}=maj\left(\pi\right)-maj\left(\prod_{v=1}^{z(\pi)-1}\pi^{(v)}\right)&=\sum_{v=1}^{z(\pi)}\sum_{j=1}^{m^{(v)}}\imath_{j}^{(v)}-\sum_{v=1}^{z(\pi)-1}
 \sum_{j=1}^{m^{(v)}}\imath_{j}^{(v)} \\ &=\sum_{j=1}^{m^{(z(\pi))}}\imath_{j}^{(z(\pi))}=maj\left(\pi^{(z(\pi))}\right),
 \end{align*}
 and by \cite{BB}
$$s_{r_{x}}\cdot \left(\prod_{v=1}^{z(\pi)-1}\pi^{(v)}\right)\cdot \pi^{(z(\pi))}=\left(\prod_{v=1}^{z(\pi)-1}\pi^{(v)}\right)\cdot\hat{\pi}_{z(\pi)}$$
 where, we get $\hat{\pi}_{z(\pi)}$ from $\pi^{(z(\pi))}$ by omitting one Coxeter generator from a reduced presentation of it (i.e., $s_{r_{x}}$ shortens by $1$ the length of the segment $\pi^{(z(\pi))}$ of $\pi$).

 Notice, by Theorem \ref{basic-canonical}, the first letter from left to right of $norm(\pi^{(z(\pi))})$  is \\
 \begingroup
 \large
  $s_{maj\left(\pi^{(z(\pi))}\right)}$.
  \endgroup
 We already proved $norm(\pi)=norm(\prod_{v=1}^{z(\pi)-1}\pi^{(v)})\cdot norm(\pi^{(z(\pi))})$. Therefore, the first letter from left to right of the segment $norm(\pi^{(z(\pi))})$  in $norm(\pi)$ is
 \begingroup
 \large
 $s_{maj\left(\pi^{(z(\pi))}\right)}$
 \endgroup
 too. Thus, any $r_{x}<maj\left(\pi^{(z(\pi))}\right)$ cannot shorten the length of the segment $norm(\pi^{(z(\pi))})$  in $norm(\pi)$. Thus, $q=1$, and $r_{1}=maj\left(\pi^{(z(\pi))}\right)$ is the only element in $des\left(\pi\right)$ which is not in $des\left(\pi^{(v)}\right)$, for $1\leq v\leq z(\pi)-1$.
  That proves
 $$des\left(\pi\right)=\bigcup_{v=1}^{z(\pi)}des\left(\pi^{(v)}\right)=\{maj\left(\pi^{(v)}\right)~|~1\leq v\leq z(\pi)\}.$$
Now, we prove the last part of the theorem, the explicit formula for length $\pi\in S_n$. Since $norm(\pi)=\prod_{v=1}^{z(\pi)}norm(\pi^{(v)})$ by a former part of the proposition, and  $norm(\pi)$ is a reduced Coxeter presentation of $\pi$, we get
$$\ell(\pi)=\sum_{v=1}^{z(\pi)}\ell(\pi^{(v)}).$$
Since $\pi^{(v)}$ is a standard $OGS$ elementary factor of $\pi$ for every $1\leq v\leq z(\pi)$, by Theorem \ref{basic-canonical}, $$\ell(\pi^{(v)})=\sum_{j=1}^{m^{(v)}}h^{(v)}_{j}\cdot \imath_{j}^{(v)}-\left(maj\left(\pi^{(v)}\right)\right)^{2}.$$
By the first part of the theorem, $$\sum_{v=1}^{z(\pi)}\sum_{j=1}^{m^{(v)}}h^{(v)}_{j}\cdot \imath_{j}^{(v)}=\sum_{x=1}^{m}k_{x}\cdot i_{k_{x}}.$$
By a former part of the theorem, $c^{(v)}\in des\left(\pi\right)$ if and only if $c^{(v)}=maj\left(\pi^{(v)}\right)$ for some $1\leq v\leq z(\pi)$. Therefore, we get
$$\ell(\pi)=\sum_{x=1}^{m}k_{x}\cdot i_{k_{x}}-\sum_{v=1}^{z(\pi)}{\left(c^{(v)}\right)}^{2}, ~~ where ~~ c^{(v)}\in des\left(\pi\right).$$
\end{proof}

\begin{example}
Let $\pi=t_{3}\cdot t_{4}^{2}\cdot t_{6}^{4}\cdot t_{7}^{3}\cdot t_{9}^{3}\cdot t_{10}^{2}$. Then, the standard $OGS$ elementary factors of $\pi$ are as follows:
$$\pi^{(1)}=t_{3}\cdot t_{4}^{2},$$
$$\pi^{(2)}=t_{6}^{4}\cdot t_{7},$$ and
$$\pi^{(3)}=t_{7}^{2}\cdot t_{9}^{3}\cdot t_{10}^{2}.$$
Now we compute $norm(\pi^{(1)})$, $norm(\pi^{(2)})$, and $norm(\pi^{(3)})$ by using Theorem \ref{basic-canonical}:
$$norm(\pi^{(1)})=s_3\cdot s_2,$$
$$norm(\pi^{(2)})=(s_5\cdot s_4\cdot s_3\cdot s_2\cdot s_1)\cdot s_6,$$
$$norm(\pi^{(3)})=(s_7\cdot s_6\cdot s_5\cdot s_4\cdot s_3)\cdot (s_8\cdot s_7\cdot s_6\cdot s_5\cdot s_4)\cdot (s_9\cdot s_8).$$
Therefore,
\begin{align*}
norm(\pi) &= norm(\pi^{(1)})\cdot norm(\pi^{(2)})\cdot norm(\pi^{(3)}) \\ &= [s_3\cdot s_2]\cdot [(s_5\cdot s_4\cdot s_3\cdot s_2\cdot s_1)\cdot s_6]\cdot \\ &\cdot [(s_7\cdot s_6\cdot s_5\cdot s_4\cdot s_3)\cdot (s_8\cdot s_7\cdot s_6\cdot s_5\cdot s_4)\cdot (s_9\cdot s_8)]
\end{align*}

$$\pi^{(1)}=[1; ~3; ~4; ~2],  ~~~~ \pi^{(2)}=[2; ~3; ~4; ~5; ~7; ~1; ~6], ~~~~ \pi^{(3)}=[1; ~2; ~5; ~6; ~7; ~9; ~10; ~3; ~4, ~8].$$
$$\pi=\pi^{(1)}\cdot \pi^{(2)}\cdot \pi^{(3)}=[2; ~6; ~7; ~5; ~10; ~1; ~9; ~3; ~4; ~8].$$

$$des\left(\pi^{(1)}\right)=\left\{maj\left(\pi^{(1)}\right)\right\}=\{3\}, ~~~~des\left(\pi^{(2)}\right)=\left\{maj\left(\pi^{(2)}\right)\right\}=\{5\},$$ $$des\left(\pi^{(3)}\right)=\left\{maj\left(\pi^{(3)}\right)\right\}=\{7\},$$
$$des\left(\pi\right)=des\left(\pi^{(1)}\right)\cup des\left(\pi^{(2)}\right)\cup des\left(\pi^{(3)}\right)=\left\{3, ~5, ~7\right\}.$$

\begin{align*}
\ell(\pi) &= \ell(\pi^{(1)})+\ell(\pi^{(2)})+\ell(\pi^{(3)}) \\ &= \left(3\cdot 1+4\cdot 2-3^2\right)+\left(6\cdot 4+7\cdot 1-5^2\right)+\left(7\cdot 2+9\cdot 3+10\cdot 2-7^2\right) \\ &=\left(3\cdot 1+4\cdot 2+6\cdot 4+7\cdot 3+9\cdot 3+10\cdot 2\right)-\left(3^2+5^2+7^2\right) \\&=20.
\end{align*}
\end{example}

In Theorem \ref{canonical-factorization} we have shown a very strong connection between the standard $OGS$ elementary factorization, the Coxeter length function, and the descent set of an element $\pi\in S_n$. Therefore, in the next proposition, we show how we conclude the normal form of the elementary factors in the standard $OGS$ elementary factorization, from a normal form of a given $\pi\in S_n$.
\begin{proposition}\label{canonical-factorization-2}
Let $\pi\in S_n$, where $$norm(\pi)=\prod_{u=p}^{q}\prod_{r=0}^{y_{u}-1}s_{u-r}.$$
Denote by $u_{1}, u_{2}, \ldots ,u_{z}$ the values of $u$ such that
\begin{itemize}
\item $u_{1}=p$;
\item $y_{u_{v}}>y_{u_{v}-1}$, for $1\leq v\leq z$;
\item $u_{1}<u_{2}<\cdots <u_{z}$.
\end{itemize}
Moreover, let $u_{z+1}=q+1.$
Then, $\pi=\prod_{v=1}^{z(\pi)}\pi^{(v)}$ is a standard $OGS$ elementary factorization of $\pi$, such that $$norm(\pi^{(v)})=\prod_{u=u_{v}}^{u_{v+1}-1}\prod_{r=0}^{y_{u}-1}s_{u-r}.$$
\end{proposition}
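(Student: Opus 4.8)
The plan is to obtain the factorization by reading it off the shape of $norm(\pi)$, to verify every clause of Definition~\ref{canonical-factorization-def} for it, and then to invoke the uniqueness established in Theorem~\ref{canonical-factorization}. The governing observation is that, with the convention $y_{p-1}=0$, the chosen indices $u_1<u_2<\cdots<u_z$ are precisely the strict-increase points of the exponent sequence $(y_u)$; consequently any index strictly inside a block $[u_v,u_{v+1}-1]$ fails to be a breakpoint, so $y_u\le y_{u-1}$ there, and $(y_u)$ is weakly decreasing on each block.

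With this in hand I would fix $v$ and apply Proposition~\ref{basic-canonical-2} to the $v$-th block $norm(\pi^{(v)})=\prod_{u=u_v}^{u_{v+1}-1}\prod_{r=0}^{y_u-1}s_{u-r}$, taking $p\mapsto u_v$ and $q\mapsto u_{v+1}-1$: the weak monotonicity just noted is exactly its hypothesis, so each $\pi^{(v)}$ is a standard $OGS$ elementary element with $maj(\pi^{(v)})=u_v$, with largest canonical generator $t_{u_{v+1}}$ (so $h^{(v)}_{m^{(v)}}=u_{v+1}$), and with smallest generator index $h^{(v)}_1\ge u_v$, equality holding exactly when $y_{u_v}<u_v$. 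Since $u_1=p$, $u_{z+1}=q+1$ and the $u_v$ increase, the blocks $[u_v,u_{v+1}-1]$ partition $[p,q]$, whence $\prod_{v=1}^{z}norm(\pi^{(v)})=norm(\pi)$; as the latter is reduced, each block is a reduced subword with no cancellation at the junctions. The inequalities of Definition~\ref{canonical-factorization-def} then follow by direct substitution: for $v=1$, $maj(\pi^{(1)})=u_1=p\le h^{(1)}_1$, while for $2\le v\le z$ the previous block terminates at $u_v-1$, giving $h^{(v-1)}_{m^{(v-1)}}=u_v=maj(\pi^{(v)})\le h^{(v)}_1$, which is the boundary (glueing) case of Theorem~\ref{canonical-factorization}.

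The delicate point, and the one I expect to require the most care, is minimality of $z$: I must rule out that a strictly finer (or otherwise different) elementary factorization also meets the conditions. I would handle this using the internal structure of a single elementary element. By Theorem~\ref{basic-canonical} together with the strict monotonicity $\rho_1>\rho_2>\cdots>\rho_m>0$ of Proposition~\ref{order-rho}, the normal form of any elementary element has a weakly decreasing exponent sequence on its support, with a single increase located at its $maj$. Hence any decomposition of $\pi$ into elementary factors whose normal forms concatenate to $norm(\pi)$ must place a cut at each of the $z$ strict-increase points of $(y_u)$, forcing at least $z$ factors; the segmentation above uses exactly those cuts and so attains the minimum. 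Having matched all clauses of Definition~\ref{canonical-factorization-def} with the minimal factor count, the uniqueness clause of Theorem~\ref{canonical-factorization} identifies this segmentation as the standard $OGS$ elementary factorization of $\pi$, and Proposition~\ref{basic-canonical-2} supplies the asserted normal forms $norm(\pi^{(v)})=\prod_{u=u_v}^{u_{v+1}-1}\prod_{r=0}^{y_u-1}s_{u-r}$ of its factors.
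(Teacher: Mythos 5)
Your proof is correct and follows essentially the same route as the paper: apply Proposition \ref{basic-canonical-2} block by block (using that $(y_u)$ is weakly decreasing inside each block $[u_v,u_{v+1}-1]$) to see that each $\pi^{(v)}$ is elementary with $maj(\pi^{(v)})=u_v$, and then check the glueing inequalities $h^{(v-1)}_{m^{(v-1)}}\leq maj(\pi^{(v)})\leq h^{(v)}_{1}$ of Definition \ref{canonical-factorization-def}. You are in fact more careful than the paper on one point: the paper's proof stops after verifying those inequalities, whereas you also justify the minimality of the number of factors (via the single-increase structure of an elementary element's normal form) and then invoke the uniqueness from Theorem \ref{canonical-factorization} — a clause of the definition the paper leaves implicit.
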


\begin{proof}
Consider $$norm(\pi^{(v)})=\prod_{u=u_{v}}^{u_{v+1}-1}\prod_{r=0}^{y_{u}-1}s_{u-r},$$ for $1\leq v\leq z(\pi)$. Since $y_{u_{j+1}}\leq  y_{u_{j}}$, for every $u_{v}\leq j<u_{v+1}-1$, by Proposition \ref{basic-canonical-2}, we have that $\pi^{(v)}$ is a standard $OGS$ elementary element, with $maj\left(\pi^{(v)}\right)=u_{v}$. Assume,
$$norm(\pi^{(v-1)})=\prod_{u=u_{v-1}}^{u_{v}-1}\prod_{r=0}^{y_{u}-1}s_{u-r},$$ where, the presentation of $\pi^{(v-1)}$ in standard $OGS$ canonical form is
\begingroup
\large
$$\pi^{(v-1)}=\prod_{j=1}^{m^{(v-1)}}{t_{{h^{(v-1)}}_{j}}}^{\imath_{j}^{(v-1)}},$$
\endgroup
for some $m^{(v-1)}$, $1\leq j\leq m^{(v-1)}$, $u_{v-1}\leq h^{(v-1)}_{j}\leq u_{v}$, and $0<\imath_{j}^{(v-1)}<h^{(v-1)}_{j}$. Since by Proposition \ref{basic-canonical-2}, $$maj\left(\pi^{(v)}\right)=u_{v}\geq h^{(v-1)}_{j}$$ for every $1\leq j\leq m^{(v-1)}$, we have $$h^{(v-1)}_{m^{(v-1)}}\leq maj\left(\pi^{(v)}\right)\leq h^{(v)}_{1}.$$ Therefore, by Definition \ref{canonical-factorization-def}, ~$\pi=\prod_{v=1}^{z(\pi)}\pi^{(v)}$ is a standard $OGS$ elementary factorization of $\pi$.
\end{proof}
\\

The next three propositions, show some other interesting properties of the standard $OGS$ elementary factorization, concerning exchange properties, parabolic subgroups, and elementary factors, which commute.

\begin{proposition}\label{exchange-elementary2}
Let $\pi\in S_n$, presented in standard $OGS$ elementary factorization, with all the notations used in Definition \ref{canonical-factorization-def}. Let $t_{q}^{i_{q}}\in S_{n}$ such that $q-i_{q}\geq h^{(z(\pi))}_{m^{(z(\pi))}}$. Then, the following holds:
\begingroup
\large
$$t_{q}^{i_{q}}\cdot \pi=\prod_{v=1}^{z(\pi)}\left(t_{\rho_{1}^{(v)}+i_{q}}^{i_{q}}\cdot \prod_{j=1}^{m^{(v)}}t_{h^{(v)}_{j}+i_{q}}^{\imath_{j}^{(v)}}\right)\cdot t_{q}^{i_{q}}.$$
\endgroup
\end{proposition}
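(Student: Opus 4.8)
The plan is to reduce the assertion to the single-factor situation $z(\pi)=1$ and then iterate over the factorization. The core is the following sub-claim, which is exactly the proposition restricted to one factor: if $\sigma=\prod_{j=1}^{m}t_{k_j}^{i_{k_j}}$ is a single standard $OGS$ elementary element with $\rho_1=\sum_{j=1}^{m}i_{k_j}=maj(\sigma)$, and $q-i_q\geq k_m$, then
$$t_q^{i_q}\cdot\sigma=t_{\rho_1+i_q}^{i_q}\cdot\prod_{j=1}^{m}t_{k_j+i_q}^{i_{k_j}}\cdot t_q^{i_q}.$$
Establishing this is the heart of the argument, in the same spirit as the proof of Proposition \ref{exchange-elementary1}, and I would prove it by induction on $m$.

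For $m=1$ the sub-claim is precisely the first case ($q-i_q\geq p$) of the exchange law in Proposition \ref{exchange}, taken with $p=k_1$: it gives $t_q^{i_q}\cdot t_{k_1}^{i_{k_1}}=t_{i_q+i_{k_1}}^{i_q}\cdot t_{k_1+i_q}^{i_{k_1}}\cdot t_q^{i_q}$, which is the desired identity since $\rho_1=i_{k_1}$. For the inductive step I would peel off the first generator, writing $\sigma=t_{k_1}^{i_{k_1}}\cdot\sigma'$ with $\sigma'=\prod_{j=2}^{m}t_{k_j}^{i_{k_j}}$. Because a sub-word of an elementary element is again elementary (Theorem \ref{basic-canonical}), $\sigma'$ is elementary with $maj(\sigma')=\rho_2$. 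Applying Proposition \ref{exchange} to $t_q^{i_q}\cdot t_{k_1}^{i_{k_1}}$ and the induction hypothesis to $t_q^{i_q}\cdot\sigma'$ (both legitimate since $q-i_q\geq k_m$) yields
$$t_q^{i_q}\cdot\sigma=t_{i_q+i_{k_1}}^{i_q}\cdot t_{k_1+i_q}^{i_{k_1}}\cdot t_{\rho_2+i_q}^{i_q}\cdot\prod_{j=2}^{m}t_{k_j+i_q}^{i_{k_j}}\cdot t_q^{i_q}.$$

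The decisive simplification concerns the block $t_{k_1+i_q}^{i_{k_1}}\cdot t_{\rho_2+i_q}^{i_q}$. Here the elementary hypothesis $\rho_1=i_{k_1}+\rho_2\leq k_1$ guarantees $(k_1+i_q)-i_{k_1}\geq\rho_2+i_q$, so this product again lies in the first case of Proposition \ref{exchange} and equals $t_{i_{k_1}+i_q}^{i_{k_1}}\cdot t_{\rho_1+i_q}^{i_q}\cdot t_{k_1+i_q}^{i_{k_1}}$ (using $\rho_2+i_{k_1}=\rho_1$). Substituting, the two leading factors collapse as $t_{i_q+i_{k_1}}^{i_q}\cdot t_{i_{k_1}+i_q}^{i_{k_1}}=t_{i_{k_1}+i_q}^{i_{k_1}+i_q}=1$, since $t_{i_{k_1}+i_q}$ has order $i_{k_1}+i_q$. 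What survives is exactly $t_{\rho_1+i_q}^{i_q}\cdot\prod_{j=1}^{m}t_{k_j+i_q}^{i_{k_j}}\cdot t_q^{i_q}$, closing the induction. I expect this cancellation, together with checking that every exchange invoked stays in the first case (which is precisely what the hypothesis $q-i_q\geq k_m$ and the elementary condition $\rho_1\leq k_1$ secure), to be the main obstacle.

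Finally I would deduce the full statement by induction on $z(\pi)$. Applying the sub-claim to $t_q^{i_q}\cdot\pi^{(1)}$ pushes $t_q^{i_q}$ to the right of the transformed first factor, leaving $t_q^{i_q}\cdot\pi^{(2)}\cdots\pi^{(z(\pi))}$, whose tail is again a standard $OGS$ elementary factorization of shorter length (the defining inequalities of Definition \ref{canonical-factorization-def} involve only consecutive factors, so discarding $\pi^{(1)}$ preserves them). Since the largest subscript occurring in $\pi$ is $h^{(z(\pi))}_{m^{(z(\pi))}}$, the hypothesis $q-i_q\geq h^{(z(\pi))}_{m^{(z(\pi))}}$ forces $q-i_q\geq h^{(v)}_{m^{(v)}}$ for every $v$, so the sub-claim applies at each stage. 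Iterating produces the claimed product over $v$, with $t_q^{i_q}$ surviving at the far right, which is the assertion of the proposition.
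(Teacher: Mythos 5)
Your argument is correct, and it follows the paper's overall skeleton (a double induction: first on the number of generators in a single elementary factor, then on $z(\pi)$, with every exchange landing in the first case $q-i_{q}\geq p$ of Proposition \ref{exchange}), but the mechanism of the inner induction is genuinely different. The paper peels off the \emph{last} generator $t_{h^{(1)}_{r}}^{\imath^{(1)}_{r}}$, which leaves a stray factor $t_{i_{q}+\imath^{(1)}_{r}}^{i_{q}}$ stranded to the right of the already-transformed prefix; commuting it back to the left requires invoking Proposition \ref{exchange-elementary1} and then a further application of Proposition \ref{exchange-2} before the telescoping cancellation appears. You instead peel off the \emph{first} generator $t_{k_{1}}^{i_{k_{1}}}$, so the stray terms are created adjacent to one another on the left; a single application of the first exchange case to the block $t_{k_{1}+i_{q}}^{i_{k_{1}}}\cdot t_{\rho_{2}+i_{q}}^{i_{q}}$ (legitimate precisely because $\rho_{1}\leq k_{1}$) produces $t_{i_{k_{1}}+i_{q}}^{i_{k_{1}}}$ next to $t_{i_{q}+i_{k_{1}}}^{i_{q}}$, and these annihilate since $t_{a}$ has order $a$. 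This makes your proof self-contained modulo Proposition \ref{exchange} alone, which is a small but real simplification; the paper's route has the advantage of exercising Proposition \ref{exchange-elementary1}, which it needs elsewhere anyway. The outer induction differs symmetrically (you strip the first elementary factor, the paper strips the last), and your observations that $q-i_{q}\geq h^{(z(\pi))}_{m^{(z(\pi))}}$ dominates all the subscripts and that the tail of a standard $OGS$ elementary factorization is again one are exactly the points that need checking; both hold.
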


\begin{proof}
The proof is in double induction on $z(\pi)$ and on $m^{(v)}$, for $1\leq v\leq z(\pi)$. First, consider the case $z(\pi)=1$ (i.e., $\pi$ is a standard $OGS$ elementary element). For the case of $z(\pi)=1$ and $m^{(1)}=1$, the proposition holds by using Proposition \ref{exchange}, where considering $t_{q}^{i_{q}}\cdot t_{p}^{i_{p}}$ in case $q-i_{q}\geq p$. Assume in induction that the proposition holds for $m^{(1)}<r$ for some $r$, and we prove it for $m^{(1)}=r$.
Therefore, by the induction hypothesis we have:
\begingroup
\large
$$t_{q}^{i_{q}}\cdot \pi=t_{\sum_{j=1}^{r-1}\imath_{j}^{(1)}+i_{q}}^{i_{q}}\cdot \prod_{j=1}^{r-1}t_{h^{(1)}_{j}+i_{q}}^{\imath_{j}^{(1)}}\cdot t_{q}^{i_{q}}\cdot t_{h^{(1)}_{r}}^{\imath_{r}^{(1)}}.$$
\endgroup
By Proposition \ref{exchange}, where considering $t_{q}^{i_{q}}\cdot t_{p}^{i_{p}}$ in case $q-i_{q}\geq p$,
\begingroup
\large
$$t_{q}^{i_{q}}\cdot t_{h^{(1)}_{r}}^{\imath_{r}^{(1)}}=t_{\imath_{r}^{(1)}+i_{q}}^{i_{q}}\cdot t_{h^{(1)}_{r}+i_{q}}^{\imath_{r}^{(1)}}\cdot t_{q}^{i_{q}}.$$
\endgroup
Notice, $\prod_{j=1}^{r-1}t_{h^{(1)}_{j}+i_{q}}^{\imath_{j}^{(1)}}$ is a sub-word of $\pi$, which is a standard $OGS$ elementary element. Therefore, by Theorem \ref{basic-canonical}, $\prod_{j=1}^{r-1}t_{h^{(1)}_{j}+i_{q}}^{\imath_{j}^{(1)}}$ is a standard $OGS$ elementary element too.
Then, by using Proposition \ref{exchange-elementary1}, for $\pi=\prod_{j=1}^{r-1}t_{h^{(1)}_{j}+i_{q}}^{\imath_{j}^{(1)}}$, ~~$x=i_{q}+\imath_{r}^{(1)}$, ~~and ~$i_{x}=i_{q}$, we get:
\begingroup
\large
$$\prod_{j=1}^{r-1}t_{h^{(1)}_{j}+i_{q}}^{\imath_{j}^{(1)}}\cdot t_{i_{q}+\imath_{r}^{(1)}}^{i_{q}}=t_{\sum_{j=1}^{r-1}\imath_{j}^{(1)}+i_{q}}^{\sum_{j=1}^{r-1}\imath_{j}^{(1)}}\cdot t_{\sum_{j=1}^{r-1}\imath_{j}^{(1)}+i_{r}^{(1)}+i_{q}}^{i_{q}}\cdot \prod_{j=1}^{r-1}t_{h^{(1)}_{j}+i_{q}}^{\imath_{j}^{(1)}}.$$
\endgroup
Thus, we get
\begingroup
\large
$$t_{q}^{i_{q}}\cdot \pi=t_{\rho^{(1)}_{1}+i_{q}}^{i_{q}}\cdot \prod_{j=1}^{m^{(1)}}t_{h^{(1)}_{j}+i_{q}}^{\imath_{j}^{(1)}}\cdot t_{q}^{i_{q}}$$
\endgroup
in case $\pi$ is a standard $OGS$ elementary element.
Now, assume in induction the proposition holds for every $z<z(\pi)$ and we prove it for $z=z(\pi)$.
Then,
\begingroup
\large
$$t_{q}^{i_{q}}\cdot \pi=\prod_{v=1}^{z(\pi)-1}t_{\rho_{1}^{(v)}+i_{q}}^{i_{q}}\cdot \prod_{j=1}^{m^{(v)}}t_{h^{(v)}_{j}+i_{q}}^{\imath_{j}^{(v)}}\cdot t_{q}^{i_{q}}\cdot \pi^{(z(\pi))}.$$
\endgroup
by the induction hypothesis.
Since $\pi^{(z(\pi))}$ is a standard $OGS$ elementary element, we have
\begingroup
\large
$$t_{q}^{i_{q}}\cdot \pi^{(z(\pi))}=t_{\rho^{(z(\pi))}_{1}+i_{q}}^{i_{q}}\cdot \prod_{j=1}^{m^{(1)}}t_{h^{(z(\pi))}_{j}+i_{q}}^{\imath_{j}^{(z(\pi)))}}\cdot t_{q}^{i_{q}}.$$
\endgroup
Therefore, we get
\begingroup
\large
$$t_{q}^{i_{q}}\cdot \pi=\prod_{v=1}^{z(\pi)}t_{\rho_{1}^{(v)}+i_{q}}^{i_{q}}\cdot \prod_{j=1}^{m^{(v)}}t_{h^{(v)}_{j}+i_{q}}^{\imath_{j}^{(v)}}\cdot t_{q}^{i_{q}}.$$
\endgroup
\end{proof}

\begin{proposition}\label{k-parabolic}
Let $\pi\in S_n$, presented in standard $OGS$ elementary factorization, with all the notations used in Definition \ref{canonical-factorization-def}. Then, the following properties are satisfied:
\begin{itemize}
\item The set of elements $\pi$ such that $maj\left(\pi^{(v)}\right)=h^{(v)}_{1}$ for all $1\leq v\leq z(\pi)$, is the parabolic subgroup generated by $s_{2}, \ldots, s_{n-1}$ (i.e., the permutations of $S_n$, with $1$ as a fixed point);
\item The set of elements $\pi$ such that $maj\left(\pi^{(v)}\right)=h^{(v)}_{1}$  and $\imath_{1}^{(v)}\geq d$ for all $1\leq v\leq z(\pi)$, is the parabolic subgroup generated by $s_{d+1}, \dots, s_{n-1}$ for some $1\leq d\leq n-2$ (i.e., the permutations of $S_n$, with $j$ as fixed point for every $1\leq j\leq d$).
\end{itemize}
\end{proposition}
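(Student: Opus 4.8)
The plan is to convert both bullets into statements about the fixed points of $\pi$ as a permutation, and then invoke the standard fact that the elements of $S_n$ fixing $1,\ldots,d$ pointwise are exactly the parabolic subgroup $\langle s_{d+1},\ldots,s_{n-1}\rangle$ (the case $d=1$ giving the first bullet). It therefore suffices to prove two equivalences: that $\pi$ fixes $1$ if and only if $maj\left(\pi^{(v)}\right)=h^{(v)}_{1}$ for all $v$, and that $\pi$ fixes $1,\ldots,d$ if and only if in addition $\imath_{1}^{(v)}\geq d$ for all $v$. The engine for everything is the explicit permutation action of an elementary factor recorded in Theorem \ref{basic-canonical}. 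First I would read off its consequences: a factor with $maj\left(\pi^{(v)}\right)=h^{(v)}_{1}$ satisfies $\pi^{(v)}(p)=p$ for $1\leq p\leq \imath_{1}^{(v)}$ and $\pi^{(v)}(\imath_{1}^{(v)}+1)>\imath_{1}^{(v)}+1$, whereas a factor with $maj\left(\pi^{(v)}\right)<h^{(v)}_{1}$ satisfies $\pi^{(v)}(1)>1$. From the same formulas I would record the two preimage facts $\left(\pi^{(v)}\right)^{-1}(1)=maj\left(\pi^{(v)}\right)+1$ (when $maj\left(\pi^{(v)}\right)<h^{(v)}_{1}$) and $\left(\pi^{(v)}\right)^{-1}(\imath_{1}^{(v)}+1)=h^{(v)}_{1}+1$ (when $maj\left(\pi^{(v)}\right)=h^{(v)}_{1}$), together with the observation that each factor fixes every point exceeding $h^{(v)}_{m^{(v)}}$, and that the factorization ordering $h^{(v-1)}_{m^{(v-1)}}\leq maj\left(\pi^{(v)}\right)\leq h^{(v)}_{1}$ forces the quantities $h^{(v)}_{m^{(v)}}$ to increase with $v$. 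The sufficiency (easy) direction is then immediate: if every factor has $maj\left(\pi^{(v)}\right)=h^{(v)}_{1}$ (and $\imath_{1}^{(v)}\geq d$), then every factor fixes $1$ (respectively $1,\ldots,d$), hence so does their product.

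For the converse of the first bullet I would argue the contrapositive by tracking the preimage of $1$ backwards through the factors. Suppose some factor fails $maj\left(\pi^{(v)}\right)=h^{(v)}_{1}$, and let $v_{0}$ be the largest index with $maj\left(\pi^{(v_{0})}\right)<h^{(v_{0})}_{1}$. The factors of index larger than $v_{0}$ fix $1$, so the backward chain of preimages stays at $1$ down to stage $v_{0}$; applying $\pi^{(v_{0})}$ moves it to $maj\left(\pi^{(v_{0})}\right)+1\geq 2$. The crucial point is that $maj\left(\pi^{(v_{0})}\right)\geq h^{(v_{0}-1)}_{m^{(v_{0}-1)}}\geq h^{(w)}_{m^{(w)}}$ for every $w\leq v_{0}-1$, so this value strictly exceeds the largest moved point of each remaining factor; those factors therefore fix it, and $\pi^{-1}(1)=maj\left(\pi^{(v_{0})}\right)+1\neq 1$, whence $\pi(1)\neq 1$. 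This monotone-support phenomenon, that once the tracked point overtakes $maj\left(\pi^{(v_{0})}\right)$ it is frozen by all earlier factors, is the main obstacle of the whole argument, and it is exactly what prevents the earlier factors from conspiring to return $1$ to itself.

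For the second bullet I would first apply the first bullet to reduce to the case where all factors already satisfy $maj\left(\pi^{(v)}\right)=h^{(v)}_{1}$, and then pin down the minimal moved point of $\pi$. Set $d^{\ast}=\min_{v}\imath_{1}^{(v)}$; the sufficiency argument already shows $\pi$ fixes $1,\ldots,d^{\ast}$. To prove $d^{\ast}$ is sharp I would track the preimage of $d^{\ast}+1$: choosing $v_{1}$ to be the largest index attaining the minimum, the factors of index larger than $v_{1}$ fix $d^{\ast}+1$, while $\pi^{(v_{1})}$ sends $h^{(v_{1})}_{1}+1$ to $d^{\ast}+1$ by the boundary case of the formula in Theorem \ref{basic-canonical}. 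Since $maj\left(\pi^{(v_{1})}\right)=h^{(v_{1})}_{1}$ forces $m^{(v_{1})}\geq 2$ and hence $h^{(v_{1})}_{1}>\imath_{1}^{(v_{1})}=d^{\ast}$, and since $maj\left(\pi^{(v_{1})}\right)+1$ again exceeds the largest moved point of every earlier factor, those factors freeze it and $\pi^{-1}(d^{\ast}+1)=h^{(v_{1})}_{1}+1>d^{\ast}+1$; thus $\pi$ moves $d^{\ast}+1$. Consequently $\pi$ fixes exactly $1,\ldots,d^{\ast}$, so $\pi$ fixes $1,\ldots,d$ if and only if $d\leq d^{\ast}$, i.e. if and only if $\imath_{1}^{(v)}\geq d$ for all $v$. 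Combining this with the first bullet and identifying the pointwise stabilizer of $\{1,\ldots,d\}$ with $\langle s_{d+1},\ldots,s_{n-1}\rangle$ completes the proof.

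I expect the two sufficiency directions and the parabolic identifications to be routine; the real work is the two converses, and both hinge on the same device, namely that the increasing thresholds $h^{(v)}_{m^{(v)}}$ guarantee a point lifted above $maj\left(\pi^{(v_{0})}\right)$ (or $maj\left(\pi^{(v_{1})}\right)$) is untouched by all lower-indexed factors. I would be careful with the edge cases $v_{0}=z(\pi)$ and $v_{1}=z(\pi)$ (where the backward chain starts at the last factor) and with the fact, already guaranteed by Proposition \ref{order-rho}, that $maj\left(\pi^{(v)}\right)=h^{(v)}_{1}$ forces $m^{(v)}>1$, which is what makes the preimage computation $h^{(v_{1})}_{1}+1$ meaningful.
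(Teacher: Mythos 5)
Your proof is correct, but it takes a genuinely different route from the paper. The paper's proof is a shift‑and‑conjugate argument carried out entirely inside the $OGS$ calculus: from $\pi$ it builds the element $\pi'$ obtained by subtracting $d$ from every generator index $h^{(v)}_{j}$ and from the leading exponent $\imath_{1}^{(v)}$ of each factor, checks that this is again a valid standard $OGS$ elementary factorization, and then invokes the exchange law of Proposition \ref{exchange-elementary2} to recognize $\pi=t_{q}^{d}\cdot\pi'\cdot t_{q}^{-d}$ with $q=h^{(z(\pi))}_{m^{(z(\pi))}}$, concluding that $\pi$ lies in $\langle s_{d+1},\ldots,s_{n-1}\rangle$. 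You instead work with the explicit permutation action of an elementary factor from Theorem \ref{basic-canonical} and characterize the fixed points of the product directly, with the ``monotone support'' device (the thresholds $h^{(v)}_{m^{(v)}}$ are non‑decreasing along the factorization, so a tracked preimage lifted above $maj\left(\pi^{(v_{0})}\right)$ is frozen by all earlier factors) doing the work in the converse directions; the boundary preimage computations $\left(\pi^{(v)}\right)^{-1}(1)=maj\left(\pi^{(v)}\right)+1$ and $\left(\pi^{(v)}\right)^{-1}\left(\imath_{1}^{(v)}+1\right)=h^{(v)}_{1}+1$ check out against the formulas in Theorem \ref{basic-canonical}, and Proposition \ref{order-rho} covers the $m^{(v)}>1$ issue as you note. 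What each approach buys: the paper's conjugation argument exhibits an explicit conjugating element and stays within the canonical‑form machinery, but as written it only establishes the inclusion of the condition‑set into the parabolic subgroup; your fixed‑point characterization is an if‑and‑only‑if, so it delivers the asserted set equality in both directions and is arguably the more complete (and more elementary) argument.
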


\begin{proof}
Let $\pi=\prod_{v=1}^{z(\pi)}\pi^{(v)}$, where $\pi^{(v)}$ satisfies the conditions of Definition \ref{canonical-factorization-def}, \\ $h^{(v)}_{1}=maj\left(\pi^{(v)}\right)$, and $\imath_{1}^{(v)}\geq d$ for some $d\geq 1$ and for every $1\leq v\leq z(\pi)$. Let $\pi'=\prod_{v=1}^{z(\pi)}{\pi'}^{(v)}$, such that: $${\pi'}^{(v)}=\prod_{v=1}^{z(\pi)}\left(t_{h^{(v)}_{1}-d}^{\imath_{1}^{(v)}-d}\cdot \prod_{j=1}^{m^{(v)}}t_{h^{(v)}_{j}-d}^{\imath_{j}^{(v)}}\right).$$
Obviously, for every $1\leq v\leq z(\pi)$: $$maj\left({\pi'}^{(v)}\right)=\sum_{\jmath=1}^{m^{(v)}}\imath_{\jmath}^{(v)}-d=maj\left(\pi^{(v)}\right)-d=h^{(v)}_{1}-d<h^{(v)}_{2}-d$$
and for every $2\leq v\leq z(\pi)$:
$$maj\left({\pi'}^{(v)}\right)=\sum_{\jmath=1}^{m^{(v)}}\imath_{\jmath}^{(v)}-d\geq h^{(v-1)}_{m^{(v-1)}}-d.$$
Since for every $1\leq v\leq z(\pi)$, ~$\imath_{1}^{(v)}\geq d$, we have $\begin{cases} \imath_{1}^{(v)}-d=0 & ~~ \imath_{1}^{(v)}=d \\
\imath_{1}^{(v)}-d>0 & ~~ \imath_{1}^{(v)}>d\end{cases}$. Thus, by defining ${m'}^{(v)}$, ${h'}^{(v)}_{\jmath}$, and ${\imath'}_{j}^{(v)}$ for $1\leq v\leq z(\pi)$ and $1\leq \jmath\leq {m'}^{(v)}$ in the following way:
\begin{itemize}
\item In case $\imath_{1}^{(v)}>d$:  ~~${m'}^{(v)}=m^{(v)}$, ~${h'}^{(v)}_{\jmath}=h^{(v)}_{\jmath}-d$, ~${\imath'}_{1}^{(v)}={\imath}_{1}^{(v)}-d$, ~${\imath'}_{\jmath}^{(v)}={\imath}_{\jmath}^{(v)}$ ~for ~$2\leq j\leq m^{(v)}$;
\item In case $\imath_{1}^{(v)}=d$: ~~${m'}^{(v)}=m^{(v)}-1$, ~${h'}^{(v)}_{\jmath}=h^{(v)}_{\jmath+1}-d$, ~${\imath'}_{\jmath}^{(v)}={\imath}_{\jmath+1}^{(v)}$
\end{itemize}
notice that $\pi'=\prod_{v=1}^{z(\pi)}{\pi'}^{(v)}$ is a standard $OGS$ elementary factorization of $\pi'$ with the elementary factors
${\pi'}^{(v)}=\prod_{v=1}^{z(\pi)}\cdot \prod_{j=1}^{{m'}^{(v)}}t_{{h'}^{(v)}_{j}}^{{\imath'}_{j}^{(v)}}$.
Thus, by Proposition \ref{exchange-elementary2}, $\pi=t_{q}^{d}\cdot \pi'\cdot t_{q}^{-d}$ for some $q$ such that $q-d\geq {h'}^{(z({\pi'}))}_{{m'}^{(z({\pi'}))}}=h^{(z(\pi))}_{m^{(z(\pi))}}-d$ (i.e., $q\geq h^{(z(\pi))}_{m^{(z(\pi))}}$). Therefore, $\pi$ is a conjugate of $\pi'$ by $t_{q}^{-d}$. Hence, by choosing $q={h'}^{(z(\pi))}_{{m'}^{(z({\pi'}))}}+d=h^{(z(\pi))}_{m^{(z(\pi))}}$,  we have $t_{q}^{d}\cdot \pi'\cdot t_{q}^{-d}=\pi\in \langle s_{d+1},\ldots, s_{n-1}\rangle$.
\end{proof}

\begin{proposition}\label{commute-perm}
Let $\pi_{1}=\prod_{j=1}^{m}t_{k_j}^{i_{k_j}}$ and $\pi_{2}=\prod_{j=1}^{w}t_{r_j}^{i_{r_j}}$ be two elements of $S_n$ which are presented in the standard $OGS$ canonical form, with the following properties:
 \begin{itemize}
\item $maj\left(\pi_{2}\right)=\sum_{j=1}^{w}i_{r_j}=r_1$;
\item $i_{r_{1}}\geq k_{m}$.
\end{itemize}
Then, $\pi_{1}\cdot \pi_{2}=\pi_{2}\cdot \pi_{1}$ (i.e., $\pi_{1}$ and $\pi_{2}$ are commute).
\end{proposition}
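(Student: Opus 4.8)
The plan is to prove that $\pi_1$ and $\pi_2$, viewed as permutations, act on disjoint sets of points, so that they commute. Throughout I use the convention $k_1 < k_2 < \cdots < k_m$ and $r_1 < r_2 < \cdots < r_w$ of the standard $OGS$ canonical form. First I would determine the support of $\pi_1$. Recall that $t_k$ is the cycle $(k, k-1, \ldots, 1)$, so $t_k$ moves only the points $\{1, \ldots, k\}$ and fixes every $p > k$. Since each $k_j \leq k_m$, every factor $t_{k_j}^{i_{k_j}}$ of $\pi_1$ fixes all points $\geq k_m + 1$; hence the support of $\pi_1$ is contained in $\{1, 2, \ldots, k_m\}$.

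Next I would locate the support of $\pi_2$. The hypothesis $maj(\pi_2) = \sum_{j=1}^{w} i_{r_j} = r_1$ says precisely that $\pi_2$ is a standard $OGS$ elementary element whose smallest index $r_1$ equals its major-index, i.e. $\rho_1 = r_1$ in the notation of Definition \ref{rho} and Theorem \ref{basic-canonical} (this is the boundary case $\vartheta_1 = 0$ of Proposition \ref{order-rho}). Applying the explicit permutation formula from the proof of Theorem \ref{basic-canonical} with $\rho_1 = r_1$ gives $\pi_2(p) = r_1 - \rho_1 + p = p$ for all $1 \leq p \leq i_{r_1}$. Thus $\pi_2$ fixes the initial segment $1, 2, \ldots, i_{r_1}$, and since $i_{r_1} \geq k_m$ by hypothesis, $\pi_2$ fixes $1, 2, \ldots, k_m$; its support is contained in $\{k_m+1, \ldots, n\}$. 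Equivalently, one may invoke the second item of Proposition \ref{k-parabolic} with $d = k_m$ to conclude $\pi_2 \in \langle s_{k_m+1}, \ldots, s_{n-1}\rangle$.

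Finally, since the support of $\pi_1$ lies in $\{1, \ldots, k_m\}$ and the support of $\pi_2$ lies in $\{k_m+1, \ldots, n\}$, the two supports are disjoint, and permutations with disjoint supports commute; hence $\pi_1 \cdot \pi_2 = \pi_2 \cdot \pi_1$, as claimed. The only genuinely delicate step is the support computation for $\pi_2$: one must confirm that the condition $maj(\pi_2) = r_1$ forces the whole block $1, \ldots, i_{r_1}$ (and in particular $1, \ldots, k_m$) to be fixed, which is read off directly from the permutation formula of Theorem \ref{basic-canonical}. Everything else is immediate from the cycle structure of the generators $t_k$.
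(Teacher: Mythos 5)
Your proof is correct and takes essentially the same route as the paper: the paper invokes Proposition \ref{k-parabolic} to place $\pi_{2}$ in the parabolic subgroup $\langle s_{i_{r_1}+1},\ldots,s_{n-1}\rangle$ and notes $\pi_{1}\in\langle s_{1},\ldots,s_{k_m-1}\rangle$, which is precisely your disjoint-support argument phrased in terms of commuting parabolic subgroups. Your direct verification that $\pi_{2}(p)=p$ for $1\leq p\leq i_{r_1}$ via the permutation formula of Theorem \ref{basic-canonical} is a sound, slightly more self-contained substitute for the citation, but it is the same underlying idea.
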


\begin{proof}
Since $maj\left(\pi_{2}\right)=r_1$, ~$\pi_{2}$ is a standard $OGS$ elementary element. Therefore, by Proposition \ref{k-parabolic}, $\pi_{2}$ is in the parabolic subgroup  $\langle s_{i_{r_1}+1}, \ldots, s_{n-1}\rangle$. Since $\pi_{1}=\prod_{j=1}^{m}t_{k_j}^{i_{k_j}}$ by the presentation in standard $OGS$ canonical form, we get by the definition of $t_{k_j}$ for $1\leq j\leq m$, that $\pi_{1}$ is in the parabolic subgroup $\langle s_{1}, \ldots, s_{k_{m}-1}\rangle$. Since $i_{r_1}\geq k_{m}$, we have $(i_{r_1}+1)-(k_{m}-1)\geq 2$. Therefore, every element in $\langle s_{i_{r_1}+1}, \ldots, s_{n-1}\rangle$ commutes with every element in $\langle s_{1}, \ldots, s_{k_{m}-1}\rangle$.
\end{proof}

\subsection{The standard $OGS$ canonical form of inverse elements in $S_n$}\label{inverse-sn}

In this subsection, we give a formula for the standard $OGS$ canonical form of $\pi^{-1}$, the inverse element of $\pi$.
Obviously, $(t_{k}^{i_{k}})^{-1}=t_{k}^{k-i_{k}}$ for every $2\leq k\leq n$. Therefore, we first consider $\pi^{-1}$ for elements $\pi\in S_n$, which have the  form $t_{k_{1}}^{i_{k_{1}}}\cdot t_{k_{2}}^{i_{k_{2}}}$, where $k_{1}<k_{2}$. We get the following proposition.

\begin{proposition}\label{inversesn2}
$t_{k_{1}}^{i_{k_{1}}}\cdot t_{k_{2}}^{i_{k_{2}}}$, where $k_{1}<k_{2}$. Then, the following holds:
$$\pi^{-1}=\begin{cases}
t_{k_{1}+k_{2}-i_{k_{1}}-i_{k_{2}}}^{k_{2}-i_{k_{2}}}\cdot t_{k_{1}+k_{2}-i_{k_{2}}}^{k_{1}-i_{k_{1}}}\cdot t_{k_{2}}^{k_{2}-i_{k_{2}}} & i_{k_{2}}>k_{1} \\
\\
t_{k_{2}-i_{k_{1}}}^{k_{2}-k_{1}}\cdot t_{k_{2}}^{k_{2}-i_{k_{1}}} & i_{k_{2}}=k_{1} \\
\\
t_{k_{2}-i_{k_{2}}}^{k_{1}-i_{k_{2}}}\cdot t_{k_{1}+k_{2}-i_{k_{1}}-i_{k_{2}}}^{k_{2}-k_{1}}\cdot t_{k_{2}}^{k_{1}+k_{2}-i_{k_{1}}-i_{k_{2}}} & k_{1}-i_{k_{1}}<i_{k_{2}}<k_{1} \\
\\
t_{k_{2}-i_{k_{2}}}^{i_{k_{1}}}\cdot t_{k_{2}}^{k_{2}-k_{1}} & k_{1}-i_{k_{1}}=i_{k_{2}} \\
\\
t_{k_{1}-i_{k_{2}}}^{k_{1}-i_{k_{1}}-i_{k_{2}}}\cdot t_{k_{2}-i_{k_{2}}}^{i_{k_{1}}}\cdot t_{k_{2}}^{k_{2}-i_{k_{1}}-i_{k_{2}}} & i_{k_{2}}<k_{1}-i_{k_{1}}
\end{cases}$$
\end{proposition}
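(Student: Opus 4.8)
The plan is to reduce everything to the exchange laws already established in Proposition \ref{exchange} and Proposition \ref{exchange-2}. The starting observation is that since $(t_{k}^{i_{k}})^{-1}=t_{k}^{k-i_{k}}$ for every $2\leq k\leq n$, reversing the order of the two factors gives
$$\pi^{-1}=\left(t_{k_{1}}^{i_{k_{1}}}\cdot t_{k_{2}}^{i_{k_{2}}}\right)^{-1}=t_{k_{2}}^{k_{2}-i_{k_{2}}}\cdot t_{k_{1}}^{k_{1}-i_{k_{1}}}.$$
This is now precisely an expression of the form $t_{q}^{i_{q}}\cdot t_{p}^{i_{p}}$ with $q=k_{2}>p=k_{1}$, to which the standard $OGS$ exchange laws apply directly once we set the substitution parameters $q=k_{2}$, $p=k_{1}$, $i_{q}=k_{2}-i_{k_{2}}$, and $i_{p}=k_{1}-i_{k_{1}}$.

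Next I would translate the case distinctions. The quantity controlling which branch of Proposition \ref{exchange} applies is $q-i_{q}$, and under the substitution this becomes $q-i_{q}=k_{2}-(k_{2}-i_{k_{2}})=i_{k_{2}}$. Hence the trichotomy $q-i_{q}\geq p$, $i_{p}\leq q-i_{q}\leq p$, $q-i_{q}\leq i_{p}$ of Proposition \ref{exchange} becomes exactly $i_{k_{2}}\geq k_{1}$, $k_{1}-i_{k_{1}}\leq i_{k_{2}}\leq k_{1}$, and $i_{k_{2}}\leq k_{1}-i_{k_{1}}$. The two boundary equalities $q-i_{q}=p$ and $q-i_{q}=i_{p}$, which translate to $i_{k_{2}}=k_{1}$ and $i_{k_{2}}=k_{1}-i_{k_{1}}$, are handled separately by the two-generator formulas of Proposition \ref{exchange-2}. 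This accounts precisely for the five cases in the statement: the strict inequalities $i_{k_{2}}>k_{1}$, $k_{1}-i_{k_{1}}<i_{k_{2}}<k_{1}$, and $i_{k_{2}}<k_{1}-i_{k_{1}}$ coming from Proposition \ref{exchange}, together with the equalities $i_{k_{2}}=k_{1}$ and $i_{k_{2}}=k_{1}-i_{k_{1}}$ coming from Proposition \ref{exchange-2}.

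Then I would carry out the substitution branch by branch. For example, in the branch $i_{k_{2}}>k_{1}$ the generator indices $i_{q}+i_{p}=k_{1}+k_{2}-i_{k_{1}}-i_{k_{2}}$ and $p+i_{q}=k_{1}+k_{2}-i_{k_{2}}$ reproduce the first displayed line; the boundary $i_{k_{2}}=k_{1}$ collapses $i_{q}+i_{p}$ to $k_{2}-i_{k_{1}}$ and yields $t_{k_{2}-i_{k_{1}}}^{k_{2}-k_{1}}\cdot t_{k_{2}}^{k_{2}-i_{k_{1}}}$; and the remaining three branches follow by the same direct rewriting of the corresponding right-hand sides of Proposition \ref{exchange} or Proposition \ref{exchange-2}. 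Each branch is thus a mechanical evaluation of linear expressions in $k_{1},k_{2},i_{k_{1}},i_{k_{2}}$.

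The main obstacle is therefore not conceptual but bookkeeping: one must verify that every exponent appearing after substitution lands in the admissible range $0\leq i_{k}<k$, so that the result is genuinely the $OGS$ canonical form and not merely a valid factorization, and one must ensure the boundary cases are assigned to Proposition \ref{exchange-2} rather than double-counted against the strict inequalities. Confirming admissibility in the middle branch $k_{1}-i_{k_{1}}<i_{k_{2}}<k_{1}$, where three distinct generators survive and the exponents are differences such as $k_{1}-i_{k_{2}}$ and $k_{2}-k_{1}$, is the most delicate check.
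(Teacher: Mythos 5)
Your proof is correct and is essentially the paper's own argument read in the opposite order: the paper first rewrites $t_{k_{1}}^{i_{k_{1}}}\cdot t_{k_{2}}^{i_{k_{2}}}$ in the dual-standard canonical form via Proposition \ref{exchange} and then inverts factor by factor, whereas you invert first to get $t_{k_{2}}^{k_{2}-i_{k_{2}}}\cdot t_{k_{1}}^{k_{1}-i_{k_{1}}}$ and then apply the standard exchange laws; since the dual-standard laws were themselves derived by this very inversion trick, the two computations coincide, and your substitution $q-i_{q}=i_{k_{2}}$ correctly reproduces all five branches. The admissibility worry you raise at the end is already absorbed into Propositions \ref{exchange} and \ref{exchange-2}, which assert their right-hand sides are canonical forms, so no extra check is needed.
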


\begin{proof}
Let  $\pi=t_{k_{1}}^{i_{k_{1}}}\cdot t_{k_{2}}^{i_{k_{2}}}$. Then, by Proposition \ref{exchange-2}, $\pi$ can be presented in the dual-standard $OGS$ canonical form. Now, by considering the inverse, we get the presentation of  $\pi^{-1}$ in the standard $OGS$ canonical form.
\end{proof}
\\

Now, we introduce an algorithm for the presentation of $\pi^{-1}$ in the standard $OGS$ canonical form for an arbitrary $\pi\in S_n$, which is also presented in the standard $OGS$ canonical form.  In the first step we consider the case where $\pi$ is a standard $OGS$ elementary element of $S_{n}$.

\begin{theorem}\label{elementary-inverse}
Let $\pi=\prod_{j=1}^{m}t_{k_{j}}^{i_{k_{j}}}$, a standard $OGS$ elementary element of $S_{n}$ presented in the standard $OGS$ canonical form (i. e., $maj\left(\pi\right)\leq k_1<k_2<\cdots<k_m$), with $i_{k_{j}}>0$ for every $1\leq j\leq m$. For every $1\leq j\leq m$, let $\rho_{j}$, $\varrho_{j}$, and $\vartheta_{j}$ integers as defined in Definition \ref{rho}, and denote $\kappa_{-;j}=\varrho_{j-1}+\vartheta_{j}=k_{j}-\rho_{j}$, ~ $\kappa_{+;j}=\varrho_{j}+\vartheta_{j}=k_{j}-\rho_{j+1}$ ~(We consider $\varrho_{0}=\rho_{m+1}=0$). Then,
\begingroup
\large
$$\pi^{-1}=t_{\kappa_{+;1}}^{\vartheta_{1}}\cdot (t_{\kappa_{-;2}}^{\varrho_{1}}\cdot t_{\kappa_{+;2}}^{\vartheta_{2}})\cdot (t_{\kappa_{-;3}}^{\varrho_{2}}\cdot t_{\kappa_{+;3}}^{\vartheta_{3}})\cdots (t_{\kappa_{-;m}}^{\varrho_{m-1}}\cdot t_{\kappa_{+;m}}^{\vartheta_{m}}),$$
\endgroup
In the specific case, where $k_{1}=\rho_{1}=maj\left(\pi\right)$, we have $\kappa_{-;1}=\vartheta_{1}=0$, thus:
\begingroup
\large
$$\pi^{-1}=(t_{\kappa_{-;2}}^{\varrho_{1}}\cdot t_{\kappa_{+;2}}^{\vartheta_{2}})\cdot (t_{\kappa_{-;3}}^{\varrho_{2}}\cdot t_{\kappa_{+;3}}^{\vartheta_{3}})\cdots (t_{\kappa_{-;m}}^{\varrho_{m-1}}\cdot t_{\kappa_{+;m}}^{\vartheta_{m}}).$$
\endgroup
Therefore,
\begingroup
\large
$$des\left(\pi^{-1}\right)=\begin{cases}\{\kappa_{-;j} ~| ~1\leq j\leq m\} & ~~k_{1}>maj\left(\pi\right) \\
\{\kappa_{-;j} ~| ~2\leq j\leq m\} & ~~k_{1}=maj\left(\pi\right)\end{cases}.$$
\endgroup
\end{theorem}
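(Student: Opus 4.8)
The plan is to verify the identity directly at the level of permutations, and then to read off the descent set from the standard $OGS$ elementary factorization of the right-hand side. First I would check that the proposed product is already a legitimate standard $OGS$ canonical form. Writing $\kappa_{-;j}=k_j-\rho_j$ and $\kappa_{+;j}=k_j-\rho_{j+1}$, the inequalities $\rho_1>\rho_2>\cdots>\rho_m>\rho_{m+1}=0$ from Proposition \ref{order-rho} together with $k_1<k_2<\cdots<k_m$ give
$$\kappa_{+;1}<\kappa_{-;2}<\kappa_{+;2}<\cdots<\kappa_{-;m}<\kappa_{+;m}=k_m\le n,$$
so the generators occur with strictly increasing indices; the same inequalities, together with $\varrho_{j-1}+\rho_j=\rho_1=maj(\pi)\le k_1$, show that every exponent lies in its admissible range and is positive (except $\vartheta_1$, which vanishes exactly when $k_1=maj(\pi)$). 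By the uniqueness of the canonical form (Theorem \ref{canonical-sn}) it then suffices to prove that the right-hand side and $\pi^{-1}$ are equal as permutations.

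Next I would repackage the one-line description of $\pi$ obtained inside the proof of Theorem \ref{basic-canonical}. Setting $B_q=\{\varrho_{q-1}+1,\ldots,\varrho_q\}$ (with $\varrho_0=0$) and $I_q=\{\kappa_{-;q}+1,\ldots,\kappa_{+;q}\}$, that description says precisely that $\pi$ sends $B_q$ onto $I_q$ by the shift $p\mapsto p+\vartheta_q$ for $1\le q\le m$, and restricts to the unique increasing bijection of $\{maj(\pi)+1,\ldots,n\}$ onto $[n]\setminus\bigcup_q I_q$ (using $\pi(maj(\pi)+1)<\cdots<\pi(n)$). Inverting, $\pi^{-1}$ sends $I_q$ onto $B_q$ by $y\mapsto y-\vartheta_q$ and sends $[n]\setminus\bigcup_q I_q$ increasingly onto $\{maj(\pi)+1,\ldots,n\}$.

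I would then compute the permutation defined by the right-hand side by composing the block actions of its factors $t_{\kappa_{\pm;j}}$, and check that it realises exactly this action on each interval $I_q$ and on the complementary gaps. This matching is the computational heart of the argument and the step I expect to be the main obstacle, mainly because the supports of the successive factors overlap and the bookkeeping near the interval endpoints (and in the degenerate block with $k_1=maj(\pi)$, where $\vartheta_1=\kappa_{-;1}=0$) has to be handled carefully.

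Finally, for the descent set I would exhibit the standard $OGS$ elementary factorization of the right-hand side. A short computation gives $\varrho_{j-1}+\vartheta_j=k_j-\rho_j=\kappa_{-;j}$, so each block $\pi^{(j)}:=t_{\kappa_{-;j}}^{\varrho_{j-1}}\cdot t_{\kappa_{+;j}}^{\vartheta_j}$ (and $\pi^{(1)}:=t_{\kappa_{+;1}}^{\vartheta_1}$ when $k_1>maj(\pi)$) is a standard $OGS$ elementary element whose major index equals its smallest index $\kappa_{-;j}$; since the $\kappa_{-;j}$ are strictly increasing and $\kappa_{-;j}>\kappa_{+;j-1}$, the ordering conditions of Definition \ref{canonical-factorization-def} hold, so by the uniqueness part of Theorem \ref{canonical-factorization} this is the elementary factorization of $\pi^{-1}$. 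Theorem \ref{canonical-factorization} then yields $des(\pi^{-1})=\{maj(\pi^{(v)})\}=\{\kappa_{-;j}\}$, with the index $j=1$ present if and only if $\vartheta_1>0$, that is if and only if $k_1>maj(\pi)$; this gives the stated dichotomy, since when $k_1=maj(\pi)$ the factor $\pi^{(1)}$ is empty and $\kappa_{-;1}=0\notin\{1,\ldots,n-1\}$.
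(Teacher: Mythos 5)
Your route is genuinely different from the paper's. The paper proves the formula by induction on $m$: it splits $\pi=\pi'\cdot\pi_m$ with $\pi_m=t_{k_{m-1}}^{\kappa_{+;m-1}}\cdot t_{k_m}^{\rho_m}$, inverts $\pi_m$ via Proposition \ref{inversesn2}, commutes $\pi_m^{-1}$ past most of $\pi'^{-1}$ using Proposition \ref{commute-perm}, and cleans up the one remaining interaction with the exchange laws of Proposition \ref{exchange-2}. You instead propose to verify the identity at the level of one-line notation. The parts of your plan that are actually carried out are sound: the monotonicity $\kappa_{+;1}<\kappa_{-;2}<\kappa_{+;2}<\cdots<\kappa_{+;m}=k_m$ and the exponent bounds do show the right-hand side is a legitimate canonical form, so uniqueness (Theorem \ref{canonical-sn}) reduces the problem to an equality of permutations; the block description of $\pi$ (hence of $\pi^{-1}$) extracted from the proof of Theorem \ref{basic-canonical} is correct; and the final derivation of $des(\pi^{-1})$ from the elementary factorization of the right-hand side via Theorem \ref{canonical-factorization} is essentially identical to the paper's own concluding step and is fine, including the dichotomy governed by whether $\vartheta_1>0$.

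The genuine gap is the step you yourself flag as ``the main obstacle'': you never verify that the product $t_{\kappa_{+;1}}^{\vartheta_{1}}\cdot\prod_{j=2}^{m}\bigl(t_{\kappa_{-;j}}^{\varrho_{j-1}}\cdot t_{\kappa_{+;j}}^{\vartheta_{j}}\bigr)$ actually acts as $y\mapsto y-\vartheta_q$ on each interval $I_q=\{\kappa_{-;q}+1,\ldots,\kappa_{+;q}\}$ and increasingly on the complement. That verification is not a routine afterthought; it \emph{is} the content of the theorem in your formulation. Concretely, each factor $t_{\kappa_{-;j}}^{\varrho_{j-1}}\cdot t_{\kappa_{+;j}}^{\vartheta_{j}}$ fixes $\{1,\ldots,\varrho_{j-1}\}$, shifts $(\varrho_{j-1},\kappa_{-;j}]$ up by $i_{k_j}$, shifts $(\kappa_{-;j},\kappa_{+;j}]$ down by $\vartheta_j$, and fixes everything above $\kappa_{+;j}$; since these supports overlap in a nontrivial pattern (the ``up'' range of factor $j$ meets the ``down'' ranges of factors $j'<j$), one must track how a point of $I_q$ travels through all $m$ factors and confirm the net displacement is exactly $-\vartheta_q$, and separately that the complementary points land increasingly on $\{maj(\pi)+1,\ldots,n\}$. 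Until that bookkeeping is written out (an induction on $m$ would organize it, at which point you are not far from the paper's argument anyway), the proposal is a correct reduction plus an unproved combinatorial identity rather than a proof.
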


\begin{proof}
Let $\pi=\prod_{j=1}^{m}t_{k_{j}}^{i_{k_{j}}}$, a standard $OGS$ elementary element of $S_{n}$ is presented in the standard $OGS$ canonical form, with $i_{k_{j}}>0$ for every $1\leq j\leq m$. Then, by the first part of Theorem \ref{basic-canonical},  $$\pi=\begin{cases} t_{k_{1}}^{\rho_{1}}\cdot (t_{k_{1}}^{\kappa_{+;1}}\cdot t_{k_{2}}^{\rho_{2}})\cdot (t_{k_{2}}^{\kappa_{+;2}}\cdot t_{k_{3}}^{\rho_{3}})\cdots (t_{k_{m-1}}^{\kappa_{+;m-1}}\cdot t_{k_{m}}^{\rho_{m}}) &  ~~ k_{1}>\rho_{1} \\  \\ (t_{k_{1}}^{\kappa_{+;1}}\cdot t_{k_{2}}^{\rho_{2}})\cdot (t_{k_{2}}^{\kappa_{+;2}}\cdot t_{k_{3}}^{\rho_{3}})\cdots (t_{k_{m-1}}^{\kappa_{+;m-1}}\cdot t_{k_{m}}^{\rho_{m}}) &  ~~ k_{1}=\rho_{1}\end{cases}.$$
Now, we prove the formula for $\pi^{-1}$ in induction on $m$. If $m=1$ and $k_{1}>\rho_{1}$,  \\ then $\pi=t_{k_{1}}^{i_{k_{1}}}$. Therefore, $\kappa_{+;1}=k_{1}$,  $\rho_{1}=i_{k_{1}}$, $\vartheta_{1}=k_{1}-i_{k_{1}}$ and obviously, \\ $\pi^{-1}=t_{k_{1}}^{k_{1}-i_{k_{1}}}=t_{\kappa_{+;1}}^{\vartheta_{1}}$ in this case. If $m=2$ and $k_{1}=\rho_{1}$, then $\pi=t_{k_{1}}^{k_{1}-i_{k_{2}}}\cdot t_{k_{2}}^{i_{k_{2}}}$. Therefore, $i_{k_{1}}=k_{1}-i_{k_{2}}=\kappa_{+;1}$, $\kappa_{-;2}=k_{2}-i_{k_{2}}$, and $\kappa_{+;2}=k_{2}$. By Proposition \ref{inversesn2}, $\pi^{-1}=t_{k_{2}-i_{k_{2}}}^{i_{k_{1}}}\cdot t_{k_{2}}^{k_{2}-k_{1}}=t_{\kappa_{-;2}}^{\varrho_{1}}\cdot t_{\kappa_{+;2}}^{\vartheta_{2}}$. Thus, the theorem holds in this case. Now, assume the theorem holds for $m'<m$, and we prove it for $m'=m$.
Consider $$\pi=\pi'\cdot \pi_{m}$$
where, $$\pi'=\begin{cases}t_{k_{1}}^{\rho_{1}}\cdot (t_{k_{1}}^{\kappa_{+;1}}\cdot t_{k_{2}}^{\rho_{2}})\cdot (t_{k_{2}}^{\kappa_{+;2}}\cdot t_{k_{3}}^{\rho_{3}})\cdots (t_{k_{m-2}}^{\kappa_{+;m-2}}\cdot t_{k_{m-1}}^{\rho_{m-1}}) &  ~~ k_{1}>\rho_{1} \\ \\ (t_{k_{1}}^{\kappa_{+;1}}\cdot t_{k_{2}}^{\rho_{2}})\cdot (t_{k_{2}}^{\kappa_{+;2}}\cdot t_{k_{3}}^{\rho_{3}})\cdots (t_{k_{m-2}}^{\kappa_{+;m-2}}\cdot t_{k_{m-1}}^{\rho_{m-1}}) &  ~~ k_{1}=\rho_{1}\end{cases}.$$
and $$\pi_{m}=t_{k_{m-1}}^{\kappa_{+;m-1}}\cdot t_{k_{m}}^{\rho_{m}}.$$
Then, $$\pi^{-1}=\pi_{m}^{-1}\cdot \pi'^{-1}.$$
By our induction hypothesis, $$\pi'^{-1}=t_{\kappa_{+;1}}^{\vartheta_{1}}\cdot (t_{\kappa_{-;2}}^{\varrho_{1}}\cdot t_{\kappa_{+;2}}^{\vartheta_{2}})\cdots (t_{\kappa_{-;m-2}}^{\varrho_{m-3}}\cdot t_{\kappa_{+;m-2}}^{\vartheta_{m-2}})\cdot (t_{\kappa_{-;m-1}}^{\varrho_{m-2}}\cdot t_{k_{m-1}}^{\vartheta_{m-1}}),$$  in case $k_{1}>\rho_{1}$.  $$\pi'^{-1}=(t_{\kappa_{-;2}}^{\varrho_{1}}\cdot t_{\kappa_{+;2}}^{\vartheta_{2}})\cdot (t_{\kappa_{-;3}}^{\varrho_{2}}\cdot t_{\kappa_{+;3}}^{\vartheta_{3}})\cdots (t_{\kappa_{-;m-2}}^{\varrho_{m-3}}\cdot t_{\kappa_{+;m-2}}^{\vartheta_{m-2}})\cdot (t_{\kappa_{-;m-1}}^{\varrho_{m-2}}\cdot t_{k_{m-1}}^{\vartheta_{m-1}}),$$  in case $k_{1}=\rho_{1}$.

By Proposition \ref{inversesn2}, $$\pi_{m}^{-1}=t_{\kappa_{-;m}}^{\kappa_{+;m-1}}\cdot t_{k_{m}}^{k_{m}-k_{m-1}}.$$
Since $\kappa_{-;j}=\varrho_{j-1}+\vartheta_{j}$ ~and ~$\kappa_{+;j}=\varrho_{j}+\vartheta_{j}$, ~the following are satisfied:
\begin{itemize}
\item $maj\left(\pi_{m}^{-1}\right)=\kappa_{+;m-1}+k_{m}-k_{m-1}=\kappa_{-;m}$;
\item $\kappa_{-;m}>\kappa_{+;x}$ for $1\leq x\leq m-2$;
\item $\kappa_{-;m}>\kappa_{-;m-1}$;
\item $\kappa_{+;m-1}>\kappa_{+;x}$ for $1\leq x\leq m-2$;
\item $\kappa_{+;m-1}>\kappa_{-;m-1}$;
\end{itemize}
Therefore, by Proposition \ref{commute-perm}, $\pi_{m}^{-1}$ commutes with the following segments of $\pi'^{-1}$:
\begin{itemize}
\item $t_{\kappa_{+;1}}^{\vartheta_{1}}$;
\item $t_{\kappa_{-;x}}^{\varrho_{x-1}}\cdot t_{\kappa_{+;x}}^{\vartheta_{x}}$, for $2\leq x\leq m-2$;
\item $t_{\kappa_{-;m-1}}^{\varrho_{m-2}}$.
\end{itemize}
Thus,
$$\pi^{-1}=\pi_{m}^{-1}\cdot {\pi'}^{-1}=t_{\kappa_{+;1}}^{\vartheta_{1}}\cdot (t_{\kappa_{-;2}}^{\varrho_{1}}\cdot t_{\kappa_{+;2}}^{\vartheta_{2}})\cdots (t_{\kappa_{-;m-2}}^{\varrho_{m-3}}\cdot t_{\kappa_{+;m-2}}^{\vartheta_{m-2}})\cdot t_{\kappa_{-;m-1}}^{\varrho_{m-2}}\cdot (\pi_{m}^{-1}\cdot t_{k_{m-1}}^{\vartheta_{m-1}}),$$
in case $k_{1}>\rho_{1}$,
and
$$\pi^{-1}=\pi_{m}^{-1}\cdot {\pi'}^{-1}=(t_{\kappa_{-;2}}^{\varrho_{1}}\cdot t_{\kappa_{+;2}}^{\vartheta_{2}})\cdots (t_{\kappa_{-;m-2}}^{\varrho_{m-3}}\cdot t_{\kappa_{+;m-2}}^{\vartheta_{m-2}})\cdot t_{\kappa_{-;m-1}}^{\varrho_{m-2}}\cdot (\pi_{m}^{-1}\cdot t_{k_{m-1}}^{\vartheta_{m-1}}),$$
in case $k_{1}=\rho_{1}$.
Now, notice,  $$\pi_{m}^{-1}\cdot t_{k_{m-1}}^{\vartheta_{m-1}}=t_{\kappa_{-;m}}^{\kappa_{+;m-1}}\cdot t_{k_{m}}^{k_{m}-k_{m-1}}\cdot t_{k_{m-1}}^{\vartheta_{m-1}}.$$
Therefore, by Proposition \ref{exchange-2}, using the exchange law $t_{q}^{i_{q}}\cdot t_{p}^{i_{p}}$, for the case  $q-i_{q}=p$, where $q=k_{m}$, $p=k_{m-1}$, $i_{q}=k_{m}-k_{m-1}$, and $i_{p}=\vartheta_{m-1}$:
$$t_{k_{m}}^{k_{m}-k_{m-1}}\cdot t_{k_{m-1}}^{\vartheta_{m-1}}=t_{\vartheta_{m}}^{k_{m}-k_{m-1}}\cdot t_{k_{m}}^{\vartheta_{m}}.$$
Again by Proposition \ref{exchange-2}, now using the exchange law $t_{q}^{i_{q}}\cdot t_{p}^{i_{p}}$, for the case  $q-i_{q}=i_{p}$, where $q=\kappa_{-;m}$, $p=\vartheta_{m}$, $i_{q}=\kappa_{+;m-1}$, and $i_{p}=k_{m}-k_{1}$:
$$t_{\kappa_{-;m}}^{\kappa_{+;m-1}}\cdot t_{\vartheta_{m}}^{k_{m}-k_{m-1}}=t_{\kappa_{+;m-1}}^{\vartheta_{m}-(k_{m}-k_{m-1})}\cdot t_{\kappa_{-;m}}^{\kappa_{-;m}-\vartheta_{m}}=t_{\kappa_{+;m-1}}^{\vartheta_{m-1}}\cdot t_{\kappa_{-;m}}^{\varrho_{m-1}},$$
since  $\vartheta_{m-1}=\vartheta_{m}-(k_{m}-k_{m-1})$ and $\kappa_{-;m}-\vartheta_{m}=\varrho_{m-1}$.

Thus, the standard $OGS$ canonical form of $\pi^{-1}$ as follows:
\begingroup
\large
\begin{equation}\label{elementary-1}
\pi^{-1}=t_{\kappa_{+;1}}^{\vartheta_{1}}\cdot (t_{\kappa_{-;2}}^{\varrho_{1}}\cdot t_{\kappa_{+;2}}^{\vartheta_{2}})\cdot (t_{\kappa_{-;3}}^{\varrho_{2}}\cdot t_{\kappa_{+;3}}^{\vartheta_{3}})\cdots (t_{\kappa_{-;m}}^{\varrho_{m-1}}\cdot t_{\kappa_{+;m}}^{\vartheta_{m}}).
\end{equation}
\endgroup
In the specific case, where $k_{1}=maj\left(\pi\right)$:
\begingroup
\large
\begin{equation}\label{elementary-2}
\pi^{-1}=(t_{\kappa_{-;2}}^{\varrho_{1}}\cdot t_{\kappa_{+;2}}^{\vartheta_{2}})\cdot (t_{\kappa_{-;3}}^{\varrho_{2}}\cdot t_{\kappa_{+;3}}^{\vartheta_{3}})\cdots (t_{\kappa_{-;m}}^{\varrho_{m-1}}\cdot t_{\kappa_{+;m}}^{\vartheta_{m}}).
\end{equation}
\endgroup
Since for every $1\leq j\leq m$, $\varrho_{j-1}+\vartheta_{j}=\kappa_{-;j}$, obviously, Equations \ref{elementary-1} and \ref{elementary-2}, are standard $OGS$ elementary factorizations of $\pi^{-1}$ with the following properties:
\begin{itemize}
\item If $k_{1}>maj\left(\pi\right)$, ~then ~$\pi^{-1}$ ~has ~$m$ ~elementary factors of the form ~$t_{\kappa_{-;j}}^{\varrho_{j-1}}\cdot t_{\kappa_{+;j}}^{\vartheta_{j}}$ ~(we consider $\varrho_{0}=0$), ~for $1\leq j\leq m$;
\item If $k_{1}=maj\left(\pi\right)$, ~then ~$\pi^{-1}$ ~has  ~$m-1$ ~elementary factors of the form ~$t_{\kappa_{-;j}}^{\varrho_{j-1}}\cdot t_{\kappa_{+;j}}^{\vartheta_{j}}$, ~for $2\leq j\leq m$;
\end{itemize}
 Hence, we get the desired result of the theorem about the descent set of $\pi^{-1}$.
\end{proof}

\begin{example}
Let $\pi=t_{17}^{9}\cdot t_{19}^{2}\cdot t_{22}^{3}\cdot t_{24}^{3}$. Since $k_{1}=maj\left(\pi\right)=17$, $\pi$ is a standard $OGS$ elementary element of $S_{24}$.
The permutation presentation of $\pi$:
$$\pi=[1; ~2; ~3; ~4; ~5; ~6; ~7; ~8; ~9; ~12; ~13; ~17; ~18; ~19; ~22; ~23; ~24; ~10; ~11; ~14; ~15; ~16; ~20; ~21].$$
Notice, $\pi(j)=j$ for $1\leq j\leq 9$, which verifies the result of Proposition \ref{k-parabolic},  since $k_{1}=maj\left(\pi\right)=17$, and $i_{k_{1}}=9$.
By Theorem \ref{basic-canonical}  we have:
\begin{align*}
norm(\pi) &= s_{17}\cdot s_{16}\cdot s_{15}\cdot s_{14}\cdot s_{13}\cdot s_{12}\cdot s_{11}\cdot s_{10}\cdot \\ & ~~\cdot s_{18}\cdot s_{17}\cdot s_{16}\cdot s_{15}\cdot s_{14}\cdot s_{13}\cdot s_{12}\cdot s_{11}\cdot \\ & ~~\cdot s_{19}\cdot s_{18}\cdot s_{17}\cdot s_{16}\cdot s_{15}\cdot s_{14}\cdot \\ & ~~\cdot s_{20}\cdot s_{19}\cdot s_{18}\cdot s_{17}\cdot s_{16}\cdot s_{15}\cdot \\ & ~~\cdot s_{21}\cdot s_{20}\cdot s_{19}\cdot s_{18}\cdot s_{17}\cdot s_{16}\cdot \\ & ~~\cdot s_{22}\cdot s_{21}\cdot s_{20}\cdot \\ & ~~\cdot s_{23}\cdot s_{22}\cdot s_{21}
\end{align*}
We derive $norm(\pi^{-1})$, which we get just by reading the letters of $\pi$ in the direction of right to left, and down to up.
Thus,
\begin{align*}
norm(\pi^{-1}) &= ~~~~~~~~~~~~~~~~~~~~~~~~~~~~ s_{11}\cdot s_{10}\cdot \\ & ~~~~~~~~~~~~~~~~~~~~~~~~~~~~~~\cdot s_{12}\cdot s_{11}\cdot \\ & ~~~~~~~~~~~~~\cdot s_{16}\cdot s_{15}\cdot s_{14}\cdot s_{13}\cdot s_{12}\cdot \\ & ~~~~~~~~~~~~~\cdot s_{17}\cdot s_{16}\cdot s_{15}\cdot s_{14}\cdot s_{13}\cdot \\ & ~~~~~~~~~~~~~\cdot s_{18}\cdot s_{17}\cdot s_{16}\cdot s_{15}\cdot s_{14}\cdot \\ &~~\cdot s_{21}\cdot s_{20}\cdot s_{19}\cdot s_{18}\cdot s_{17}\cdot s_{16}\cdot s_{15}\cdot \\ &~~\cdot s_{22}\cdot s_{21}\cdot s_{20}\cdot s_{19}\cdot s_{18}\cdot s_{17}\cdot s_{16}\cdot \\ &~~\cdot s_{23}\cdot s_{22}\cdot s_{21}\cdot s_{20}\cdot s_{19}\cdot s_{18}\cdot s_{17}
\end{align*}

Now, we compute the standard $OGS$ canonical form of $\pi^{-1}$ by the formula in Theorem \ref{elementary-inverse}.

$$k_{1}=17, ~~k_{2}=19, ~~k_{3}=22, ~~k_{4}=24;$$
$$i_{k_{1}}=9, ~~i_{k_{2}}=2, ~~i_{k_{3}}=3, ~~i_{k_{4}}=3.$$
Thus,
$$\varrho_{1}=9, ~~\varrho_{2}=9+2=11, ~~\varrho_{3}=9+2+3=14, ~~\varrho_{4}=maj\left(\pi\right)=9+2+3+3=17;$$

$$\vartheta_{1}=17-17=0, ~~\vartheta_{2}=19-17=2, ~~\vartheta_{3}=22-17=5, ~~\vartheta_{4}=24-17=7.$$
and
$$\kappa_{-;2}=\varrho_{1}+\vartheta_{2}=9+2=11, ~~~~\kappa_{+;2}=\varrho_{2}+\vartheta_{2}=11+2=13,$$
$$\kappa_{-;3}=\varrho_{2}+\vartheta_{3}=11+5=16, ~~~~\kappa_{+;3}=\varrho_{3}+\vartheta_{3}=14+5=19,$$
$$\kappa_{-;4}=\varrho_{3}+\vartheta_{4}=14+7=21, ~~~~\kappa_{+;4}=\varrho_{4}+\vartheta_{4}=17+7=24.$$
Thus, by using Theorem \ref{elementary-inverse} for the case of $k_{1}=\rho_{1}$,
$$\pi^{-1}=t_{11}^{9}\cdot t_{13}^{2}\cdot t_{16}^{11}\cdot t_{19}^{5}\cdot t_{21}^{14}\cdot t_{24}^{7}.$$
Indeed, the permutation presentation of $\pi^{-1}$:
$$\pi^{-1}=[1; ~2; ~3; ~4; ~5; ~6; ~7; ~8; ~9; ~18; ~19; ~10; ~11; ~20; ~21; ~22; ~12; ~13; ~14; ~23; ~24; ~15; ~16; ~17].$$
Notice:
\begin{itemize}
\item $\kappa_{-;2}=11$ and $\pi^{-1}(11)=19>10=\pi^{-1}(12)$;
\item $\kappa_{-;3}=16$ and $\pi^{-1}(16)=22>12=\pi^{-1}(17)$;
\item $\kappa_{-;4}=21$ and $\pi^{-1}(21)=24>15=\pi^{-1}(22)$.
\end{itemize}
Thus, $$des\left(\pi^{-1}\right)=\{\kappa_{-;2}, ~\kappa_{-;3}, ~\kappa_{-;4}\}=\{11, ~16, ~21\}.$$
\end{example}

Now, we generalize Theorem \ref{elementary-inverse} for general $\pi\in S_{n}$.  We consider the standard $OGS$ elementary factorization
\begingroup
\large
$\pi=\prod_{v=1}^{z(\pi)}\pi^{(v)}$,
 \endgroup
 with the standard $OGS$ elementary factor
 \begingroup
 \large
 $\pi^{(v)}=\prod_{j=1}^{m^{(v)}}t_{h^{(v)}_{j}}^{\imath_{j}^{(v)}}$,
  \endgroup
  for $1\leq v\leq z(\pi)$. First, we show the standard $OGS$ canonical form for  $\pi^{-1}$ in a special case, where $\pi^{(v)}$ and $\pi^{(w)}$ are commute for all $1\leq v,w\leq z(\pi)$.
 \\
  Recall from Definition \ref{rho}:
\begin{itemize}
\item $\rho^{(v)}_{j}$ is defined to be $\sum_{x=j}^{m^{(v)}}\imath_{x}^{(v)}$;
\item $\varrho^{(v)}_{j}$ is defined to be $\sum_{x=1}^{j}\imath_{x}^{(v)}$;
\item $\vartheta^{(v)}_{j}$ is defined to be $h^{(v)}_{j}-\rho^{(v)}_{1}$.
\end{itemize}

\begin{theorem}\label{elementary-commute}
Let $\pi\in S_n$, presented in standard $OGS$ elementary factorization, with all the notations used in Definition \ref{canonical-factorization-def}. Assume $\pi^{(v)}$ satisfies  the following conditions:
\begin{itemize}
\item $maj\left(\pi^{(v)}\right)=h^{(v)}_{1}$ for $2\leq v\leq z(\pi)$;
\item $i_{1}^{(v)}\geq h^{(v-1)}_{ m^{(v-1)}}$ for $2\leq v\leq z(\pi)$.
\end{itemize}
For every $1\leq v\leq z(\pi)$, $1\leq j\leq m^{(v)}$, and $v\leq r\leq z(\pi)$, let $\varrho^{(v)}_{j}$ and $\vartheta^{(v)}_{j}$  be integers as defined in Definition \ref{rho}. Denote  ~$\kappa^{(v)}_{-;j}=\varrho^{(v)}_{j-1}+\vartheta^{(v)}_{j}=h^{(v)}_{j}-\rho^{(v)}_{j}$ and \\ $\kappa^{(v)}_{+;j}=\varrho^{(v)}_{j}+\vartheta^{(v)}_{j}=h^{(v)}_{j}-\rho^{(v)}_{j+1}$ ~(We consider $\varrho^{(v)}_{0}=\rho^{(v)}_{m^{(v)}+1}=0$). Then, the following holds:
\begin{itemize}
\item $\pi^{(v)}$ and $\pi^{(w)}$ are commute for all $1\leq v,w\leq z(\pi)$;
\begingroup
\Large
\item $$\pi^{-1}=\prod_{v=1}^{z(\pi)}\prod_{j=1}^{m^{(v)}}(t_{\kappa^{(v)}_{-;j}}^{\varrho^{(v)}_{j-1}}\cdot t_{\kappa^{(v)}_{+;j}}^{\vartheta^{(v)}_{j}}).$$
\endgroup
\end{itemize}
\end{theorem}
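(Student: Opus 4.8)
The plan is to prove the two assertions in order: first that the elementary factors commute pairwise, and then to invert $\pi$ factor-by-factor via Theorem \ref{elementary-inverse}, using commutativity to reassemble the inverted factors in the original left-to-right order. The whole argument is structural: commutativity is the enabler, and the inverse formula then drops out of Theorem \ref{elementary-inverse} applied to each $\pi^{(v)}$.

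For the commutativity, I fix $1\leq v<w\leq z(\pi)$ and apply Proposition \ref{commute-perm} with $\pi_{1}=\pi^{(v)}$ and $\pi_{2}=\pi^{(w)}$. The first hypothesis of that proposition, $maj\left(\pi^{(w)}\right)=h^{(w)}_{1}$, is exactly the standing assumption, valid since $w\geq 2$. For the second hypothesis I must verify $\imath_{1}^{(w)}\geq h^{(v)}_{m^{(v)}}$, the largest index occurring in $\pi^{(v)}$. This follows by combining the standing assumption $\imath_{1}^{(w)}\geq h^{(w-1)}_{m^{(w-1)}}$ with the monotonicity of the indices across the factorization: from $h^{(u-1)}_{m^{(u-1)}}\leq maj\left(\pi^{(u)}\right)=h^{(u)}_{1}<\cdots<h^{(u)}_{m^{(u)}}$ one obtains $h^{(v)}_{m^{(v)}}\leq h^{(w-1)}_{m^{(w-1)}}$ whenever $v\leq w-1$, whence $\imath_{1}^{(w)}\geq h^{(v)}_{m^{(v)}}$. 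Proposition \ref{commute-perm} then gives $\pi^{(v)}\cdot\pi^{(w)}=\pi^{(w)}\cdot\pi^{(v)}$, and since commutativity is symmetric this covers all pairs.

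Once all factors commute, their inverses commute as well, so the order reversal in $\left(\prod_{v=1}^{z(\pi)}\pi^{(v)}\right)^{-1}$ can be undone and
$$\pi^{-1}=\left(\prod_{v=1}^{z(\pi)}\pi^{(v)}\right)^{-1}=\prod_{v=1}^{z(\pi)}\left(\pi^{(v)}\right)^{-1}.$$
Each $\pi^{(v)}$ is a standard $OGS$ elementary element, so Theorem \ref{elementary-inverse} applies. For $v\geq 2$ we have $\rho^{(v)}_{1}=maj\left(\pi^{(v)}\right)=h^{(v)}_{1}$, hence $\vartheta^{(v)}_{1}=0$, so the $j=1$ block $t_{\kappa^{(v)}_{-;1}}^{\varrho^{(v)}_{0}}\cdot t_{\kappa^{(v)}_{+;1}}^{\vartheta^{(v)}_{1}}$ is trivial (recall $\varrho^{(v)}_{0}=0$), recovering the special-case formula of Theorem \ref{elementary-inverse}; for $v=1$ the general formula applies and its leading term reduces to $t_{\kappa^{(1)}_{+;1}}^{\vartheta^{(1)}_{1}}$ since $\varrho^{(1)}_{0}=0$. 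In every case
$$\left(\pi^{(v)}\right)^{-1}=\prod_{j=1}^{m^{(v)}}\left(t_{\kappa^{(v)}_{-;j}}^{\varrho^{(v)}_{j-1}}\cdot t_{\kappa^{(v)}_{+;j}}^{\vartheta^{(v)}_{j}}\right),$$
and substituting this into the displayed product yields the asserted expression for $\pi^{-1}$.

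The one genuinely delicate point — and the main obstacle — is to confirm that this concatenated product is already the standard $OGS$ canonical form, i.e.\ that its $t$-indices strictly increase as $(v,j)$ runs lexicographically; this is what makes the equality a canonical presentation rather than merely a product. Inside a fixed factor this is immediate from Proposition \ref{order-rho}: $\varrho^{(v)}_{j-1}<\varrho^{(v)}_{j}$ and $\vartheta^{(v)}_{j}<\vartheta^{(v)}_{j+1}$ force $\kappa^{(v)}_{-;j}<\kappa^{(v)}_{+;j}<\kappa^{(v)}_{-;j+1}$. Across consecutive factors the largest index of $\left(\pi^{(v)}\right)^{-1}$ is $\kappa^{(v)}_{+;m^{(v)}}=\varrho^{(v)}_{m^{(v)}}+\vartheta^{(v)}_{m^{(v)}}=h^{(v)}_{m^{(v)}}$, while the smallest nontrivial index of $\left(\pi^{(v+1)}\right)^{-1}$ is $\kappa^{(v+1)}_{-;2}=\imath_{1}^{(v+1)}+\left(h^{(v+1)}_{2}-h^{(v+1)}_{1}\right)>\imath_{1}^{(v+1)}\geq h^{(v)}_{m^{(v)}}$, again using the second standing hypothesis (and noting that $maj\left(\pi^{(v+1)}\right)=h^{(v+1)}_{1}$ together with $0<\imath_{1}^{(v+1)}<h^{(v+1)}_{1}$ forces $m^{(v+1)}\geq 2$, so this index exists). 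Thus the indices strictly increase across factor boundaries too, the expression is a legitimate canonical form, and the proof is complete.
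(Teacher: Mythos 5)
Your proof is correct and follows essentially the same route as the paper's: establish that the factors commute via Proposition \ref{commute-perm} (using the two standing hypotheses together with the monotonicity $h^{(v)}_{m^{(v)}}\leq h^{(w-1)}_{m^{(w-1)}}$), then invert factor-by-factor via Theorem \ref{elementary-inverse}. Your closing check that the concatenated product has strictly increasing indices across factor boundaries, so that the displayed expression really is the standard $OGS$ canonical form, is a useful detail the paper leaves implicit, but it does not change the argument.
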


\begin{proof}

Let $\pi^{(v)}=\prod_{j=1}^{m^{(v)}}t_{h^{(v)}_{j}}^{\imath_{j}^{(v)}}$, for $1\leq v\leq z(\pi)$.
Since for every $w>v$, ~$maj\left(\pi^{(w)}\right)=h^{(w)}_{1}$, and $i_{1}^{(w)}\geq h^{(w-1)}_{ m^{(w-1)}}\geq h^{(v)}_{ m^{(v)}}$, by Proposition \ref{commute-perm}, $\pi^{(v)}$ commutes with $\pi^{(w)}$, for every $1\leq v<w\leq z(\pi)$. Therefore, $\pi^{-1}=\prod_{v=1}^{z(\pi)}{\left(\pi^{(v)}\right)}^{-1}$.
 Then, by using Theorem \ref{elementary-inverse}, we get the desired result.
\end{proof}
\\

Now, we turn to the general case, where considering $\pi\in S_{n}$ such that the standard $OGS$ elementary factors do not necessarily commute. In the first step, we generalize the definitions of $\kappa^{(v)}_{-;j}$ and $\kappa^{(v)}_{+;j}$ for an arbitrary $\pi\in S_n$, which were defined in Theorems \ref{elementary-inverse} and \ref{elementary-commute} for the special cases of $\pi$.
\\

\begin{definition}\label{kappa}
For every $1\leq v\leq z(\pi)$, $1\leq j\leq m^{(v)}$, and $v\leq r\leq z(\pi)$, define $\kappa^{(v\rightarrow r)}_{-;j}$ and $\kappa^{(v\rightarrow r)}_{+;j}$ in the following recursive way:
\begin{itemize}
\item $$\kappa^{(v\rightarrow v)}_{-;j}=\vartheta^{(v)}_{j}+\varrho^{(v)}_{j-1}=h^{(v)}_{j}-\rho^{(v)}_{j},$$  $$\kappa^{(v\rightarrow v)}_{+;j}=\vartheta^{(v)}_{j}+\varrho^{(v)}_{j}=h^{(v)}_{j}-\rho^{(v)}_{j+1},$$
for every $1\leq v\leq z(\pi)$  and for every $1\leq j\leq m^{(v)}$ (where in the formula for $j=1$, and for $j=m^{(v)}$ we consider $\varrho^{(v)}_{0}=\rho^{(v)}_{m^{(v)}+1}=0$);
\item If ~ $\varrho^{(r+1)}_{j'-1}\leq\kappa^{(v\rightarrow r)}_{-;j}<\varrho^{(r+1)}_{j'}$, ~for some ~$1\leq j'\leq m^{(r+1)}$,
then $$\kappa^{(v\rightarrow r+1)}_{-;j}=\kappa^{(v\rightarrow r)}_{-;j}+\vartheta^{(r+1)}_{j'};$$
\item If ~ $\varrho^{(r+1)}_{j''-1}<\kappa^{(v\rightarrow r)}_{+;j}\leq\varrho^{(r+1)}_{j''}$, ~for some ~$1\leq j''\leq m^{(r+1)}$,
then $$\kappa^{(v\rightarrow r+1)}_{+;j}=\kappa^{(v\rightarrow r)}_{+;j}+\vartheta^{(r+1)}_{j''};$$
\end{itemize}
In case of $r=z(\pi)$:
\begin{itemize}
\item Denote $\kappa^{(v\rightarrow z(\pi))}_{-;j}$ ~by just $\kappa^{(v)}_{-;j}$;
\item Denote $\kappa^{(v\rightarrow z(\pi))}_{+;j}$ ~by just $\kappa^{(v)}_{+;j}$.
\end{itemize}
\end{definition}

The next proposition shows that Definition \ref{kappa} just generalizes the definitions  of $\kappa^{(v)}_{-;j}$ and  $\kappa^{(v)}_{+;j}$  in the special case of Theorem \ref{elementary-commute}.
\\

\begin{proposition}\label{kappa-commute}
If $\kappa^{(v\rightarrow r)}_{+;m^{(v)}}=\kappa^{(v\rightarrow v)}_{+;m^{(v)}}$, for every $1\leq v\leq z(\pi)$, ~$v\leq r\leq z(\pi)$, then the following holds:
\begin{itemize}
\item $\kappa^{(v\rightarrow r)}_{-;j}=\kappa^{(v\rightarrow v)}_{-;j}$ and $\kappa^{(v\rightarrow r)}_{+;j}=\kappa^{(v\rightarrow v)}_{+;j}$, for every $1\leq v\leq z(\pi)$, ~$v\leq r\leq z(\pi)$, and $1\leq j\leq m^{(v)}$;
\item $\pi^{(v)}$ commutes with $\pi^{(r)}$, for every $1\leq v\leq z(\pi)$ and $v\leq r\leq z(\pi)$.
\end{itemize}
\end{proposition}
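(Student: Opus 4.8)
The plan is to extract from the hypothesis exactly the two conditions appearing in Theorem \ref{elementary-commute}, and then to feed them back into the recursion of Definition \ref{kappa} to show that every $\kappa$ is frozen immediately after the base step. First I would record the base value $\kappa^{(v\rightarrow v)}_{+;m^{(v)}}=\vartheta^{(v)}_{m^{(v)}}+\varrho^{(v)}_{m^{(v)}}=h^{(v)}_{m^{(v)}}$ (using $\rho^{(v)}_{m^{(v)}+1}=0$), together with the monotonicity facts $\kappa^{(v\rightarrow v)}_{-;j}<h^{(v)}_{m^{(v)}}$ and $\kappa^{(v\rightarrow v)}_{+;j}\le h^{(v)}_{m^{(v)}}$ for all $1\le j\le m^{(v)}$, which follow from Proposition \ref{order-rho} since $\vartheta^{(v)}_{j}$ increases and $\rho^{(v)}_{j}$ decreases in $j$.

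Next I would apply the hypothesis to the consecutive pair $(w-1,w)$, giving $\kappa^{(w-1\rightarrow w)}_{+;m^{(w-1)}}=\kappa^{(w-1\rightarrow w-1)}_{+;m^{(w-1)}}=h^{(w-1)}_{m^{(w-1)}}$. By Definition \ref{kappa} this single step adds $\vartheta^{(w)}_{j''}$ for the index $j''$ with $\varrho^{(w)}_{j''-1}<h^{(w-1)}_{m^{(w-1)}}\le\varrho^{(w)}_{j''}$, so the equality forces $\vartheta^{(w)}_{j''}=0$. By Proposition \ref{order-rho}, $\vartheta^{(w)}_{j''}=0$ holds precisely when $j''=1$ and $h^{(w)}_{1}=maj(\pi^{(w)})$, and then the interval condition reads $0<h^{(w-1)}_{m^{(w-1)}}\le\varrho^{(w)}_{1}=\imath_{1}^{(w)}$. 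Hence for every $2\le w\le z(\pi)$ I obtain $maj(\pi^{(w)})=h^{(w)}_{1}$ and $\imath_{1}^{(w)}\ge h^{(w-1)}_{m^{(w-1)}}$, which are exactly the hypotheses of Theorem \ref{elementary-commute}; that theorem then immediately yields the second bullet, that $\pi^{(v)}$ and $\pi^{(r)}$ commute.

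For the first bullet I would chain these inequalities into the monotone estimate $h^{(v)}_{m^{(v)}}\le\imath_{1}^{(v+1)}\le maj(\pi^{(v+1)})=h^{(v+1)}_{1}\le h^{(v+1)}_{m^{(v+1)}}$, so that $h^{(v)}_{m^{(v)}}\le h^{(r)}_{m^{(r)}}\le\imath_{1}^{(r+1)}=\varrho^{(r+1)}_{1}$ for all $v\le r$. Then, fixing $v$, I would induct on $r$: assuming $\kappa^{(v\rightarrow r)}_{\pm;j}=\kappa^{(v\rightarrow v)}_{\pm;j}$, the base bounds give $\kappa^{(v\rightarrow r)}_{-;j}<h^{(v)}_{m^{(v)}}\le\varrho^{(r+1)}_{1}$ and $0<\kappa^{(v\rightarrow r)}_{+;j}\le h^{(v)}_{m^{(v)}}\le\varrho^{(r+1)}_{1}$, so both quantities land in the cell indexed by $j'=j''=1$; since $h^{(r+1)}_{1}=maj(\pi^{(r+1)})$ forces $\vartheta^{(r+1)}_{1}=0$, the step from $r$ to $r+1$ adds nothing, closing the induction and proving $\kappa^{(v\rightarrow r)}_{\pm;j}=\kappa^{(v\rightarrow v)}_{\pm;j}$ for all $j$.

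The main obstacle I anticipate is the careful bookkeeping of the half-open interval conditions, namely distinguishing the strict upper bound needed for $\kappa_{-}$ (cell $[\varrho_{j'-1},\varrho_{j'})$) from the non-strict one for $\kappa_{+}$ (cell $(\varrho_{j''-1},\varrho_{j''}]$), and verifying that the borderline case $j=m^{(v)}$, where $\kappa^{(v\rightarrow v)}_{+;m^{(v)}}$ attains the extreme value $h^{(v)}_{m^{(v)}}$, still falls into the $j''=1$ cell. Everything else is a direct translation of Proposition \ref{order-rho} and Theorem \ref{elementary-commute}.
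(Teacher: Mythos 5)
Your proposal is correct and follows essentially the same route as the paper: both extract $\vartheta^{(w)}_{1}=0$, $maj\left(\pi^{(w)}\right)=h^{(w)}_{1}$, and $\imath_{1}^{(w)}\geq h^{(w-1)}_{m^{(w-1)}}$ from the frozen value $\kappa^{(v\rightarrow r)}_{+;m^{(v)}}=h^{(v)}_{m^{(v)}}$ via Definition \ref{kappa} and Proposition \ref{order-rho}, then invoke Theorem \ref{elementary-commute} for commutativity and observe that all the other $\kappa$'s land in the first cell so nothing is added. Your explicit chain $h^{(v)}_{m^{(v)}}\leq h^{(r)}_{m^{(r)}}\leq\varrho^{(r+1)}_{1}$ spells out the induction over $r$ that the paper leaves implicit after treating only the step from $v$ to $v+1$, which is a welcome (and needed) detail but not a different argument.
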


\begin{proof}

Assume $\kappa^{(v\rightarrow r)}_{+;m^{(v)}}=\kappa^{(v\rightarrow v)}_{+;m^{(v)}}$, for every $1\leq v\leq z(\pi)$ and $v\leq r\leq z(\pi)$.
Then, in particular, $$h^{(v)}_{m^{(v)}}=\kappa^{(v\rightarrow v)}_{+;m^{(v)}}=\kappa^{(v\rightarrow v+1)}_{+;m^{(v)}}$$ for every $1\leq v<z(\pi)$. Thus, by Definition \ref{kappa},  $$h^{(v)}_{m^{(v)}}=\kappa^{(v\rightarrow v)}_{+;m^{(v)}}\leq\varrho^{(v+1)}_{j'}$$ such that $\vartheta^{(v+1)}_{j'}=0$. Then, by Proposition \ref{order-rho}, $$j'=1 ~~~~ maj\left(\pi^{(v+1)}\right)=h^{(v+1)}_{1}.$$
Notice, by Definition \ref{kappa}, $\kappa^{(v\rightarrow v)}_{-;j}<\kappa^{(v\rightarrow v)}_{+;m^{(v)}}$ and $\kappa^{(v\rightarrow v)}_{+;j}\leq\kappa^{(v\rightarrow v)}_{+;m^{(v)}}$ ~for every \\ $1\leq j\leq m^{(v)}$. Thus, we get
$$\kappa^{(v\rightarrow v)}_{-;j}<\varrho^{(v+1)}_{1}, ~~~~\kappa^{(v\rightarrow v)}_{+;j}\leq\varrho^{(v+1)}_{1}$$
as well, for every ~$1\leq j\leq m^{(v)}$. Hence,
$$\kappa^{(v\rightarrow v+1)}_{-;j}=\kappa^{(v\rightarrow v)}_{-;j}+\vartheta^{(v+1)}_{1}=\kappa^{(v\rightarrow v)}_{-;j}, ~~\kappa^{(v\rightarrow v+1)}_{+;j}=\kappa^{(v\rightarrow v)}_{+;j}+\vartheta^{(v+1)}_{1}=\kappa^{(v\rightarrow v)}_{+;j},$$
for every $1\leq v<z(\pi)$ ~and ~$1\leq j\leq m^{(v)}$. Hence, $\kappa^{(v\rightarrow r)}_{-;j}=\kappa^{(v\rightarrow v)}_{-;j}$ and $\kappa^{(v\rightarrow r)}_{+;j}=\kappa^{(v\rightarrow v)}_{+;j}$, for every $1\leq v\leq z(\pi)$, ~$v\leq r\leq z(\pi)$, and $1\leq j\leq m^{(v)}$.
Now, since  $$\kappa^{(v\rightarrow v)}_{+;m^{(v)}}=h^{(v)}_{m^{(v)}}\leq\varrho^{(v+1)}_{1}=\imath_{1}^{(v+1)},  ~~~~ maj\left(\pi^{(v+1)}\right)=h^{(v+1)}_{1},$$ by Theorem \ref{elementary-commute}, $\pi^{(v)}$ commutes with $\pi^{(r)}$ for every $1\leq v<z(\pi)$, and $v<r\leq z(\pi)$.
\end{proof}
\\

The next six propositions describe some important properties of $\kappa^{(v\rightarrow r)}_{-;j}$ and $\kappa^{(v\rightarrow r)}_{+;j}$, which are essential for understanding the formula of the standard $OGS$ canonical form of $\pi^{-1}$.

\begin{proposition}\label{order-kappa}
For every $1\leq v\leq z(\pi)$, $1\leq j\leq m^{(v)}$, and $v\leq r\leq z(\pi)$, ~$\kappa^{(v\rightarrow r)}_{-;j}$ and  $\kappa^{(v\rightarrow r)}_{+;j}$ satisfy the following properties:
\begin{itemize}
\item $\kappa^{(v\rightarrow r)}_{-;j}<\kappa^{(v\rightarrow r)}_{+;j}$,
and in particular,
$\kappa^{(v)}_{-;j}<\kappa^{(v)}_{+;j}$,
for every $1\leq v\leq z(\pi)$, \\ $v\leq r\leq z(\pi)$, and  $1\leq j\leq m^{(v)}$;
\item $\kappa^{(v\rightarrow r)}_{+;j}<\kappa^{(v\rightarrow r)}_{-;j+1}$,
and in particular,
$\kappa^{(v)}_{+;j}<\kappa^{(v)}_{-;j+1}$,
for every $1\leq v\leq z(\pi)$, \\ $v\leq r\leq z(\pi)$, and $2\leq j\leq m^{(v)}-1$.
\end{itemize}
\end{proposition}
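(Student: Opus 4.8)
The plan is to prove both inequalities \emph{simultaneously} by induction on $r$ (with $v$ fixed), taking $r=v$ as the base case, and to read off the ``in particular'' statements as the special case $r=z(\pi)$. Throughout I will lean on the monotonicity recorded in Proposition \ref{order-rho}: for each fixed factor the sequences $\varrho^{(\cdot)}_{j}$ and $\vartheta^{(\cdot)}_{j}$ are strictly increasing in $j$, and the defining indices of the canonical form give $h^{(v)}_{1}<h^{(v)}_{2}<\cdots<h^{(v)}_{m^{(v)}}$.

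For the base case $r=v$ I would substitute the closed forms $\kappa^{(v\rightarrow v)}_{-;j}=h^{(v)}_{j}-\rho^{(v)}_{j}$ and $\kappa^{(v\rightarrow v)}_{+;j}=h^{(v)}_{j}-\rho^{(v)}_{j+1}$ from Definition \ref{kappa}. The first inequality then collapses to $\rho^{(v)}_{j}>\rho^{(v)}_{j+1}$, i.e.\ $\imath^{(v)}_{j}>0$, and the second to $h^{(v)}_{j}<h^{(v)}_{j+1}$; both are immediate. (Notice the second even holds at $j=1$, which is why the restriction $j\geq 2$ in the statement is only a convenience.)

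For the inductive step I would isolate the following monotone-placement lemma as the engine. Passing from $r$ to $r+1$ relocates $\kappa^{(v\rightarrow r)}_{-;j}$ and $\kappa^{(v\rightarrow r)}_{+;j}$ by adding the increments $\vartheta^{(r+1)}_{j'}$ and $\vartheta^{(r+1)}_{j''}$, where $j',j''$ are the indices of the $\varrho^{(r+1)}$-interval containing each value, \emph{left}-closed for the ``$-$'' quantities and \emph{right}-closed for the ``$+$'' quantities. The lemma is: if $a<b$ and $a$ is placed in interval index $j_a$, $b$ in interval index $j_b$ (under those conventions), then $j_a\leq j_b$; for were $j_a>j_b$ one would get $a\geq \varrho^{(r+1)}_{j_a-1}\geq \varrho^{(r+1)}_{j_b}\geq b$, contradicting $a<b$ (the half-open conventions make each of the three comparison cases close correctly). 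Combining $j_a\leq j_b$ with $\vartheta^{(r+1)}_{j_a}\leq\vartheta^{(r+1)}_{j_b}$ and the inductive inequality $a<b$ gives $a+\vartheta^{(r+1)}_{j_a}<b+\vartheta^{(r+1)}_{j_b}$. Applying this with $(a,b)=(\kappa^{(v\rightarrow r)}_{-;j},\kappa^{(v\rightarrow r)}_{+;j})$ yields the first inequality at level $r+1$, and with $(a,b)=(\kappa^{(v\rightarrow r)}_{+;j},\kappa^{(v\rightarrow r)}_{-;j+1})$ the second.

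The main obstacle, and the place to be careful, is the equal-interval case $j_a=j_b$: there the two increments coincide and strictness is preserved only because the inductive hypothesis supplies the \emph{strict} inequality $a<b$, so I must check that the mismatched left/right-closed conventions still place $a$ and $b$ in a common interval without collapsing the strictness. A secondary point is well-definedness of the recursion itself, namely that each $\kappa^{(v\rightarrow r)}_{\pm;j}$ actually lies in $[0,\varrho^{(r+1)}_{m^{(r+1)}}]=[0,maj(\pi^{(r+1)})]$ so that an index $j'$ or $j''$ exists; I would handle this inside the same induction, since the largest relevant value $\kappa^{(v\rightarrow v)}_{+;m^{(v)}}=h^{(v)}_{m^{(v)}}$ lies in range by the factorization condition $h^{(v)}_{m^{(v)}}\leq maj(\pi^{(v+1)})$ of Definition \ref{canonical-factorization-def}, and the ordering just proved forces all remaining $\kappa$-values below it to be in range as well.
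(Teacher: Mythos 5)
Your proposal is correct and follows essentially the same route as the paper: induction on $r$ with the base case $r=v$ read off from the closed forms, and the inductive step resting on the observation that the $\varrho^{(r+1)}$-interval indices of two $\kappa$-values are weakly ordered whenever the values are, so the added increments $\vartheta^{(r+1)}_{j'}\leq\vartheta^{(r+1)}_{j''}$ preserve the strict inequalities. You merely make explicit two points the paper leaves implicit (the contradiction argument for $j'\leq j''$ and the well-definedness of the interval index), which is fine.
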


\begin{proof}
The proof is in induction on $r$.  For $r=v$, the results are obvious by the definitions. Assume by induction the correctness of the proposition for $r'\leq r$, and we prove it for $r'=r+1$. By Definition \ref{kappa}, \\ \\ $\kappa^{(v\rightarrow r+1)}_{-;j}=\kappa^{(v\rightarrow r)}_{-;j}+\vartheta^{(r+1)}_{j'}$, ~~where ~~$\varrho^{(r+1)}_{j'-1}\leq\kappa^{(v\rightarrow r)}_{-;j}<\varrho^{(r+1)}_{j'}$, \\ \\ $\kappa^{(v\rightarrow r+1)}_{+;j}=\kappa^{(v\rightarrow r)}_{+;j}+\vartheta^{(r+1)}_{j''}$, ~~where ~~$\varrho^{(r+1)}_{j''-1}<\kappa^{(v\rightarrow r)}_{+;j}\leq\varrho^{(r+1)}_{j''}$. \\

By the induction hypothesis, $$\kappa^{(v\rightarrow r)}_{-;j}<\kappa^{(v\rightarrow r)}_{+;j}, ~~~~\kappa^{(v\rightarrow r)}_{+;j}<\kappa^{(v\rightarrow r)}_{-;j+1}.$$ Therefore, $$\varrho^{(r+1)}_{j'-1}\leq\kappa^{(v\rightarrow r)}_{-;j}<\kappa^{(v\rightarrow r)}_{+;j}\leq\varrho^{(r+1)}_{j''}.$$ Thus, by Proposition \ref{order-rho}, $j'\leq j''$, and $\vartheta^{(r+1)}_{j'}\leq \vartheta^{(r+1)}_{j''}$. Hence, $$\kappa^{(v\rightarrow r+1)}_{-;j}=\kappa^{(v\rightarrow r)}_{-;j}+\vartheta^{(r+1)}_{j'}<\kappa^{(v\rightarrow r)}_{+;j}+\vartheta^{(r+1)}_{j''}=\kappa^{(v\rightarrow r+1)}_{+;j},$$ for every $1\leq v\leq z(\pi)$, $1\leq j\leq m^{(v)}$. By a similar argument, we have $$\kappa^{(v\rightarrow r+1)}_{+;j}<\kappa^{(v\rightarrow r+1)}_{-;j+1},$$ for every $1\leq v\leq z(\pi)$, $1\leq j\leq m^{(v)}-1$.
\end{proof}

\begin{proposition}\label{v1-x-v2}
For every $1\leq v<z(\pi)$ and $w>v$, let ~$j'_{w:v}$ ~and ~$j''_{w:v}$  ~be integers, such that
$$\varrho^{(w)}_{j'_{w:v}-1}\leq \kappa^{(v\rightarrow w-1)}_{-;j}<\varrho^{(w)}_{j'_{w:v}},  ~~~~\varrho^{(w)}_{j''_{w:v}-1}<\kappa^{(v\rightarrow w-1)}_{+;j}\leq\varrho^{(w)}_{j''_{w:v}}.$$
Then, the following holds:

$$\kappa^{(w\rightarrow r)}_{-;j'_{w:v}}\leq \kappa^{(v\rightarrow r)}_{-;j}<\kappa^{(w\rightarrow r)}_{+;j'_{w:v}}, ~~~~\kappa^{(w\rightarrow r)}_{-;j''_{w:v}}<\kappa^{(v\rightarrow r)}_{+;j}\leq\kappa^{(w\rightarrow r)}_{+;j''_{w:v}}.$$
for every $r$ such that $w\leq r\leq z(\pi)$.
In particular,
$$\kappa^{(w)}_{-;j'_{w:v}}\leq \kappa^{(v)}_{-;j}<\kappa^{(w)}_{+;j'_{w:v}}, ~~~~\kappa^{(w)}_{-;j''_{w:v}}<\kappa^{(v)}_{+;j}\leq\kappa^{(w)}_{+;j''_{w:v}}.$$
\end{proposition}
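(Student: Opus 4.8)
The plan is to prove the two chains of inequalities by induction on $r$, running from the base case $r=w$ up to $r=z(\pi)$. The starting observation is that $j'_{w:v}$ and $j''_{w:v}$, as defined in the statement, are exactly the indices $j'$ and $j''$ that Definition \ref{kappa} invokes when passing from level $w-1$ to level $w$ for the quantities $\kappa^{(v\rightarrow\cdot)}_{-;j}$ and $\kappa^{(v\rightarrow\cdot)}_{+;j}$; that is, $\kappa^{(v\rightarrow w)}_{-;j}=\kappa^{(v\rightarrow w-1)}_{-;j}+\vartheta^{(w)}_{j'_{w:v}}$ and $\kappa^{(v\rightarrow w)}_{+;j}=\kappa^{(v\rightarrow w-1)}_{+;j}+\vartheta^{(w)}_{j''_{w:v}}$. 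This identification is what couples the two trajectories $\kappa^{(v\rightarrow r)}$ and $\kappa^{(w\rightarrow r)}$.

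For the base case $r=w$ I would argue directly from the definitions. Since $\kappa^{(w\rightarrow w)}_{-;j'_{w:v}}=\vartheta^{(w)}_{j'_{w:v}}+\varrho^{(w)}_{j'_{w:v}-1}$ and $\kappa^{(w\rightarrow w)}_{+;j'_{w:v}}=\vartheta^{(w)}_{j'_{w:v}}+\varrho^{(w)}_{j'_{w:v}}$, adding $\vartheta^{(w)}_{j'_{w:v}}$ to the defining inequality $\varrho^{(w)}_{j'_{w:v}-1}\leq\kappa^{(v\rightarrow w-1)}_{-;j}<\varrho^{(w)}_{j'_{w:v}}$ yields $\kappa^{(w\rightarrow w)}_{-;j'_{w:v}}\leq\kappa^{(v\rightarrow w)}_{-;j}<\kappa^{(w\rightarrow w)}_{+;j'_{w:v}}$; the half-closed inequality $\varrho^{(w)}_{j''_{w:v}-1}<\kappa^{(v\rightarrow w-1)}_{+;j}\leq\varrho^{(w)}_{j''_{w:v}}$ gives the $+$ chain in the same way.

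The inductive step $r\to r+1$ is the core of the argument. Each of the three relevant quantities receives its own $\vartheta^{(r+1)}$-shift, determined by which $\varrho^{(r+1)}$-interval it lies in. The key input is that $\vartheta^{(r+1)}_{\cdot}$ is weakly increasing in its index, a consequence of Proposition \ref{order-rho}, combined with the fact that the intervals $[\varrho^{(r+1)}_{a-1},\varrho^{(r+1)}_{a})$ (used by the $-$ rule) and $(\varrho^{(r+1)}_{c-1},\varrho^{(r+1)}_{c}]$ (used by the $+$ rule) are linearly ordered. Concretely, if $\kappa^{(w\rightarrow r)}_{-;j'_{w:v}}\leq\kappa^{(v\rightarrow r)}_{-;j}$ with the two values lying in $-$-intervals of indices $b$ and $a$, then $b\leq a$, hence $\vartheta^{(r+1)}_{b}\leq\vartheta^{(r+1)}_{a}$ and the $\leq$ survives the shift; and if $\kappa^{(v\rightarrow r)}_{-;j}<\kappa^{(w\rightarrow r)}_{+;j'_{w:v}}$ with interval indices $a$ (for a $-$-value) and $c$ (for a $+$-value), then $a\leq c$, so $\vartheta^{(r+1)}_{a}\leq\vartheta^{(r+1)}_{c}$ and the strict inequality survives. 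Applying this to both flanks of the induction hypothesis propagates $\kappa^{(w\rightarrow r)}_{-;j'_{w:v}}\leq\kappa^{(v\rightarrow r)}_{-;j}<\kappa^{(w\rightarrow r)}_{+;j'_{w:v}}$ to level $r+1$, and the $+$ chain is handled by the symmetric computation.

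The main obstacle I anticipate is the bookkeeping forced by the asymmetry between the $-$ rule (half-open interval, $\varrho_{\cdot-1}\leq\,\cdot\,<\varrho_{\cdot}$) and the $+$ rule (half-closed, $\varrho_{\cdot-1}<\,\cdot\,\leq\varrho_{\cdot}$): in the mixed comparisons one must verify that the interval index attached to a smaller $-$-value never exceeds the index attached to a larger $+$-value, so that the monotone $\vartheta$-shift preserves the correct strict or non-strict inequality. This is precisely where a careless argument could collapse, so I would isolate once and for all the elementary fact ``if $x<y$ with $x\in[\varrho_{a-1},\varrho_a)$ and $y\in(\varrho_{c-1},\varrho_c]$, then $a\leq c$'' and invoke it uniformly; the analogous monotone comparison for two $-$-values (or two $+$-values) is even simpler. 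With these comparisons in hand, the remaining inequalities and the final specialization to $r=z(\pi)$, which is the displayed ``in particular'' assertion, follow routinely.
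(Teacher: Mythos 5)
Your proposal is correct and follows essentially the same route as the paper's proof: induction on $r$ with the base case $r=w$ read off from Definition \ref{kappa}, and an inductive step in which the interval indices of the three quantities are compared, so that Proposition \ref{order-rho} gives the monotone $\vartheta^{(r+1)}$-shifts that preserve the strict and non-strict inequalities. Your explicit isolation of the half-open/half-closed interval comparison is a point the paper leaves implicit, but it is the same argument.
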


\begin{proof}
The proof of the result is in induction on the value of $r$. By Definition \ref{kappa}, we have:

$\kappa^{(v\rightarrow w)}_{-;j}=\kappa^{(v\rightarrow w-1)}_{-;j}+\vartheta^{(w)}_{j'_{w:v}}$, ~in case ~$\varrho^{(w)}_{j'_{w:v}-1}\leq\kappa^{(v\rightarrow w-1)}_{-;j}<\varrho^{(w)}_{j'_{w:v}}$,
  \\

  Where, $$\kappa^{(w\rightarrow w)}_{-;j'_{w:v}}=\varrho^{(w)}_{j'_{w:v}-1}+\vartheta^{(w)}_{j'_{w:v}}, ~~~~\kappa^{(w\rightarrow w)}_{+;j'_{w:v}}=\varrho^{(w)}_{j'_{w:v}}+\vartheta^{(w)}_{j'_{w:v}}.$$ Therefore, by again using Definition \ref{kappa}, we have  $\kappa^{(w\rightarrow w)}_{-;j'_{w:v}}\leq\kappa^{(v\rightarrow w)}_{-;j}<\kappa^{(w\rightarrow w)}_{+;j'_{w:v}}$, ~for every $w>v$.
  \\

  Now assume by induction ~$\kappa^{(w\rightarrow r')}_{-;j'_{w:v}}\leq\kappa^{(v\rightarrow r')}_{-;j}<\kappa^{(w\rightarrow r')}_{+;j'_{w:v}}$ ~for every $r'$ such that \\ $w\leq r'\leq r$. Now, by Definition \ref{kappa}, notice the following,
\begin{itemize}
\item $\kappa^{(w\rightarrow r+1)}_{-;j'_{w:v}}=\kappa^{(w\rightarrow r)}_{-;j'_{w:v}}+\vartheta^{(r+1)}_{\hat{j}}$ ~ in case ~$\varrho^{(r+1)}_{\hat{j}-1}\leq\kappa^{(w\rightarrow r)}_{-;j'_{w:v}}<\varrho^{(r+1)}_{\hat{j}}$;
\item $\kappa^{(v\rightarrow r+1)}_{-;j}=\kappa^{(v\rightarrow r)}_{-;j}+\vartheta^{(r+1)}_{\hat{\hat{j}}}$ ~ in case
    ~$\varrho^{(r+1)}_{\hat{\hat{j}}-1}\leq\kappa^{(v\rightarrow r)}_{-;j}<\varrho^{(r+1)}_{\hat{\hat{j}}}$;
\item $\kappa^{(w\rightarrow r+1)}_{+;j'_{w:v}}=\kappa^{(w\rightarrow r)}_{+;j'_{w:v}}+\vartheta^{(r+1)}_{\hat{\hat{\hat{j}}}}$ ~ in case ~$\varrho^{(r+1)}_{\hat{\hat{\hat{j}}}-1}<\kappa^{(w\rightarrow r)}_{+;j'_{w:v}}\leq\varrho^{(r+1)}_{\hat{\hat{\hat{j}}}}$;
\end{itemize}
Now, by the induction hypothesis, ~$\kappa^{(w\rightarrow r)}_{-;j'_{w:v}}\leq \kappa^{(v\rightarrow r)}_{-;j}<\kappa^{(w\rightarrow r)}_{+;j'_{w:v}}$. \\

Therefore, $\varrho^{(r+1)}_{\hat{j}}\leq \varrho^{(r+1)}_{\hat{\hat{j}}}\leq\varrho^{(r+1)}_{\hat{\hat{\hat{j}}}}$, which by Proposition \ref{order-rho} implies:  $\hat{j}\leq \hat{\hat{j}}\leq\hat{\hat{\hat{j}}}$.
\\

Then, by Proposition \ref{order-rho}, ~$\vartheta^{(r+1)}_{\hat{j}}\leq \vartheta^{(r+1)}_{\hat{\hat{j}}}\leq\vartheta^{(r+1)}_{\hat{\hat{\hat{j}}}}$.
\\

 Thus, we get $\kappa^{(w\rightarrow r)}_{-;j'_{w:v}}\leq \kappa^{(v\rightarrow r)}_{-;j}<\kappa^{(w\rightarrow r)}_{+;j'_{w:v}}$.
\\

By the same argument, we get
\\

$\kappa^{(w\rightarrow r)}_{-;j''_{w:v}}<\kappa^{(v\rightarrow r)}_{+;j}\leq\kappa^{(w\rightarrow r)}_{+;j''_{w:v}}$, which implies $\kappa^{(w\rightarrow r+1)}_{-;j''_{w:v}}<\kappa^{(v\rightarrow w)}_{+;j}\leq\kappa^{(w\rightarrow r+1)}_{+;j''_{w:v}}$, for every $1\leq v\leq z(\pi)-1$. Thus, the proposition holds for every $1\leq v\leq z(\pi)$, $w>v$, and $w\leq r\leq z(\pi)$.
\end{proof}

\begin{proposition}\label{v2-x-v1}
Assume one of the following conditions holds for some \\ $1\leq v, v'\leq z(\pi)$, ~$1\leq j\leq m^{(v)}-1$, ~$1\leq j'\leq m^{(v')}$, $v,v'\leq r\leq z(\pi)$, ~and ~$c\in\{+, -\}$:
\begin{itemize}
\item $\kappa^{(v\rightarrow r)}_{+;j}<\kappa^{(v'\rightarrow r)}_{c;j'}<\kappa^{(v\rightarrow r)}_{-;j+1}$;
\item $\kappa^{(v'\rightarrow r)}_{c;j'}<\kappa^{(v\rightarrow r)}_{-;1}$;
\item $\kappa^{(v\rightarrow r)}_{+;m^{(v)}}<\kappa^{(v'\rightarrow r)}_{c;j'}$.
\end{itemize}
then necessarily $v'>v$ (i.e.,the interval $\left(\kappa^{(v\rightarrow r)}_{+;j}, \kappa^{(v\rightarrow r)}_{-;j+1}\right)$ does not contain $\kappa^{(v'\rightarrow r)}_{c;j'}$ ~such that, $v'\leq v$).
\end{proposition}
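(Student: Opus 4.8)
The plan is to prove the contrapositive: assuming $v'\le v$, I will show that $\kappa^{(v'\rightarrow r)}_{c;j'}$ can lie in none of the three forbidden locations, i.e. neither strictly inside a gap $\bigl(\kappa^{(v\rightarrow r)}_{+;j},\ \kappa^{(v\rightarrow r)}_{-;j+1}\bigr)$, nor strictly below $\kappa^{(v\rightarrow r)}_{-;1}$, nor strictly above $\kappa^{(v\rightarrow r)}_{+;m^{(v)}}$. The backbone is Proposition \ref{order-kappa}, which for fixed $v$ and fixed $r$ gives the strictly increasing chain
$$\kappa^{(v\rightarrow r)}_{-;1}<\kappa^{(v\rightarrow r)}_{+;1}<\kappa^{(v\rightarrow r)}_{-;2}<\cdots<\kappa^{(v\rightarrow r)}_{-;m^{(v)}}<\kappa^{(v\rightarrow r)}_{+;m^{(v)}}.$$
Consequently the closed intervals $I_p:=\bigl[\kappa^{(v\rightarrow r)}_{-;p},\ \kappa^{(v\rightarrow r)}_{+;p}\bigr]$, for $1\le p\le m^{(v)}$, are pairwise disjoint and are separated precisely by the open gaps of the first condition, while every $I_p$ is contained in $\bigl[\kappa^{(v\rightarrow r)}_{-;1},\ \kappa^{(v\rightarrow r)}_{+;m^{(v)}}\bigr]$, ruling out the second and third conditions for any point of any $I_p$. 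Hence it suffices to prove that, whenever $v'\le v$, the value $\kappa^{(v'\rightarrow r)}_{c;j'}$ lies in one of the intervals $I_p$.

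For $v'=v$ this is immediate, since $\kappa^{(v\rightarrow r)}_{c;j'}$ is itself an endpoint of $I_{j'}$. For the case $v'<v$ I will invoke Proposition \ref{v1-x-v2}, taking its earlier factor to be $v'$ and its later factor to be $v$ (this is legitimate because $v'<v\le r\le z(\pi)$, so in particular $v'<z(\pi)$). Matching the branch $c=-$ to the index called $j'$ there, and the branch $c=+$ to the index called $j''$ there, that proposition produces an index $p\in\{1,\dots,m^{(v)}\}$ with
$$\kappa^{(v\rightarrow r)}_{-;p}\le \kappa^{(v'\rightarrow r)}_{c;j'}\le \kappa^{(v\rightarrow r)}_{+;p},$$
that is, $\kappa^{(v'\rightarrow r)}_{c;j'}\in I_p$, exactly as required. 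Combining the two cases shows that no $\kappa^{(v'\rightarrow r)}_{c;j'}$ with $v'\le v$ can satisfy any of the three conditions, so each condition forces $v'>v$.

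The one point needing care is the final disjointness check: I must verify that the closed interval $I_p$ supplied by Proposition \ref{v1-x-v2} meets no open gap $\bigl(\kappa^{(v\rightarrow r)}_{+;j},\ \kappa^{(v\rightarrow r)}_{-;j+1}\bigr)$. This follows mechanically from the strict chain above: if $p\le j$ then $I_p\subseteq(-\infty,\ \kappa^{(v\rightarrow r)}_{+;j}]$, and if $p\ge j+1$ then $I_p\subseteq[\kappa^{(v\rightarrow r)}_{-;j+1},\ \infty)$, so in either case $I_p$ can touch the gap only at an endpoint, which the strict inequalities in the condition exclude; the same endpoint argument disposes of the second and third conditions. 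I expect this endpoint-versus-strict-inequality bookkeeping, together with correctly pairing the $c=+$ and $c=-$ branches with the $j''$- and $j'$-indices of Proposition \ref{v1-x-v2}, to be the main obstacle, but it is routine: all the conceptual content is already carried by the nesting statement of Proposition \ref{v1-x-v2} and the ordering of Proposition \ref{order-kappa}.
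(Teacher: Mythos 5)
Your proposal is correct and follows essentially the same route as the paper's own proof: Proposition \ref{v1-x-v2} places $\kappa^{(v'\rightarrow r)}_{c;j'}$ inside a closed interval $\bigl[\kappa^{(v\rightarrow r)}_{-;p},\ \kappa^{(v\rightarrow r)}_{+;p}\bigr]$ whenever $v'<v$, Proposition \ref{order-kappa} handles $v'=v$ and supplies the interleaving that keeps those closed intervals out of the three forbidden open regions. Your contrapositive framing and the explicit endpoint bookkeeping are just a more careful write-up of the paper's (terser) argument, not a different method.
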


\begin{proof}
Consider $\kappa^{(v'\rightarrow r)}_{c;j'}$. Then, by Proposition \ref{v1-x-v2},  for every $v>v'$ there exists $1\leq j\leq m^{(v)}$ such that
$\kappa^{(v\rightarrow r)}_{+;j}\leq\kappa^{(v'\rightarrow r)}_{c;j'}\leq\kappa^{(v\rightarrow r)}_{+;j}$. Therefore, the case where,
\\

 $\kappa^{(v\rightarrow r)}_{+;j}<\kappa^{(v'\rightarrow r)}_{c;j'}<\kappa^{(v\rightarrow r)}_{-;j+1} ~~~~ or ~~~~\kappa^{(v'\rightarrow r)}_{c;j'}<\kappa^{(v\rightarrow r)}_{-;1} ~~~~ or ~~~~ \kappa^{(v\rightarrow r)}_{+;m^{(v)}}<\kappa^{(v'\rightarrow r)}_{c;j'}$
 \\

 can not happen for $v>v'$. By Proposition \ref{order-kappa}, the mentioned cases can not happen for $v=v'$ as well. Therefore, necessarily, $v'>v$ under the conditions of the proposition.

\end{proof}

\begin{proposition}\label{no-common-eps12}
$\kappa^{(v')}_{-;j'}\neq \kappa^{(v'')}_{+;j''}$, for any $1\leq v', ~v''\leq z(\pi)$, $1\leq j'\leq m^{(v')}$, and  $1\leq j''\leq m^{(v'')}$.
\end{proposition}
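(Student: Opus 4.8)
The plan is to prove the inequality by a short case analysis on the relative order of the factor indices $v'$ and $v''$, reducing everything to the two structural facts already available: the strictly increasing interleaving of the $\kappa$-values inside a single factor (Proposition \ref{order-kappa}, together with the monotonicity of $\kappa_{-;j}$ and $\kappa_{+;j}$ in $j$, which is immediate from the base definitions), and the interval-nesting of the lower-index $\kappa$-values inside the higher-index ones (Proposition \ref{v1-x-v2}). The decisive observation is that these two nestings are half-open on \emph{opposite} sides, and it is exactly this asymmetry that forbids a ``minus''-value from ever coinciding with a ``plus''-value.

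For the case $v'=v''=:v$ I would invoke Proposition \ref{order-kappa} directly: for a fixed factor index the values form a single strictly increasing chain $\kappa^{(v)}_{-;1}<\kappa^{(v)}_{+;1}<\kappa^{(v)}_{-;2}<\kappa^{(v)}_{+;2}<\cdots$ in which the $\kappa_{-}$ and $\kappa_{+}$ entries alternate. Hence $\kappa^{(v)}_{-;j'}$ and $\kappa^{(v)}_{+;j''}$ are distinct entries of one chain and cannot be equal; explicitly, if $j'\le j''$ then $\kappa^{(v)}_{-;j'}\le\kappa^{(v)}_{-;j''}<\kappa^{(v)}_{+;j''}$, while if $j'>j''$ then $\kappa^{(v)}_{+;j''}<\kappa^{(v)}_{-;j''+1}\le\kappa^{(v)}_{-;j'}$.

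For $v'<v''$, where I want $\kappa^{(v')}_{-;j'}\ne\kappa^{(v'')}_{+;j''}$, I would apply Proposition \ref{v1-x-v2} (with $v=v'$, $w=v''$, $r=z(\pi)$) to bracket $\kappa^{(v')}_{-;j'}$ in an interval closed on the left and \emph{open on the right}, say $\kappa^{(v'')}_{-;a}\le\kappa^{(v')}_{-;j'}<\kappa^{(v'')}_{+;a}$. Then if $j''\ge a$ one gets $\kappa^{(v'')}_{+;j''}\ge\kappa^{(v'')}_{+;a}>\kappa^{(v')}_{-;j'}$, and if $j''<a$ one gets $\kappa^{(v'')}_{+;j''}\le\kappa^{(v'')}_{+;a-1}<\kappa^{(v'')}_{-;a}\le\kappa^{(v')}_{-;j'}$, the middle step being the interleaving of Proposition \ref{order-kappa}; in either subcase the two quantities are strictly separated. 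Symmetrically, for $v'>v''$ the lower-index value is now the ``plus'' one, and Proposition \ref{v1-x-v2} brackets $\kappa^{(v'')}_{+;j''}$ in an interval open on the \emph{left} and closed on the right, $\kappa^{(v')}_{-;b}<\kappa^{(v'')}_{+;j''}\le\kappa^{(v')}_{+;b}$; splitting into $j'\le b$ and $j'>b$ yields $\kappa^{(v')}_{-;j'}\le\kappa^{(v')}_{-;b}<\kappa^{(v'')}_{+;j''}$ and $\kappa^{(v')}_{-;j'}\ge\kappa^{(v')}_{-;b+1}>\kappa^{(v')}_{+;b}\ge\kappa^{(v'')}_{+;j''}$ respectively.

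I expect the only delicate point to be keeping the open and closed ends of the nesting intervals straight: Proposition \ref{v1-x-v2} places a $\kappa_{-}$ in an interval closed on the left and open on the right, but a $\kappa_{+}$ in one open on the left and closed on the right, and a sign slip here would spuriously admit equality at a shared endpoint. Once the handedness is pinned down, every subcase collapses to a one-line comparison, and the degenerate situation $\kappa^{(v')}_{-;1}=0$ (which arises when $h^{(v')}_1=maj\left(\pi^{(v')}\right)$) causes no difficulty, since each $\kappa^{(v'')}_{+;j''}$ is strictly positive.
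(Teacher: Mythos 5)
Your proposal is correct and follows essentially the same route as the paper's proof: the case $v'=v''$ is handled by the interleaving of Proposition \ref{order-kappa}, and the cases $v'\neq v''$ by combining the half-open bracketing of Proposition \ref{v1-x-v2} with that interleaving (the paper phrases the $v'<v''$ case as a contradiction — no index $j$ can satisfy $\kappa^{(v'')}_{-;j}\leq \kappa^{(v'')}_{+;j''}<\kappa^{(v'')}_{+;j}$ — while you give the equivalent direct two-subcase separation). The delicate point you flag, the opposite handedness of the two nesting intervals, is indeed exactly what the paper's argument rests on.
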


\begin{proof}
If $v'=v''$ for some $1\leq v'\leq z(\pi)$, then by Proposition \ref{order-kappa}, we have $\kappa^{(v')}_{-;j'}\neq \kappa^{(v')}_{+;j''}$, for all $1\leq v' \leq z(\pi)$, ~$1\leq j', j''\leq m^{(v)}$. Now, assume $v'<v''$. Then, by Proposition \ref{v1-x-v2}, there exists $1\leq j\leq m^{(v'')}$, such that ~$\kappa^{(v'')}_{-;j}\leq \kappa^{(v')}_{-;j'}<\kappa^{(v'')}_{+;j}$.
\\

By our assumption, $\kappa^{(v')}_{-;j'}=\kappa^{(v'')}_{+;j''}$.
\\

Therefore, $\kappa^{(v'')}_{-;j}\leq \kappa^{(v'')}_{+;j''}<\kappa^{(v'')}_{+;j}$.
\\

By Proposition \ref{order-kappa}, $$\kappa^{(v'')}_{+;j}>\kappa^{(v'')}_{+;j''} ~~if ~and ~only ~if ~j>j'', ~~~~ \kappa^{(v'')}_{-;j}\leq\kappa^{(v'')}_{+;j''} ~~ if ~and ~only ~if ~j\leq j''.$$

Therefore, there is no $j$  such that ~$\kappa^{(v'')}_{-;j}\leq \kappa^{(v'')}_{+;j''}<\kappa^{(v'')}_{+;j}$. ~Thus, ~$\kappa^{(v')}_{-;j'}\neq \kappa^{(v'')}_{+;j''}$, ~for all $1\leq j'\leq m^{(v')}$, and  all $1\leq j''\leq m^{(v'')}$, in case $v'<v''$.

By a similar argument we have ~$\kappa^{(v')}_{-;j'}\neq \kappa^{(v'')}_{+;j''}$, ~for any $1\leq j'\leq m^{(v')}$ and  any $1\leq j''\leq m^{(v'')}$, in case $v'>v''$.
\end{proof}

\begin{proposition}\label{max-min-kappa}
For every integer $v_{1}$, ~$v_{2}$, ~and $j$, such that $1\leq v_{1}<v_{2}\leq z(\pi)$, and  $1\leq j\leq m^{(v_{1})}$, the parameters $\kappa^{(v_{1})}_{-;j}$, ~$\kappa^{(v_{1})}_{+;j}$, ~$\kappa^{(v_{2})}_{+;m^{(v_{2})}}$, ~$\kappa^{(v_{2})}_{-;1}$ ~satisfy the following properties:
\begin{itemize}
\item If $1\leq v_{1}<v_{2}\leq z(\pi)$, then $\kappa^{(v_{1})}_{+;j}\leq \kappa^{(v_{2})}_{+;m^{(v_{2})}}$ and $\kappa^{(v_{1})}_{-;j}< \kappa^{(v_{2})}_{+;m^{(v_{2})}}$, for every \\ $1\leq j\leq m^{(v)}$. In particular, $\kappa^{(z(\pi))}_{+;m^{(z(\pi))}}\geq \kappa^{(v)}_{c;j}$, for every $1\leq v\leq z(\pi)$, \\ $1\leq j\leq m^{(v)}$, $c\in\{+,-\}$;
\item If $1\leq v_{1}<v_{2}\leq z(\pi)$, then $\kappa^{(v_{2})}_{-;1}\leq \kappa^{(v_{1})}_{-;j}$  and $\kappa^{(v_{2})}_{-;1}< \kappa^{(v_{1})}_{+;j}$, for every $1\leq j\leq m^{(v)}$. In particular, $\kappa^{(z(\pi))}_{-;1}\leq \kappa^{(v)}_{c;j}$, for every $1\leq v\leq z(\pi)$, $1\leq j\leq m^{(v)}$, $c\in\{+,-\}$.
\end{itemize}
\end{proposition}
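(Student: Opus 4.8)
The plan is to obtain both parts as short consequences of two facts already in hand: the sandwich estimate of Proposition \ref{v1-x-v2}, which locates each $\kappa^{(v_1)}_{\pm;j}$ between two consecutive values carried by the later factor $v_2$, together with the monotonicity of $\kappa^{(v_2)}_{\pm;j}$ in the index $j$ supplied by Proposition \ref{order-kappa}. Spelled out, Proposition \ref{order-kappa} yields the increasing chain
$$\kappa^{(v_2)}_{-;1}<\kappa^{(v_2)}_{+;1}<\kappa^{(v_2)}_{-;2}<\kappa^{(v_2)}_{+;2}<\cdots<\kappa^{(v_2)}_{-;m^{(v_2)}}<\kappa^{(v_2)}_{+;m^{(v_2)}},$$
so that $\kappa^{(v_2)}_{-;1}$ is the minimum and $\kappa^{(v_2)}_{+;m^{(v_2)}}$ the maximum among all the $\kappa^{(v_2)}_{c;j}$. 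The assertion is then precisely that these two extreme values of the $v_2$-th factor also bound all the values coming from any earlier factor $v_1<v_2$.

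For the first part I would fix $1\le v_1<v_2\le z(\pi)$ and $1\le j\le m^{(v_1)}$ and invoke Proposition \ref{v1-x-v2} with $v=v_1$, $w=v_2$, $r=z(\pi)$, producing indices $j'_{v_2:v_1},j''_{v_2:v_1}\in\{1,\dots,m^{(v_2)}\}$ with
$$\kappa^{(v_2)}_{-;j'_{v_2:v_1}}\le \kappa^{(v_1)}_{-;j}<\kappa^{(v_2)}_{+;j'_{v_2:v_1}},\qquad \kappa^{(v_2)}_{-;j''_{v_2:v_1}}<\kappa^{(v_1)}_{+;j}\le\kappa^{(v_2)}_{+;j''_{v_2:v_1}}.$$
Since $j''_{v_2:v_1}\le m^{(v_2)}$, the chain gives $\kappa^{(v_2)}_{+;j''_{v_2:v_1}}\le\kappa^{(v_2)}_{+;m^{(v_2)}}$, hence $\kappa^{(v_1)}_{+;j}\le\kappa^{(v_2)}_{+;m^{(v_2)}}$, and likewise $\kappa^{(v_1)}_{-;j}<\kappa^{(v_2)}_{+;j'_{v_2:v_1}}\le\kappa^{(v_2)}_{+;m^{(v_2)}}$, which is the strict bound. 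The second part is the mirror image: using $j'_{v_2:v_1},j''_{v_2:v_1}\ge 1$ and the same chain, $\kappa^{(v_2)}_{-;1}\le\kappa^{(v_2)}_{-;j'_{v_2:v_1}}\le\kappa^{(v_1)}_{-;j}$ and $\kappa^{(v_2)}_{-;1}\le\kappa^{(v_2)}_{-;j''_{v_2:v_1}}<\kappa^{(v_1)}_{+;j}$.

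The two ``in particular'' statements then follow by taking $v_2=z(\pi)$: for $v<z(\pi)$ they are exactly the two parts just proved, while the diagonal case $v=z(\pi)$ is immediate, $\kappa^{(z(\pi))}_{-;1}$ and $\kappa^{(z(\pi))}_{+;m^{(z(\pi))}}$ being the endpoints of the chain. I expect the only delicate point to be bookkeeping rather than substance: one must confirm that the indices $j'_{v_2:v_1},j''_{v_2:v_1}$ furnished by Proposition \ref{v1-x-v2} really lie in $\{1,\dots,m^{(v_2)}\}$, so that the endpoints $\kappa^{(v_2)}_{-;1}$ and $\kappa^{(v_2)}_{+;m^{(v_2)}}$ do dominate, and that the strict-versus-weak inequalities are tracked with care — every comparison involving a $\kappa^{(v_2)}_{+;\,\cdot}$ on the bounding side inherits the weak inequality from Proposition \ref{v1-x-v2}, whereas a comparison against a $\kappa^{(v_2)}_{-;\,\cdot}$ on the opposite side remains strict.
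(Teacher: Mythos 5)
Your proposal is correct and follows essentially the same route as the paper: both deduce the bounds by combining the sandwich inequalities of Proposition \ref{v1-x-v2} (applied with $w=v_{2}$, $r=z(\pi)$) with the monotonicity in $j$ from Proposition \ref{order-kappa}, so that $\kappa^{(v_{2})}_{-;1}$ and $\kappa^{(v_{2})}_{+;m^{(v_{2})}}$ dominate all values from earlier factors. Your extra care about which inequalities are strict versus weak matches the statement and is handled the same way in the paper.
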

\begin{proof}
Let $v_{1}$, and $v_{2}$ be integers such that $1\leq v_{1}<v_{2}\leq z(\pi)$. Let $1\leq j\leq m^{(v_{1})}$. Then, by Proposition \ref{v1-x-v2}, there exists $1\leq j', j''\leq m^{(v_{2})}$ such that $\kappa^{(v_{1})}_{-;j'}<\kappa^{(v_{1})}_{+;j}\leq \kappa^{(v_{2})}_{+;j'}$ and $\kappa^{(v_{1})}_{-;j''}\leq\kappa^{(v_{1})}_{-;j}<\kappa^{(v_{2})}_{+;j''}$. By Proposition \ref{order-kappa}, $\kappa^{(v_{2})}_{+;j'}\leq \kappa^{(v_{2})}_{+;m^{(v_{2})}}$ and $\kappa^{(v_{2})}_{+;j''}\leq \kappa^{(v_{2})}_{+;m^{(v_{2})}}$. Hence, we get the first part of the proposition. By a similar argument we prove the second part of the proposition.
\end{proof}

\begin{proposition}\label{kappa-zero}
Assume $\kappa^{(v)}_{c;j}=0$ for some $1\leq v\leq z(\pi)$, ~$1\leq j\leq m^{(v)}$, ~and ~$c\in\{+, -\}$. Then, necessarily $j=1$, ~$c=(-)$, ~and for every $v'\geq v$, ~$m^{(v')}>1$, ~and both $\kappa^{(v')}_{-;1}=0$ ~and ~$\vartheta^{(v')}_{1}=0$. Moreover, if $\vartheta^{(v')}_{1}=0$ for every $v'\geq v$, for some $1\leq v\leq z(\pi)$, then necessarily $\kappa^{(v')}_{-;1}=0$ for every $v'\geq v$, as well.
\end{proposition}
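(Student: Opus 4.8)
The plan is to exploit the monotonicity built into the recursion of Definition \ref{kappa}, together with the fact that $\kappa^{(z(\pi))}_{-;1}$ is the global minimum of all the $\kappa$-values. First I would record that every $\kappa^{(v)}_{c;j}$ is a non-negative integer: by Proposition \ref{max-min-kappa} we have $\kappa^{(z(\pi))}_{-;1}\leq \kappa^{(v)}_{c;j}$ for all admissible $v,j,c$, and $\kappa^{(z(\pi))}_{-;1}=\kappa^{(z(\pi)\rightarrow z(\pi))}_{-;1}=\vartheta^{(z(\pi))}_{1}=h^{(z(\pi))}_{1}-maj\left(\pi^{(z(\pi))}\right)\geq 0$, since $\pi^{(z(\pi))}$ is a standard $OGS$ elementary factor. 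Hence every $\kappa$-value is $\geq 0$.

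From the hypothesis $\kappa^{(v)}_{c;j}=0$ I would then pin down $c$ and $j$. Proposition \ref{order-kappa} gives $\kappa^{(v)}_{-;j}<\kappa^{(v)}_{+;j}$, so $c=+$ would force $\kappa^{(v)}_{-;j}<0$, which is impossible; thus $c=(-)$. To get $j=1$ I would use the strict chain $\kappa^{(v)}_{-;1}<\kappa^{(v)}_{+;1}<\kappa^{(v)}_{-;2}<\cdots$ coming from Proposition \ref{order-kappa}, which exhibits $\kappa^{(v)}_{-;1}$ as the unique minimum at level $v$; any $\kappa^{(v)}_{-;j}$ with $j\geq 2$ strictly exceeds $\kappa^{(v)}_{-;1}\geq 0$, so only $j=1$ can vanish.

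The heart of the argument is a monotonicity observation. The sequence $r\mapsto \kappa^{(v\rightarrow r)}_{-;1}$, for $v\leq r\leq z(\pi)$, is non-decreasing, since each step of Definition \ref{kappa} adds a term $\vartheta^{(r+1)}_{j'}\geq 0$; its base value is $\kappa^{(v\rightarrow v)}_{-;1}=\vartheta^{(v)}_{1}$ and its terminal value is $\kappa^{(v)}_{-;1}=0$. A non-decreasing non-negative integer sequence ending at $0$ is identically $0$ with all increments equal to $0$. This gives $\vartheta^{(v)}_{1}=0$ at once, and forces each increment $\vartheta^{(r+1)}_{j'}$ to vanish; moreover the index is $j'=1$, because $\varrho^{(r+1)}_{0}=0\leq \kappa^{(v\rightarrow r)}_{-;1}=0<\varrho^{(r+1)}_{1}=\imath^{(r+1)}_{1}$. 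Therefore $\vartheta^{(v')}_{1}=0$ for every $v\leq v'\leq z(\pi)$, and by Proposition \ref{order-rho} each such vanishing forces $m^{(v')}>1$. To obtain $\kappa^{(v')}_{-;1}=0$ for every $v'\geq v$, I would rerun the same recursion from the base $\kappa^{(v'\rightarrow v')}_{-;1}=\vartheta^{(v')}_{1}=0$: whenever the running value is $0$ the selected index is $j'=1$ and the increment $\vartheta^{(r+1)}_{1}=0$ keeps it at $0$, so $\kappa^{(v'\rightarrow z(\pi))}_{-;1}=0$. The concluding ``moreover'' claim is exactly this last induction: assuming $\vartheta^{(v')}_{1}=0$ for all $v'\geq v$, the recursion for $\kappa^{(v'\rightarrow r)}_{-;1}$ stays pinned at $0$ for each $v'\geq v$, giving $\kappa^{(v')}_{-;1}=0$.

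The main obstacle I anticipate is purely bookkeeping: at every recursion step one must verify that the running value being $0$ selects $j'=1$ (so the increment is genuinely $\vartheta^{(r+1)}_{1}$ and not some $\vartheta^{(r+1)}_{j'}$ with $j'>1$), which rests on the strict positivity $\varrho^{(r+1)}_{1}=\imath^{(r+1)}_{1}>0$. A secondary subtlety is justifying the strict chain at level $v$ used to force $j=1$: Proposition \ref{order-kappa} is phrased for the interior range $2\leq j\leq m^{(v)}-1$, so the boundary inequality $\kappa^{(v)}_{+;1}<\kappa^{(v)}_{-;2}$ should be confirmed from the base-case formulas $\kappa^{(v\rightarrow v)}_{+;1}=h^{(v)}_{1}-\rho^{(v)}_{2}<h^{(v)}_{2}-\rho^{(v)}_{2}=\kappa^{(v\rightarrow v)}_{-;2}$ and then propagated through the recursion.
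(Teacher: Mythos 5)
Your proof is correct and follows essentially the same route as the paper: both arguments rest on the non-negativity of the $\varrho^{(v)}_{j}$ and $\vartheta^{(v)}_{j}$ building blocks (Proposition \ref{order-rho}) together with the recursion of Definition \ref{kappa} to force $c=(-)$, $j=1$, and the vanishing of all $\kappa^{(v')}_{-;1}$ and $\vartheta^{(v')}_{1}$ for $v'\geq v$. The only cosmetic difference is the order of deduction — the paper first gets $\kappa^{(v')}_{-;1}=0$ for $v'>v$ from Proposition \ref{max-min-kappa} and then extracts $\vartheta^{(v')}_{1}=0$ from $\kappa^{(v')}_{-;1}\geq\vartheta^{(v')}_{1}\geq 0$, whereas you unwind the recursion to kill the increments $\vartheta^{(v')}_{1}$ first and then reassemble $\kappa^{(v')}_{-;1}=0$ — and your attention to the boundary case $\kappa^{(v)}_{+;1}<\kappa^{(v)}_{-;2}$ is a reasonable extra precaution.
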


\begin{proof}
Since $\kappa^{(v)}_{+;j}$ is the sum of $\varrho^{(v)}_{j}$ with elements of the form $\vartheta^{(v')}_{j'}$, where by Proposition \ref{order-rho}, $\varrho^{(v)}_{j}>0$  and  $\vartheta^{(v')}_{j'}\geq 0$ ~for every $1\leq v\leq z(\pi)$ and  $1\leq j\leq m^{(v)}$, obviously, $\kappa^{(v)}_{+;j}>0$. By the same argument, $\kappa^{(v)}_{-;j}>0$ for $j\geq 2$ as well. Thus, if  $\kappa^{(v)}_{-;j}=0$ for some $1\leq v\leq z(\pi)$ and $1\leq j\leq m^{(v)}$, then necessarily $j=1$. Now, by using Proposition \ref{max-min-kappa}, $\kappa^{(v')}_{-;1}\leq \kappa^{(v)}_{-;1}$ for every $v'>v$. Thus, we get $\kappa^{(v')}_{-;1}=0$ for every $v'\geq v$. Now, since $\kappa^{(v')}_{-;1}\geq \kappa^{(v'\rightarrow v')}_{-;1}=\vartheta^{(v')}_{1}$, and by Proposition \ref{order-rho}, $\vartheta^{(v')}_{1}\geq 0$, ~we conclude $\vartheta^{(v')}_{1}=0$ for every $v'\geq v$ as well, and therefore, $m^{(v')}>1$ by using the same proposition. Now, assume $\vartheta^{(v')}_{1}=0$ for every $v'\geq v$, for some $1\leq v\leq z(\pi)$, then by Definition \ref{kappa}, we get $\kappa^{(v')}_{-;1}=0$ for every $v'\geq v$, as well.
\end{proof}
\\

Now, we introduce two new parameters $\chi$ and $\eta$, which are defined by the different values of $\kappa^{(v)}_{-;j}$ and $\kappa^{(v)}_{+;j}$, for $1\leq v\leq z(\pi)$ and $1\leq j\leq m^{(v)}$, and which are essential for the formula of the standard $OGS$ canonical form of $\pi^{-1}$.
\\

\begin{definition}\label{chi}
For every $1\leq v\leq z(\pi)$ and $1\leq j<m^{(v)}$, ~$\chi^{(v)}_{j}$ is defined in the following way: \\
If there exists integers $v'<v$ and $1\leq j'<m^{(v')}$, such that
$$\kappa^{(v')}_{+;j'}\leq\kappa^{(v)}_{+;j}<\kappa^{(v')}_{-;j'+1}, ~~ or ~~ \kappa^{(v)}_{+;j}<\kappa^{(v')}_{-;1}, ~~ or ~~ \kappa^{(v')}_{+;m^{(v')}}\leq\kappa^{(v)}_{+;j},$$
then define $\chi^{(v)}_{j}$ to be the largest $v'$ such that $v'<v$, otherwise $\chi^{(v)}_{j}$ is defined to be $0$.
\end{definition}

\begin{definition}\label{eta}
For every $1\leq v\leq z(\pi)$ and $1\leq j<m^{(v)}$, ~$\eta^{(v)}_{j}$ is defined in the following way:

$$\eta^{(v)}_{j}=\sum_{v'=\chi^{(v)}_{j}+1}^{v-1}\vartheta^{(v)}_{j_{v':v}},$$
such that
$$\kappa^{(v')}_{-;j_{v':v}}<\kappa^{(v)}_{+;j}<\kappa^{(v')}_{+;j_{v':v}};$$
for some $1\leq j_{v':v}\leq m^{(v')}$.
\end{definition}

In the next three propositions we find important connections between the parameters  $\chi^{(v)}_{j}$, ~$\eta^{(v)}_{j}$, ~and the parameters $\varrho^{(v)}_{j}$, ~$\vartheta^{(v)}_{j}$, ~$\kappa^{(v)}_{-;j}$,  and ~$\kappa^{(v)}_{+;j}$, where the third proposition (Proposition \ref{inverse-factorization}), leads to the standard $OGS$ elementary factorization of $\pi^{-1}$.
\\
\begin{proposition}\label{eta-varrho}
For every $1\leq v\leq z(\pi)$ and $1\leq j\leq m^{(v)}$, $$\eta^{(v)}_{j}<\varrho^{(v)}_{j}.$$
\end{proposition}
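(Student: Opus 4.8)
The plan is to reduce the claim to a statement about the recursion of Definition \ref{kappa} and then prove it by induction on the number $v-1-\chi^{(v)}_{j}$ of summands occurring in $\eta^{(v)}_{j}$. First I would rewrite the target: the base clause of Definition \ref{kappa} gives $\kappa^{(v\rightarrow v)}_{+;j}=\vartheta^{(v)}_{j}+\varrho^{(v)}_{j}$, hence $\varrho^{(v)}_{j}=\kappa^{(v\rightarrow v)}_{+;j}-\vartheta^{(v)}_{j}$, and the desired inequality $\eta^{(v)}_{j}<\varrho^{(v)}_{j}$ is equivalent to $\eta^{(v)}_{j}+\vartheta^{(v)}_{j}<\kappa^{(v\rightarrow v)}_{+;j}$. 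Each summand $\vartheta^{(v')}_{j_{v':v}}$ is nonnegative and each $\varrho^{(v)}_{j}$ is strictly positive by Proposition \ref{order-rho}; moreover the index $j_{v':v}$ is well defined, since by Proposition \ref{order-kappa} the intervals $\bigl(\kappa^{(v')}_{-;\ell},\kappa^{(v')}_{+;\ell}\bigr)$ are pairwise disjoint and increasing in $\ell$, so at most one contains $\kappa^{(v)}_{+;j}$, and its existence for every $v'$ with $\chi^{(v)}_{j}<v'<v$ is forced by the maximality of $\chi^{(v)}_{j}$ in Definition \ref{chi}. This already settles the base case $\chi^{(v)}_{j}=v-1$, where the sum defining $\eta^{(v)}_{j}$ is empty and $0<\varrho^{(v)}_{j}$.

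Next I would exploit the nesting of the bracketing intervals across levels. Fix $X:=\kappa^{(v)}_{+;j}$; for each $v'$ with $\chi^{(v)}_{j}<v'<v$ the bucket $j_{v':v}$ is characterised by $\kappa^{(v')}_{-;j_{v':v}}<X<\kappa^{(v')}_{+;j_{v':v}}$. Applying Proposition \ref{v1-x-v2} to the consecutive levels $v'-1<v'$ shows that the level-$(v'-1)$ bracket containing $X$ is nested inside the level-$v'$ bracket containing it, so the left endpoints $\kappa^{(v')}_{-;j_{v':v}}$ form a nondecreasing sequence and the right endpoints $\kappa^{(v')}_{+;j_{v':v}}$ a nonincreasing sequence as $v'$ decreases from $v-1$ to $\chi^{(v)}_{j}+1$, all strictly straddling $X$. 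This monotone shrinking of the brackets toward $X$ is what makes the buckets compatible from one level to the next and lets me peel off the lowest level $v'=\chi^{(v)}_{j}+1$ and invoke the inductive hypothesis on the truncated factorization below level $v$; the aim is to realise each $\vartheta^{(v')}_{j_{v':v}}$ as the increment that the recursion of Definition \ref{kappa} adds on passing up one level, so that $\eta^{(v)}_{j}$ is dominated by a telescoping sum of successive endpoint differences collapsing below $\kappa^{(v\rightarrow v)}_{+;j}-\vartheta^{(v)}_{j}=\varrho^{(v)}_{j}$.

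The hard part will be the bookkeeping of the blocks in this telescoping, because the $(-)$- and $(+)$-branches of the recursion in Definition \ref{kappa} may pick up different increments $\vartheta^{(r+1)}_{j'}$ and $\vartheta^{(r+1)}_{j''}$, so the cancellation is not literal. To force it through I would carry the strict separation $X\in\bigl(\kappa^{(v'\rightarrow r)}_{-;j_{v':v}},\kappa^{(v'\rightarrow r)}_{+;j_{v':v}}\bigr)$ at every intermediate stage $r$ — which propagates by Propositions \ref{order-kappa} and \ref{v1-x-v2} — and use it to pin down exactly which block $\varrho^{(r+1)}_{\cdot}$ each branch enters; the point is that, because $X$ separates the two branches at each step, the mismatched increments accumulated on the two sides add up to precisely $\sum_{v'=\chi^{(v)}_{j}+1}^{v-1}\vartheta^{(v')}_{j_{v':v}}=\eta^{(v)}_{j}$, and this total is bounded by the width between the endpoint at level $\chi^{(v)}_{j}+1$ and $\kappa^{(v\rightarrow v)}_{+;j}$. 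I would also verify the degenerate situation where some $\kappa^{(v')}_{-;1}=0$ by means of Proposition \ref{kappa-zero}, so that the lowest bracket is handled correctly. Controlling this branch-interaction is the delicate step; once it is in place, strictness (coming from the fact that at level $v$ itself $X=\kappa^{(v)}_{+;j}$ is an endpoint, leaving a positive gap) yields $\eta^{(v)}_{j}<\varrho^{(v)}_{j}$.
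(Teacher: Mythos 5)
Your setup (induction on $v-\chi^{(v)}_{j}$, the empty-sum base case, and the idea of realising the summands of $\eta^{(v)}_{j}$ as increments of the recursion in Definition \ref{kappa}) matches the paper's, but the inductive step as you describe it has a genuine gap, and it is exactly the step you yourself flag as delicate. First, the nesting you extract from Proposition \ref{v1-x-v2} is more than that proposition gives: it shows that each endpoint of the level-$(v'-1)$ bracket lies in \emph{some} level-$v'$ bracket, not that it lies in the level-$v'$ bracket containing $X=\kappa^{(v)}_{+;j}$; a lower-level bracket containing $X$ may have its left endpoint strictly to the left of the left endpoint of a higher-level bracket containing $X$. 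Consequently the left endpoints $\kappa^{(v')}_{-;j_{v':v}}$ need not be monotone in $v'$, and peeling off the \emph{lowest} level $v'=\chi^{(v)}_{j}+1$ does not work: if you run the recursion of Definition \ref{kappa} starting from the level-$(\chi^{(v)}_{j}+1)$ left endpoint, the block it falls into at an intermediate level $r+1$ need not be $j_{r+1:v}$, so the increments you accumulate are not the summands of $\eta^{(v)}_{j}$ and the telescoping does not close up. Your assertion that the mismatched increments on the two branches add up to precisely $\eta^{(v)}_{j}$ is exactly the unproved core of the argument.

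The paper's resolution is a specific choice you are missing: let $\breve{v}$ be the smallest level in $\{\chi^{(v)}_{j}+1,\ldots,v-1\}$ whose bracket containing $X$ has the \emph{maximal} left endpoint $\kappa^{(\breve{v})}_{-;j_{\breve{v}:v}}$. Because this left endpoint dominates all the others and still satisfies $\kappa^{(\breve{v})}_{-;j_{\breve{v}:v}}<X<\kappa^{(w)}_{+;j_{w:v}}$, it lies in the level-$w$ bracket $j_{w:v}$ for every intermediate $w$, so the recursion started at $\kappa^{(\breve{v}\rightarrow \breve{v})}_{-;j_{\breve{v}:v}}=\varrho^{(\breve{v})}_{j_{\breve{v}:v}-1}+\vartheta^{(\breve{v})}_{j_{\breve{v}:v}}$ picks up exactly the increments $\vartheta^{(v')}_{j_{v':v}}$ for $\breve{v}<v'\leq v-1$, giving $\sum_{v'=\breve{v}}^{v-1}\vartheta^{(v')}_{j_{v':v}}=\kappa^{(\breve{v}\rightarrow v-1)}_{-;j_{\breve{v}:v}}-\varrho^{(\breve{v})}_{j_{\breve{v}:v}-1}$, and this is $<\varrho^{(v)}_{j}-\varrho^{(\breve{v})}_{j_{\breve{v}:v}-1}$ because $\kappa^{(\breve{v})}_{-;j_{\breve{v}:v}}<\kappa^{(v)}_{+;j}$ forces $\kappa^{(\breve{v}\rightarrow v-1)}_{-;j_{\breve{v}:v}}<\varrho^{(v)}_{j}$. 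The remaining head $\sum_{v'=\chi^{(v)}_{j}+1}^{\breve{v}-1}\vartheta^{(v')}_{j_{v':v}}$ is then identified with $\eta^{(\breve{v})}_{j_{\breve{v}:v}-1}$, which the induction hypothesis bounds by $\varrho^{(\breve{v})}_{j_{\breve{v}:v}-1}$; adding the two estimates yields $\eta^{(v)}_{j}<\varrho^{(v)}_{j}$. Without the choice of $\breve{v}$ (or an equivalent device) the telescoping cannot be made to produce the summands of $\eta^{(v)}_{j}$, so the proposal as written does not constitute a proof.
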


\begin{proof}
The proof is in induction on the value of $v-\chi^{(v)}_{j}$. If $v-\chi^{(v)}_{j}=1$, then $\chi^{(v)}_{j}=v-1$, and $\eta^{(v)}_{j}=0$. Therefore, obviously, $\eta^{(v)}_{j}<\varrho^{(v)}_{j}$ in case $\eta^{(v)}_{j}=0$. Now, assume in induction that $\eta^{(v)}_{j}<\varrho^{(v)}_{j}$, in case $v-\chi^{(v)}_{j}\leq \hat{v}$, for some $\hat{v}\geq 1$, and we prove the proposition for the case $v-\chi^{(v)}_{j}=\hat{v}+1$. By Definition \ref{eta}, $\eta^{(v)}_{j}=\sum_{v'=\chi^{(v)}_{j}+1}^{v-1}\vartheta^{(v')}_{j_{v':v}}$, such that $\kappa^{(v')}_{-;j_{v':v}}<\kappa^{(v)}_{+;j}<\kappa^{(v')}_{+;j_{v':v}}$. Since $v-\chi^{(v)}_{j}=\hat{v}+1$, we get $\eta^{(v)}_{j}=\sum_{v'=v-\hat{v}}^{v-1}\vartheta^{(v')}_{j_{v':v}}$, such that $\kappa^{(v')}_{-;j_{v':v}}<\kappa^{(v)}_{+;j}<\kappa^{(v')}_{+;j_{v':v}}$. Thus, consider the values of $\kappa^{(v')}_{-;j_{v'v}}$ for $v-\hat{v}\leq v'\leq v-1$. Denote by $\breve{v}$ the smallest integer such that $v-\hat{v}\leq \breve{v}\leq v-1$ and for every $v-\hat{v}\leq v'\leq v-1$, $\kappa^{(v')}_{-;j_{v':v}}\leq \kappa^{(\breve{v})}_{-;j_{\breve{v}:v}}$. Then, the following holds:
\begin{itemize}
\item $\eta^{(\breve{v})}_{j_{\breve{v:v}}-1}=\sum_{v'=v-\hat{v}}^{\breve{v}-1}\vartheta^{(v')}_{j_{v':v}}$;
\item $\kappa^{(\breve{v}\rightarrow v-1)}_{-;j_{\breve{v}:v}}=\varrho^{(\breve{v})}_{j_{\breve{v}:v}-1}+\sum_{v'=\breve{v}}^{v-1}\vartheta^{(v')}_{j_{v':v}}$.
\end{itemize}
Since $\breve{v}<v$, by the induction hypothesis, $\eta^{(\breve{v})}_{j_{\breve{v}:v}-1}<\varrho^{(\breve{v})}_{j_{\breve{v}:v}-1}$, and since $\kappa^{(\breve{v})}_{-;j_{\breve{v}:v}}<\kappa^{(v)}_{+;j}$, by Definition \ref{kappa} we conclude, $\kappa^{(\breve{v}\rightarrow v-1)}_{-;j_{\breve{v}:v}}<\varrho^{(v)}_{j}$. Hence,
$$\eta^{(v)}_{j}=\eta^{(\breve{v})}_{j_{\breve{v}:v}-1}+\kappa^{(\breve{v}\rightarrow v-1)}_{-;j_{\breve{v}:v}}-\varrho^{(\breve{v})}_{j_{\breve{v}:v}-1}<\varrho^{(\breve{v})}_{j_{\breve{v}:v}-1}+\varrho^{(v)}_{j}-\varrho^{(\breve{v})}_{j_{\breve{v}:v}-1}=
\varrho^{(v)}_{j}.$$
\end{proof}

\begin{proposition}\label{kappa-adjacent}
Consider a non-decreasing monotonic  sequence  $\{a_{x}\}_{x=1}^{r}$ of all the integers of the form $\kappa^{(v)}_{-;j}$ and  $\kappa^{(v)}_{+;j}$, where $1\leq v\leq z(\pi)$ and $1\leq j\leq m^{(v)}$, with the following condition:
\begin{itemize}
\item If $\kappa^{(v_{1})}_{-;j_{1}}=\kappa^{(v_{2})}_{-;j_{2}}$ for some $1\leq v_{1}<v_{2}\leq z(\pi)$, and $1\leq j_{x}\leq m^{(v_{x})}$ then $\kappa^{(v_{2})}_{-;j_{2}}$ is prior to $\kappa^{(v_{1})}_{-;j_{1}}$ in the sequence  $\{a_{x}\}_{x=1}^{r}$;
\item If $\kappa^{(v_{1})}_{+;j_{1}}=\kappa^{(v_{2})}_{+;j_{2}}$ for some $1\leq v_{1}<v_{2}\leq z(\pi)$, and $1\leq j_{x}\leq m^{(v_{x})}$ then $\kappa^{(v_{1})}_{+;j_{1}}$ is prior to $\kappa^{(v_{2})}_{+;j_{2}}$ in the sequence  $\{a_{x}\}_{x=1}^{r}$.
\end{itemize}
 Then, the following holds:
\begin{itemize}
\item $a_{1}=\kappa^{(z(\pi))}_{-;1}$, $a_{r}=\kappa^{(z(\pi))}_{+;m^{(z(\pi))}}$;
\item If $a_{x}=\kappa^{(v_{x})}_{+;j_{x}}$ and $a_{x+1}=\kappa^{(v_{x+1})}_{c_{x+1};j_{x+1}}$, for some $1\leq x\leq r-1$, $1\leq j_{x}\leq m^{(v_{x})}$, $1\leq j_{x+1}\leq m^{(v_{x+1})}$, and $c_{x+1}\in \{-,+\}$, then $v_{x+1}\geq v_{x}$, where the following are satisfied:
    \begin{itemize}
    \item In case $c_{x+1}=(-)$, then necessarily, $v_{x+1}=v_{x}$ and $j_{x+1}=j_{x}+1$;
    \item In case $c_{x+1}=(+)$ where, $a_{x}<a_{x+1}$, then necessarily, $v_{x+1}>v_{x}$ and \\ $\chi^{(v_{x+1})}_{j_{x+1}}=v_{x}$.
    \end{itemize}
\item If $a_{x}=\kappa^{(v_{x})}_{-;j_{x}}$ and $a_{x-1}=\kappa^{(v_{x-1})}_{c_{x-1};j_{x-1}}$, for some $2\leq x\leq r$, $1\leq j_{x}\leq m^{(v_{x})}$, $1\leq j_{x-1}\leq m^{(v_{x-1})}$, and $c_{x-1}\in \{-,+\}$, then $v_{x-1}\geq v_{x}$, where the following are satisfied:
    \begin{itemize}
    \item In case $c_{x-1}=(-)$ where, $a_{x}>a_{x-1}$, necessarily, $v_{x-1}>v_{x}$ and  \\ $\chi^{(v_{x-1})}_{j_{x-1}-1}=v_{x}$;
    \item In case $c_{x-1}=(+)$, necessarily, $v_{x-1}=v_{x}$ and $j_{x-1}=j_{x}-1$.
    \end{itemize}
\item If $a_{x}=\kappa^{(v_{x})}_{-;j_{x}}$ and $a_{x+1}=\kappa^{(v_{x+1})}_{+;j_{x+1}}$, for some $1\leq x\leq r-1$, \\ $1\leq j_{x}\leq m^{(v_{x})}$, $1\leq j_{x+1}\leq m^{(v_{x+1})}$, then either $v_{x+1}=v_{x}$ and then $j_{x+1}=j_{x}$ as well, or necessarily, $\chi^{(v_{x})}_{j_{x}-1}=\chi^{(v_{x+1})}_{j_{x+1}}$.
\end{itemize}
\end{proposition}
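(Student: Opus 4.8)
The plan is to read the sorted sequence $\{a_x\}_{x=1}^{r}$ as a balanced system of brackets: I regard each $\kappa^{(v)}_{-;j}$ as an opening bracket and each $\kappa^{(v)}_{+;j}$ as a closing bracket of the interval $I^{(v)}_{j}=[\kappa^{(v)}_{-;j},\kappa^{(v)}_{+;j}]$. Four facts already available organise these intervals into a laminar family: within one factor they are disjoint and separated by gaps (Proposition \ref{order-kappa}); across factors they nest (Proposition \ref{v1-x-v2}), and the gap $(\kappa^{(v)}_{+;j},\kappa^{(v)}_{-;j+1})$ of a factor contains only endpoints of later factors (Proposition \ref{v2-x-v1}); no opening bracket ever coincides with a closing bracket (Proposition \ref{no-common-eps12}); and $\kappa^{(z(\pi))}_{-;1}$, $\kappa^{(z(\pi))}_{+;m^{(z(\pi))}}$ are the global minimum and maximum (Proposition \ref{max-min-kappa}). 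The two tie-breaking conventions are exactly the ones making this bracket reading consistent: among equal opening brackets the deepest (largest $v$) is placed first, among equal closing brackets the shallowest (smallest $v$) is placed first.

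First I would dispose of the endpoints. By Proposition \ref{max-min-kappa} the value $\kappa^{(z(\pi))}_{-;1}$ is minimal and $\kappa^{(z(\pi))}_{+;m^{(z(\pi))}}$ is maximal; Proposition \ref{no-common-eps12} forbids an opening value from equalling a closing value, so the minimal position is occupied by an opening bracket and the maximal by a closing bracket, and among the values tied with them the tie-break rules push $\kappa^{(z(\pi))}_{-;1}$ to the very front and $\kappa^{(z(\pi))}_{+;m^{(z(\pi))}}$ to the very back. This gives $a_{1}$ and $a_{r}$.

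The heart of the argument is the monotonicity of depth across a single step, proved by contradiction: assuming a step decreases the factor index, I produce an endpoint lying strictly between $a_x$ and $a_{x+1}$, contradicting adjacency. Concretely, if $a_x=\kappa^{(v_x)}_{+;j_x}$ and $a_{x+1}=\kappa^{(v_{x+1})}_{c;j_{x+1}}$ with $v_{x+1}<v_x$, I locate $a_{x+1}$ inside the laminar nesting of $v_x$ via Proposition \ref{v1-x-v2} (or, when $j_x=m^{(v_x)}$, I bound it below $a_x$ via Proposition \ref{max-min-kappa}); the opening bracket $\kappa^{(v_x)}_{-;j_x+1}$ then lands strictly between $a_x$ and $a_{x+1}$, or a value-equal closing bracket is forced into the wrong position by the tie-break convention. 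Either way I reach a contradiction, so $v_{x+1}\ge v_x$; the predecessor statement $v_{x-1}\ge v_x$ for $a_x=\kappa^{(v_x)}_{-;j_x}$ follows from the mirror-image argument (reverse the sequence and interchange the two tie-break rules, which are mirror images of one another). Once depth is monotone, the remaining structure follows locally: for $c=-$ the successor must be the next opening bracket of the same factor, forcing $v_{x+1}=v_x$ and $j_{x+1}=j_x+1$; for $c=+$ with $a_x<a_{x+1}$ the strict ascent rules out $v_{x+1}=v_x$, and the absence of any endpoint in the open interval $(a_x,a_{x+1})$ says precisely that $a_{x+1}=\kappa^{(v_{x+1})}_{+;j_{x+1}}$ falls in the gap just above $\kappa^{(v_x)}_{+;j_x}$, which is the defining condition identifying $v_x$ as the largest eligible $v'$ in Definition \ref{chi}, i.e. $\chi^{(v_{x+1})}_{j_{x+1}}=v_x$. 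The last item, an opening bracket immediately followed by a closing bracket, is the case of an innermost empty pair: if the factors agree then necessarily $j_{x+1}=j_x$, and otherwise the common floor beneath the two flanking lower-level brackets is read off through Definition \ref{chi} as $\chi^{(v_x)}_{j_x-1}=\chi^{(v_{x+1})}_{j_{x+1}}$.

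The main obstacle will be the bookkeeping of coincident $\kappa$-values. Because many of the conclusions assert an equality of parameters (a factor together with an index) rather than a mere equality of values, every step where two $\kappa$'s share a value has to be resolved by showing that the prescribed tie-break order is the only one compatible with laminarity; otherwise one of the coincident endpoints would have to occupy the successor slot and contradict the stated parameters. Making this airtight requires combining Proposition \ref{no-common-eps12} with repeated appeals to Propositions \ref{v1-x-v2} and \ref{v2-x-v1}, and carefully distinguishing the degenerate situations $\kappa^{(v)}_{-;1}=0$ catalogued in Proposition \ref{kappa-zero}, where several leading opening brackets collapse to the same value.
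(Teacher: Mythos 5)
Your proposal is correct and follows essentially the same route as the paper's proof: both arguments rest on the adjacency of $a_{x}$ and $a_{x+1}$ combined with Propositions \ref{order-kappa}, \ref{v1-x-v2}, \ref{v2-x-v1}, \ref{no-common-eps12}, and \ref{max-min-kappa} to force the successor into the gap $[\kappa^{(v_{x})}_{+;j_{x}},\kappa^{(v_{x})}_{-;j_{x}+1}]$, and then read off $\chi$ directly from Definition \ref{chi} by the sandwich $\chi^{(v_{x+1})}_{j_{x+1}}\leq v_{x}$ and $\chi^{(v_{x+1})}_{j_{x+1}}\geq v_{x}$. The laminar-family/bracket framing is a clean way to package the same nesting facts, not a genuinely different argument.
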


\begin{proof}
The first part of the proposition is a direct consequence of Proposition \ref{max-min-kappa}. Thus, we turn to the second part of the proposition.\\
Assume, $a_{x}=\kappa^{(v_{x})}_{+;j_{x}}$  ~and ~$a_{x+1}=\kappa^{(v_{x+1})}_{c_{x+1};j_{x+1}}$, for some $1\leq x\leq r-1$, \\ $1\leq v_{x}, v_{x+1}\leq z(\pi)$,  $1\leq j_{x}\leq m^{(v_{x})}$, $1\leq j_{x+1}\leq m^{(v_{x+1})}$, and $c_{x+1}\in \{+,-\}$. Since $a_{x+1}$ is a successive element to $a_{x}=\kappa^{(v_{x})}_{+;j_{x}}$, necessarily $\kappa^{(v_{x})}_{+;j_{x}}\leq a_{x+1}\leq\kappa^{(v_{x})}_{-;j_{x}+1}$ (or just $\kappa^{(v_{x})}_{+;j_{x}}\leq a_{x+1}$, in case $j_{x}=m^{(v_{x})}$). Therefore, in case $a_{x+1}>a_{x}$, by Proposition \ref{v2-x-v1}, we have $v_{x+1}>v_{x}$, unless $a_{x+1}=\kappa^{(v_{x})}_{-;j_{x}+1}$. If $a_{x+1}=a_{x}$, then by the assumption of the proposition $v_{x+1}>v_{x}$. Now, assume $c_{x+1}=(-)$. Then, $a_{x+1}=\kappa^{(v_{x+1})}_{-;j_{x+1}}$. Therefore, by Proposition \ref{v2-x-v1}, we have $v_{x}>v_{x+1}$ unless, $a_{x+1}=\kappa^{(v_{x})}_{-;j_{x}+1}$. Hence, in case ~$c_{x+1}=(-)$ ~and ~$c_{x}=(+)$, the only possibly case is $$a_{x}=\kappa^{(v_{x})}_{+;j_{x}}, ~~~~a_{x+1}=\kappa^{(v_{x})}_{-;j_{x}+1}.$$
Now, assume $c_{x+1}=(+)$. Then, $a_{x}=\kappa^{(v_{x})}_{+;j_{x}}$  ~and ~$a_{x+1}=\kappa^{(v_{x+1})}_{+;j_{x+1}}$, where we have already shown $v_{x+1}>v_{x}$. By Proposition \ref{v1-x-v2}, for every $v'>v_{x}$, there exists $j'_{v':v_{x}}$, such that $\kappa^{(v')}_{-;j'_{v':v_{x}}}<a_{x}=\kappa^{(v_{x})}_{+;j_{x}}\leq \kappa^{(v')}_{+;j'_{v':v_{x}}}$. Since ~$a_{x+1}=\kappa^{(v_{x+1})}_{+;j_{x+1}}$ ~is a successive element to $a_{x}$, we have  $\kappa^{(v')}_{-;j'_{v':v_{x}}}<a_{x+1}=\kappa^{(v_{x+1})}_{+;j_{x+1}}\leq \kappa^{(v')}_{+;j'_{v':v_{x}}}$, for every $v_{x}+1\leq v'\leq v_{x+1}-1$. Thus, if $a_{x+1}>a_{x}$, then ~$\kappa^{(v')}_{-;j'_{v':v_{x}}}<a_{x+1}=\kappa^{(v_{x+1})}_{+;j_{x+1}}< \kappa^{(v')}_{+;j'_{v':v_{x}}}$, for every $v_{x}+1\leq v'\leq v_{x+1}-1$. Hence, by Definition \ref{chi}, $\chi^{(v_{x+1})}_{j_{x+1}}\leq v_{x}$. On the other hand, since
$a_{x+1}$ is successive to $a_{x}=\kappa^{(v_{x})}_{+;j_{x}}$, we have $\chi^{(v_{x+1})}_{j_{x+1}}\geq v_{x}$. Thus, we conclude $\chi^{(v_{x+1})}_{j_{x+1}}=v_{x}$ in case $a_{x+1}>a_{x}$ and $c_{x+1}=(+)$.
By a similar argument the third part of the proposition can be proved as well. Hence, we turn to the last part of the proposition. \\
Assume, $a_{x}=\kappa^{(v_{x})}_{-;j_{x}}$ and $a_{x+1}=\kappa^{(v_{x+1})}_{+;j_{x+1}}$, for some $1\leq x\leq r-1$, ~$1\leq j_{x}\leq m^{(v_{x})}$, \\ $1\leq j_{x+1}\leq m^{(v_{x+1})}$. If $v_{x+1}>v_{x}$, then by Proposition \ref{v1-x-v2}, for every \\ $v_{x}+1\leq v'\leq v_{x+1}-1$, there exists $1\leq j'_{v':v_{x}}\leq m^{(v')}$ such that
$$\kappa^{(v')}_{-;j'_{v':v_{x}}}\leq a_{x}=\kappa^{(v_{x})}_{-;j_{x}}<\kappa^{(v')}_{+;j'_{v':v_{x}}}.$$
Since $a_{x+1}=\kappa^{(v_{x+1})}_{+;j_{x+1}}$ is a successive to  $a_{x}=\kappa^{(v_{x})}_{-;j_{x}}$, we get
$$\kappa^{(v')}_{-;j'_{v':v_{x}}}<a_{x+1}=\kappa^{(v_{x+1})}_{+;j_{x+1}}<\kappa^{(v')}_{+;j'_{v':v_{x}}}$$
as well.
Thus, by Definition \ref{chi}, ~$\chi^{(v_{x+1})}_{j_{x+1}}\leq v_{x}$. On the other hand,  since
\\

$\kappa^{(v_{x})}_{-;j_{x}}=a_{x}<a_{x+1}=\kappa^{(v_{x+1})}_{+;j_{x+1}}$,
~and by Proposition \ref{order-kappa}, ~$\kappa^{(v_{x})}_{-;j_{x}}<\kappa^{(v_{x})}_{+;j_{x}}$,
we get $$\kappa^{(v_{x})}_{-;j_{x}}<\kappa^{(v_{x+1})}_{+;j_{x+1}}<\kappa^{(v_{x})}_{+;j_{x}}.$$
Moreover, since $a_{x+1}=\kappa^{(v_{x+1})}_{+;j_{x+1}}$ is a successive to  $a_{x}=\kappa^{(v_{x})}_{-;j_{x}}$, we have for every $v'<v_{x}$,  one of the following holds:
\begin{itemize}
\item $\kappa^{(v')}_{-;j'}\leq\kappa^{(v_{x})}_{-;j_{x}}<\kappa^{(v_{x+1})}_{+;j_{x+1}}\leq \kappa^{(v')}_{+;j'}$;
\item $\kappa^{(v')}_{+;j'}<\kappa^{(v_{x})}_{-;j_{x}}<\kappa^{(v_{x+1})}_{+;j_{x+1}}<\kappa^{(v')}_{-;j'+1}$;
\item $\kappa^{(v_{x})}_{-;j_{x}}<\kappa^{(v_{x+1})}_{+;j_{x+1}}<\kappa^{(v')}_{-;1}$;
\item $\kappa^{(v')}_{+;m^{(v')}}<\kappa^{(v_{x})}_{-;j_{x}}<\kappa^{(v_{x+1})}_{+;j_{x+1}}$.
\end{itemize}
for some $1\leq j'\leq m^{(v')}$.
Therefore,  by Definition \ref{chi}, we conclude ~$\chi^{(v_{x+1})}_{j_{x+1}}=\chi^{(v_{x})}_{j_{x}-1}$ in case $v_{x+1}>v_{x}$.
By a similar argument it can be shown that ~$\chi^{(v_{x+1})}_{j_{x+1}}=\chi^{(v_{x})}_{j_{x}-1}$ in case $v_{x+1}<v_{x}$ as well.
If $v_{x+1}=v_{x}$, then by Proposition \ref{order-kappa}, ~$a_{x}=\kappa^{(v_{x})}_{-;j_{x}}$ and $a_{x+1}=\kappa^{(v_{x})}_{+;j_{x}}$.
\end{proof}

\begin{proposition}\label{inverse-factorization}
Let $\{a_{x}\}_{x=1}^{r}$ be a non-decreasing monotonic sequence of $\kappa^{(v)}_{+;j}$ and $\kappa^{(v)}_{-;j}$ with the same conditions as in Proposition \ref{kappa-adjacent}. Let
\begingroup
\large
$\tilde{\pi}=\prod_{x=1}^{r}t_{a_{x}}^{i_{x}}$,
\endgroup
where,
$$i_{x}=\begin{cases}\vartheta^{(v)}_{j}+\eta^{(v)}_{j} & a_{x}=\kappa^{(v)}_{+;j} \\ \\ \kappa^{(v)}_{-;j}-(\vartheta^{(v)}_{j}+\eta^{(v)}_{j-1}) & a_{x}=\kappa^{(v)}_{-;j}\end{cases}.$$
For every $1\leq w\leq z(\tilde{\pi})$, ~consider $$\tilde{\pi}=\prod_{w=1}^{z(\tilde{\pi})}\tilde{\pi}^{(w)},$$
the standard $OGS$ elementary factorization of $\tilde{\pi}$, with the elementary factors
\begingroup
\large
$$\tilde{\pi}^{(w)}=\prod_{\jmath=1}^{m^{(w)}}t_{\tilde{h}^{(w)}_{\jmath}}^{\tilde{\imath}_{\jmath}^{(w)}}.$$
\endgroup
Define $\tilde{\pi}^{(w)}_{\jmath}$ to be the the following terminal segment of $\tilde{\pi}^{(w)}$:
\begingroup
\large
$$\tilde{\pi}^{(w)}_{\jmath}=\prod_{x=\jmath}^{m^{(w)}}t_{\tilde{h}^{(w)}_{\jmath}}^{\tilde{\imath}_{\jmath}^{(w)}}.$$
\endgroup
Then, the following holds:

\begin{itemize}
\item $z(\tilde{\pi})$, the number of standard $OGS$ elementary factors of $\tilde{\pi}$ equals to the number of different non-zero values $\kappa^{(v)}_{-;j}$, where $1\leq v\leq z(\pi)$, and $1\leq j\leq m^{(v)}$;
\item For every $1\leq w\leq z(\tilde{\pi})$, ~$maj\left(\tilde{\pi}^{(w)}\right)=\kappa^{(v)}_{-;j}$ for some $1\leq v\leq z(\pi)$, and for every given non-zero value of $\kappa^{(v)}_{-;j}$, there exists exactly one $w$ such that $$maj\left(\tilde{\pi}^{(w)}\right)=\kappa^{(v)}_{-;j}$$ (i.e., There is a one-to-one correspondence between the non-zero different values of $\kappa^{(v)}_{-;j}$ and $maj\left(\tilde{\pi}^{(w)}\right)$);
\item If $\jmath\neq 1$, or $\jmath\neq m^{(w)}$ for some $1\leq w\leq z(\tilde{\pi})$, then $\tilde{h}^{(w)}_{\jmath}=\kappa^{(v)}_{+;j}$ for some  \\ $1\leq v\leq z(\pi)$, and $1\leq j\leq m^{(v)}$. (i.e., If $\tilde{h}^{(w)}_{\jmath}=\kappa^{(v)}_{-;j}$ for some $1\leq v\leq z(\pi)$, and $1\leq j\leq m^{(v)}$, then necessarily $\jmath=1$ or $\jmath=m^{(w)}$);
\item If  $\tilde{h}^{(w)}_{m^{(w)}}=\kappa^{(v)}_{-;j}$ for some $1\leq v\leq z(\pi)$ and $1\leq j\leq m^{(v)}$, then
$$maj\left(\tilde{\pi}^{(w)}_{m^{(w)}}\right)=\tilde{\imath}_{m^{(w)}}^{(w)}=\sum_{v'=\ddot{v}+1}^{z(\pi)}\vartheta^{(v')}_{j'_{v':v}},$$
where, $\kappa^{(v')}_{-;j'_{v':v}}\leq\kappa^{(v)}_{-;j}<\kappa^{(v')}_{+;j'_{v':v}}$ and $\ddot{v}$ is the maximal value of $1\leq v\leq z(\pi)$ such that $\kappa^{(\ddot{v})}_{-;\ddot{j}}=\kappa^{(v)}_{-;j}$  for some $1\leq \ddot{j}\leq m^{(\ddot{v})}$;
\item If  $\tilde{h}^{(w)}_{\jmath}=\kappa^{(v)}_{+;j}$ for some $1\leq v\leq z(\pi)$ and $1\leq j\leq m^{(v)}$, then
$$maj\left(\tilde{\pi}^{(w)}_{\jmath}\right)=\sum_{v'=\chi^{(\dot{v})}_{\dot{j}}+1}^{z(\pi)}\vartheta^{(v')}_{j'_{v':v}},$$
where, $\kappa^{(v')}_{-;j'_{v':v}}<\kappa^{(v)}_{+;j}\leq\kappa^{(v')}_{+;j'_{v':v}}$ and $\dot{v}$ is the minimal value of $1\leq v\leq z(\pi)$ such that $\kappa^{(\dot{v})}_{+;\dot{j}}=\kappa^{(v)}_{+;j}$  for some $1\leq \dot{j}\leq m^{(\dot{v})}$;
\item If $\tilde{h}^{(w)}_{m^{(w)}}=\kappa^{(v)}_{+;j}$ for some $1\leq v\leq z(\pi)$, and $1\leq j\leq m^{(v)}$ then one of the following holds:
    \begin{itemize}
     \item $v=z(\pi)$;
     \item For every $v'>v$, ~$\kappa^{(v')}_{-;1}=\vartheta^{(v')}_{1}=0$, and $\kappa^{(v)}_{-;j+1}<\kappa^{(v')}_{+;1}$;
     \end{itemize}
\item If $\tilde{h}^{(w)}_{1}=\kappa^{(v)}_{-;j}$ for some $1\leq v\leq z(\pi)$, and $1\leq j\leq m^{(v)}$, then ${\imath}_{1}^{(w)}=\varrho^{(\dot{v})}_{\dot{j}-1}-\eta^{(\dot{v})}_{\dot{j}-1}$, where $\dot{v}$ is the minimal value of $1\leq v\leq z(\pi)$ such that $\kappa^{(\dot{v})}_{-;\dot{j}}=\kappa^{(v)}_{-;j}$  for some $2\leq \dot{j}\leq m^{(\dot{v})}$;
\item If $\tilde{h}^{(w)}_{1}=\kappa^{(v)}_{+;j}$ for some $1\leq v\leq z(\pi)$, and $1\leq j\leq m^{(v)}$ then $$maj\left(\tilde{\pi}^{(w)}\right)=\kappa^{(
    \bar{v})}_{-;1}<\tilde{h}^{(w)}_{1}=\kappa^{(v)}_{+;j},$$
    for some $1\leq \bar{v}\leq z(\pi)$;
\item If ~$maj\left(\tilde{\pi}^{(w)}\right)=\kappa^{(v)}_{-;1}$, for some $1\leq v\leq z(\pi)$, then $$maj\left(\tilde{\pi}^{(w)}\right)<\tilde{h}^{(w)}_{1};$$
\item $\tilde{h}^{(w-1)}_{m^{(w-1)}}=\kappa^{(v)}_{+;j}$ and $\tilde{h}^{(w)}_{1}=\kappa^{(\breve{v})}_{+;\breve{j}}$ for some $1\leq v, \breve{v}\leq z(\pi)$, ~$1\leq j\leq m^{(v)}$, and  $1\leq \breve{j}\leq m^{(\breve{v})}$ if and only if
     $$\tilde{h}^{(w-1)}_{m^{(w-1)}}<maj\left(\tilde{\pi}^{(w)}\right)=\kappa^{(\dot{v})}_{-;1}=\kappa^{(v)}_{-;j+1}<\tilde{h}^{(w)}_{1},$$
    for some $1\leq \dot{v}<v\leq z(\pi)$, and $v$ satisfied one of the following conditions:
    \begin{itemize}
     \item $v=z(\pi)$;
     \item For every $v'>v$, ~$\kappa^{(v')}_{-;1}=\vartheta^{(v')}_{1}=0$, and $\kappa^{(v)}_{-;j+1}<\kappa^{(v')}_{+;1}$;
     \end{itemize}
    \end{itemize}

\end{proposition}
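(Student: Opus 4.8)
The plan is to read off the standard $OGS$ canonical form of $\tilde\pi$ directly from the sorted sequence $\{a_x\}_{x=1}^r$, and then to locate its standard $OGS$ elementary factorization by running the right-to-left grouping algorithm from the proof of Theorem \ref{canonical-factorization}. First I would check that $\tilde\pi$ is genuinely written in canonical form. By Proposition \ref{no-common-eps12} no value $\kappa^{(v)}_{-;j}$ ever coincides with a value $\kappa^{(v'')}_{+;j''}$, so the only coincidences among the $a_x$ occur between two $\kappa_+$'s or between two $\kappa_-$'s, and the tie-breaking convention fixed in Proposition \ref{kappa-adjacent} orders these consistently; whenever $a_x=a_{x+1}$ one merges the two powers into a single generator, and Propositions \ref{eta-varrho} and \ref{order-rho} guarantee that each resulting exponent lies strictly between $0$ and $a_x$, so the merged word is a legitimate standard $OGS$ canonical form. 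Proposition \ref{max-min-kappa} pins down the two endpoints $a_1=\kappa^{(z(\pi))}_{-;1}$ and $a_r=\kappa^{(z(\pi))}_{+;m^{(z(\pi))}}$.

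Next I would use the adjacency dictionary of Proposition \ref{kappa-adjacent} to decode the local structure of consecutive pairs $(a_x,a_{x+1})$: that proposition tells us exactly which transition types (a $\kappa_+$ followed by the $\kappa_-$ of the same factor, a $\kappa_-$ followed by the matching $\kappa_+$, or a jump to a strictly later factor recorded by $\chi$) can occur. In particular, a position interior to a factor must carry a $\kappa_+$ value, which is the third bullet; this follows because a $\kappa_-$ can be adjacent to a $\kappa_+$ only in the boundary situation described in the last part of Proposition \ref{kappa-adjacent}, while by Proposition \ref{order-kappa} within a single factor the $\kappa_-$ and $\kappa_+$ entries alternate. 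The factor boundaries of the elementary factorization are then forced to fall exactly at the $\kappa_-$ values: scanning from the right, the cumulative exponent sum first reaches the generator index on its left precisely when a $\kappa_-$ position is crossed.

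The heart of the argument, and the step I expect to be the main obstacle, is verifying the two $maj$-of-terminal-segment formulas (the two middle bullets): that $maj(\tilde\pi^{(w)}_{\jmath})$ equals the prescribed sum $\sum\vartheta^{(v')}_{j'_{v':v}}$ of the $\vartheta$-contributions of the later factors. Here the exponent formula $i_x=\vartheta^{(v)}_j+\eta^{(v)}_j$ (respectively $\kappa^{(v)}_{-;j}-(\vartheta^{(v)}_j+\eta^{(v)}_{j-1})$) was engineered precisely so that the partial sums telescope: by Definition \ref{eta}, $\eta^{(v)}_j$ collects exactly the $\vartheta$-mass of the factors lying strictly between $\chi^{(v)}_j$ and $v$, while $\chi^{(v)}_j$ of Definition \ref{chi} marks the last earlier factor whose interval brackets $\kappa^{(v)}_{+;j}$. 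I would prove the telescoping by induction from right to left along the sequence, using Proposition \ref{kappa-adjacent} at each step to identify the next $\kappa$ value and Proposition \ref{eta-varrho}, namely $\eta^{(v)}_j<\varrho^{(v)}_j$, to keep the running exponent sums below the relevant generator index until a genuine boundary is reached. Once these $maj$ formulas are in place, the one-to-one correspondence between the distinct non-zero values $\kappa^{(v)}_{-;j}$ and the quantities $maj(\tilde\pi^{(w)})$, and hence the value of $z(\tilde\pi)$, is immediate.

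Finally I would dispose of the boundary and degeneracy bullets. The statements about $\tilde h^{(w)}_1$ and $\tilde h^{(w)}_{m^{(w)}}$ being a $\kappa_+$ or a $\kappa_-$, and the terminal condition that either $v=z(\pi)$ or all later $\kappa^{(v')}_{-;1}=\vartheta^{(v')}_1=0$, follow by feeding the adjacency rules of Proposition \ref{kappa-adjacent} into the endpoint data of Proposition \ref{max-min-kappa}, with the vanishing cases controlled by Proposition \ref{kappa-zero}, which forces $j=1$, $c=-$ and propagates the zero to all later factors. The closing \emph{if and only if} is then a matter of matching the two descriptions of when a new factor opens with a $\kappa_+$ rather than a $\kappa_-$ first letter, reading off $maj(\tilde\pi^{(w)})=\kappa^{(\dot v)}_{-;1}=\kappa^{(v)}_{-;j+1}$ from the same adjacency analysis.
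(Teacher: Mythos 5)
Your plan follows essentially the same route as the paper's proof: a right-to-left scan of the sorted sequence $\{a_{x}\}$, the adjacency dictionary of Proposition \ref{kappa-adjacent} to classify consecutive pairs, the telescoping of the $\vartheta$/$\eta$ exponents together with Proposition \ref{eta-varrho} to locate the factor boundaries at the $\kappa_{-}$ values, and Propositions \ref{no-common-eps12}, \ref{max-min-kappa} and \ref{kappa-zero} for the endpoint and degenerate cases. One correction, though: your claim in the first step that every merged exponent ``lies strictly between $0$ and $a_{x}$'' is false. When $j=1$ and $\kappa^{(v)}_{-;1}=\vartheta^{(v)}_{1}$ (the situation Proposition \ref{kappa-zero} governs, which occurs e.g.\ as $t_{3}^{0}$ in the paper's worked example), the exponent attached to $a_{x}=\kappa^{(v)}_{-;1}$ is $0$, that generator drops out of the canonical form, and the value $\kappa^{(v)}_{-;1}$ survives only as $maj(\tilde{\pi}^{(w)})$ with $maj(\tilde{\pi}^{(w)})<\tilde{h}^{(w)}_{1}$; this is exactly the content of the ninth and tenth bullets, so the strict-positivity claim must be dropped and the zero-exponent case tracked explicitly (as you in fact begin to do in your final paragraph) rather than assumed away.
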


\begin{proof}
Consider the standard $OGS$ elementary factorization of $\tilde{\pi}$, with the elementary factors
\begingroup
\large
$$\tilde{\pi}^{(w)}=\prod_{\jmath=1}^{m^{(w)}}t_{\tilde{h}^{(w)}_{\jmath}}^{\tilde{\imath}_{\jmath}^{(w)}};$$
\endgroup
First, look at $\tilde{\pi}^{(w)}$ for $w=z(\tilde{\pi}^{(w)})$ (i.e., the last elementary factor of $\tilde{\pi}^{(w)}$). Then, by Proposition \ref{kappa-adjacent},
$$\tilde{h}^{(w)}_{m^{(w)}}=\kappa^{(z(\pi))}_{+;m^{(z(\pi))}}.$$

Let $\beta$ be the largest $1\leq x\leq r$ such that $a_{x}=\kappa^{(v_{x})}_{-;j_{x}}$ for some $1\leq v_{x}\leq z(\pi)$, and $1\leq j_{x}\leq m^{(v_{x})}$. Then, for every $x>\beta$, ~$a_{x}=\kappa^{(v_{x})}_{+;j_{x}}$. Therefore, by Proposition \ref{order-kappa}, $j_{\beta}=m^{(v_{\beta})}$ and  $j_{v_{x}}=m^{(v_{x})}$, for all $x>\beta$. Moreover, since $a_{x}=\kappa^{(v_{x})}_{+;j_{x}}$, for $x>\beta$, by Proposition \ref{kappa-adjacent}, we conclude, $v_{x_{1}}<v_{x_{2}}$ for every $\beta< x_{1}<x_{2}\leq r$. Thus, by considering
\begingroup
\large
$$\prod_{x=\beta}^{r}t_{a_{x}}^{i_{x}}$$
\endgroup
The following holds:
\begin{itemize}
\item
\begingroup
\large
$$a_{\beta}=\kappa^{(v_{\beta})}_{-;m^{(v_{\beta})}}, ~~i_{\beta}=\kappa^{(v_{\beta})}_{-;m^{(v_{\beta})}}-\vartheta^{(v_{\beta})}_{m^{(v_{\beta})}}-\eta^{(v_{\beta})}_{m^{(v_{\beta})}-1},$$
\endgroup
where,
\\
\begin{itemize}
\item By Definition \ref{kappa}: ~~~~
\begingroup
\large
$\kappa^{(v_{\beta})}_{-;m^{(v_{\beta})}}= \varrho^{(v_{\beta})}_{m^{(v_{\beta})}-1}+\sum _{v'=v_{\beta}+1}^{z(\pi)}\vartheta^{(v')}_{m^{(v')}},$
\endgroup
~such that: \\ \\
\begingroup
\large
$\kappa^{(v')}_{-;m^{(v')}}\leq\kappa^{(v_{\beta})}_{-;m^{(v_{\beta})}}<\kappa^{(v')}_{+;m^{(v')}};$
\endgroup
\\

\item By Definition \ref{eta}: ~~~~
\begingroup
\large
$\eta^{(v_{\beta})}_{m^{(v_{\beta})}-1}=\sum _{v'=\chi^{(v_{\beta})}_{m^{(v_{\beta})}-1}+1}^{v_{\beta}-1}\vartheta^{(v')}_{m^{(v')}},$
\endgroup
~such that: \\ \\
\begingroup
\large
$\kappa^{(v')}_{-;m^{(v')}}<\kappa^{(v_{\beta})}_{-;m^{(v_{\beta})}}<\kappa^{(v')}_{+;m^{(v')}}.$
\endgroup
\end{itemize}

\item For $\beta+1\leq x\leq r$ :
\begingroup
\large
$$a_{x}=\kappa^{(v_{x})}_{+;m^{(v_{x})}}, ~~i_{x}=\vartheta^{(v_{x})}_{m^{(v_{x})}},$$
\endgroup
such that by Definition \ref{chi}, and by Propositions \ref{v1-x-v2}, \ref{kappa-adjacent}, the following properties hold:
\begin{itemize}
\item $v_{\beta+1}=\chi^{(v_{\beta})}_{m^{(v_{\beta})}-1}+1$;
\item $v_{x+1}=v_{x}+1$, for $\beta<x<r$;
\item $v_{r}=z(\pi)$.
\end{itemize}
\end{itemize}
Thus, by using Definition \ref{kappa}, for every $x$ such that $\beta+1\leq x\leq r$ the following are satisfied:
\begingroup
\large
$$a_{x}=\kappa^{(v_{x})}_{+;m^{(v_{x})}}=\varrho^{(v_{x})}_{m^{(v_{x})}}+\sum_{v'=v_{x}}^{z(\pi)}\vartheta^{(v')}_{m^{(v')}}>
\sum_{v'=v_{x}}^{z(\pi)}\vartheta^{(v')}_{m^{(v')}}=\sum_{x'=x}^{r}i_{x'}.$$
\endgroup
Hence, by Definition \ref{canonical-factorization-def},
\begingroup
\large
$$\prod_{x=\beta+1}^{r}t_{a_{x}}^{i_{x}}$$
\endgroup
is a terminal segment of $\tilde{\pi}^{(w)}$, for $w=z(\tilde{\pi})$. \\

Since
\begingroup
\large
~$\eta^{(v_{\beta})}_{m^{(v_{\beta})}-1}=\sum_{v'=\chi^{(v_{\beta})}_{m^{(v_{\beta})}-1}+1}^{v_{\beta}-1}~\vartheta^{(v')}_{m^{(v')}}$,
\endgroup
by using Proposition \ref{eta-varrho}:

\begingroup
\large
\begin{align*}
a_{\beta}&=\kappa^{(v_{\beta})}_{-;m^{(v_{\beta})}}=\varrho^{(v_{\beta})}_{m^{(v_{\beta})}-1}+\sum_{v'=v_{\beta}}^{z(\pi)}\vartheta^{(v')}_{m^{(v')}} \\ \\&>
\sum_{v'=\chi^{(v_{\beta})}_{m^{(v_{\beta})}-1}+1}^{v_{\beta}-1}\vartheta^{(v')}_{m^{(v'_{\beta})}}+\sum_{v'=v_{\beta}}^{z(\pi)}\vartheta^{(v')}_{m^{(v')}} \\ \\ &= \sum_{x=\beta+1}^{r}i_{x}.
\end{align*}
\endgroup
We have also
\begingroup
\large
\begin{align*}
\sum_{x=\beta}^{r}i_{x}&=i_{\beta}+\sum_{x=\beta+1}^{r}i_{x} \\ &=
\kappa^{(v_{\beta})}_{-;m^{(v_{\beta})}}-\sum_{v'=\chi^{(v_{\beta})}_{m^{(v_{\beta})}-1}+1}^{v_{\beta}}\vartheta^{(v')}_{m^{(v')}}+
\sum_{v'=\chi^{(v_{\beta})}_{m^{(v_{\beta})}-1}+1}^{z(\pi)}\vartheta^{(v')}_{m^{(v')}} \\ \\ &=\kappa^{(v_{\beta})}_{-;m^{(v_{\beta})}}+\sum_{v'=v_{\beta}+1}^{z(\pi)}\vartheta^{(v')}_{m^{(v')}} \\ \\ &\geq \kappa^{(v_{\beta})}_{-;m^{(v_{\beta})}} ~= ~a_{\beta}.
\end{align*}
\endgroup

Therefore, by considering
\begingroup
\large
$$\tilde{\pi}^{(w)}=\prod_{\jmath=1}^{m^{(w)}}t_{\tilde{h}^{(w)}_{\jmath}}^{\tilde{\imath}_{\jmath}^{(w)}},$$
\endgroup
for $w=z(\tilde{\pi})$, the following holds:
\begin{itemize}
\item In case $m^{(v_{\beta})}>1$ (i.e., $\varrho^{(v_{\beta})}_{m^{(v_{\beta})}-1}>0$):
\begingroup
\large
$$\tilde{h}^{(w)}_{1}=\kappa^{(v_{\beta})}_{-;m^{(v_{\beta})}}, ~~~~\tilde{\imath}_{1}^{(w)}=\varrho^{(v_{\beta})}_{m^{(v_{\beta})}-1}-\eta^{(v_{\beta})}_{m^{(v_{\beta})}-1}.$$
\endgroup

If $2\leq\jmath\leq m^{z(\tilde{\pi})}$, then
\begingroup
\large
$$\tilde{h}^{(w)}_{\jmath}=\kappa^{(v_{x})}_{+;m^{(v_{x})}},$$
\endgroup
for some $x>\beta$.
Moreover, by Definition \ref{chi}  and Proposition \ref{v1-x-v2}, for every $\chi^{(v)}_{m^{(v)}-1}<v\leq z(\pi)$, there exists $2\leq\jmath\leq m^{z(\tilde{\pi})}$, such that
\begingroup
\large
$$\tilde{h}^{(w)}_{\jmath}=\kappa^{(v)}_{+;m^{(v)}}.$$
\endgroup
Since there is a possibility for $\kappa^{(v)}_{+;m^{(v)}}=\kappa^{(v')}_{+;m^{(v)}}$, where $v'\neq v$,
\begingroup
\large
$$\tilde{\imath}_{\jmath}^{(w)}=\sum_{x=\dot{x}}^{\ddot{x}}\vartheta^{(v_{x})}_{m^{(v_{x})}},$$
\endgroup
such that $\kappa^{(v_{x})}_{+;m^{(v_{x})}}=\kappa^{(v)}_{+;m^{(v)}}$ for every $x$ where, $\dot{x}\leq x\leq \ddot{x}$.
Therefore, for every $2\leq \jmath\leq m^{(w)}$,
\begingroup
\large
$$maj\left(\tilde{\pi}_{\jmath}\right)=\sum_{x=\dot{x}}^{z(\pi)}\vartheta^{(v_{x})}_{m^{(v_{x})}},$$
\endgroup
such that $\dot{x}$ is the minimal $x>\beta$, such that
\begingroup
\large
$$\tilde{h}^{(w)}_{\jmath}=\kappa^{(v_{\dot{x}})}_{+;m^{(v_{\dot{x}})}}.$$
\endgroup
By former observation:
\begingroup
\large
$$maj\left(\tilde{\pi}^{(w)}\right)=\sum_{\jmath=1}^{m^{(w)}}\tilde{\imath}_{\jmath}^{(w)}=\kappa^{(v_{\beta})}_{-;m^{(v_{\beta})}};$$
\endgroup
\item In case, where $m^{(v_{\beta})}=1$, we have,
\begingroup
\large
$\varrho^{(v_{\beta})}_{m^{(v_{\beta})}-1}=0$.
\endgroup
 Thus, by Propositions \ref{eta-varrho}, \ref{kappa-adjacent}
 \begingroup
 \large
 ~$\eta^{(v_{\beta})}_{m^{(v_{\beta})}-1}=0$, ~~$\chi^{(v_{\beta})}_{m^{(v_{\beta})}-1}=v_{\beta}-1$.
 \endgroup
 Thus,
    \begingroup
    \large
    $${\tilde{h}}^{(w)}_{1}\neq \kappa^{(v_{\beta})}_{-;m^{(v_{\beta})}},$$
    \endgroup
    and therefore, we have for every $1\leq \jmath\leq m^{(w)}$:
   \begingroup
   \large
    $$\tilde{h}^{(w)}_{\jmath}=\kappa^{(v_{x})}_{+;m^{(v_{x})}}$$
\endgroup
for some $x>\beta$, where the other observations are the same as in the case of $m^{(v_{\beta})}>1$.

In particular,
\begingroup
\large
$$maj\left(\tilde{\pi}^{(w)}\right)=\sum_{\jmath=1}^{m^{(w)}}\tilde{\imath}_{\jmath}^{(w)}=\kappa^{(v_{\beta})}_{-;m^{(v_{\beta})}};$$
\endgroup
even in case where,
 \begingroup
 \large
  ~${\tilde{h}}^{(w)}_{1}\neq \kappa^{(v_{\beta})}_{-;m^{(v_{\beta})}}.$
\endgroup
\end{itemize}
Now, look at
\begingroup
\large
${\tilde{h}}^{(w-1)}_{m^{(w-1)}}$,
\endgroup
for $w=z(\pi)$. Let $\zeta$ be the number of different values of $x$, such that $a_{x}=a_{\beta}$, i.e.,
\begingroup
\large
$$a_{x}=\kappa^{(v_{x})}_{-;m^{(v_{x})}}=\kappa^{(v_{\beta})}_{-;m^{(v_{\beta})}}=a_{\beta},$$
\endgroup
for $\beta-\zeta+1\leq x\leq \beta$. Notice, by the assumption of the proposition, $v_{x_{1}}>v_{x_{2}}$ for $\beta-\zeta+1\leq x_{1}<x_{2}\leq \beta$. Thus, the maximal value of $v_{x}$ such that $\kappa^{(v_{x})}_{-;m^{(v_{x})}}=\kappa^{(v_{\beta})}_{-;m^{(v_{\beta})}}$ is for $x=\beta-\zeta+1$. Notice,
\begingroup
\large
$\tilde{\imath}_{1}^{(w)}=\varrho^{(v_{\beta})}_{m^{(v_{\beta})}-1}-\eta^{(v_{\beta})}_{m^{(v_{\beta})}-1},$
\endgroup
in case
\begingroup
\large
$\tilde{h}^{(w)}_{1}=\kappa^{(v_{\beta})}_{-;m^{(v_{\beta})}}$.
\endgroup
Therefore,
\begingroup
\large
$$\sum_{x=\beta-\zeta+1}^{\beta}i_{x}-\left(\varrho^{(v_{\beta})}_{m^{(v_{\beta})}-1}-\eta^{(v_{\beta})}_{m^{(v_{\beta})}-1}\right),$$
\endgroup
the possibly exponent of
\begingroup
\large
${\tilde{h}}^{(w-1)}_{m^{(w-1)}}$,
\endgroup
in case
\begingroup
\large
${\tilde{h}}^{(w-1)}_{m^{(w-1)}}=\kappa^{(v_{\beta})}_{-;m^{(v_{\beta})}}.$
\endgroup
Then, by using the definitions of $i_{x}$, ~$\kappa^{(v_{x})}_{-;m^{(v_{x})}}$, and  $\eta^{(v_{x})}_{m^{(v_{x})}-1}$ ~for $\beta-\zeta+1\leq x\leq \beta$, we conclude the following property:
\begingroup
\large
\begin{align*}
&\sum_{x=\beta-\zeta+1}^{\beta}i_{x}-\left(\varrho^{(v_{\beta})}_{m^{(v_{\beta})}-1}-\eta^{(v_{\beta})}_{m^{(v_{\beta})}-1}\right) \\ &=\sum_{x=\beta-\zeta+1}^{\beta-1}
\left(\kappa^{(v_{x})}_{-;m^{(v_{x})}}-\vartheta^{(v_{x})}_{m^{(v_{x})}}-\eta^{(v_{x})}_{m^{(v_{x})}-1}\right)+ \\ &+
\sum_{v'=v_{\beta}+1}^{w}\vartheta^{(v')}_{m^{(v')}}-
\left(\varrho^{(v_{\beta})}_{m^{(v_{\beta})}-1}-\eta^{(v_{\beta})}_{m^{(v_{\beta})}-1}\right) \\ &= (\zeta-1)\cdot \kappa^{(v_{\beta})}_{-;m^{(v_{\beta})}}+
\sum_{v'=v_{\beta-\zeta+1}+1}^{w}\vartheta^{(v')}_{m^{(v')}}.
\end{align*}
\endgroup
Therefore, the following holds:
\begin{itemize}
\item In case, where there exists at least one $v'>v_{\beta-\zeta+1}$, such that $\vartheta^{(v')}_{m^{(v')}}>0$,
\begingroup
\large
$${\tilde{h}}^{(w-1)}_{m^{(w-1)}}=\kappa^{(v_{\beta})}_{-;m^{(v_{\beta})}},  ~~~~\tilde{\imath}_{m^{(w-1)}}^{(w-1)}=\sum_{v'=v_{\beta-\zeta+1}+1}^{w}\vartheta^{(v')}_{m^{(v')}};$$
\endgroup
\item In case, where $\vartheta^{(v')}_{m^{(v')}}=0$, for every $v'>v_{\beta-\zeta+1}$, which means necessarily \\ $v_{\beta-\zeta+1}=z(\pi)$ (The case where for every $v'>v_{\beta-\zeta+1}$, we have ~$\kappa^{(v')}_{-;1}=\vartheta^{(v')}_{1}=0$, ~$\kappa^{(v_{\beta})}_{-;m^{(v_{\beta})}}<\kappa^{(v')}_{+;1}$~ and ~$m^{(v')}=1$ cannot happen by Proposition \ref{kappa-zero}.):
    \begingroup
    \large
    $$\sum_{v'=v_{\beta-\zeta+1}+1}^{z(\pi)}\vartheta^{(v')}_{m^{(v')}}=0.$$
    \endgroup
     Therefore,
    \begingroup
    \large
    $${\tilde{h}}^{(w-1)}_{m^{(w-1)}}\neq \kappa^{(v_{\beta})}_{-;m^{(v_{\beta})}}.$$
    \endgroup
\end{itemize}

Notice the following observation:
\\
In case, where both $m^{(v_{\beta})}=1$ and $v_{\beta-\zeta+1}=z(\pi)$:
\begingroup
\large
$$a_{\beta}=\kappa^{(v_{\beta})}_{-;1}=\vartheta^{(v_{\beta})}_{1}, ~~~~i_{\beta}=0.$$
\endgroup
Therefore, there is no $1\leq w\leq z(\tilde{\pi})$, and $1\leq \jmath\leq m^{(w)}$, such that
\begingroup
\large
$${\tilde{h}}^{(w)}_{\jmath}=\kappa^{(v_{\beta})}_{-;1},$$
\endgroup
Although,
\begingroup
\large
$$maj\left(\tilde{\pi}^{(z(\tilde{\pi}))}\right)=\kappa^{(v_{\beta})}_{-;1}.$$
\endgroup
i.e.,
\begingroup
\large
$${\tilde{h}}^{(z(\tilde{\pi})-1)}_{m^{(z(\tilde{\pi})-1)}}<\kappa^{(v_{\beta})}_{-;m^{(v_{\beta})}}<{\tilde{h}}^{(z(\tilde{\pi}))}_{1}.$$
\endgroup
Thus, all the parts of the proposition hold for the terminal elementary factor $\tilde{\pi}^{(z(\tilde{\pi}))}$ of $\tilde{\pi}$. Now, assume in induction, the proposition holds for  $\tilde{\pi}^{(w')}$, where $w'>w$ for some $1\leq w<z(\tilde{\pi})$, and we prove it for $w'=w$.
Consider
\begingroup
\large
$$\tilde{\pi}^{(w)}=\prod_{\jmath=1}^{m^{(w)}}t_{\tilde{h}^{(w)}_{\jmath}}^{\tilde{\imath}_{\jmath}^{(w)}}.$$
\endgroup
By the induction hypothesis,
\begingroup
\large
$$a_{\alpha}=maj\left(\tilde{\pi}^{(w+1)}\right)=\kappa^{(v_{\alpha})}_{-;j_{\alpha}}, $$
\endgroup
for some $1\leq v_{\alpha}\leq z(\pi)$ and $1\leq j_{\alpha}\leq m^{(v_{\alpha})}$
~($\beta$ is just $\alpha$ for $w'=z(\tilde{\pi})$).
For every $v'>\chi^{(v_{\alpha})}_{-;j_{\alpha}-1}$, let $j_{v':\alpha}$ be a positive integer such that
\begingroup
\large
$$\kappa^{(v')}_{-;j_{v':\alpha}}\leq \kappa^{(v_{\alpha})}_{-;j_{\alpha}}<\kappa^{(v')}_{+;j_{v':\alpha}}.$$
\endgroup
Let $\ddot{v}_{\alpha}$ ~be the maximal value of $v_{\alpha}\leq v\leq z(\pi)$, such that
\begingroup
\large
$\kappa^{(\ddot{v}_{\alpha})}_{-;\ddot{j}_{\alpha}}=\kappa^{(v_{\alpha})}_{-;j_{\alpha}}$
\endgroup
for some \\ $1\leq \ddot{j}_{\alpha}\leq m^{(\ddot{v}_{\alpha})}$.
Then, by the induction hypothesis the following holds:
\begin{itemize}
\item If there exists at least one $v'>\ddot{v}_{\alpha}$, such that $\vartheta^{(v')}_{j_{v':\alpha}}>0$,
\begingroup
\large
$${\tilde{h}}^{(w)}_{m^{(w)}}=\kappa^{(v_{\alpha})}_{j_{\alpha}}, ~~~~\tilde{\imath}_{m^{(w)}}^{(w)}=\sum_{\ddot{v}_{\alpha}+1}^{z(\pi)}\vartheta^{(v')}_{j_{v:\alpha}};$$
\endgroup
\item If $\vartheta^{(v')}_{j_{v',\alpha}}=0$, for every $v'>\ddot{v}_{\alpha}$ (i.e., either $\ddot{v}_{\alpha}=z(\pi)$ ~or for every $v'>\ddot{v}_{\alpha}$ ~we have $~\vartheta^{(v')}_{1}=0$ and $\kappa^{(v_{\alpha})}_{-;j_{\alpha}}<\kappa^{(v')}_{+;1}$~), then:
    \begingroup
    \large
    $$\sum_{v'=\ddot{v}_{\alpha}+1}^{z(\pi)}\vartheta^{(v')}_{j_{v',\alpha}}=0.$$
    \endgroup
     Therefore,
    \begingroup
    \large
    $$\tilde{h}^{(w)}_{m^{(w)}}\neq\kappa^{(v_{\alpha})}_{-;j_{\alpha}}.$$
    \endgroup
Since by the induction hypothesis,
\begingroup
\large
$$a_{\alpha}=maj\left(\tilde{\pi}^{(w+1)}\right)=\kappa^{(v_{\alpha})}_{-;j_{\alpha}},$$
\endgroup
by Proposition \ref{kappa-adjacent}, either
\begingroup
\large
$$\tilde{h}^{(w)}_{m^{(w)}}=\kappa^{(\bar{v})}_{-;\bar{j}}$$
\endgroup
such that $\bar{v}>v_{\alpha}$,
or
\begingroup
\large
$$\tilde{h}^{(w)}_{m^{(w)}}=\kappa^{(\ddot{v}_{\alpha})}_{+;\ddot{j}_{\alpha}-1}.$$
\endgroup
\end{itemize}

Let $\alpha'$ be the largest integer such that $\alpha'<\alpha$, and $a_{\alpha'}=\kappa^{(v_{\alpha'})}_{-;j_{\alpha'}}<a_{\alpha}$, for some $1\leq v_{\alpha'}\leq z(\pi)$ and $1\leq j_{\alpha'}\leq m^{(v_{\alpha'})}$. Therefore, $a_{x}=\kappa^{(v_{x})}_{+;j_{x}}$ for every $x$ such that $x>\alpha'$ and $a_{x}<\kappa^{(v_{\alpha})}_{-;j_{\alpha}}$. If $x_{1}<x_{2}$ and $\kappa^{(v_{\alpha'})}_{-;j_{\alpha'}}<a_{x_{1}}\leq a_{x_{2}}<\kappa^{(v_{\alpha})}_{-;j_{\alpha}}$, then by Proposition \ref{kappa-adjacent}, $v_{x_{1}}<v_{x_{2}}$. Notice, the maximal possible value of $v_{x}$ such that $a_{x}=\kappa^{(v_{x})}_{+;j_{x}}$  is $v_{x}=\ddot{v}_{\alpha}$ and then $a_{x}=\kappa^{(\ddot{v}_{\alpha})}_{+;\ddot{j}_{{\alpha}-1}}$, where $a_{x+1}=\kappa^{(\ddot{v}_{\alpha})}_{-;\ddot{j}_{\alpha}}=\kappa^{(v_{\alpha})}_{-;j_{\alpha}}$. Therefore, if
\begingroup
\large
$$a_{x}=\kappa^{(v_{x})}_{+;j_{x}}=\varrho^{(v_{x})}_{j_{x}}+\sum_{v'>v_{x+1}}^{z(\pi)}\vartheta^{(v')}_{j_{v':x}},$$
\endgroup
where, $j_{v':x}$ is a positive integer such that
\begingroup
\large
$$\kappa^{(v')}_{-;j_{v':x}}<\kappa^{(v_{x})}_{+;j_{x}}\leq\kappa^{(v')}_{+;j_{v':x}},$$
\endgroup
then, by Proposition \ref{kappa-adjacent},
\begingroup
\large
$\chi^{(v_{x})}_{j_{x}}=v_{x-1}$,
\endgroup
in case $x>\alpha'+1$. Therefore,
\begingroup
\large
$$i_{x}=\vartheta^{(v_{x})}_{j_{x}}+\eta^{(v_{x})}_{j_{x}}=\sum_{v'=v_{x-1}+1}^{v_{x}-1}\vartheta^{(v')}_{j_{v':x}}+\vartheta^{(v_{x})}_{j_{x}},$$
\endgroup
and for $x=\alpha'+1$:
\begingroup
\large
$$i_{x}=\vartheta^{(v_{x})}_{j_{x}}+\eta^{(v_{x})}_{j_{x}}=\sum_{v'=\chi^{(v_{x})}_{j_{x}}+1}^{v_{x}-1}\vartheta^{(v')}_{j_{v':x}}+\vartheta^{(v_{x})}_{j_{x}}.$$
\endgroup
Therefore, by using Proposition \ref{eta-varrho}
\begingroup
\large
\begin{align*}
a_{x}&=\kappa^{(v_{x})}_{+;j_{x}}=\varrho^{(v_{x})}_{j_{x}}+\vartheta^{(v_{x})}_{j_{x}}+\sum_{v'=v_{x}+1}^{z(\pi)}\vartheta^{(v')}_{j_{v':x}} \\ &>
\eta^{(v_{x})}_{j_{x}}+\vartheta^{(v_{x})}_{j_{x}}+\sum_{v'=v_{x}+1}^{\ddot{v}_{\alpha}}\vartheta^{(v')}_{j_{v':x}}+
\sum_{v'=\ddot{v}_{\alpha}+1}^{z(\pi)}\vartheta^{(v')}_{j_{v':x}} \\ &=\sum_{x'=x}^{\alpha-1}i_{x'}=maj\left(\tilde{\pi}^{(w)}_{x}\right).
\end{align*}
\endgroup
Hence, by Definition \ref{canonical-factorization-def},
\begingroup
\large
$$\prod_{x=\alpha'+1}^{\alpha-1}t_{a_{x}}^{i_{x}}$$
\endgroup
is a terminal segment of $\tilde{\pi}^{(w)}$.\\
Notice, $x=\alpha'$ is the largest $x$, such that $a_{\alpha'}=\kappa^{(v_{\alpha'})}_{-;j_{\alpha'}}<\kappa^{(v_{\alpha})}_{-;j_{\alpha}}$. Therefore, $j_{v':x}=j_{v':\alpha'}$, for every $\chi^{(v_{x})}_{j_{x}}<v'<v_{x}$, and $j_{v'}=j_{v':\alpha'}$ for $v'=v_{x}$, in case $a_{x}=\kappa^{(v_{x})}_{+;j_{x}}$ for $x>\alpha'$ and $a_{x}<\kappa^{(v_{\alpha})}_{-;j_{\alpha}}$. By Proposition \ref{kappa-adjacent}, $\chi^{(v_{\alpha'})}_{j_{\alpha'}-1}=\chi^{(v_{\alpha'+1})}_{j_{\alpha'+1}}$, in case $a_{\alpha'+1}=\kappa^{(v_{\alpha'+1})}_{+;j_{\alpha'+1}}$.
Thus,
\begingroup
\large
\begin{align*}
a_{\alpha'}&=\kappa^{(v_{\alpha'})}_{-;j_{\alpha'}} =\varrho^{(v_{\alpha'})}_{j_{\alpha'}-1}+\vartheta^{(v_{\alpha'})}_{j_{\alpha'}}+\sum_{v'=v_{\alpha'+1}}^{z(\pi)}\vartheta^{(v')}_{j_{v':\alpha'}}\\&>
\eta^{(v_{\alpha'})}_{j_{\alpha'}-1}+\vartheta^{(v_{\alpha'})}_{j_{\alpha'}}+\sum_{v'=v_{\alpha'}+1}^{z(\pi)}\vartheta^{(v')}_{j_{v':\alpha'}}\\ \\ &=
\sum_{v'=\chi^{(v_{\alpha'})}_{j_{\alpha'}-1}+1}^{v_{\alpha'}-1}\vartheta^{(v')}_{j_{v':\alpha'}}+\vartheta^{(v_{\alpha'})}_{j_{\alpha'}}+
\sum_{v'=v_{\alpha'}+1}^{z(\pi)}\vartheta^{(v')}_{j_{v':\alpha'}} \\ \\ &= \sum_{x=\alpha'+1}^{\alpha-1}i_{x}=maj\left(\tilde{\pi}^{(w)}_{\alpha'+1}\right).
\end{align*}
\endgroup
We have also
\begingroup
\large
\begin{align*}
\sum_{x=\alpha'}^{\alpha-1}i_{x}&=i_{\alpha'}+\sum_{x=\alpha'+1}^{\alpha-1}i_{x}=
\kappa^{(v_{\alpha'})}_{-;j_{\alpha'}}-\vartheta^{(v_{\alpha'})}_{j_{\alpha'}} -\sum_{v'=\chi^{(v_{\alpha'})}_{j_{\alpha'}-1}+1}^{v_{\alpha'}-1}\vartheta^{(v')}_{j_{v':\alpha'}}+
\sum_{v'=\chi^{(v_{\alpha'})}_{j_{\alpha'}-1}+1}^{z(\pi)}\vartheta^{(v')}_{j_{v':\alpha'}}
\\ \\ &=\kappa^{(v_{\alpha'})}_{-;j_{\alpha'}}+\sum_{v'=v_{\alpha'}+1}^{z(\pi)}\vartheta^{(v')}_{j_{v':\alpha'}}\geq \kappa^{(v_{\alpha'})}_{-;j_{\alpha'}} ~= ~a_{\alpha'}.
\end{align*}
\endgroup
Hence, by the same argument as in the case of $\tilde{\pi}^{(z(\tilde{\pi}))}$ the following holds for
\begingroup
\large
$$\tilde{\pi}^{(w)}=\prod_{\jmath=1}^{m^{(w)}}t_{\tilde{h}^{(w)}_{\jmath}}^{\tilde{\imath}_{\jmath}^{(w)}}:$$
\endgroup
\begin{itemize}
\item In case $j_{\alpha'}>1$ (i.e., $\varrho^{(v_{\alpha'})}_{j_{\alpha'}}>0$):
\begingroup
\large
$$\tilde{h}^{(w)}_{1}=\kappa^{(v_{\alpha'})}_{-;j_{\alpha'}}, ~~~~\tilde{\imath}_{1}^{(w)}=\varrho^{(v_{\alpha'})}_{j_{\alpha'}-1}-\eta^{(v_{\alpha'})}_{j_{\alpha'}-1}.$$
\endgroup
If $\jmath\neq 1$ or $\jmath\neq m^{(w)}$ (i.e., $\tilde{h}^{(w)}_{\jmath}$ is not the first and is not the last letter of $\tilde{\pi}^{(w)}$)
then,
\begingroup
\large
$$\tilde{h}^{(w)}_{\jmath}=\kappa^{(v_{x})}_{+;j_{x}},$$
\endgroup
for some $x>\alpha'$, such that $v_{x}\leq \ddot{v}_{\alpha}$.
Now we consider
\begingroup
\large
$\tilde{\imath}_{\jmath}^{(w)}$.
\endgroup
Let  $\zeta'$ be the number of $x>\alpha'$ such that $a_{x}=\kappa^{(v)}_{+;j}$ , for a specific $1\leq v\leq z(\pi)$ and $1\leq j\leq m^{(v)}$ (i.e., $\kappa^{(v_{x})}_{+;j_{x}}=\kappa^{(v)}_{+;j}$, for every ~$\alpha'<x'\leq x\leq x'+\zeta'-1$). Then,
\begingroup
\large
$$\tilde{\imath}_{\jmath}^{(w)}=\sum_{v'=\chi^{(v_{x'})}_{j_{x'}}+1}^{v_{x'+\zeta'-1}}\vartheta^{(v')}_{j_{v':\alpha'}},$$
\endgroup
where,
\begin{itemize}
\item $\chi^{(v_{x'})}_{j_{x'}}=v_{x'-1}$ for $x'>\alpha'+1$;
\item $\chi^{(v_{x'})}_{j_{x'}}=\chi^{(v_{\alpha'})}_{j_{\alpha'}-1}$, for $x'=\alpha'+1$;
\item $x=x'$ is the minimal value of $v_{x}$ such that $\kappa^{(v_{x'})}_{+;j_{x'}}=\kappa^{(v)}_{+;j}$;
\item $x=x'+\zeta'-1$ is the maximal value of $v_{x}$ such that $\kappa^{(v_{x'+\zeta'-1})}_{+;j_{x'+\zeta'-1}}=\kappa^{(v)}_{+;j}$ for some $1\leq v\leq z(\pi)$ and $1\leq j\leq m^{(z(\pi))}$.
\end{itemize}

Now, we consider $\tilde{h}^{(w)}_{m^{(w)}}$ By the induction hypothesis: In case of
\begingroup
\large
$\tilde{h}^{(w)}_{m^{(w)}}=\kappa^{(v)}_{+;j}$
\endgroup
for some $1\leq v\leq z(\pi)$, and $1\leq j\leq m^{(v)}$, one of the following holds:
    \begin{itemize}
     \item either ~$v=z(\pi)$;
     \item or for every $v'>v$, $\kappa^{(v')}_{-;1}=0$, and $\kappa^{(v)}_{-;j+1}<\kappa^{(v')}_{+;1}$.
     \end{itemize}
Otherwise,
\begingroup
\large
$$\tilde{h}^{(w)}_{m^{(w)}}=\kappa^{(v_{\alpha})}_{-;j_{\alpha}}, ~~~~
\tilde{\imath}_{m^{(w)}}^{(w)}=\sum_{v'=\ddot{v}_{\alpha}+1}^{z(\pi)}\vartheta^{(v')}_{j'_{v'}}.$$
\endgroup
Therefore,
\begingroup
\large
$$maj\left(\tilde{\pi}^{(w)}\right)=\sum_{\jmath=1}^{m^{(w)}}\tilde{\imath}_{\jmath}^{(w)}=\kappa^{(v_{\alpha'})}_{-;m^{(v_{\alpha'})}};$$
\endgroup

\item In case, where $j_{\alpha'}=1$, we have,
\begingroup
\large
$\varrho^{(v_{\alpha'})}_{j_{\alpha'}-1}=0$.
\endgroup
 Thus, by Propositions \ref{eta-varrho}, \ref{kappa-adjacent}
 \begingroup
 \large
 ~$\eta^{(v_{\alpha'})}_{j_{\alpha'}-1}=0$, ~~$\chi^{(v_{\alpha'})}_{j_{\alpha'}-1}=v_{\alpha'}-1$.
 \endgroup
 Therefore,
    \begingroup
    \large
    $${\tilde{h}}^{(w)}_{1}\neq \kappa^{(v_{\alpha'})}_{-;j_{\alpha'}}.$$
\endgroup
The further
\begingroup
\large
${\tilde{h}}^{(w)}_{\jmath}$
\endgroup
is the same like in the case of
\begingroup
\large
$j_{\alpha'}>1$.
\endgroup
Also,
\begingroup
\large
$$maj\left(\tilde{\pi}^{(w)}\right)=\sum_{\jmath=1}^{m^{(w)}}\tilde{\imath}_{\jmath}^{(w)}=\kappa^{(v_{\alpha'})}_{-;j_{\alpha'}}$$
\endgroup
even in case where,
 \begingroup
 \large
  ~${\tilde{h}}^{(w)}_{1}\neq \kappa^{(v_{\alpha'})}_{-;j_{\alpha'}}.$
\endgroup
\end{itemize}

Now, look at
\begingroup
\large
${\tilde{h}}^{(w-1)}_{m^{(w-1)}}$.
\endgroup
 Let $\bar{\zeta}$ be the number of different values of $x$, such that $a_{x}=a_{\alpha'}$, i.e.,
\begingroup
\large
$$a_{x}=\kappa^{(v_{x})}_{-;j_{x}}=\kappa^{(v_{\alpha'})}_{-;j_{\alpha'}}=a_{\alpha'},$$
\endgroup
for $\alpha'-\bar{\zeta}+1\leq x\leq \alpha'$.
Then, by the same argument as in the case of $w=z(\tilde{\pi})$ the following holds:
\begin{itemize}
\item In case, where there exists at least one $v'>v_{\alpha'-\bar{\zeta}+1}$, such that $\vartheta^{(v')}_{j_{v':\alpha'}}>0$,
\begingroup
\large
$${\tilde{h}}^{(w-1)}_{m^{(w-1)}}=\kappa^{(v_{\alpha'})}_{-;j_{\alpha'}},  ~~~~\tilde{\imath}_{m^{(w-1)}}^{(w-1)}=\sum_{v'=v_{\alpha'-\bar{\zeta}+1}+1}^{w}\vartheta^{(v')}_{j_{v':\alpha'}};$$
\endgroup
\item In case, where $\vartheta^{(v')}_{j_{v':\alpha'}}=0$, for every $v'>v_{\alpha'-\bar{\zeta}+1}$, by Proposition \ref{kappa-zero}, either $v_{\alpha'-\bar{\zeta}+1}=z(\pi)$ ~or for every $v'>v_{\alpha'-\bar{\zeta}+1}$, we have ~$\kappa^{(v')}_{-;1}=\vartheta^{(v')}_{1}=0$ and \\ $\kappa^{(v_{\alpha'})}_{-;j_{\alpha'}}<\kappa^{(v')}_{+;1}$. Then, the following holds:
    \begingroup
    \large
    $$\sum_{v'=v_{\alpha'-\bar{\zeta}+1}}^{z(\pi)}\vartheta^{(v')}_{j_{v':\alpha'}}=0.$$
    \endgroup
     Therefore,
    \begingroup
    \large
    $${\tilde{h}}^{(w-1)}_{m^{(w-1)}}\neq \kappa^{(v_{\alpha'})}_{-;j_{\alpha'}}.$$
    \endgroup
\end{itemize}
Thus, the proposition has been proved for $w'=w$. Hence, the proposition holds for $\tilde{\pi}$.
\end{proof}

The next theorem proves the formula for the standard $OGS$ canonical form of $\pi^{-1}$ for an arbitrary $\pi\in S_{n}$, which is presented in the standard $OGS$ canonical form as well.

\begin{theorem}\label{inverse-general-sn}
Let $\pi\in S_n$, presented in standard $OGS$ elementary factorization, with all the notations used in Definition \ref{canonical-factorization-def}. For every $1\leq v\leq z(\pi)$ and $1\leq j\leq m^{(v)}$, let $\vartheta^{(v)}_{j}$, ~$\kappa^{(v)}_{-;j}$, ~$\kappa^{(v)}_{+;j}$, ~and ~$\eta^{(v)}_{j}$ be non-negative integers as defined in Definitions \ref{rho}, \ref{kappa}, and \ref{eta}. Then, the standard $OGS$ canonical form of $\pi^{-1}$ is $\prod_{j=1}^{\tilde{m}}t_{\tilde{k}_j}^{\tilde{i}_{\tilde{k}_j}}$, where $\tilde{k}_{\tilde{j}}$, $\tilde{i}_{\tilde{k}_{\tilde{j}}}$ satisfy the following conditions for every $1\leq \tilde{j}\leq \tilde{m}$:
\begin{itemize}
\item $\tilde{k}_{\tilde{j}}=\kappa^{(v)}_{-;j}$ or $\tilde{k}_{\tilde{j}}=\kappa^{(v)}_{+;j}$, for $1\leq v\leq z(\pi)$, and $1\leq j\leq m^{(v)}$;
\item If $\tilde{k}_{\tilde{j}}=\kappa^{(v)}_{+;j}$, then $\tilde{i}_{\tilde{k}_{\tilde{j}}}=\sum_{\gamma}\left(\vartheta^{(v_{\gamma})}_{j_{\gamma}}+\eta^{(v_{\gamma})}_{j_{\gamma}}\right),$
such that $\kappa^{(v)}_{+;j}=\kappa^{(v_{\gamma})}_{+;j_{\gamma}};$
\item If $\tilde{k}_{\tilde{j}}=\kappa^{(v)}_{-;j}$, then $\tilde{i}_{\tilde{k}_{\tilde{j}}}=\kappa^{(v)}_{-;j}-\sum_{\gamma}\left(\vartheta^{(v_{\gamma})}_{j_{\gamma}}+\eta^{(v_{\gamma})}_{j_{\gamma}-1}\right),$
such that $\kappa^{(v)}_{-;j}=\kappa^{(v_{\gamma})}_{-;j_{\gamma}};$
\item $z(\pi^{-1})$, the number of standard $OGS$ elementary factors of $\pi^{-1}$ equals to the number of different non-zero values $\kappa^{(v)}_{-;j}$, where $1\leq v\leq z(\pi)$, and $1\leq j\leq m^{(v)}$;
\item For every $1\leq w\leq z(\pi^{-1})$, ~$maj\left({\pi^{-1}}^{(w)}\right)=\kappa^{(v)}_{-;j}$ for some $1\leq v\leq z(\pi)$, and for every given non-zero value of $\kappa^{(v)}_{-;j}$, there exists exactly one $w$ such that $maj\left({\pi^{-1}}^{(w)}\right)=\kappa^{(v)}_{-;j}$ (i.e., There is a one-to-one correspondence between the non-zero different values of $\kappa^{(v)}_{-;j}$ and the descent set of ~$\pi^{-1}$).
\end{itemize}
\end{theorem}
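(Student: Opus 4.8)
The plan is to reduce the entire theorem to a single identity, namely $\pi^{-1}=\tilde\pi$, where $\tilde\pi=\prod_{x=1}^{r}t_{a_x}^{i_x}$ is the element explicitly constructed in Proposition \ref{inverse-factorization} from the data $\vartheta^{(v)}_{j}$, $\eta^{(v)}_{j}$, $\kappa^{(v)}_{-;j}$, $\kappa^{(v)}_{+;j}$. Once this identity is in hand, every assertion of the theorem follows mechanically. The stated canonical form $\prod_{\tilde j}t_{\tilde k_{\tilde j}}^{\tilde i_{\tilde k_{\tilde j}}}$ is obtained from $\prod_{x=1}^{r}t_{a_x}^{i_x}$ simply by merging powers of generators that share the same index $a_x$; hence the exponent attached to $\kappa^{(v)}_{+;j}$ is the sum of the $i_x=\vartheta^{(v_\gamma)}_{j_\gamma}+\eta^{(v_\gamma)}_{j_\gamma}$ over all coinciding values $\kappa^{(v)}_{+;j}=\kappa^{(v_\gamma)}_{+;j_\gamma}$, and the exponent attached to $\kappa^{(v)}_{-;j}$ is the sum of $\kappa^{(v_\gamma)}_{-;j_\gamma}-(\vartheta^{(v_\gamma)}_{j_\gamma}+\eta^{(v_\gamma)}_{j_\gamma-1})$ over all coinciding values, which is exactly the two exponent clauses of the theorem. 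The count $z(\pi^{-1})$, the equalities $maj\left({\pi^{-1}}^{(w)}\right)=\kappa^{(v)}_{-;j}$, and the one-to-one correspondence with the descent set of $\pi^{-1}$ are then read off verbatim from the corresponding bullet points of Proposition \ref{inverse-factorization}.

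Thus the whole burden is to prove $\pi^{-1}=\tilde\pi$, and I would do this by induction on $z(\pi)$. For $z(\pi)=1$ the element $\pi$ is a single standard $OGS$ elementary element; here Definition \ref{kappa} collapses to $\kappa^{(1\to 1)}_{\pm;j}=\kappa^{(1)}_{\pm;j}$ and Definition \ref{eta} gives $\eta^{(1)}_{j}=0$ (the defining sum is empty, since $\chi^{(1)}_{j}=0$). Substituting these into the formulas for $i_x$ yields exponent $\vartheta^{(1)}_{j}$ on $\kappa^{(1)}_{+;j}$ and exponent $\kappa^{(1)}_{-;j}-\vartheta^{(1)}_{j}=\varrho^{(1)}_{j-1}$ on $\kappa^{(1)}_{-;j}$, which is precisely the canonical form of $\pi^{-1}$ already established in Theorem \ref{elementary-inverse}. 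For the inductive step I would write $\pi=\hat\pi\cdot\pi^{(z(\pi))}$ with $\hat\pi=\prod_{v=1}^{z(\pi)-1}\pi^{(v)}$, so that $\pi^{-1}=(\pi^{(z(\pi))})^{-1}\cdot\hat\pi^{-1}$, apply the induction hypothesis to $\hat\pi^{-1}$ (whose canonical form is governed by the intermediate indices $\kappa^{(v\to z(\pi)-1)}_{\pm;j}$), invert the final factor by Theorem \ref{elementary-inverse}, and transport the two pieces into one canonical word using the exchange laws of Propositions \ref{exchange}, \ref{exchange-2}, \ref{exchange-elementary1} and \ref{exchange-elementary2}.

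The heart of the argument, and the step I expect to be the main obstacle, is showing that multiplying $\hat\pi^{-1}$ on the left by $(\pi^{(z(\pi))})^{-1}$ and renormalizing shifts each intermediate index $\kappa^{(v\to z(\pi)-1)}_{\pm;j}$ to $\kappa^{(v\to z(\pi))}_{\pm;j}$ exactly by the recursion of Definition \ref{kappa}, with multiplicities accumulating through $\eta$. Concretely, the additive shift $\vartheta^{(z(\pi))}_{j'}$ in Definition \ref{kappa} must be identified with the index displacement produced by pushing a generator past the factor $(\pi^{(z(\pi))})^{-1}$ via Proposition \ref{exchange-elementary2}, and the interval condition $\varrho^{(r+1)}_{j'-1}\le\kappa^{(v\to r)}_{-;j}<\varrho^{(r+1)}_{j'}$ must be shown to pick out the correct segment of that factor. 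This is exactly the bookkeeping controlled by Propositions \ref{order-kappa}, \ref{v1-x-v2}, \ref{v2-x-v1}, \ref{no-common-eps12}, \ref{max-min-kappa}, \ref{kappa-zero}, \ref{eta-varrho} and \ref{kappa-adjacent}: they guarantee that the shifted indices stay correctly ordered, that no $\kappa_{-}$ value collides with a $\kappa_{+}$ value, and that the resulting word is genuinely a canonical form with exponents in range. The fully commuting situation of Theorem \ref{elementary-commute} and Proposition \ref{kappa-commute} is the degenerate sanity check, where every shift $\vartheta^{(r+1)}_{1}$ vanishes and the recursion is stationary; the general case is the iterated, order-sensitive version of it, with Proposition \ref{inverse-factorization} packaging the final structural output into the form demanded by the theorem.
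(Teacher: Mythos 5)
Your proposal follows essentially the same route as the paper's own proof: induction on $z(\pi)$ with base case handled by Theorem \ref{elementary-inverse} (where $\chi^{(1)}_{j}=0$ forces $\eta^{(1)}_{j}=0$), the decomposition $\pi^{-1}=(\pi^{(z(\pi))})^{-1}\cdot(\pi')^{-1}$, the identification of the index shift $\vartheta^{(z(\pi))}_{j'}$ produced by Proposition \ref{exchange-elementary2} with the recursion of Definition \ref{kappa}, and the final packaging via Proposition \ref{inverse-factorization}. The bookkeeping you flag as the main obstacle is indeed where the paper spends nearly all of its effort, but your plan names the correct tools for it, so the approach is sound and matches the paper's.
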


\begin{proof}
The proof is in induction on $z(\pi)$. If $z(\pi)=1$, then $\pi$ is a standard $OGS$ elementary element. Then, by Definition \ref{chi}, $\chi^{(1)}_{j}=0$, for every $1\leq j\leq m$. Therefore, by Definition \ref{eta}, $\eta^{(1)}_{j}=0$. By Definition \ref{kappa}, $\kappa^{(1)}_{-;j}-\vartheta^{(1)}_{j}=\varrho^{(1)}_{j-1}$.
Therefore, the result of the theorem for the case of $\pi$ a standard $OGS$ elementary element holds by Theorem \ref{elementary-inverse}. Now, assume in induction that the theorem holds for $z'<z(\pi)$, and we prove the correctness of it for $z'=z(\pi)$. Denote by $\pi'$ the element $\prod_{v=1}^{z(\pi)-1}\pi^{(v)}$, and consider $\pi=\pi'\cdot \pi^{(z(\pi))}$. For a convenience we write $\varrho_{j}$, $\vartheta_{j}$, $\kappa_{+;j}$, $\kappa_{-;j}$, $\chi_{j}$, and $\eta_{j}$ instead of $\varrho^{(z(\pi))}_{j}$, $\vartheta^{(z(\pi))}_{j}$, $\kappa^{(z(\pi))}_{+;j}$, $\kappa^{(z(\pi))}_{-;j}$, $\chi^{(z(\pi))}_{j}$, and $\eta^{(z(\pi))}_{j}$. Obviously,
$$\pi^{-1}=(\prod_{v=1}^{z(\pi)}\pi^{(v)})^{-1}=(\pi^{(z(\pi))})^{-1}\cdot \pi'^{-1}.$$
Thus, by using Theorem \ref{elementary-inverse},
\begingroup
\large
$$\pi^{-1}=\prod_{j=1}^{m^{(z(\pi))}}(t_{{\kappa}_{-;j}}^{\varrho_{j-1}}\cdot t_{{\kappa}_{+;j}}^{\vartheta_{j}})\cdot \pi'^{-1},$$
\endgroup
where, we consider $\varrho_{0}=0$.
By the induction hypothesis, the standard $OGS$ canonical form of ${\pi'}^{-1}$ is as follows:
 \begingroup
 \large
 $$\pi'^{-1}=\prod_{d=1}^{m'}t_{k_{d}'}^{i_{k_{d}'}},$$
 \endgroup
 where
 \begingroup
 \large
 $$k_{d}'\in \{\kappa^{(v\rightarrow z(\pi)-1)}_{-;j}, \kappa^{(v\rightarrow z(\pi)-1)}_{+;j} ~| ~ 1\leq v\leq z(\pi)-1, ~ 1\leq j\leq m^{(v)}\},$$
\endgroup
\begin{itemize}
\item If $k_{d}'=\kappa^{(v\rightarrow z(\pi)-1)}_{-;j}$, then
$$i_{k_{d}'}=\kappa^{(v\rightarrow z(\pi)-1)}_{-;j}-\sum_{\gamma}\left(\vartheta^{(v_{\gamma})}_{j_{\gamma}}+\eta^{(v_{\gamma})}_{j_{\gamma}-1}\right),$$
where,
\begingroup
\large
$\kappa^{(v\rightarrow z(\pi)-1)}_{-;j}=\kappa^{(v_{\gamma}\rightarrow z(\pi)-1)}_{-;j_{\gamma}}$.
\endgroup
\item If $k_{d}'=\kappa^{(v\rightarrow z(\pi)-1)}_{+;j}$, then
$$i_{k_{d}'}=\sum_{\gamma}\left(\vartheta^{(v_{\gamma})}_{j_{\gamma}}+\eta^{(v_{\gamma})}_{j_{\gamma}}\right)$$
where,
\begingroup
\large
 $\kappa^{(v\rightarrow z(\pi)-1)}_{+;j}=\kappa^{(v_{\gamma}\rightarrow z(\pi)-1)}_{+;j_{\gamma}}.$
\endgroup
\end{itemize}

Now, consider $t_{k_{d}'}^{i_{k_{d}'}}$ for $1\leq d\leq m'$. By Proposition \ref{commute-perm}, $t_{k_{d}'}^{i_{k_{d}'}}$ commutes with every sub-word of ${\left(\pi^{(z(\pi))}\right)}^{-1}$ of the form  $t_{\kappa_{-;j}}^{\varrho_{j-1}}\cdot t_{\kappa_{+;j}}^{\vartheta_{j}}$, such that $k_{d}'\leq\varrho_{j-1}$.
Therefore, the following holds:
If $k_{d}'$ for some $1\leq d\leq m'$ satisfies that $\varrho_{g-1}<k_{d}'\leq \varrho_{g}$, for some $1\leq g\leq m^{(z(\pi))}$, then  $t_{k_{d}'}^{i_{k_{d}'}}$ commutes with the sub-word
\begingroup
\large
$$\prod_{j=g+1}^{m^{(z(\pi))}}\left(t_{\kappa_{-;j}}^{\varrho_{j-1}}\cdot t_{\kappa_{+;j}}^{\vartheta_{j}}\right),$$
\endgroup
but does not commute with the sub-word
\begingroup
\large
$$t_{\kappa_{-;g}}^{\varrho_{g-1}}\cdot t_{\kappa_{+;g}}^{\vartheta_{g}}$$
\endgroup
of $\pi^{(z(\pi))}$.
For every $1\leq j\leq m^{(z(\pi))}$, denote by $\mu_{j}$ the number of integers $k_{d}'$, such that  $\varrho_{j-1}<k_{d}'\leq \varrho_{j}$, where $1\leq d\leq m'$ , denote by $(\pi')_{j}^{-1}$ the sub-word $\prod_{x=1}^{\mu_{j}}t_{k_{j_{x}}'}$ of $\pi^{-1}$, where $\varrho_{j-1}<k_{j_{x}}'\leq \varrho_{j}$. (It might happen that ~$\mu_{j}=0$ for some $j$, if there is no $k_{d}'$ such that $\varrho_{j-1}<k_{d}'\leq \varrho_{j}$. Then, $(\pi')^{-1}_{j}$ is the empty word).
Then, we have
\begingroup
\large
\begin{equation}\label{pi-1-order}
\pi^{-1}=\prod_{j=1}^{m^{(z(\pi))}}\left(t_{\kappa_{-;j}}^{\varrho_{j-1}}\cdot t_{\kappa_{+;j}}^{\vartheta_{j}}\cdot (\pi')_{j}^{-1}\right)=\prod_{j=1}^{m^{(z(\pi))}}\left(t_{\kappa_{-;j}}^{\varrho_{j-1}}\cdot t_{\kappa_{+;j}}^{\vartheta_{j}}\cdot\prod_{x=1}^{\mu_{j}}t_{k_{j_{x}}'}^{i_{k_{j_{x}}'}}\right).
\end{equation}
\endgroup
If
\begingroup
\large
${k_{m'}'}\leq \varrho_{m^{(z(\pi))}-1}$,
\endgroup
then there is a terminal segment of $\pi^{-1}$, which is a terminal segment of ${(\pi^{(z(\pi))})}^{-1}$ too. Then, denote by $\varphi$ the value of $j$ such that $\varrho_{j-1}<{k_{m'}'}\leq\varrho_{j}$, and denote by $\pi^{-1}_{0}$ the terminal segment of $\pi^{-1}$ of the form:
\begingroup
\large
\begin{equation}\label{terminal-pi-0}
\pi^{-1}_{0}=\prod_{j=\varphi+1}^{m^{(z(\pi))}}\left(t_{\kappa_{-;j}}^{\varrho_{j-1}}\cdot t_{\kappa_{+;j}}^{\vartheta_{j}}\right).
\end{equation}
\endgroup
Since $k_{m'}'\leq \varrho_{\varphi}$, and $k_{d}'<k_{m'}'$ for every $d<m'$, we have $k_{d}'\leq \varrho_{j}$ for every $j\geq \varphi$. Moreover, by the induction hypothesis, every $\kappa^{(v\rightarrow \pi(z)-1)}_{+;m^{(v)}}$ equals to $k_{d}'$ for some\\ $1\leq d\leq m'$.  Therefore, $\kappa^{(v\rightarrow \pi(z)-1)}_{+;m^{(v)}}\leq\varrho_{j}$ for every $j\geq \varphi$. Then, by Proposition \ref{v1-x-v2}, $\kappa^{(v)}_{+;m^{(v)}}\leq \kappa_{+;\varphi}$. Since
by Proposition \ref{order-kappa}, $\kappa_{+;\varphi}<\kappa_{c;j}$, ~$\kappa^{(v)}_{c;j'}<\kappa^{(v)}_{+;m^{(v)}}$, and $\kappa^{(v)}_{-;m^{(v)}}<\kappa^{(v)}_{+;m^{(v)}}$ for every $j>\varphi$, ~$1\leq v\leq z(\pi)-1$, ~$1\leq j'\leq m^{(v)}-1$, and $c\in\{+,-\}$, we conclude
$\kappa^{(v)}_{c';j'}<\kappa_{c;j}$. Thus, by Definition \ref{eta}, $\eta_{j}=0$ for every $j\geq \varphi$. Therefore, the Theorem holds for the terminal segment $\pi^{-1}_{0}$ of $\pi^{-1}$.
\vskip 0.5cm
Now, we turn to the sub-word
\begingroup
\Large
\begin{equation}\label{pi-phi-segment}
t_{\kappa_{-;\varphi}}^{\varrho_{\varphi-1}}\cdot t_{\kappa_{+;\varphi}}^{\vartheta_{\varphi}}\cdot {(\pi')}^{-1}_{\varphi}=t_{\kappa_{-;\varphi}}^{\varrho_{\varphi-1}}\cdot t_{\kappa_{+;\varphi}}^{\vartheta_{\varphi}}\cdot\prod_{x=1}^{\mu_{\varphi}}t_{k_{\varphi_{x}}'}^{i_{k_{\varphi_{x}}'}},
\end{equation}
\endgroup
of $\pi^{-1}$.

Notice, by the induction hypothesis, all the conditions of Proposition \ref{inverse-factorization} hold for $\tilde{\pi}={(\pi')}^{-1}$.
Consider the standard $OGS$ elementary factorization of ${(\pi')}^{-1}$:
\begingroup
\Large
\begin{equation}\label{pi-ogs-fac}
{(\pi')}^{-1}=\prod_{w'=1}^{z({(\pi')}^{-1})}\cdot\prod_{x=1}^{m^{(w')}}
t_{\tilde{h}^{(w')}_{x}}^{\tilde{\imath}_{x}^{(w')}}
\end{equation}
\endgroup

Define ${(\pi')}^{-1}_{\varphi}$ to be
\begingroup
\Large
$${(\pi')}^{-1}_{\varphi}=\prod_{x=1}^{\mu(\varphi)}t_{k_{\varphi_{x}}'}^{i_{k_{\varphi_{x}}'}}.$$
\endgroup
Since ${(\pi')}^{-1}_{\varphi}$ is a terminal segment of ${(\pi')}^{-1}$, its standard $OGS$ elementary factorization is as follows:
\begingroup
\Large
\begin{equation}\label{pi-phi-ogs}
{(\pi')}^{-1}_{\varphi}=\tilde{\pi}^{(w_{\varphi})}_{\jmath_{\varphi}}\cdot \prod_{w'=w_{\varphi}+1}^{z({(\pi')}^{-1})}\cdot\prod_{x=1}^{m^{(w')}}
t_{\tilde{h}^{(w')}_{x}}^{\tilde{\imath}_{x}^{(w')}},
\end{equation}
\endgroup
for some $1\leq w\leq z({\pi'}^{-1})$ and $1\leq \jmath_{\varphi}\leq m^{(w_{\varphi})}$,
where
\begingroup
\Large
\begin{equation}\label{tilde-jmath-phi}
\tilde{\pi}^{(w_{\varphi})}_{\jmath_{\varphi}}=\prod_{x=\jmath_{\varphi}}^{m^{(w_{\varphi})}}t_{\tilde{h}^{(w_{\varphi})}_{x}}^{\tilde{\imath}_{x}^{(w_{\varphi})}}.
\end{equation}
\endgroup
Notice, all the elementary factors of ${(\pi')}^{-1}_{\varphi}$ apart from the first factor $\tilde{\pi}^{(w_{\varphi})}_{\jmath_{\varphi}}$ coincide with the terminal elementary factors ${{(\pi')}^{-1}}^{(w')}$ ~of ~${(\pi')}^{-1}$, for $w'\geq w_{\varphi}+1$.
\\

Now, look at
\begingroup
\Large
$$t_{\kappa_{-;\varphi}}^{\varrho_{\varphi-1}}\cdot t_{\kappa_{+;\varphi}}^{\vartheta_{\varphi}}\cdot {(\pi')}^{-1}_{\varphi}.$$
\endgroup
Notice, by Definition \ref{kappa}, $\kappa_{+;\varphi}-\vartheta_{\varphi}=\varrho_{\varphi}$. By our assumption, $\varrho_{\varphi}\geq\tilde{h}^{(w_{\varphi})}_{x}$ for \\ $\jmath_{\varphi}\leq x\leq m^{(w_{\varphi})}$ and also  $\varrho_{\varphi}\geq\tilde{h}^{(w')}_{x}$ for every $w'>w_{\varphi}$ and $1\leq x\leq m^{(w')}$. Thus, by Proposition \ref{exchange-elementary2}, for $q=\kappa_{+;\varphi}$ and $i_{q}=\vartheta_{\varphi}$, we have:
\begingroup
\Large
\begin{equation}\label{exchange-z-1}
t_{\kappa_{+;\varphi}}^{\vartheta_{\varphi}}\cdot {(\pi')}^{-1}_{\varphi}=t_{\rho^{(w_{\varphi})}_{\jmath_{\varphi}}+\vartheta_{\varphi}}^{\vartheta_{\varphi}}\cdot {(\breve{\pi}')}^{-1}_{\varphi}\cdot t_{\kappa_{+;\varphi}}^{\vartheta_{\varphi}}.
\end{equation}
\endgroup
where,
\begingroup
\Large
\begin{equation}\label{breve}
{(\breve{\pi}')}^{-1}_{\varphi}=\prod_{x={\jmath}_{\varphi}}^{m^{(w_{\varphi})}}
t_{\tilde{h}^{(w')}_{x}+\vartheta_{\varphi}}^{\tilde{\imath}_{x}^{(w')}}\cdot\prod_{w'=w_{\varphi}+1}^{z({(\pi)'}^{-1})}\cdot \left(t_{\rho^{(w')}_{1}+\vartheta_{\varphi}}^{\vartheta_{\varphi}}\cdot \prod_{x=1}^{m^{(w')}}
t_{\tilde{h}^{(w')}_{x}+\vartheta_{\varphi}}^{\tilde{\imath}_{x}^{(w')}}\right).
\end{equation}
\endgroup
Thus, by Proposition \ref{exchange-elementary2}, the standard $OGS$ of ${(\breve{\pi}')}^{-1}_{\varphi}$ is the following:
\begin{itemize}
\item
\begingroup
\large
$t_{k_{\varphi_{j}}'+\vartheta_{\varphi}}$,
\endgroup
 for $1\leq j\leq \mu(\varphi)$;
\item
\begingroup
\large
$t_{maj\left({{(\pi')}^{-1}}^{(w')}\right)+\vartheta_{\varphi}}$,
\endgroup
 for some $w'>w_{\varphi}$, such that  $$\tilde{h}^{(w'-1)}_{m^{(w'-1)}}<maj\left({{(\pi')}^{-1}}^{(w')}\right)<\tilde{h}^{(w')}_{1}.$$
\end{itemize}
Notice also, by Definition \ref{kappa}, $\kappa_{-;\varphi}=\varrho_{\varphi-1}+\vartheta_{\varphi}$, ~$\kappa_{+;\varphi}=\varrho_{\varphi}+\vartheta_{\varphi}$. By our assumption $\varrho_{\varphi-1}<k_{\varphi_{j}}'\leq \varrho_{\varphi}$ for every $1\leq j\leq \mu(\varphi)$. Therefore, $$\kappa_{-;\varphi}<k_{\varphi_{j}}'+\vartheta_{\varphi}\leq \kappa_{+;\varphi}.$$
We have also, $\tilde{h}^{(w'-1)}_{m^{(w'-1)}}\leq maj\left({{(\pi')}^{-1}}^{(w')}\right)\leq\tilde{h}^{(w')}_{1}$, for every $w'>w_{\varphi}$. Therefore, $$\kappa_{-;\varphi}<maj\left({{(\pi')}^{-1}}^{(w')}\right)+\vartheta_{\varphi}\leq \kappa_{+;\varphi}$$
also for every $w'>w_{\varphi}$. Thus, the non-zero exponent standard $OGS$ of ${(\breve{\pi}')}^{-1}_{\varphi}$ included in the non-zero exponent standard $OGS$ of $\pi^{-1}$.

Now, notice, If $w'>w_{\varphi}$ and $1\leq x\leq m^{(w_{\varphi})}$, or $w'=w_{\varphi}$ and $\jmath_{\varphi}\leq x\leq m^{(w_{\varphi})}$, then by Proposition \ref{inverse-factorization}, the following holds:
\begin{itemize}
 \item $$\rho^{(w')}_{1}=maj\left({{(\pi')}^{-1}_{\varphi}}^{(w')}\right)=\kappa^{(v\rightarrow z(\pi)-1)}_{-;j},$$
for some $1\leq v\leq z(\pi)$ and $1\leq j\leq m^{(v)}$;
\item $$\tilde{h}^{(w')}_{x}=\kappa^{(v\rightarrow z(\pi)-1)}_{c;j},$$ for some $1\leq v\leq z(\pi)$ and $1\leq j\leq m^{(v)}$, and $c\in \{+,-\}$, such that in case $$\tilde{h}^{(w')}_{x}=\kappa^{(v\rightarrow z(\pi)-1)}_{-;j},$$ necessarily, $x=1$ or $x=m^{(w')}$ (i.e.,
    \begingroup
    \large
    $t_{\kappa^{(v\rightarrow z(\pi)-1)}_{-;j}}$
    \endgroup
     can appear just as a first or the last letter of an elementary factor ${{(\pi')}^{-1}}^{(w')}$).
\end{itemize}
Since for every $w'\geq w_{\varphi}$ and $1\leq x\leq m^{(w_{\varphi})}$, $$\varrho_{\varphi-1}<\tilde{h}^{(w')}_{x}=\kappa^{(v\rightarrow z(\pi)-1)}_{c;j}\leq\varrho_{\varphi}$$
and for $w'>w_{\varphi}$
$$\varrho_{\varphi-1}< maj\left({{(\pi')}^{-1}}^{(w')}\right)=\kappa^{(v\rightarrow z(\pi)-1)}_{-;j}\leq\varrho_{\varphi},$$
by Definition \ref{kappa}:
$$\tilde{h}^{(w')}_{x}+\vartheta_{\varphi}=\kappa^{(v\rightarrow z(\pi)-1)}_{c;j}+\vartheta_{\varphi}=\kappa^{(v)}_{c;j},$$

$$maj\left({{(\pi')}^{-1}}^{(w')}\right)+\vartheta_{\varphi}=\kappa^{(v\rightarrow z(\pi)-1)}_{-;j}+\vartheta_{\varphi}=\kappa^{(v)}_{-;j},$$
for some $1\leq v\leq z(\pi)$ and $1\leq j\leq m^{(v)}$.
Therefore, the standard $OGS$ canonical form of ${(\breve{\pi}')}^{-1}_{\varphi}$ is a product of exponents of $t_{\kappa^{(v)}_{-;j}}$ and $t_{\kappa^{(v)}_{+;j}}$, such that the following holds:
\begin{itemize}
\item If $k_{\varphi_{j'}}'+\vartheta_{\varphi}=\kappa^{(v)}_{+;j}$  ~for some $1\leq v\leq z(\pi)$, ~$1\leq j\leq m^{(v)}$, and \\ $1\leq j'\leq \mu(\varphi)$, then by Proposition \ref{no-common-eps12}, $k_{\varphi_{j'}}'+\vartheta_{\varphi}\neq \kappa^{(\bar{v})}_{-;\bar{j}}$, for any $1\leq \bar{v}\leq z(\pi)$ and $1\leq \bar{j}\leq m^{(\bar{v})}$. Thus, $k_{\varphi_{j'}}'+\vartheta_{\varphi}\neq maj\left({{(\pi')}^{-1}}^{(w')}\right)+\vartheta_{\varphi}$ for any $w'>w_{\varphi}$. Therefore, by Proposition \ref{exchange-elementary2}, $$\tilde{i}_{\tilde{k}_{\tilde{j}}}=i'_{k_{\varphi_{j'}}'}=i'_{\kappa^{(v\rightarrow z(\pi)-1)}_{+;j}},$$
where, $\tilde{k}_{\tilde{j}}=\kappa^{(v)}_{+;j}$ for some $1\leq v\leq z(\pi)$, $1\\leq j\leq m^{(v)}$.
Therefore, by the induction hypothesis,
\begingroup
\large
\begin{equation}\label{kappa+exponent}
\tilde{i}_{\kappa^{(v)}_{+;j}}=i'_{\kappa^{(v\rightarrow z(\pi)-1)}_{+;j}}=\sum_{\gamma}\left(\vartheta^{(v_{\gamma})}_{j_{\gamma}}+\eta^{(v_{\gamma})}_{j_{\gamma}}\right),
\end{equation}
\endgroup
such that $\kappa^{(v_{\gamma})}_{+;j_{\gamma}}=\kappa^{(v)}_{+;j}$, where
\begingroup
\large
$$\kappa_{-;\varphi}=\varrho_{\varphi-1}+\vartheta_{\varphi}<\kappa^{(v)}_{+;j}\leq \varrho_{\varphi}+\vartheta_{\varphi}=\kappa_{+;\varphi};$$
\endgroup
\item If $k_{\varphi_{j'}}'+\vartheta_{\varphi}=\kappa^{(v)}_{-;j}$  ~for some $1\leq v\leq z(\pi)$, ~$1\leq j\leq m^{(v)}$, and $1\leq j'\leq \mu_{\varphi}$, then by Proposition \ref{inverse-factorization}, $k_{\varphi_{j'}}'+\vartheta_{\varphi}=maj\left({{(\pi')}^{-1}}^{(w')}\right)+\vartheta_{\varphi}$ for some $w'>w_{\varphi}$. Therefore, by Proposition \ref{exchange-elementary2},
    \begingroup
    \large
    $$\tilde{i}_{\tilde{k}_{\tilde{j}}}=i'_{k_{\varphi_{j'}}'}+\vartheta_{\varphi}=i'_{\kappa^{(v\rightarrow z(\pi)-1)}_{-;j}}+\vartheta_{\varphi},$$
    \endgroup
    where, $\tilde{k}_{\tilde{j}}=\kappa^{(v)}_{-;j}$ for some $1\leq v\leq z(\pi)$, ~$1\leq j\leq m^{(v)}$. Therefore, by the induction hypothesis,
    \begingroup
    \large
    \begin{align*}\label{kappa-exponent}
    \tilde{i}_{\kappa^{(v)}_{-;j}}=i'_{\kappa^{(v\rightarrow z(\pi)-1)}_{-;j}}+\vartheta_{\varphi}&=\kappa^{(v\rightarrow z(\pi)-1)}_{-;j}+\vartheta_{\varphi}-\sum_{\gamma}\left(\vartheta^{(v_{\gamma})}_{j_{\gamma}}+\eta^{(v_{\gamma})}_{j_{\gamma}-1}\right) \\&=
    \kappa^{(v)}_{-;j}-\sum_{\gamma}\left(\vartheta^{(v_{\gamma})}_{j_{\gamma}}+\eta^{(v_{\gamma})}_{j_{\gamma}-1}\right),
    \end{align*}
    \endgroup
such that $\kappa^{(v_{\gamma})}_{-;j_{\gamma}}=\kappa^{(v)}_{-;j}$, where
\begingroup
\large
$$\kappa_{-;\varphi}=\varrho_{\varphi-1}+\vartheta_{\varphi}<\kappa^{(v)}_{-;j}<\varrho_{\varphi}+\vartheta_{\varphi}=\kappa_{+;\varphi};$$
\endgroup
\item If $\tilde{h}^{(w'-1)}_{m^{(w'-1)}}<maj\left({{(\pi')}^{-1}}^{(w')}\right)<\tilde{h}^{(w')}_{1}$, for some $w'>w_{\varphi}$, then by Proposition \ref{exchange-elementary2}, the exponent of ~$t_{maj\left({{(\pi')}^{-1}}^{(w')}\right)}$ is ~$i_{maj\left({{(\pi')}^{-1}}^{(w')}\right)}=0$ in the standard $OGS$ canonical form. By Proposition \ref{inverse-factorization}, $maj\left({{(\pi')}^{-1}}^{(w')}\right)=\kappa^{(v\rightarrow z(\pi)-1)}_{-;j}$  ~for some \\ $1\leq v\leq z(\pi)$ and $1\leq j\leq m^{(v)}$. Therefore, by the induction hypothesis, $$i_{maj\left({{(\pi')}^{-1}}^{(w')}\right)}=0=\kappa^{(v\rightarrow z(\pi)-1)}_{-;j}-\sum_{\gamma}\left(\vartheta^{(v_{\gamma})}_{j_{\gamma}}+\eta^{(v_{\gamma})}_{j_{\gamma}-1}\right),$$
such that $\kappa^{(v_{\gamma})}_{-;j_{\gamma}}=\kappa^{(v\rightarrow z(\pi)-1)}_{-;j}$. Thus, by Proposition \ref{inverse-factorization},
\begingroup
\large
\begin{align*}
    \tilde{i}_{maj\left({{(\pi')}^{-1}}^{(w')}\right)}=\vartheta_{\varphi}&=\kappa^{(v\rightarrow z(\pi)-1)}_{-;j}+\vartheta_{\varphi}-\sum_{\gamma}\left(\vartheta^{(v_{\gamma})}_{j_{\gamma}}+\eta^{(v_{\gamma})}_{j_{\gamma}-1}\right) \\&=
    \kappa^{(v)}_{-;j}-\sum_{\gamma}\left(\vartheta^{(v_{\gamma})}_{j_{\gamma}}+\eta^{(v_{\gamma})}_{j_{\gamma}-1}\right),
    \end{align*}
\endgroup
such that $\kappa^{(v_{\gamma})}_{-;j_{\gamma}}=\kappa^{(v)}_{-;j}$, where
\begingroup
\large
$$\kappa_{-;\varphi}=\varrho_{\varphi-1}+\vartheta_{\varphi}<\kappa^{(v)}_{-;j}<\varrho_{\varphi}+\vartheta_{\varphi}=\kappa_{+;\varphi};$$
\endgroup
\end{itemize}
Now, look at the sub-word
\begingroup
\Large
$$t_{\kappa_{-;\varphi}}^{\varrho_{\varphi-1}}\cdot t_{\rho^{(w_{\varphi})}_{\jmath_{\varphi}}+\vartheta_{\varphi}}^{\vartheta_{\varphi}}$$
\endgroup
of $\pi^{-1}$. If $\jmath_{\varphi}>1$, by Proposition \ref{inverse-factorization}, $\rho^{(w_{\varphi})}_{\jmath_{\varphi}}<\tilde{h}^{(w_{\varphi})}_{\jmath_{\varphi}-1}$. By our assumption,\\ $\tilde{h}^{(w_{\varphi})}_{\jmath_{\varphi}-1}<k_{\varphi_{1}}'\leq \varrho_{\varphi-1}$. Thus,
\begingroup
\large
$$\rho^{(w_{\varphi})}_{\jmath_{\varphi}}+\vartheta_{\varphi}<\varrho_{\varphi-1}+\vartheta_{\varphi}=\kappa_{-,\varphi}.$$
\endgroup
Therefore, by using Proposition \ref{exchange-2}, for the case $q-i_{q}=i_{p}$, where $q=\kappa_{-;\varphi}$, ~$i_{q}=\varrho_{\varphi-1}$, ~$p=\rho^{(w_{\varphi})}_{\jmath_{\varphi}}+\vartheta_{\varphi}$, and ~$i_{p}=\vartheta_{\varphi}$, we have:
\begingroup
\Large
\begin{equation}\label{exchange-eta}
t_{\kappa_{-;\varphi}}^{\varrho_{\varphi-1}}\cdot t_{\rho^{(w_{\varphi})}_{\jmath_{\varphi}}+\vartheta_{\varphi}}^{\vartheta_{\varphi}}=t_{\varrho_{\varphi-1}}^{\rho^{(w_{\varphi})}_{\jmath_{\varphi}}}\cdot t_{\kappa_{-;\varphi}}^{\varrho_{\varphi-1}-\rho^{(w_{\varphi})}_{\jmath_{\varphi}}}.
\end{equation}
\endgroup
By Equation \ref{tilde-jmath-phi},
$$\rho^{(w_{\varphi})}_{\jmath_{\varphi}}=maj\left(\tilde{\pi}^{(w_{\varphi})}_{\jmath_{\varphi}}\right).$$
Therefore, by Proposition \ref{inverse-factorization} the following holds:
\begin{itemize}
\item If $\tilde{h}^{(w_{\varphi})}_{\jmath_{\varphi}}=\kappa^{(v\rightarrow z(\pi)-1)}_{+;j}$, then
$$\rho^{(w_{\varphi})}_{\jmath_{\varphi}}=\sum_{v'=\chi^{(v)}_{j}+1}^{z(\pi)-1}\vartheta^{(v')}_{j'_{v':v}}$$ where, $$\kappa^{(v'\rightarrow z(\pi)-1)}_{-;j'_{v':v}}<\kappa^{(v\rightarrow z(\pi)-1)}_{+;j}\leq\kappa^{(v'\rightarrow z(\pi)-1)}_{+;j'_{v':v}};$$
\item If $\tilde{h}^{(w_{\varphi})}_{\jmath_{\varphi}}=\kappa^{(v\rightarrow z(\pi)-1)}_{-;j}$ and $\jmath_{\varphi}=m^{(w_{\varphi})}$, then
$$\rho^{(w_{\varphi})}_{\jmath_{\varphi}}=\sum_{v'=v+1}^{z(\pi)-1}\vartheta^{(v')}_{j'_{v':v}}$$
where,
$$\kappa^{(v'\rightarrow z(\pi)-1)}_{-;j'_{v':v}}\leq\kappa^{(v\rightarrow z(\pi)-1)}_{-;j}<\kappa^{(v'\rightarrow z(\pi)-1)}_{+;j'_{v':v}}.$$
 \end{itemize}
 Notice, $\tilde{h}^{(w_{\varphi})}_{\jmath_{\varphi}}+\vartheta_{\pi}=\kappa^{(v)}_{c;j}$ in case $\tilde{h}^{(w_{\varphi})}_{\jmath_{\varphi}}=\kappa^{(v\rightarrow z(\pi)-1)}_{c;j}$ for $c\in \{+, -\}$. Notice also, $\tilde{h}^{(w_{\varphi})}_{\jmath_{\varphi}}+\vartheta_{\pi}$ is the smallest value of $\kappa^{(v)}_{c;j}$, which is greater than  $\kappa_{-;\varphi}$, unless $\jmath_{\varphi}=1$ and $\tilde{h}^{(w_{\varphi})}_{\jmath_{\varphi}}=maj\left(\tilde{\pi}^{(w_{\varphi})}\right)$.  Therefore, by Proposition \ref{kappa-adjacent} the following holds:
 \begin{itemize}
 \item If $\tilde{h}^{(w_{\varphi})}_{\jmath_{\varphi}}+\vartheta_{\pi}=\kappa^{(v)}_{-;j}$ (i.e., $\tilde{h}^{(w_{\varphi})}_{\jmath_{\varphi}}=\kappa^{(v\rightarrow z(\pi)-1)}_{-;j}$), for some $1\leq v\leq z(\pi)$ and $1\leq j\leq m^{(v)}$, then $\chi_{\varphi-1}=v$;
 \item If $\tilde{h}^{(w_{\varphi})}_{\jmath_{\varphi}}+\vartheta_{\pi}=\kappa^{(v)}_{+;j}$ (i.e., $\tilde{h}^{(w_{\varphi})}_{\jmath_{\varphi}}=\kappa^{(v\rightarrow z(\pi)-1)}_{+;j}$), for some $1\leq v\leq z(\pi)$ and $1\leq j\leq m^{(v)}$, then $\chi_{\varphi-1}=\chi^{(v)}_{j}$;
 \end{itemize}
Therefore, unless $\jmath_{\varphi}=1$ and $\tilde{h}^{(w_{\varphi})}_{\jmath_{\varphi}}=maj\left(\tilde{\pi}^{(w_{\varphi})}\right)$,
$$\rho^{(w_{\varphi})}_{\jmath_{\varphi}}=\sum_{v'=\chi_{\varphi}}^{z(\pi)-1}\vartheta^{(v')}_{j'_{v'}}$$ where, $$\kappa^{(v')}_{-;j'_{v'}}<\kappa_{-;\varphi}<\kappa^{(v')}_{+;j'_{v'}}.$$
Thus, by Definition \ref{eta},
\begingroup
\large
\begin{equation}\label{rho-j-eta}
\rho^{(w_{\varphi})}_{\jmath_{\varphi}}=\eta_{\varphi-1}.
\end{equation}
\endgroup
Hence, Equation \ref{exchange-eta} can be written in the form
\begingroup
\Large
\begin{equation}\label{exchange-eta2}
t_{\kappa_{-;\varphi}}^{\varrho_{\varphi-1}}\cdot t_{\rho^{(w_{\varphi})}_{\jmath_{\varphi}}+\vartheta_{\varphi}}^{\vartheta_{\varphi}}=t_{\varrho_{\varphi-1}}^{\eta_{\varphi-1}}\cdot t_{\kappa_{-;\varphi}}^{\varrho_{\varphi-1}-\eta_{\varphi-1}}.
\end{equation}
\endgroup
Therefore, the following holds unless $\jmath_{\varphi}=1$ and  $maj\left(\tilde{\pi}^{(w_{\varphi})}\right)\geq \varrho_{j-1}$:
\begingroup
\Large
\begin{equation}\label{phi-segment}
t_{\kappa_{-;\varphi}}^{\varrho_{\varphi-1}}\cdot t_{\kappa_{+;\varphi}}^{\vartheta_{\varphi}}\cdot {(\pi')}^{-1}_{\varphi}=t_{\varrho_{\varphi-1}}^{\eta_{\varphi-1}}\cdot t_{\kappa_{-;\varphi}}^{\varrho_{\varphi-1}-\eta_{\varphi-1}}\cdot {(\breve{\pi}')}^{-1}_{\varphi}\cdot t_{\kappa_{+;\varphi}}^{\vartheta_{\varphi}}.
\end{equation}
\endgroup

 Now, assume $\jmath_{\varphi}=1$ and  $maj\left(\tilde{\pi}^{(w_{\varphi})}\right)\geq \varrho_{j-1}$. By Proposition \ref{inverse-factorization}, \\ $maj\left(\tilde{\pi}^{(w_{\varphi})}\right)=\kappa^{(v_{0}\rightarrow z(\pi)-1)}_{-;j_{0}}$, for some $1\leq v_{0}\leq z(\pi)-1$ and $1\leq j_{0}\leq m^{(v)}$. Notice, also $\tilde{h}^{(w_{\varphi}-1)}_{m^{(w_{\varphi}-1)}}<maj\left(\tilde{\pi}^{(w_{\varphi})}\right)$, since otherwise $w_{\varphi}$ would be smaller by one, and $\jmath_{\varphi}$ would be $m^{(w_{\varphi})}$. Thus, by Proposition \ref{inverse-factorization}, either $v_{0}=z(\pi)-1$, or for every $v_{0}<v'\leq z(\pi)-1$, we have $\kappa^{(v')}_{-;1}=\vartheta^{(v')}_{1}=0$ and  $\kappa^{(v_{0}\rightarrow z(\pi)-1)}_{-;j_{0}}<\kappa^{(v'\rightarrow z(\pi)-1)}_{+;1}$. By Proposition \ref{v1-x-v2}, the same holds for  $\kappa^{(v_{0})}_{-;j_{0}}=maj\left(\tilde{\pi}^{(w_{\varphi})}\right)+\vartheta_{\varphi}$ too (i.e., either $v_{0}=z(\pi)-1$, or for every \\ $v_{0}<v'\leq z(\pi)-1$, we have $\kappa^{(v')}_{-;1}=\vartheta^{(v')}_{1}=0$ and $\kappa^{(v_{0})}_{-;j_{0}}<\kappa^{(v')}_{+;1}$).\\ Notice also, $\kappa^{(v_{0})}_{-;j_{0}}=maj\left(\tilde{\pi}^{(w_{\varphi})}\right)+\vartheta_{\varphi}$, is the smallest $\kappa^{(v)}_{c;j}$ which is larger than $\kappa_{-;\varphi}$. Thus, by Proposition \ref{kappa-adjacent}, $\chi_{\varphi-1}=v_{0}$. Then, by Definition \ref{eta}, $\eta_{\varphi-1}=0$.
Thus, in case $\eta_{\varphi-1}=0$, we have
\begingroup
\Large
\begin{equation}\label{phi-segment-eta-zero}
t_{\kappa_{-;\varphi}}^{\varrho_{\varphi-1}}\cdot t_{\kappa_{+;\varphi}}^{\vartheta_{\varphi}}\cdot {(\pi')}^{-1}_{\varphi}= t_{\kappa_{-;\varphi}}^{\varrho_{\varphi-1}}\cdot t_{\rho^{(w_{\varphi})}_{1}+\vartheta_{\varphi}}^{\vartheta_{\varphi}}\cdot{(\breve{\pi}')}^{-1}_{\varphi}\cdot t_{\kappa_{+;\varphi}}^{\vartheta_{\varphi}},
\end{equation}
\endgroup
where, $\jmath_{\varphi}=1$ and $\rho^{(w_{\varphi})}_{1}=maj\left(\tilde{\pi}^{(w_{\varphi})}\right)=\tilde{h}^{(w_{\varphi})}_{1}\geq \varrho_{\varphi-1}$.

Thus, by defining
\begingroup
\large
$${(\check{\pi}')}^{-1}_{\varphi}=\begin{cases} {(\breve{\pi}')}^{-1}_{\varphi} & ~~\eta_{\varphi-1}>0 \\ \\ t_{\rho^{(w_{\varphi})}_{1}+\vartheta_{\varphi}}^{\vartheta_{\varphi}}\cdot{(\breve{\pi}')}^{-1}_{\varphi} & ~~\eta_{\varphi-1}=0\end{cases}$$
\endgroup
we have:
\begingroup
\Large
\begin{equation}\label{phi-segment-eta-general}
t_{\kappa_{-;\varphi}}^{\varrho_{\varphi-1}}\cdot t_{\kappa_{+;\varphi}}^{\vartheta_{\varphi}}\cdot {(\pi')}^{-1}_{\varphi}=t_{\varrho_{\varphi-1}}^{\eta_{\varphi-1}}\cdot t_{\kappa_{-;\varphi}}^{\varrho_{\varphi-1}-\eta_{\varphi-1}}\cdot {(\check{\pi}')}^{-1}_{\varphi}\cdot t_{\kappa_{+;\varphi}}^{\vartheta_{\varphi}}.
\end{equation}
\endgroup

If $\varphi=1$, then $\varrho_{\varphi-1}=0$. Thus, the Theorem has been proved for case $\varphi=1$. If $\varphi>1$, then consider a segment of $\pi^{-1}$  by combining Equations \ref{pi-1-order} and \ref{phi-segment-eta-general}.
\begingroup
\large
\begin{align*}
\pi^{-1}=\left[\prod_{j=1}^{\varphi-2}\left(t_{\kappa_{-;j}}^{\varrho_{j-1}}\cdot t_{\kappa_{+;j}}^{\vartheta_{j}}\cdot (\pi')_{j}^{-1}\right)\right]&\cdot\left(t_{\kappa_{-;\varphi-1}}^{\varrho_{\varphi-2}}\cdot t_{\kappa_{+;\varphi-1}}^{\vartheta_{\varphi-1}}\cdot(\pi')_{\varphi-1}^{-1}\cdot t_{\varrho_{\varphi-1}}^{\eta_{\varphi-1}}\right) \\ &\cdot\left(t_{\kappa_{-;\varphi}}^{\varrho_{\varphi-1}-\eta_{\varphi-1}}\cdot {(\check{\pi}')}^{-1}_{\varphi}\cdot t_{\kappa_{+;\varphi}}^{\vartheta_{\varphi}}\right)\cdot \pi^{-1}_{0},
\end{align*}
\endgroup
where, by Equation \ref{pi-1-order},
\begingroup
\large
$$(\pi')_{\varphi-1}^{-1}=\prod_{x=1}^{\mu_{\varphi-1}}t_{k_{{(\varphi-1)}_{x}}'}^{i_{k_{{(\varphi-1)}_{x}}'}}.$$
\endgroup
such that
\begingroup
\large
$$\varrho_{\varphi-2}<k_{{(\varphi-1)}_{x}}'<k_{{(\varphi-1)}_{x+1}}'\leq \varrho_{\varphi-1},$$
\endgroup
for every $1\leq x<\mu_{\varphi-1}$.
\\

Consider the segment
\begingroup
\large
\begin{equation}\label{phi-1-order}
t_{\kappa_{-;\varphi-1}}^{\varrho_{\varphi-2}}\cdot t_{\kappa_{+;\varphi-1}}^{\vartheta_{\varphi-1}}\cdot\prod_{x=1}^{\mu_{\varphi-1}}t_{k_{{(\varphi-1)}_{x}}'}^{i_{k_{{(\varphi-1)}_{x}}'}}\cdot t_{\varrho_{\varphi-1}}^{\eta_{\varphi-1}}
\end{equation}
\endgroup

\begingroup
\Large
\begin{equation}\label{pi-phi-1-ogs}
{(\pi')}^{-1}_{\varphi-1}\cdot t_{\varrho_{\varphi-1}}^{\eta_{\varphi-1}}=\tilde{\pi}^{(w_{\varphi-1})}_{\jmath_{\varphi-1}}\cdot \prod_{w'=w_{\varphi-1}+1}^{w_{\varphi}-1}\cdot\prod_{x=1}^{m^{(w')}}
t_{\tilde{h}^{(w')}_{x}}^{\tilde{\imath}_{x}^{(w')}}\cdot \prod_{x=1}^{\jmath_{\varphi}-1}
t_{\tilde{h}^{(w_{\varphi})}_{x}}^{\tilde{\imath}_{x}^{(w_{\varphi})}}\cdot t_{\varrho_{\varphi-1}}^{\eta_{\varphi-1}},
\end{equation}
\endgroup
for some $1\leq w\leq z({\pi'}^{-1})$ and $1\leq \jmath\leq m^{(w_{\varphi})}$,
where
\begingroup
\Large
\begin{equation}\label{tilde-jmath-phi-1}
\tilde{\pi}^{(w_{\varphi-1})}_{\jmath_{\varphi-1}}=\prod_{x=\jmath_{\varphi-1}}^{m^{(w_{\varphi-1})}}t_{\tilde{h}^{(w_{\varphi-1})}_{x}}^{\tilde{\imath}_{x}^{(w_{\varphi-1})}}.
\end{equation}
\endgroup

Consider the terminal segment of ${(\pi')}^{-1}_{\varphi-1}\cdot t_{\varrho_{\varphi-1}}^{\eta_{\varphi-1}}$ in Equation \ref{pi-phi-1-ogs}:

\begingroup
\large
\begin{equation}\label{terminal-ph-1}
\prod_{x=1}^{\jmath_{\varphi}-1}
t_{\tilde{h}^{(w_{\varphi})}_{x}}^{\tilde{\imath}_{x}^{(w_{\varphi})}}\cdot t_{\varrho_{\varphi-1}}^{\eta_{\varphi-1}}
\end{equation}
\endgroup

By Equation \ref{rho-j-eta}, $\eta_{\varphi-1}=\rho^{(w_{\varphi})}_{\jmath_{\varphi}}$, and by Definition \ref{rho}, $\rho^{(w_{\varphi})}_{\jmath_{\varphi}}=\sum_{x=\jmath_{\varphi}}^{m^{(w_{\varphi})}}\tilde{\imath}_{x}^{(w_{\varphi})}$. Thus,
\begingroup
\large
\begin{equation}\label{term-phi-1-eta}
\sum_{x=1}^{\jmath_{\varphi}-1}\tilde{\imath}_{x}^{(w_{\varphi})}+\eta_{\varphi-1}=\sum_{x=1}^{m^{(w_{\varphi})}}\tilde{\imath}_{x}^{(w_{\varphi})}=maj\left(\tilde{\pi}^{(w_{\varphi})}\right).
\end{equation}
\endgroup
Thus, Equation \ref{pi-phi-1-ogs} is a standard $OGS$ elementary factorization of
\begingroup
\large
${(\pi')}^{-1}_{\varphi-1}\cdot t_{\varrho_{\varphi-1}}^{\eta_{\varphi-1}}$,
\endgroup
 with the elementary factors
 \begingroup
 \large
  $\tilde{\pi}^{(w_{\varphi-1})}_{\jmath_{\varphi-1}}$, ~$\prod_{x=1}^{m^{(w')}}
t_{\tilde{h}^{(w')}_{x}}^{\tilde{\imath}_{x}^{(w')}}$,
\endgroup
where
\begingroup
\large
$w_{\varphi-1}+1\leq w'\leq w_{\varphi}-1$,
\endgroup
and the terminal elementary factor
\begingroup
\large
 $\prod_{x=1}^{\jmath_{\varphi}-1}
t_{\tilde{h}^{(w_{\varphi})}_{x}}^{\tilde{\imath}_{x}^{(w_{\varphi})}}\cdot t_{\varrho_{\varphi-1}}^{\eta_{\varphi-1}}$.
\endgroup

Notice, similar to the $OGS$ elementary factorization of ${(\pi')}^{-1}_{\varphi}$, we have that all the elementary factors of ${(\pi')}^{-1}_{\varphi-1}$ apart from the first factor $\tilde{\pi}^{(w_{\varphi})}_{\jmath_{\varphi-1}}$ and the last factor,  coincides with the terminal elementary factors ${{(\pi')}^{-1}}^{(w')}$ ~of ~${(\pi')}^{-1}$, for \\ $w_{\varphi-1}+1\leq w'\leq w_{\varphi}-1$.

Notice, by Definition \ref{kappa}, $\kappa_{+;\varphi-1}-\vartheta_{\varphi-1}=\varrho_{\varphi-1}$. By our assumption, \\ $\varrho_{\varphi-1}>\tilde{h}^{(w_{\varphi-1})}_{x}$ for $\jmath_{\varphi-1}\leq x\leq m^{(w_{\varphi-1})}$, ~$\varrho_{\varphi-1}>\tilde{h}^{(w')}_{x}$ for every $w_{\varphi-1}<w'<w_{\varphi}$ and $1\leq x\leq m^{(w')}$, and $\varrho_{\varphi-1}>\tilde{h}^{(w_{\varphi})}_{x}$ for $1\leq x\leq \jmath_{\varphi}-1$. Thus, by Proposition \ref{exchange-elementary2}, for $q=\kappa_{+;\varphi}$ and $i_{q}=\vartheta_{\varphi}$, we have:
\begingroup
\Large
\begin{equation}\label{exchnge-z-1-1}
t_{\kappa_{+;\varphi-1}}^{\vartheta_{\varphi-1}}\cdot {(\pi')}^{-1}_{\varphi-1}\cdot t_{\varrho_{\varphi-1}}^{\eta_{\varphi-1}}=t_{\rho^{(w_{\varphi-1})}_{\jmath_{\varphi-1}}+\vartheta_{\varphi-1}}^{\vartheta_{\varphi-1}}\cdot {(\breve{\pi}')}^{-1}_{\varphi-1}\cdot t_{\kappa_{+;\varphi-1}}^{\vartheta_{\varphi-1}+\eta_{\varphi-1}}.
\end{equation}
\endgroup
where,
\begingroup
\Large
\begin{align*}
{(\breve{\pi}')}^{-1}_{\varphi-1}&=\prod_{x={\jmath}_{\varphi-1}}^{m^{(w_{\varphi-1})}}
t_{\tilde{h}^{(w')}_{x}+\vartheta_{\varphi-1}}^{\tilde{\imath}_{x}^{(w')}} \\ &~\cdot\prod_{w'=w_{\varphi-1}+1}^{w_{\varphi}-1} \left(t_{\rho^{(w')}_{1}+\vartheta_{\varphi-1}}^{\vartheta_{\varphi-1}}\cdot \prod_{x=1}^{m^{(w')}}
t_{\tilde{h}^{(w')}_{x}+\vartheta_{\varphi-1}}^{\tilde{\imath}_{x}^{(w')}}\right) \\ &~\cdot \left(t_{\rho^{(w_{\varphi})}_{1}+\vartheta_{\varphi-1}}^{\vartheta_{\varphi-1}}\cdot \prod_{x=1}^{{\jmath}_{\varphi}-1}
t_{\tilde{h}^{(w_{\varphi})}_{x}+\vartheta_{\varphi-1}}^{\tilde{\imath}_{x}^{(w_{\varphi})}}\right).
\end{align*}
\endgroup
Thus, by Proposition \ref{exchange-elementary2}, the standard $OGS$ of ${(\breve{\pi}')}^{-1}_{\varphi-1}$ is the following:
\begin{itemize}
\item $t_{k_{\varphi_{j}}'+\vartheta_{\varphi-1}}$, for $1\leq j\leq \mu(\varphi-1)$;
\item $t_{maj\left({{(\pi')}^{-1}}^{(w')}\right)+\vartheta_{\varphi-1}}$, for some $w_{\varphi-1}<w'\leq w_{\varphi}$, such that  $$\tilde{h}^{(w'-1)}_{m^{(w'-1)}}<maj\left({{(\pi')}^{-1}}^{(w')}\right)<\tilde{h}^{(w')}_{1},$$
\end{itemize}
where, by the same argument as in the case of ${(\breve{\pi}')}^{-1}_{\varphi}$,
the standard $OGS$ canonical form of ${(\breve{\pi}')}^{-1}_{\varphi-1}$ is a product of exponents of $t_{\kappa^{(v)}_{-;j}}$ and $t_{\kappa^{(v)}_{+;j}}$, such that the following holds:
\begin{itemize}
\item If $k_{\varphi_{j'}}'+\vartheta_{\varphi-1}=\kappa^{(v)}_{+;j}$  ~for some $1\leq v\leq z(\pi)$, ~$1\leq j\leq m^{(v)}$, and $1\leq j'\leq \mu(\varphi-1)$, then
\begingroup
\large
\begin{equation}\label{kappa+exponent-phi-1}
\tilde{i}_{\kappa^{(v)}_{+;j}}=i'_{\kappa^{(v\rightarrow z(\pi)-1)}_{+;j}}=\sum_{\alpha}\left(\vartheta^{(v_{\alpha})}_{j_{\alpha}}+\eta^{(v_{\alpha})}_{j_{\alpha}}\right),
\end{equation}
\endgroup
such that $\kappa^{(v_{\alpha})}_{+;j_{\alpha}}=\kappa^{(v)}_{+;j}$, where
\begingroup
\large
$$\kappa_{-;\varphi-1}=\varrho_{\varphi-2}+\vartheta_{\varphi-1}<\kappa^{(v)}_{+;j}\leq \varrho_{\varphi-1}+\vartheta_{\varphi-1}=\kappa_{+;\varphi-1};$$
\endgroup
\item If $k_{\varphi_{j'}}'+\vartheta_{\varphi-1}=\kappa^{(v)}_{-;j}$  ~for some $1\leq v\leq z(\pi)$, ~$1\leq j\leq m^{(v)}$, and $1\leq j'\leq \mu(\varphi-1)$, then \begingroup
    \large
    \begin{align*}\label{kappa-exponent-phi-1}
    \tilde{i}_{\kappa^{(v)}_{-;j}}=i'_{\kappa^{(v\rightarrow z(\pi)-1)}_{-;j}}+\vartheta_{\varphi-1}&=\kappa^{(v\rightarrow z(\pi)-1)}_{-;j}+\vartheta_{\varphi-1}-\sum_{\alpha}\left(\vartheta^{(v_{\alpha})}_{j_{\alpha}}+\eta^{(v_{\alpha})}_{j_{\alpha}-1}\right) \\&=
    \kappa^{(v)}_{-;j}-\sum_{\alpha}\left(\vartheta^{(v_{\alpha})}_{j_{\alpha}}+\eta^{(v_{\alpha})}_{j_{\alpha}-1}\right),
    \end{align*}
    \endgroup
such that $\kappa^{(v_{\alpha})}_{-;j_{\alpha}}=\kappa^{(v)}_{-;j}$, where
\begingroup
\large
$$\kappa_{-;\varphi-1}=\varrho_{\varphi-2}+\vartheta_{\varphi-1}<\kappa^{(v)}_{-;j}<\varrho_{\varphi-1}+\vartheta_{\varphi-1}=\kappa_{+;\varphi-1};$$
\endgroup
\item If $\tilde{h}^{(w'-1)}_{m^{(w'-1)}}<maj\left({{(\pi')}^{-1}}^{(w')}\right)<\tilde{h}^{(w')}_{1}$, for some $w_{\varphi-1}<w'\leq w_{\varphi}$, then by Proposition \ref{exchange-elementary2}, the exponent of ~$t_{maj\left({{(\pi')}^{-1}}^{(w')}\right)}$ is ~$i_{maj\left({{(\pi')}^{-1}}^{(w')}\right)}=0$ in the standard $OGS$ canonical form of ${(\pi')}^{-1}_{\phi-1}$. We have also $maj\left({{(\pi')}^{-1}}^{(w')}\right)=\kappa^{(v\rightarrow z(\pi)-1)}_{-;j}$ for some $1\leq v\leq z(\pi)$ and $1\leq j\leq m^{(v)}$.
By similar argument as in the case of ${(\breve{\pi}')}^{-1}_{\varphi}$, we conclude
\begingroup
\large
\begin{align*}
    \tilde{i}_{maj\left({{(\pi')}^{-1}}^{(w')}\right)}=\vartheta_{\varphi-1}&=\kappa^{(v\rightarrow z(\pi)-1)}_{-;j}+\vartheta_{\varphi}-\sum_{\alpha}\left(\vartheta^{(v_{\alpha})}_{j_{\alpha}}+\eta^{(v_{\alpha})}_{j_{\alpha}-1}\right) \\&=
    \kappa^{(v)}_{-;j}-\sum_{\alpha}\left(\vartheta^{(v_{\alpha})}_{j_{\alpha}}+\eta^{(v_{\alpha})}_{j_{\alpha}-1}\right),
    \end{align*}
\endgroup
such that $\kappa^{(v_{\alpha})}_{-;j_{\alpha}}=\kappa^{(v)}_{-;j}$, where
\begingroup
\large
$$\kappa_{-;\varphi}=\varrho_{\varphi-1}+\vartheta_{\varphi}<\kappa^{(v)}_{-;j}<\varrho_{\varphi}+\vartheta_{\varphi}=\kappa_{+;\varphi};$$
\endgroup
\end{itemize}
Finally, analogously to Equations \ref{phi-segment}, \ref{phi-segment-eta-zero} we conclude the following:
\begingroup
\Large
\begin{equation}\label{phi-1-segment}
t_{\kappa_{-;\varphi-1}}^{\varrho_{\varphi-2}}\cdot t_{\kappa_{+;\varphi-1}}^{\vartheta_{\varphi-1}}\cdot {(\pi')}^{-1}_{\varphi-1}\cdot t_{\varrho_{\varphi-1}}^{\eta_{\varphi-1}}=t_{\varrho_{\varphi-2}}^{\eta_{\varphi-2}}\cdot t_{\kappa_{-;\varphi-1}}^{\varrho_{\varphi-2}-\eta_{\varphi-2}}\cdot {(\check{\pi}')}^{-1}_{\varphi-1}\cdot t_{\kappa_{+;\varphi-1}}^{\vartheta_{\varphi-1}+\eta_{\varphi-1}},
\end{equation}
\endgroup
where,
\begingroup
\large
$${(\check{\pi}')}^{-1}_{\varphi-1}=\begin{cases} {(\breve{\pi}')}^{-1}_{\varphi-1} & ~~\eta_{\varphi-2}>0 \\ \\ t_{\rho^{(w_{\varphi-1})}_{1}+\vartheta_{\varphi-1}}^{\vartheta_{\varphi-1}}\cdot{(\breve{\pi}')}^{-1}_{\varphi-1} & ~~\eta_{\varphi-2}=0\end{cases}$$
\endgroup
If $\varphi=2$, then $\varrho_{\varphi-2}=0$, and the theorem has been proved. If $\varphi>2$, we continue in the same way, where finally we get
\begingroup
\Large
\begin{equation}
\pi^{-1}=\left({(\check{\pi}')}^{-1}_{1}\cdot t_{\kappa_{+;1}}^{\vartheta_{1}+\eta_{1}}\right)\prod_{j=2}^{\varphi}\left(t_{\kappa_{-;j}}^{\varrho_{j-1}-\eta_{j-1}}\cdot {(\check{\pi}')}^{-1}_{j}\cdot t_{\kappa_{+;j}}^{\vartheta_{j}+\eta_{j}}\right)\cdot \pi^{-1}_{0},
\end{equation}
\endgroup
where, for every $1\leq j\leq \varphi$, ~${(\check{\pi}')}^{-1}_{j}$ is defined analogously to the definition of ${(\check{\pi}')}^{-1}_{\varphi}$.
\\

Hence, the $OGS$ canonical form of $\pi^{-1}$ is as follows:
 $${\pi}^{-1}=\prod_{\tilde{j}=1}^{\tilde{m}}t_{\tilde{k}_{\tilde{j}}}^{\tilde{i}_{\tilde{k}_{\tilde{j}}}},$$
 such that:
 \begin{itemize}
\item  $\tilde{k}_{\tilde{j}}=\kappa^{(v)}_{-;j}$ or $\tilde{k}_{\tilde{j}}=\kappa^{(v)}_{+;j}$, for $1\leq v\leq z(\pi)$, and $1\leq j\leq m^{(v)}$;
\item If $\tilde{k}_{\tilde{j}}=\kappa^{(v)}_{+;j}$, then $\tilde{i}_{\tilde{k}_{\tilde{j}}}=\sum_{\gamma}\left(\vartheta^{(v_{\gamma})}_{j_{\gamma}}+\eta^{(v_{\gamma})}_{j_{\gamma}}\right),$
such that $\kappa^{(v)}_{+;j}=\kappa^{(v_{\gamma})}_{+;j_{\gamma}};$
\item If $\tilde{k}_{\tilde{j}}=\kappa^{(v)}_{-;j}$, then $\tilde{i}_{\tilde{k}_{\tilde{j}}}=\kappa^{(v)}_{-;j}-\sum_{\gamma}\left(\vartheta^{(v_{\gamma})}_{j_{\gamma}}+\eta^{(v_{\gamma})}_{j_{\gamma}-1}\right),$
such that $\kappa^{(v)}_{-;j}=\kappa^{(v_{\gamma})}_{-;j_{\gamma}}.$
\end{itemize}

Thus, $\pi^{-1}$ satisfies all the conditions of Proposition \ref{inverse-factorization}.
Hence,
\begin{itemize}
\item $z(\pi^{-1})$, the number of standard $OGS$ elementary factors of $\pi^{-1}$ equals to the number of different non-zero values $\kappa^{(v)}_{-;j}$, where $1\leq v\leq z(\pi)$, and $1\leq j\leq m^{(v)}$;
\item For every $1\leq w\leq z(\pi^{-1})$, ~$maj\left({\pi^{-1}}^{(w)}\right)=\kappa^{(v)}_{-;j}$ for some $1\leq v\leq z(\pi)$, and for every given non-zero value of $\kappa^{(v)}_{-;j}$, there exists exactly one $w$ such that $maj\left({\pi^{-1}}^{(w)}\right)=\kappa^{(v)}_{-;j}$ (i.e., There is a one-to-one correspondence between the non-zero different values of $\kappa^{(v)}_{-;j}$ and the descent set of ~$\pi^{-1}$).
\end{itemize}

\end{proof}

\begin{example}
Consider $$\pi=t_{9}^{3}\cdot t_{10}^{3}\cdot t_{12}^{5}\cdot t_{13}^{3}\cdot t_{15}^{5}\cdot t_{17}^{2}\cdot t_{18}^{7}.$$

The permutation presentation of $\pi$:
$$\pi=[12; ~13; ~15; ~17; ~2; ~3; ~6; ~9; ~10; ~16; ~4; ~5; ~16; ~18; ~1; ~7; ~8; ~11].$$

Thus, $$des\left(\pi\right)=\{4, ~10, ~14\}.$$

Now, the standard $OGS$ elementary factorization of $\pi$ is as follows:
$$\pi^{(1)}=t_{9}^{3}\cdot t_{10}, ~~\pi^{(2)}=t_{10}^{2}\cdot t_{12}^{5}\cdot t_{13}^{3},  ~~\pi^{(3)}=t_{15}^{5}\cdot t_{17}^{2}\cdot t_{18}^{7}.$$
\\
Notice, the number of elementary factors equals to the number of the descents of $\pi$, where each descent of $\pi$ corresponds to the sum of the exponents of an elementary factor.
\\

The generator-length of the elementary factors are: $m^{(1)}=2$, ~~$m^{(2)}=3$, ~~$m^{(3)}=3$.
\\

Now, we consider the parameters which we use for the computation of $\pi^{-1}$.

 $$h^{(1)}_{1}=9, ~~h^{(1)}_{2}=10,$$
 $$h^{(2)}_{1}=10, ~~h^{(2)}_{2}=12, h^{(2)}_{3}=13,$$
 $$h^{(3)}_{1}=15, ~~h^{(3)}_{2}=17, ~~h^{(3)}_{3}=18.$$
$$\imath^{(1)}_{1}=3, ~~\imath^{(1)}_{2}=1,$$
$$\imath^{(2)}_{1}=2, ~~\imath^{(2)}_{2}=5, ~~\imath^{(2)}_{3}=3,$$
$$\imath^{(3)}_{1}=5, ~~\imath^{(3)}_{2}=7, ~~\imath^{(3)}_{3}=14.$$

Therefore,
$$\varrho^{(1)}_{1}=3,  ~~~~~~~~~~~~~~~~\varrho^{(1)}_{2}=3+1=4=maj\left(\pi^{(1)}\right),$$
$$\varrho^{(2)}_{1}=2, ~~\varrho^{(2)}_{2}=2+5=7, ~~\varrho^{(2)}_{3}=2+5+3=10=maj\left(\pi^{(2)}\right),$$
$$\varrho^{(3)}_{1}=5, ~~\varrho^{(3)}_{2}=5+2=7, ~~\varrho^{(3)}_{3}=5+2+7=14=maj\left(\pi^{(3)}\right).$$

Thus, by the formula $\vartheta^{(v)}_{j}=h^{(v)}_{j}-maj\left(\pi^{(v)}\right)$:
$$\vartheta^{(1)}_{1}=9-4=5, ~~~~~~~~~~~~~\vartheta^{(1)}_{2}=10-4=6,$$
$$\vartheta^{(2)}_{1}=10-10=0, ~~\vartheta^{(2)}_{2}=12-10=2, ~~\vartheta^{(2)}_{3}=13-10=3,$$
$$\vartheta^{(3)}_{1}=15-14=1, ~~\vartheta^{(3)}_{2}=17-14=3, ~~\vartheta^{(3)}_{3}=18-14=4.$$

Hence, by using the formula $\kappa^{(v\rightarrow v)}_{-;j}=\varrho^{(v)}_{j-1}+\vartheta^{(v)}_{j}$  and $\kappa^{(v\rightarrow v)}_{+;j}=\varrho^{(v)}_{j}+\vartheta^{(v)}_{j}$:
$$\kappa^{(1\rightarrow 1)}_{-;1}=0+5=5, ~~\kappa^{(1\rightarrow 1)}_{-;2}=3+6=9,$$
$$\kappa^{(1\rightarrow 1)}_{+;1}=3+5=8, ~~\kappa^{(1\rightarrow 1)}_{+;2}=4+6=10.$$

$$\kappa^{(2\rightarrow 2)}_{-;1}=0+0=0, ~~\kappa^{(2\rightarrow 2)}_{-;2}=2+2=4, ~~\kappa^{(2\rightarrow 2)}_{-;3}=7+3=10,$$
$$\kappa^{(2\rightarrow 2)}_{+;1}=2+0=2, ~~\kappa^{(2\rightarrow 2)}_{+;2}=7+2=9, ~~\kappa^{(2\rightarrow 2)}_{+;3}=10+3=13.$$

$$\kappa^{(3\rightarrow 3)}_{-;1}=0+1=1, ~~\kappa^{(3\rightarrow 3)}_{-;2}=5+3=8, ~~\kappa^{(3\rightarrow 3)}_{-;3}=7+4=11,$$
$$\kappa^{(3\rightarrow 3)}_{+;1}=5+1=6, ~~\kappa^{(3\rightarrow 3)}_{+;2}=7+3=10, ~~\kappa^{(3\rightarrow 3)}_{+;3}=14+4=18.$$

Now, compute $\kappa^{(v\rightarrow r)}_{-;j}$ and $\kappa^{(v\rightarrow r)}_{+;j}$, for $1\leq v\leq 3$, ~$v<r\leq 3$, and ~$1\leq j\leq m^{(v)}$, ~by using the algorithm in Definition \ref{kappa}:
\\
\\

  Since $\varrho^{(2)}_{1}<\kappa^{(1\rightarrow 1)}_{-;1}<\varrho^{(2)}_{2}$: ~~~~$\kappa^{(1\rightarrow 2)}_{-;1}=\kappa^{(1\rightarrow 1)}_{-;1}+\vartheta^{(2)}_{2}=5+2=7.$
 \\

  Since $\varrho^{(3)}_{2}\leq\kappa^{(1\rightarrow 2)}_{-;1}<\varrho^{(3)}_{3}$: ~~~~$\kappa^{(1\rightarrow 3)}_{-;1}=\kappa^{(1\rightarrow 2)}_{-;1}+\vartheta^{(3)}_{3}=7+4=11.$
\\
\\

  Since $\varrho^{(2)}_{2}<\kappa^{(1\rightarrow 1)}_{+;1}<\varrho^{(2)}_{3}$: ~~~~$\kappa^{(1\rightarrow 2)}_{+;1}=\kappa^{(1\rightarrow 1)}_{+;1}+\vartheta^{(2)}_{3}=8+3=11.$
  \\

  Since $\varrho^{(3)}_{2}<\kappa^{(1\rightarrow 2)}_{+;1}<\varrho^{(3)}_{3}$: ~~~~$\kappa^{(1\rightarrow 3)}_{+;1}=\kappa^{(1\rightarrow 2)}_{+;1}+\vartheta^{(3)}_{3}=11+4=15.$
\\
\\

  Since $\varrho^{(2)}_{2}<\kappa^{(1\rightarrow 1)}_{-;2}<\varrho^{(2)}_{3}$: ~~~~$\kappa^{(1\rightarrow 2)}_{-;2}=\kappa^{(1\rightarrow 1)}_{-;2}+\vartheta^{(2)}_{3}=9+3=12.$
  \\

  Since $\varrho^{(3)}_{2}<\kappa^{(1\rightarrow 2)}_{-;2}<\varrho^{(3)}_{3}$: ~~~~$\kappa^{(1\rightarrow 3)}_{-;2}=\kappa^{(1\rightarrow 2)}_{-;2}+\vartheta^{(3)}_{3}=12+4=16.$
\\
\\

  Since $\varrho^{(2)}_{2}<\kappa^{(1\rightarrow 1)}_{+;2}<\varrho^{(2)}_{3}$: ~~~~$\kappa^{(1\rightarrow 2)}_{+;2}=\kappa^{(1\rightarrow 1)}_{+;2}+\vartheta^{(2)}_{3}=10+3=13.$
  \\

  Since $\varrho^{(3)}_{2}<\kappa^{(1\rightarrow 2)}_{+;2}<\varrho^{(3)}_{3}$: ~~~~$\kappa^{(1\rightarrow 3)}_{+;2}=\kappa^{(1\rightarrow 2)}_{+;2}+\vartheta^{(3)}_{3}=13+4=17.$
\\
\\

  Since $\kappa^{(2\rightarrow 2)}_{-;1}<\varrho^{(3)}_{1}$: ~~~~$\kappa^{(2\rightarrow 3)}_{-;1}=\kappa^{(2\rightarrow 2)}_{-;1}+\vartheta^{(3)}_{1}=0+1=1.$
\\
\\

  Since $\kappa^{(2\rightarrow 2)}_{+;1}<\varrho^{(3)}_{1}$: ~~~~$\kappa^{(2\rightarrow 3)}_{+;1}=\kappa^{(2\rightarrow 2)}_{+;1}+\vartheta^{(3)}_{1}=2+1=3.$
\\
\\

  Since $\kappa^{(2\rightarrow 2)}_{-;2}<\varrho^{(3)}_{1}$: ~~~~$\kappa^{(2\rightarrow 3)}_{-;2}=\kappa^{(2\rightarrow 2)}_{-;2}+\vartheta^{(3)}_{1}=4+1=5.$
\\
\\

  Since $\varrho^{(3)}_{2}<\kappa^{(2\rightarrow 2)}_{+;2}<\varrho^{(3)}_{3}$: ~~~~$\kappa^{(2\rightarrow 3)}_{+;2}=\kappa^{(2\rightarrow 2)}_{+;2}+\vartheta^{(3)}_{3}=9+4=13.$
\\
\\

  Since $\varrho^{(3)}_{2}<\kappa^{(2\rightarrow 2)}_{-;3}<\varrho^{(3)}_{3}$: ~~~~$\kappa^{(2\rightarrow 3)}_{-;3}=\kappa^{(2\rightarrow 2)}_{-;3}+\vartheta^{(3)}_{3}=10+4=14.$
\\
\\

  Since $\varrho^{(3)}_{2}<\kappa^{(2\rightarrow 2)}_{+;3}<\varrho^{(3)}_{3}$: ~~~~$\kappa^{(2\rightarrow 3)}_{-;3}=\kappa^{(2\rightarrow 2)}_{-;3}+\vartheta^{(3)}_{3}=13+4=17.$
\\

  Thus,
   $$\kappa^{(1)}_{-;1}=11, ~~~~~~~~~~\kappa^{(1)}_{-;2}=16,$$
   $$\kappa^{(1)}_{+;1}=15, ~~~~~~~~~~\kappa^{(1)}_{+;2}=17.$$

   $$\kappa^{(2)}_{-;1}=1, ~~\kappa^{(2)}_{-;2}=5, ~~\kappa^{(2)}_{-;3}=14,$$
   $$\kappa^{(2)}_{+;1}=3, ~~\kappa^{(2)}_{+;2}=13, ~~\kappa^{(2)}_{+;3}=17.$$

   $$\kappa^{(3)}_{-;1}=1, ~~\kappa^{(3)}_{-;2}=8, ~~\kappa^{(3)}_{-;3}=11,$$
   $$\kappa^{(3)}_{+;1}=6, ~~\kappa^{(3)}_{+;2}=10, ~~\kappa^{(3)}_{+;3}=18.$$

    Thus, the values of $\kappa$ in increasing order are:
    $$\kappa^{(3)}_{-;1}=\kappa^{(2)}_{-;1}=1, ~~\kappa^{(2)}_{+;1}=3, ~~\kappa^{(2)}_{-;2}=5, ~~\kappa^{(3)}_{+;1}=6, ~~\kappa^{(3)}_{-;2}=8, ~~\kappa^{(3)}_{+;2}=10,$$
    $$\kappa^{(3)}_{-;3}=\kappa^{(1)}_{-;1}=11, ~~\kappa^{(2)}_{+;2}=13, ~~\kappa^{(2)}_{-;3}=14, ~~\kappa^{(1)}_{+;1}=15, ~~\kappa^{(1)}_{-;2}=16,$$
    $$\kappa^{(1)}_{+;2}=\kappa^{(2)}_{+;3}=17, ~~\kappa^{(3)}_{+;3}=18.$$

    Now, we find $\chi^{(v)}_{j}$ and $\eta^{(v)}_{j}$  for $1\leq v\leq 3$, and $1\leq j\leq m^{(v)}$, by using Definitions \ref{chi}, \ref{eta}.
    Obviously,
    ~$\chi^{(1)}_{j}=0.$  ~Therefore, ~$\eta^{(1)}_{j}=0$, ~for every $1\leq j\leq 2$.

    Now we compute $\chi^{(v)}_{j}$ and $\eta^{(v)}_{j}$, ~for $2\leq v\leq 3$, and $1\leq j\leq m^{(v)}-1$.
    \\

    $\kappa^{(2)}_{+;1}<\kappa^{(1)}_{-;1}$~
    applies ~$\chi^{(2)}_{1}=1.$
    Therefore,
     ~$\eta^{(2)}_{1}=0.$
\\

    $\kappa^{(1)}_{-;1}<\kappa^{(2)}_{+;2}<\kappa^{(1)}_{+;1}$~
    applies ~$\chi^{(2)}_{2}=0.$
    Therefore,
     ~$\eta^{(2)}_{2}=\vartheta^{(1)}_{1}=5.$
\\

   $\kappa^{(2)}_{-;2}<\kappa^{(3)}_{+;1}<\kappa^{(2)}_{+;2}$~
    and
    ~$\kappa^{(3)}_{+;1}<\kappa^{(1)}_{-;1}$~
    applies ~$\chi^{(3)}_{1}=1.$
    Therefore,
    ~$\eta^{(3)}_{1}=\vartheta^{(2)}_{2}=2.$
\\

   $\kappa^{(2)}_{-;2}<\kappa^{(3)}_{+;2}<\kappa^{(2)}_{+;2}$~
    and
    ~$\kappa^{(3)}_{+;2}<\kappa^{(1)}_{-;1}$~
    applies ~$\chi^{(3)}_{1}=1.$
    Therefore,
    ~$\eta^{(3)}_{2}=\vartheta^{(2)}_{2}=2.$
\\

Thus, by Theorem \ref{inverse-general-sn}, the standard $OGS$ canonical form of $\pi^{-1}$  is a product of all the elements of the form
\begingroup
\large
$$t_{\kappa^{(v)}_{-;j}}^{\kappa^{(v)}_{-;j}-\left(\vartheta^{(v)}_{j}+\eta^{(v)}_{j-1}\right)}, ~~~~t_{\kappa^{(v)}_{+;j}}^{\vartheta^{(v)}_{j}+\eta^{(v)}_{j}}$$
\endgroup
where, $1\leq v\leq 3$ and $1\leq j\leq m^{(v)}$.
\\

Therefore, by putting $\kappa^{(v)}_{-;j}$ and $\kappa^{(v)}_{+;j}$ in increasing order, and substituting the corresponding $\vartheta^{(v)}_{j}$, and $\eta^{(v)}_{j}$ or $\eta^{(v)}_{j-1}$ in the exponent for every $1\leq v\leq 3$ and $1\leq j\leq m^{(v)}$,  we get the following standard $OGS$ canonical form of $\pi^{-1}$:
\begin{align*}
\pi^{-1}&=t_{3}^{0}\cdot t_{5}^{5-2}\cdot t_{6}^{1+2}\cdot t_{8}^{8-(3+2)}\cdot t_{10}^{3+2}\cdot t_{11}^{11-(4+2+5)}\cdot t_{13}^{2+5}\cdot t_{14}^{14-(3+5)}\cdot t_{15}^{5}\cdot t_{16}^{16-6}\cdot t_{17}^{6+3}\cdot t_{18}^{4} \\ \\ &= t_{5}^{3}\cdot t_{6}^{3}\cdot t_{8}^{3}\cdot t_{10}^{5}\cdot t_{13}^{7}\cdot t_{14}^{6}\cdot t_{15}^{5}\cdot t_{16}^{10}\cdot t_{17}^{9}\cdot t_{18}^{4}.
\end{align*}
The standard $OGS$ elementary factorization of $\pi^{-1}$:

The $6$  different values of $\kappa^{(v)}_{-;j}$, such that $1\leq v\leq 3$, and $1\leq j\leq m^{(v)}$, implies $6$ different $OGS$ elementary factors as follows:
\begin{itemize}
\item $\kappa^{(3)}_{-;1}=\kappa^{(2)}_{-;1}=1$ ~implies ~$maj\left({\pi^{-1}}^{(1)}\right)=1$;
\item $\kappa^{(2)}_{-;2}=5$ ~implies ~$maj\left({\pi^{-1}}^{(2)}\right)=5$;
\item $\kappa^{(3)}_{-;2}=8$ ~implies ~$maj\left({\pi^{-1}}^{(3)}\right)=8$;
\item $\kappa^{(3)}_{-;3}=\kappa^{(1)}_{-;1}=11$ ~implies ~$maj\left({\pi^{-1}}^{(4)}\right)=11$;
\item $\kappa^{(2)}_{-;3}=14$ ~implies ~$maj\left({\pi^{-1}}^{(5)}\right)=14$;
\item $\kappa^{(1)}_{-;3}=16$ ~implies ~$maj\left({\pi^{-1}}^{(6)}\right)=16$;
\end{itemize}
$${\pi^{-1}}^{(1)}=t_{5}, ~~~~{\pi^{-1}}^{(2)}=t_{5}^{2}\cdot t_{6}^{3}, ~~~~{\pi^{-1}}^{(3)}=t_{8}^{3}\cdot t_{10}^{5},$$
$${\pi^{-1}}^{(4)}=t_{13}^{7}\cdot t_{14}^{4}, ~~~~{\pi^{-1}}^{(5)}=t_{14}^{2}\cdot t_{15}^{5}\cdot t_{16}^{7}, ~~~~{\pi^{-1}}^{(6)}=t_{16}^{3}\cdot t_{17}^{9}\cdot t_{18}^{4}.$$

Indeed, the permutation presentation of $\pi^{-1}$:

$$\pi^{-1}=[15; ~5; ~6; ~11; ~12; ~7; ~16; ~17; ~8; ~9; ~18; ~1; ~2; ~13; ~3; ~10; ~4; ~14].$$

Where, $$des\left(\pi^{-1}\right)=\{1, ~5, ~8, ~11, ~14, ~16\}.$$

\end{example}

\section{Conclusions and future plans}

In the paper, we introduced a quite interesting generalization of the fundamental theorem for abelian groups to two important and very elementary families of non-abelian Coxeter groups, the $I$-type (where the number of vertices on the Coxeter graph is always two, the smallest possibly number of vertices for a non-abelian Coxeter group, although the lace of the connecting edge depends on the group, and can be any integer), and the $A$-type (where the number of vertices depends on the group, and can be any integer greater than one, although the lace of the connecting edges are simply laced, and with a minimal number of connections such that the Coxeter graph is connected and non-abelian). We showed canonical forms, with very interesting exchange laws, and quite interesting properties concerning the Coxeter lengths of the elements. The interesting results for the two elementary families of non-abelian Coxeter groups motivate generalization for further families of Coxeter and generalized Coxeter groups, which have an importance in the classification of Lie-Algebras and the Lie-type simple groups, and in other fields of mathematics, such as algebraic geometry for classification of fundamental groups of Galois covers of surfaces \cite{alst}. In the first step it is interesting to generalize the standard $OGS$ canonical forms and the exchange laws for the finite classical families of $B$ and $D$-type, which have presentations as signed permutations,  then to the affine classical families $\tilde{A}$, $\tilde{B}$, $\tilde{C}$, and $\tilde{D}$, and also to other generalizations of the mentioned Coxeter groups, as the complex reflection groups $G(r,p,n)$ \cite{ST} or the generalized affine classical groups, the definition of which is described in \cite{rtv}, \cite{ast}.

\end{document}